\documentclass[reqno,11pt]{amsart}
\usepackage{amsmath,amssymb,mathrsfs,amsthm,amsfonts}
\usepackage[normalem]{ulem}
\usepackage[inline]{enumitem}
\usepackage[usenames,dvipsnames]{xcolor}
\usepackage{hyperref}
\usepackage[percent]{overpic}
\usepackage{comment}
\usepackage{stmaryrd}
\usepackage{algorithm}
\usepackage{algorithmic}
\usepackage{pgfplots}
\usepackage{subcaption}
\usepackage{tikz}
\usetikzlibrary{calc}
\usetikzlibrary{arrows.meta,backgrounds}
\usepackage{multirow,array,longtable,booktabs}
\usetikzlibrary{arrows}
\usepackage{bm}
\usepackage{graphicx}

\hypersetup{
	colorlinks=true, linkcolor=blue,
	citecolor=ForestGreen
}
\usepackage[paper=letterpaper,margin=1in]{geometry}
\DeclareMathAlphabet{\mathpzc}{OT1}{pzc}{m}{it}

\newtheorem{theorem}{Theorem}[section]

\newtheorem{lemma}[theorem]{Lemma}
\newtheorem{proposition}[theorem]{Proposition}
\newtheorem{corollary}[theorem]{Corollary}
\newtheorem{definition}[theorem]{Definition}

\newtheorem{assumption}[theorem]{Assumption}
\newtheorem{notation}[theorem]{Notation}
\newtheorem{remark}[theorem]{Remark}
\theoremstyle{remark}
\newtheorem{example}[theorem]{Example}
\numberwithin{equation}{section}
\allowdisplaybreaks
\usepackage{acronym}

\acrodef{KPZ}{Kardar--Parisi--Zhang}
\acrodef{SHE}{Stochastic Heat Equation}
\acrodef{LDP}{Large Deviation Principle}

\newcommand{\be}{\begin{equation}}
\newcommand{\ee}{\end{equation}}

\newcommand{\E}{\mathbb{E}}	

\makeatletter
\newcommand\nd{\noindent}
\newcommand{\1}{{\bf 1}}
\newcommand{\cF}{\mathcal{F}}
\newcommand{\cA}{\mathcal{A}}

\newcommand{\cC}{\mathcal{C}}
\newcommand{\cO}{\mathcal{O}}

\newcommand{\cH}{\mathcal{H}}
\newcommand{\cG}{\mathcal{G}}

\newcommand{\cP}{\mathcal{P}}
\newcommand{\cS}{\mathcal{S}}
\newcommand{\cV}{\mathcal{V}}

\newcommand{\tr}{\text{tr}}

\newcommand{\la}{{\lambda}}
\newcommand{\La}{{\Lambda}}

\def\pa{\partial}
\newcommand{\Law}{\operatorname{Law}}

\newcommand{\R}{\mathbb{R}} 
\newcommand{\N}{\mathbb{N}}

\newcommand{\e}{\varepsilon}

\newcommand{\m}{\mathsf}

\renewcommand{\hat}{\widehat}

\renewcommand{\bar}{\overline}

\usepackage{comment}

\newcounter{hyp}
\title[Analytical Approach to Continuous-time Causal Optimal Transport]{Analytical Approach to Continuous-Time Causal Optimal Transport}

\author[J.\ Backhoff]{Julio Backhoff}
\address{J.\ Backhoff,
	Faculty of Mathematics, University of Vienna,
	\newline\hphantom{\quad \ \ J. Backhoff}
	Oskar-Morgenstern-Platz 1, Vienna, 1190, Austria}
\email{julio.backhoff@univie.ac.at}
\thanks{J.\ Backhoff is partially supported  by the Austrian Science Fund (FWF) DOI 10.55776/P36835.}

\author[E.\ Bayraktar]{Erhan Bayraktar}
\address{E.\ Bayraktar,
	Department of Mathematics, University of Michigan,
	\newline\hphantom{\quad \ \ E. Bayraktar}
	530 Church St, Ann Arbor, MI 48109, USA}
\email{erhan@umich.edu}
\thanks{E. Bayraktar is partially supported by the NSF grants DMS-2507940, and 2406232, and in part by the Susan M. Smith Professorship.}

\author[I.\ Ekren]{Ibrahim Ekren}
\address{I.\ Ekren,
	Department of Mathematics, University of Michigan,
	\newline\hphantom{\quad \ \ I. Ekren}
	530 Church St, Ann Arbor, MI 48109, USA}
\email{iekren@umich.edu}
\thanks{I.\ Ekren is partially supported by the NSF grant DMS-2406240.}

\author[A.\ Zitridis]{Antonios Zitridis}
\address{A.\ Zitridis,
	Department of Mathematics, University of Michigan,
	\newline\hphantom{\quad \ \ A. Zitridis}
	530 Church St, Ann Arbor, MI 48109, USA}
\email{zitridis@umich.edu}
\RequirePackage{xcolor} 
\providecommand{\DIFdeltex}[1]{} 
\providecommand{\DIFaddbegin}{\begingroup\color{blue}} 
\providecommand{\DIFaddend}{\endgroup} 
\providecommand{\DIFdelbegin}{} 
\providecommand{\DIFdelend}{} 
\providecommand{\DIFdelFL}[1]{} 
\providecommand{\DIFaddbeginFL}{\begingroup\color{blue}} 
\providecommand{\DIFaddendFL}{\endgroup} 
\providecommand{\DIFdelbeginFL}{} 
\providecommand{\DIFdelendFL}{} 
\newcommand{\DIFscaledelfig}{0.5}
\RequirePackage{settobox} 
\RequirePackage{letltxmacro} 
\newsavebox{\DIFdelgraphicsbox} 
\newlength{\DIFdelgraphicswidth} 
\newlength{\DIFdelgraphicsheight} 
\LetLtxMacro{\DIFOincludegraphics}{\includegraphics} 
\newcommand{\DIFaddincludegraphics}[2][]{{\color{blue}\fbox{\DIFOincludegraphics[#1]{#2}}}} 
\newcommand{\DIFdelincludegraphics}[2][]{
\sbox{\DIFdelgraphicsbox}{\DIFOincludegraphics[#1]{#2}}
\settoboxwidth{\DIFdelgraphicswidth}{\DIFdelgraphicsbox} 
\settoboxtotalheight{\DIFdelgraphicsheight}{\DIFdelgraphicsbox} 
\scalebox{\DIFscaledelfig}{
\parbox[b]{\DIFdelgraphicswidth}{\usebox{\DIFdelgraphicsbox}\\[-\baselineskip] \rule{\DIFdelgraphicswidth}{0em}}\llap{\resizebox{\DIFdelgraphicswidth}{\DIFdelgraphicsheight}{
\setlength{\unitlength}{\DIFdelgraphicswidth}
\begin{picture}(1,1)
\thicklines\linethickness{2pt} 
{\color[rgb]{1,0,0}\put(0,0){\framebox(1,1){}}}
{\color[rgb]{1,0,0}\put(0,0){\line( 1,1){1}}}
{\color[rgb]{1,0,0}\put(0,1){\line(1,-1){1}}}
\end{picture}
}\hspace*{3pt}}} 
} 
\LetLtxMacro{\DIFOaddbegin}{\DIFaddbegin} 
\LetLtxMacro{\DIFOaddend}{\DIFaddend} 
\LetLtxMacro{\DIFOdelbegin}{\DIFdelbegin} 
\LetLtxMacro{\DIFOdelend}{\DIFdelend} 
\DeclareRobustCommand{\DIFaddbegin}{\DIFOaddbegin \let\includegraphics\DIFaddincludegraphics} 
\DeclareRobustCommand{\DIFaddend}{\DIFOaddend \let\includegraphics\DIFOincludegraphics} 
\DeclareRobustCommand{\DIFdelbegin}{\DIFOdelbegin \let\includegraphics\DIFdelincludegraphics} 
\DeclareRobustCommand{\DIFdelend}{\DIFOaddend \let\includegraphics\DIFOincludegraphics} 
\LetLtxMacro{\DIFOaddbeginFL}{\DIFaddbeginFL} 
\LetLtxMacro{\DIFOaddendFL}{\DIFaddendFL} 
\LetLtxMacro{\DIFOdelbeginFL}{\DIFdelbeginFL} 
\LetLtxMacro{\DIFOdelendFL}{\DIFdelendFL} 
\DeclareRobustCommand{\DIFaddbeginFL}{\DIFOaddbeginFL \let\includegraphics\DIFaddincludegraphics} 
\DeclareRobustCommand{\DIFaddendFL}{\DIFOaddendFL \let\includegraphics\DIFOincludegraphics} 
\DeclareRobustCommand{\DIFdelbeginFL}{\DIFOdelbeginFL \let\includegraphics\DIFdelincludegraphics} 
\DeclareRobustCommand{\DIFdelendFL}{\DIFOaddendFL \let\includegraphics\DIFOincludegraphics} 
\RequirePackage{listings} 
\lstdefinelanguage{DIFcode}{ 
  moredelim=[il][\color{white}\tiny]{\%DIF\ <\ }, 
  moredelim=[il][\sffamily\bfseries]{\%DIF\ >\ } 
} 
\lstdefinestyle{DIFverbatimstyle}{ 
	language=DIFcode, 
	basicstyle=\ttfamily, 
	columns=fullflexible, 
	keepspaces=true 
} 
\lstnewenvironment{DIFverbatim}{\lstset{style=DIFverbatimstyle}}{} 
\lstnewenvironment{DIFverbatim*}{\lstset{style=DIFverbatimstyle,showspaces=true}}{} 

\begin{document}
\maketitle

\vspace{-10mm}
\begin{abstract}
We study causal optimal transport in continuous time, with Markovian
cost, between a finite-state Markov source and a diffusion target. By
replacing the source with its conditional law given the observation of the
target, we characterize the value of this transport problem through a
fully nonlinear parabolic master equation on 
an enlarged state space. 
We further show that this
value coincides with those of two equivalent stochastic control
problems on the simplex: a control of the Kushner--Stratonovich
filtering equation with a zero-mean condition, and a state-constrained stochastic
optimal control problem. Both formulations give rise to implementable
numerical schemes that approximate the value from above and below.
\end{abstract}

\tableofcontents

\section{Introduction}

Optimal transport has become a foundational framework in probability and analysis, with applications ranging from data science and image processing to economics, mathematical finance, and biology. Over the last decade and a half, however, it has become increasingly clear that classical optimal transport is not fully suited to studying stochastic processes. Its formulation does not take filtrations into account and therefore ignores the intrinsic direction of time. Consequently, two stochastic processes may be close in Wasserstein distance while having very different probabilistic structures: for instance, their Doob--Meyer decompositions may differ substantially, and the values of optimal stopping or control problems associated with them may be far apart.

This observation motivated the development of adapted topologies for stochastic processes, initiated in the works of Aldous, Hoover, Keisler, and others (see, e.g., \cite{aldous1981weak} and \cite{hoover1984adapted,Ho87}). These topologies capture the continuity and stability properties of genuinely dynamic objects, including Doob--Meyer decompositions and optimal stopping values. More recently, adapted Wasserstein distances, which arise as particular instances of causal optimal transport, have provided metric counterparts to these adapted topologies. This point of view has led to quantitative stability estimates, statistical consistency results, and a growing range of applications.

At the heart of causal optimal transport is the requirement that couplings respect the temporal information structure of the processes. Given a source process $X$ and a target process $Y$, one optimizes an expected cost over joint laws of $(X,Y)$ with prescribed marginals. In contrast to classical optimal transport, admissible couplings are required to be causal: the target process $Y$ must be generated from the source process $X$ in an adapted way, possibly using additional independent randomization. Equivalently, the conditional law of $(Y_s)_{s\le t}$ given the entire path of $X$ must depend only on the history $(X_s)_{s\le t}$, for every $t\in[0,T]$. The reader may recognize this condition as the H-hypothesis between the filtration of $X$ enlarged by the filtration of $Y$.

Despite the growing interest in causal optimal transport,
comparatively little is known about the characterization and
computation of the associated values and optimizers. Important
progress has been made in discrete time, both on dynamic programming
and duality \cite{veraguas2016causal} and on computation
\cite{EcPa24}. To the best of our knowledge, the present article is
the first to address the characterization and computation of causal
optimal transport in continuous time. Our approach is analytical. We
identify the correct state variables for the problem, show that they
differ from the original state variables $(X,Y)$, and characterize
the value function through a master equation on the resulting
enlarged state space. Because this master equation involves a
singular control problem, we introduce two relaxed formulations that
are more tractable and prove that all three problems have the same
value. This yields partial differential equations that characterize the causal transport value exactly, or through approximations from above and below. These equations provide a basis for numerical schemes, although the focus of the present paper is foundational: we establish a route to computing continuous-time causal transport values and isolate the main analytical difficulties.

We now describe our contribution more precisely. The source process $X$ is a continuous-time Markov chain on a $K$-element state space $\mathbb S$, so without loss of generality
\[
\mathbb{S}:=\{1,\ldots,K\}, \qquad K\in\mathbb N,
\]
with generator $\Lambda$. The target process $Y$ is a diffusion in $\mathbb R^d$ with characteristics $(b_\nu,\sigma_\nu^2)$. The laws of $X$ and $Y$ are fixed and denoted by $\mu$ and $\nu$, respectively. We study the causal optimal transport problem
\[
\sup_{\pi\in\Pi_c(\mu,\nu)}
\E^\pi\left[
\int_0^T f_0(X_s,Y_s)\,ds
+
g_0(X_T,Y_T)
\right],
\]
where the supremum is taken over causal couplings of $\mu$ and $\nu$. Equivalently, under an admissible coupling, the conditional law of $(Y_s)_{s\le t}$ given $X$ is $\sigma((X_s)_{s\le t})$-measurable for every $t\in[0,T]$.

A key asymmetry appears immediately. Under any causal coupling, the semimartingale characteristics of $X$ are preserved in the joint filtration of $(X,Y)$; see Remark~2.5. The same is not true for $Y$. This asymmetry is the main reason why the state space of the problem must be enlarged. The correct additional state variable, as we show, is the filter
\[
p_t^i
:=
\mathbb{P}^\pi\big(X_t=i\,\big|\,\mathcal F_t^Y\big),
\qquad i\in \mathbb{S},
\]
namely the conditional distribution of the source given the observation of the target. The filter takes values in the simplex $\mathcal P(\mathbb{S})$, and the objective can be rewritten as the linear functional
\[
\E^\pi\left[
\int_0^T \sum_{i=1}^K f_0(i,Y_s)p_s^i\,ds
+
\sum_{i=1}^K g_0(i,Y_T)p_T^i
\right].
\]

In one of our first results, we establish that causality implies that the filter satisfies dynamics of the form
\[
dp_t^i
=
(p_t\Lambda)_i\,dt
+
(Z_t^i)^\top \sigma_\nu(t,Y_t)^{-2}
\big(dY_t-b_\nu(t,Y_t)\,dt\big),
\qquad i\in \mathbb{S},
\]
where
\[
\sum_{i=1}^K Z_t^i=0,
\qquad
\mathbf 1_{\{p_t^i=0\}}Z_t^i=0,
\qquad i\in \mathbb{S}.
\]
Here both $p$ and $Z$ are adapted to the filtration of $Y$. For fixed $Z$, the equation for $p$ is linear, but the boundary condition
\[
\mathbf 1_{\{p_t^i=0\}}Z_t^i=0
\]
is nonlinear and is precisely what enforces the state constraint $p_t\in\mathcal P(\mathbb{S})$. If we optimize over all controlled processes $(p,Y,Z)$ satisfying these dynamics and the state constraint, we obtain a state-constrained stochastic control problem. Since this class of dynamics contains those generated by causal couplings, its value gives an upper bound for the original causal optimal transport problem.

Conversely, suppose we restrict to controls for which
\[
h_t^i:=\frac{Z_t^i}{p_t^i}
\quad\text{on }\{p_t^i>0\}
\]
is uniformly bounded. This is stronger than the boundary condition above. In this case the filter dynamics become the Kushner--Stratonovich equation
\[
dp_s^i
=
(p_s\Lambda)_i\,ds
+
p_s^i h(s,Y_\cdot,i)^\top \sigma_\nu(s,Y_s)^{-2}
\big(dY_s-b_\nu(s,Y_s)\,ds\big),
\qquad i\in \mathbb{S},
\]
together with the zero-conditional-mean constraint
\[
\sum_{i=1}^K h(t,Y_\cdot,i)p_t^i
=
0.
\]
In other words, $p_t=(p_t^i)_{i\in\mathbb S}$ is the conditional law of $X_t$ given the observation of $Y$ in the filtering system
\[
dY_t
=
\big(h(t,Y_\cdot,X_t)+b_\nu(t,Y_t)\big)\,dt
+
\sigma_\nu(t,Y_t)\,dW_t,
\]
where $X$ has generator $\Lambda$. Maximizing the same cost over couplings generated by such filtering systems gives a lower bound for the causal optimal transport problem.

Our main theorem shows that these two bounds are sharp:

\begin{theorem} \label{thm:main_intro}
The causal optimal transport problem coincides with both the filtering lower bound and the state-constrained upper bound described above. Moreover, if $Y_0$ is deterministic, then the value function of the problem, written in the enlarged state space $(t,y,p)$, is the unique viscosity solution of the variational inequality
\[
\begin{cases}
H_{\mathrm{vi}}\big(t,y,p,\partial_t V,D_{(p,y)}V,D^2_{(p,y)}V\big)\le 0,
& \text{on } [0,T)\times\mathbb R^d\times\mathcal P(\mathbb S),\\[3pt]
H_{\mathrm{vi}}\big(t,y,p,\partial_t V,D_{(p,y)}V,D^2_{(p,y)}V\big)\ge 0,
& \text{on } [0,T)\times\mathbb R^d\times\mathcal P(\mathbb S)^\circ,\\
\mathcal{B}_0\big(t,y,p,\partial_t V,D_{(p,y)}V,D^2_{(p,y)}V\big)\,\,\ge 0,&\text{on } [0,T)\times\mathbb R^d\times\pa\mathcal{P}(\mathbb S),
\end{cases}
\]
with the terminal condition $V(T,y,p)=\sum_{i=1}^Kg_0(i,y)p^i$ and where $\mathcal{P}(\mathbb{S})$ is understood as a subset of $\R^K$; see \eqref{Hsc}, \eqref{Hvi}, \eqref{correctPDE}, \eqref{boundaryop}. If $Y_0$ is not deterministic, then the value of the causal optimal transport problem is obtained by solving a weak optimal transport problem between the laws of $X_0$ and $Y_0$, with cost given by $V(0,\cdot,\cdot)$; see \eqref{eq:initialdistthm}.
\end{theorem}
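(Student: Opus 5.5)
The plan is a sandwich argument: $V_{\mathrm{filt}}\le V_{\mathrm{cot}}\le V_{\mathrm{sc}}$, where $V_{\mathrm{filt}}$ is the filtering (Kushner--Stratonovich) lower bound, $V_{\mathrm{cot}}$ the causal transport value and $V_{\mathrm{sc}}$ the state-constrained upper bound. Both inequalities come from the discussion preceding the theorem; the main work is showing $V_{\mathrm{sc}}\le V_{\mathrm{filt}}$.

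\textbf{The easy inequalities.} For $V_{\mathrm{filt}}\le V_{\mathrm{cot}}$, take $X$ with generator $\Lambda$, an independent Brownian motion $W$, and bounded $h$ satisfying the zero-mean constraint $\sum_i h(t,Y_\cdot,i)p^i_t=0$. The filtering system then produces a process $Y$ whose innovation representation has drift $b_\nu+\sum_i h^ip^i=b_\nu$. By uniqueness in law for the SDE defining $\nu$, we get $Y\sim\nu$. Causality holds because $Y$ is built adaptedly from $X$ and the independent noise $W$. For $V_{\mathrm{cot}}\le V_{\mathrm{sc}}$, I use the filter dynamics derived earlier: under any causal coupling, $p$ solves the $Z$-controlled equation with $\sum_iZ^i=0$ and $\mathbf 1_{\{p^i=0\}}Z^i=0$. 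Rewriting the objective linearly in $p$ shows the cost is dominated by $V_{\mathrm{sc}}$.

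\textbf{The PDE route to $V_{\mathrm{sc}}\le V_{\mathrm{filt}}$.} For deterministic $Y_0$, I would proceed in five steps.
\begin{enumerate}[label=(\roman*)]
\item Show $V_{\mathrm{sc}}$ is a viscosity subsolution of $H_{\mathrm{vi}}\le0$ on the closed domain, with the terminal condition. This follows from the dynamic programming principle for the state-constrained problem, as in Soner's theory of state constraints, which gives the subsolution property up to the boundary.
\item Show $V_{\mathrm{filt}}$ is a viscosity supersolution: $H_{\mathrm{vi}}\ge0$ in the interior $\mathcal P(\mathbb S)^\circ$ and $\mathcal B_0\ge0$ on $\partial\mathcal P(\mathbb S)$. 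For this I use constant bounded controls $h$, which give $Z^i=p^ih^i$. In the interior, letting $h$ be large recovers the full singular Hamiltonian, which is why the equation is of variational-inequality type. On the boundary face $\{p^i=0\}$, the admissible directions are exactly those with $Z^i=0$, and this yields the boundary operator $\mathcal B_0$.
\item Prove a comparison principle: a subsolution of the first line is dominated by a supersolution of the second and third lines with the same terminal data.
\item Chain the inequalities as $V_{\mathrm{sc}}\le V_{\mathrm{filt}}\le V_{\mathrm{cot}}\le V_{\mathrm{sc}}$. This gives equality of all three values, and comparison also gives uniqueness of the viscosity solution.
\item For random $Y_0$, condition on $(X_0,Y_0)$. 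Causality constrains only the coupling at time $0$, namely the joint law of $X_0$ and $(Y_0,p_0)$ with $p_0$ the conditional law of $X_0$ given $Y_0$. Gluing via measurable selection of $\varepsilon$-optimal controls then yields the weak optimal transport problem with cost $V(0,y,p)$.
\end{enumerate}

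\textbf{Main obstacle: comparison.} I expect step (iii) to be the hardest. Three features cause trouble:
\begin{itemize}
\item the Hamiltonian is singular, since $Z$ is unbounded;
\item the diffusion in $p$ degenerates at the boundary of the simplex;
\item the boundary condition is of state-constraint/oblique type only in the supersolution sense.
\end{itemize}
My first attempt would be to perturb the subsolution inward: replace $V_{\mathrm{sc}}(t,y,p)$ by $V_{\mathrm{sc}}(t,y,(1-\delta)p+\delta\bar p)$, where $\bar p$ is the barycenter of the simplex. The mixing of $\Lambda$ and the convex contraction keep this a strict subsolution up to $O(\delta)$, and it pushes the relevant touching points into the interior, away from the boundary faces. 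I would then double variables in $(y,p)$ with a penalization $|p-p'|^2/\varepsilon$ plus a $y$-growth penalization. Interior points are handled with Ishii's lemma, using the variational-inequality structure: the gradient-constraint part is compared through linearity of the Hamiltonian's $Z$-maximization in the Hessian. Where a maximum still lands on $\partial\mathcal P(\mathbb S)$, I would use $\mathcal B_0$ together with the fact that admissible $Z$ are tangent to the face, so the face is invariant and the problem restricts to a lower-dimensional simplex. This suggests an induction on $K$ over faces as a fallback.
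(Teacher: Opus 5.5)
Your bracketing $V_{\mathrm{filt}}\le V_{\mathrm{cot}}\le V_{\mathrm{sc}}$ and your conditioning argument for random $Y_0$ match the paper. The comparison step, on which the identification $V_{\mathrm{sc}}=V_{\mathrm{filt}}$ rests, has a genuine gap exactly where you expect trouble.

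\textbf{The boundary case in comparison.} In this state-constrained framework the subsolution inequality holds on the \emph{closed} simplex, so the subsolution's touching point is never the problem. The problem is a maximum of the doubled function at which the \emph{supersolution's} point $\hat p_2$ lies on $\partial\Sigma_K$, where only $\mathcal{B}_0\ge 0$ is available.
\begin{itemize}
\item Replacing $V_{\mathrm{sc}}(t,y,p)$ by $V_{\mathrm{sc}}(t,y,(1-\delta)p+\delta\bar p)$ moves the subsolution's argument inward and does nothing to $\hat p_2$.
\item Unless $\bar p$ is the stationary law $\pi$, the drift mismatch $\delta\,\bar p\Lambda\cdot D_p\varphi$ is proportional to the test-function gradient, so the perturbation is not a subsolution up to $O(\delta)$.
\item You cannot simply combine $\mathcal{B}_0\ge 0$ with the subsolution inequality. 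At a boundary point the supersolution gives no obstacle information, so nothing rules out $\lambda_{\max}(X)\ge 0$. In that case $\min\{H_{sc},-\lambda_{\max}(X)\}\le 0$ holds trivially and carries no information.
\item Your fallback rests on a false premise: faces are \emph{not} invariant. On $\{p^i=0\}$ the drift component $(p\Lambda)^i=\sum_{j\neq i}p^j\Lambda_{ji}\ge 0$ is in general positive, and for an irreducible chain $p\,e^{t\Lambda}$ lies in the interior for every $t>0$. There is no closed lower-dimensional problem to induct on.
\end{itemize}

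\textbf{The missing idea.} You need to exploit exactly this inward drift, which is where irreducibility (never used in your plan) enters. Following Ishii--Loreti, the paper builds two auxiliary functions.
\begin{itemize}
\item A smooth $\psi$ whose derivative along the vector field $p\Lambda$ is at least $1$ on the boundary (Lemma~\ref{1st}). It is obtained by integrating a cutoff along the flow $\dot p=p\Lambda$, which converges exponentially to $\pi\in\Sigma_K^\circ$.
\item A penalization $w(p_1,p_2)\approx|p_1-p_2|^2$ whose $p_2$-gradient is non-positive along the drift at boundary points $p_2$ (Lemma~\ref{2nd}).
\end{itemize}
The paper adds $M(\psi(p_2)-\psi(p_1))$ to the auxiliary function. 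If $\hat p_2\in\partial\Sigma_K$, then $\mathcal{B}_0\ge 0$ \emph{alone} forces a quantity of order $\sqrt L+1/\sqrt{L'}$ to exceed $M/2$, a contradiction for $M$ large. At interior points, the supersolution's obstacle gives $Y\ge 0$. Ishii's lemma with the extra $\frac{\varepsilon}{2}|p_2|^2$ term gives $X+Y<0$, hence $X<0$, so both $z$-optimizations vanish.

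\textbf{A second, smaller gap.} Your steps (i)--(ii) assume a dynamic programming principle for $V_{\mathrm{sc}}$ and $V_{\mathrm{filt}}$. With unbounded $Z$ (and the nonlinear constraint $\sum_i h^ip^i=0$) none is available, and even measurability of these value functions is unclear. The paper sidesteps this with stochastic Perron's method:
\begin{itemize}
\item The supremum $v^-$ of stochastic subsolutions is a viscosity supersolution lying below $V_{\mathrm{filt}}$. These use bounded feedback $h^i=z^i/p^i$ on balls inside the interior, and $h=0$ for the $\mathcal{B}_0$ inequality.
\item The infimum $v^+$ of stochastic supersolutions is a viscosity subsolution lying above $V_{\mathrm{sc}}$.
\end{itemize}
Comparison then gives $v^+\le v^-$, which closes the chain.
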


This result gives, to the best of our knowledge, the first PDE characterization of causal optimal transport in continuous time. Furthermore, the Hamiltonian $H_{\mathrm{vi}}$ is explicitly given; for ease of presentation, we refer the reader to \eqref{Hsc}, \eqref{Hvi} for $H_{\text{vi}}$, and to \eqref{boundaryop} for the operator $\mathcal{B}_0$. The comparison with bicausal optimal transport is instructive. In the bicausal problem, admissible couplings are required to be causal in both directions, from $X$ to $Y$ and from $Y$ to $X$. This additional symmetry simplifies the analysis: the state space need not be enlarged at all. When both processes are diffusions, a PDE characterization of the bicausal value was obtained by Bion--Nadal and Talay \cite{bion-nadal2019wasserstein}, and the recent work of Cont and Lim \cite{cont-lim2024causal} further clarifies the associated probabilistic structure.

The causal problem is substantially more delicate. The asymmetry between source and target destroys the finite-dimensional state-space description available in the bicausal setting. The remedy is to adjoin the filter $p$, the conditional law of the source given the target observation, as an additional state variable. The resulting value function satisfies a fully nonlinear parabolic PDE on
\[
[0,T]\times\mathbb R^d\times\mathcal P(\mathbb{S}).
\]
Structurally, this equation is related to the master and Bellman equations that arise in the control of partially observed diffusions and Wonham-filter-type problems \cite{gozzi2000hamilton,bandini-cosso-fuhrman-pham2019,bensoussan1992partial,bayraktar2025comparison,bayraktar2026comparison}. The new feature is the pointwise constraint on the controls $
\sum_{i=1}^K z^i=0$.
This constraint is also the zero-conditional-mean condition
$
\sum_{i=1}^K h^i p^i=0,
$
and reflects the fact that the law of $Y$ is prescribed rather than chosen by the controller. In the terminology of \cite{cont-lim2024causal}, the Brownian motion driving $Y$ may acquire a drift under the enlarged filtration generated by $(X,Y)$, but this drift must disappear after conditioning on the filtration of $Y$ alone. The zero-conditional-mean condition enforces precisely this cancellation. This mechanism is also reminiscent of the inconspicuous trading condition in \cite{kyle,cho,back2020optimal,bose2020kyle,bose2021multidimensional}.

This perspective also clarifies a structural difficulty of causal optimal transport. The set of admissible causal couplings is large and difficult to manipulate, because causality is not a pointwise constraint on the joint law. Unlike in mean-field games or control, the full laws of the marginal processes are fixed, not just their marginals at given times. A natural strategy is therefore to identify a tractable subclass of couplings that is still rich enough to preserve the value of the original problem. In the Markovian setting considered here, the relevant subclass consists of the couplings generated by the controlled Kushner--Stratonovich equation, or equivalently by the $\mathbb R^d$-valued control $h$ subject to the zero-conditional-mean constraint. Our main result shows that this subclass already attains the full causal optimal transport value. Thus the causal transport problem can be reduced to a finite-dimensional stochastic control problem on the simplex. We also emphasize that our main result, Theorem \ref{thm:main_intro}, is an initial step toward computational methods for causal optimal transport in continuous time. For instance, the state-constrained upper bound, together with the idea of bounding the controls by $N$ and sending $N\to\infty$, yields a monotone approximation from below in terms of more tractable PDEs; this is the content of Theorem \ref{Nmainpdes}. On the other hand, the state-constrained upper bound together with convex duality can be used to produce a monotone sequence of approximations from above, as in Theorem \ref{thm:dual}.

We expect the same ideas to extend beyond the Markovian setting. For non-Markovian sources, targets, or costs, the simplex-valued filter $p$ should be replaced by its path-space analogue: the conditional law of the source path given the observation of the target path. The corresponding master equation would then live on a space of probability measures over paths, and both the formulation and the analysis would become substantially more involved. For this reason, we develop the Markovian case first, where the geometry of the simplex keeps the main ideas transparent, and leave the path-dependent extension for future work. Similarly, we assume that $X$ is a finite-state continuous-time Markov chain, rather than a more general Markov process. In the general case, the difficulty lies in dealing with a state-constrained PDE in infinite dimensions, for which new techniques, such as an appropriate comparison principle, would first have to be developed.

\subsection{Literature review.}

Classical optimal transport has a vast literature, beginning with the Monge--Kantorovich formulation and its modern analytic development; see, among many others, \cite{b,m,gangbo1996geometry,villani,santambrogio2015optimal}. When the objects to be transported are stochastic processes, however, the classical Wasserstein topology does not encode the temporal information structure of the processes. This limitation motivated several adapted notions of weak convergence and transport. Early topological approaches go back to Aldous' extended weak topology, the adapted distribution framework of Hoover and Keisler, and Hellwig's information topology \cite{aldous1981weak,hoover1984adapted,Ho87,He96}. In stochastic programming and statistics, the same issue appears through the nested distance of Pflug and Pichler \cite{pflug-pichler2012} or the Markov constructions of R{\"u}schendorf \cite{Ru85}. The connection to optimal transport is captured by causal transport plans/couplings, and the associated Monge--Kantorovich framework was introduced by Lassalle \cite{lassalle2018causal}, coining the term causal optimal transport. However, causal transport plans had been considered earlier, for instance implicitly in the proof of the celebrated Yamada--Watanabe criterion, or in the theory of enlargement of filtrations, where they are related to the H-hypothesis (see, e.g., \cite{AkJe17} and references therein). Subsequent work clarified the relation between these approaches and the adapted Wasserstein distance; see, for instance, \cite{backhoff2020fundamental,backhoff2020all,backhoff2020adapted,BaBePa21,bartl2025wasserstein,Pa24}.

Causal optimal transport in discrete time was developed in
\cite{veraguas2016causal}, where dynamic programming and duality results were
obtained and applications to stochastic optimization were studied. On the
computational side, Eckstein and Pammer \cite{EcPa24} developed
linear-programming and entropy-based methods for causal optimal transport as well as its more tractable relative, bicausal transport, in which the causality constraint is symmetrized by demanding causality from source to target and vice versa. For bicausal transport in discrete time,
Bayraktar and Han \cite{BayraktarHanFVI2025} proposed fitted value iteration
methods and Pflug and Pichler \cite{pflug-pichler2012} developed a discretization approach.  By contrast, the present paper derives a
continuous-time PDE characterization for a causal transport problem.
In continuous time, \cite{acciaio2020causal} related causal transport to
enlargement of filtrations and stochastic optimization, while
\cite{bion-nadal2019wasserstein} obtained a PDE characterization for a
Wasserstein-type bicausal distance between diffusion laws. See
\cite{BaKaRo25,HiRo24} for probabilistic counterparts to
\cite{bion-nadal2019wasserstein}. More recently, \cite{cont-lim2024causal}
studied the structure of causal and bicausal couplings on path space, in
particular for laws of stochastic differential equations. The present paper
differs from these works in that it treats a one-sided (i.e., non-symmetrized) causal transport problem
in continuous time and identifies a finite-dimensional Markovian state
description through the conditional law of the source given the target
observation. We refer to
\cite{backhoff2022estimating,LaPaWi25,BlWiZhZh26,BlLaPaWi24,ChEc25,ChVi25} for applications
of causal and bicausal transport in statistics, to
\cite{backhoff2020adapted,BaWi22,JiOb24,Ji24} for applications in mathematical
finance, and to \cite{AcHoPa25,AcKrPa14,AcBaGrHoPa26,xu2020cot,BoLiOb23,JiLi25,GuWo25,LiPa25,SaLeLiHoDaLy21} for applications in machine learning and related subjects. 

The filtering reformulation used here is connected to the classical theory of nonlinear filtering. The Kushner--Stratonovich and Wonham filtering equations originate in \cite{Kushner1964,wonham1965applications}; see also \cite{pardoux1982equations,liptser1977statistics,bensoussan1992partial}. Control problems under partial observation and their separated formulations lead naturally to equations on spaces of probability measures; related infinite-dimensional viscosity and Bellman equations appear in \cite{lions1989viscosity2,lions1989viscosity3,gozzi2000hamilton,bandini-cosso-fuhrman-pham2019,martini2023kolmogorov}. Our formulation has a different origin: the filter is not introduced to estimate an unobserved signal for a controller, but rather to encode the information constraint imposed by causality while preserving the prescribed law of the target process.

The PDE and control aspects of the paper are also related to viscosity solutions for constrained stochastic control. We use viscosity techniques in the spirit of the general theory \cite{crandall1992user,barles1994solutions}, together with ideas from state-constrained control such as \cite{soner1986optimal,AlvarezLasryLions1997,katsoulakis1994viscosity,ishii2002class}. 
The viscosity characterization is also related to stochastic Perron's method, which provides a route to identifying value functions with viscosity solutions without first proving a dynamic programming principle. For stochastic control and Hamilton--Jacobi--Bellman (HJB) equations, this approach was developed in \cite{bayraktar2013stochastic}; see also \cite{bayraktar-sirbu2012linear,bayraktar-sirbu2014obstacle} for the linear and obstacle/Dynkin-game cases. Extensions to stochastic target problems and target games appear in \cite{bayraktar-li2016target,bayraktar-li2016games}, while the state-constrained version of \cite{rokhlin2014stochastic} is particularly close in spirit to the simplex constraint appearing in our state-constrained formulation. A related concave-envelope phenomenon appears in martingale optimal
transport with stopping: Bayraktar, Cox, and Stoev
\cite{BayraktarCoxStoev2018} use stochastic Perron's method to obtain a
finite-dimensional viscosity characterization and identify the solution
as the concave envelope of the payoff with respect to the atoms of the
terminal law. Related stochastic Perron arguments for constrained or singular financial-control
problems include lifetime ruin with transaction costs
\cite{BayraktarZhang2015ruin} and goal-based portfolio selection with mental
accounting \cite{BayraktarHanGoal2025}. Here the constraint is different: it is
the simplex constraint on the filter induced by causality.
The approximation results are in line with the broader literature on numerical schemes for stochastic control and fully nonlinear parabolic PDEs, including \cite{kushner-dupuis2001,barles-souganidis1991,gobet-lemor-warin2005,han-jentzen-e2018,hure-pham-warin2020,sirignano-spiliopoulos2018}. Finally, the zero-conditional-mean constraint in the filtering formulation is reminiscent of the inconspicuous trading condition in Kyle-type models of asymmetric information; see \cite{kyle,cho,back2020optimal,bose2020kyle,bose2021multidimensional}. In particular, \cite{first_paper} already pointed out the causality of the coupling as a main limitation in constructing an equilibrium in Kyle's model. In the present work this condition arises for a different reason: it enforces the fixed marginal law of the target process after conditioning on the target filtration.

The rest of the paper is organized as follows.
Section~\ref{sec:preliminaries} introduces the notation, standing
assumptions, and the notions of causal and filtering couplings used
throughout the paper. Section~\ref{sec:dynamic-formulations} derives
the dynamic formulations of the problem, including the filtering
formulation, the state-constrained control problem, and the associated
HJB equations. Section~\ref{sec:main-results}
states the main results, including the equivalence between the causal
transport value, the filtering formulation, and the state-constrained
formulation, as well as the approximation and duality results.
Section~\ref{sec:examples} presents examples and numerical
illustrations. Section~\ref{sec:main-proofs} contains the viscosity
characterization, comparison arguments, and the proofs of the main
theorems. Appendix~\ref{sec:technical-proofs} collects technical
proofs that are omitted in the main text.

\section{Preliminaries}\label{sec:preliminaries}

\subsection{Notation} 

\nd
Let $T>0$, $K\in\N$ with $K\ge 2$, and $d \in \mathbb{N}$. Denote by
$\cS_d$ the set of symmetric matrices of dimension $d$ and by
$\cS^{>0}_d$ its subset of positive definite elements. We consider
$\mathbb{S}=\{1,\ldots, K\}$ and $\Omega^X:=D([0,T];\mathbb{S})$, the
Skorokhod space of c\`adl\`ag paths from $[0,T]$ to $\mathbb{S}$. We also set $\Sigma_K:=\left\{(p_1,\ldots,p_K)\in \R^K: p_i\in [0,1],\;\sum_ip_i=1\right\}$.\\
We denote by $X$ the canonical process on $\Omega^X$, i.e.\
$X_t(\omega)=\omega(t)$ for $\omega\in \Omega^X$, and by
$\cF^X=(\cF^X_t)_{t\in[0,T]}$ its natural filtration. The space of all
probability measures on $\Omega^X$ is denoted by $\cP(\Omega^X)$.\\
Let $\Omega^Y:=C([0,T];\R^d)$ be the space of continuous paths on
$[0,T]$ taking values in $\R^d$. We denote by $Y$ the canonical
process on $\Omega^Y$, i.e.\ $Y_t(\omega)=\omega(t)$ for
$\omega\in\Omega^Y$, and by $\cF^Y=(\cF^Y_t)_{t\in[0,T]}$ its natural
filtration. We write $\mathbb{W}$ for the $d$-dimensional Wiener
measure, i.e., the law of standard $d$-dimensional Brownian motion.
\vspace{1mm}

\nd
We define the product space $\Omega:=\Omega^X\times \Omega^Y$, equipped with the product $\sigma$-algebra $\cF^X_T\otimes \cF^Y_T$ and the product filtration $(\cF^X_t\otimes \cF^Y_t)_{t\in[0,T]}$. This is the space on which all couplings of the laws of $X$ and $Y$ are defined.
\vspace{1mm}

\nd
\textit{Completed filtrations.} Given a probability measure $\mathbb{Q}$ on any of the above spaces, we write ${}^{\mathbb{Q}}\cG_t$ for the $\mathbb{Q}$-completion of a filtration $(\cG_t)_{t\in[0,T]}$, i.e.\ the smallest right-continuous filtration containing $(\cG_t)$ and all $\mathbb{Q}$-null sets. In particular, for $\mu\in\cP(\Omega^X)$, $\nu\in\cP(\Omega^Y)$ and $\pi\in\cP(\Omega)$, we use ${}^\mu\cF^X_t$, ${}^\nu\cF^Y_t$, and ${}^\pi(\cF^X_t\otimes\cF^Y_t)$ for the respective completed filtrations.
\vspace{1mm}

\nd
\textit{Marginals.} For $\mu\in\cP(\Omega^X)$, $\nu\in\cP(\Omega^Y)$
and $t\in[0,T]$, we write $\mu_t:=\mu\circ X_t^{-1}\in\cP(\mathbb{S})$
and $\nu_t:=\nu\circ Y_t^{-1}\in\cP(\R^d)$ for the laws of $X_t$ and
$Y_t$. We identify $\mu_t$ with the vector
$(\mu_t(\{i\}))_{i\in\mathbb{S}}\in\Sigma_K$.
\vspace{1mm}

\nd
\textit{Row vector convention for the simplex.} Throughout the paper, elements $p\in \Sigma_K$ are treated as \emph{row vectors} in $\R^K$. For a matrix $\Lambda\in \R^{K\times K}$, the product $p\Lambda\in \R^K$ is the standard right-multiplication, with components
\[
(p\Lambda)^i:=\sum_{j=1}^K p^j\,\Lambda_{ji},\qquad i=1,\ldots,K.
\]
Similarly, $p\,e^{t\Lambda}\in \R^K$ denotes the row vector obtained by right-multiplication.
\vspace{2mm}

\nd
\textit{General notation.} We use the notation $Z_\cdot$ for the path of a stochastic process $Z$. For any metric space $(\mathcal{X},d)$ we denote by $B_r(x)=\{ z\in\mathcal{X}:\;d(x,z)<r\}$ the open ball centered at $x\in \mathcal{X}$ with radius $r>0$, and by $\overline{B}_r(x)$ its closure.\\
For $A\subset \R^n$, for some $n\in\mathbb{N}$, and a function $u(t,y,p)=u\colon [0,T]\times \R^d\times \Sigma_K\to \R$, we denote by $\partial_tu,\; D_yu,\; D_pu$ its its first order derivatives. We write $u\in C^{1,2,2}([0,T]\times \R^d\times A)$ or simply $u\in C^{1,2,2}$, if its first order derivatives, as well as  the second order derivatives $D^2_{yy}u,\; D^2_{pp}u$ and $D^2_{yp}u=D_pD_yu$ exist and are continuous.

\subsection{Setup} We fix $\nu\in \cP(\Omega^Y)$ and $\mu\in \cP(\Omega^X)$ such that the following assumption holds.
\begin{assumption} \label{assumption1}
Under $\mu$, $X$ is an irreducible Markov chain with generator $\La$ and transition matrix $P(t)=e^{t\La}$. Under $\nu$, $Y$ satisfies
    \be \label{Ydynamics}
    dY_t=b_{\nu}(t,Y_t)dt+\sigma_{\nu}(t,Y_t)dW_t,
    \ee
    where $W$ is a $(\nu,\cF^Y)$ standard $d$-dimensional Brownian motion and $b_{\nu}\colon [0,T]\times \R^d\to \R^d$ and $\sigma_{\nu}\colon [0,T]\times \R^d\to \cS_d$ satisfy
    \begin{itemize}
        \item $\sigma_\nu,b_\nu$ are bounded, jointly continuous, and Lipschitz in the $y$ variable;
        \item $\sigma_\nu(t,y)\ge \kappa I_d$, for some $\kappa>0$.
    \end{itemize}
\end{assumption}

\nd
Under these assumptions, by \cite[Chapter 3]{sv} there exists a kernel
$K_{\nu}$ such that
$$\frac{d\nu_t}{dx}(x)=\int K_{\nu}(s,t,y,x)\;d\nu_s(y),$$
for all $0\le s\le t\le T$.
\vspace{1mm}

\nd
Denote by $\Pi(\mu,\nu)\subset\cP(\Omega)$ the set of all couplings
$\pi$ of $\mu$ and $\nu$ such that, under $\pi$, the law of $X$ is
$\mu$ and the law of $Y$ is $\nu$. We will define certain subclasses
of $\Pi(\mu,\nu)$ and, for each one, an optimal transport problem
based on the cost function $\cC\colon\Omega^X\times\Omega^Y\to\R$. We
assume that $\cC$ satisfies the following.
\begin{assumption}\label{costassumption}
    $\cC\colon \Omega\to \R$ has the form
    $$\cC(X,Y)=\int_0^Tf_0(X_s,Y_s)ds+g_0(X_T,Y_T),$$
    where $f_0,g_0\colon \mathbb{S}\times \R^d\to \R $ are Lipschitz in $y$.
\end{assumption}
\vspace{3mm}

\nd
\textit{Causal Couplings.} The first class of couplings we consider
is the class of causal couplings. For $\pi\in\Pi(\mu,\nu)$ we denote
by $\pi_x$ the disintegration of $\pi$ along $X$, which exists and is
$\mu$-a.e.\ unique since $\Omega^X$ and $\Omega^Y$ are Polish spaces
and hence every Borel probability measure on
$\Omega=\Omega^X\times\Omega^Y$ is Radon (see e.g.\
\cite[Theorem~5.3.1]{ambrosio2005gradient}).
\begin{definition}\label{def:causal}
A coupling $\pi \in \Pi(\mu,\nu)$ is causal from $X$ to $Y$ if for
all $t\in [0,T]$ and $A\in \cF^Y_t$, the map
$\Omega^X\ni x\mapsto \pi_x(A)$ is ${}^\mu\cF^X_{t}$-measurable. We
denote by $\Pi_{c}(\mu,\nu)$ the set of such couplings.
\end{definition}
\nd
The following proposition is established in \cite{acciaio2020causal, bartl2025wasserstein}.
\begin{proposition}\label{causalprop}
    Let $\pi \in \Pi(\mu,\nu)$. The following statements are equivalent.
    \begin{itemize}
        \item[(i)] $\pi\in \Pi_{c}(\mu,\nu).$
        \item[(ii)]  For all $t\in [0,T]$, $\{\emptyset,\Omega^X\}\otimes \cF_t^Y$ and $\cF_T^X\otimes \{\emptyset,\Omega^Y\}$ are $\pi$-conditionally independent given $\cF^X_{t}\otimes \{\emptyset,\Omega^Y\}$.
        \item[(iii)] $\pi(\Omega^X\times A_t | \cF^X_{t}\otimes \{\emptyset,\Omega^Y\})=\pi(\Omega^X\times A_t|\cF^X_T\otimes \{\emptyset,\Omega^Y\})$ for all $A_t\in \cF^Y_t.$
        \item[(iv)] ($H$-hypothesis) Any $\left({}^\mu \cF_{t}^X\otimes \{\emptyset,\Omega^Y\}\right)_{t\in[0,T]}$-martingale is also an $\left({}^\mu \cF_{t}^X\otimes \cF_t^Y\right)_{t\in[0,T]}$-martingale.
    \end{itemize}
\end{proposition}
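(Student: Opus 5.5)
We prove the implications (i)$\Rightarrow$(iii)$\Rightarrow$(ii)$\Rightarrow$(iv)$\Rightarrow$(i). This is essentially bookkeeping with disintegrations and conditional expectations on the product space $\Omega$. Throughout we use that $\cF^X_T\otimes\{\emptyset,\Omega^Y\}$-measurable functions are functions of $x$ alone. Moreover, for $A_t\in\cF^Y_t$ we have $\pi(\Omega^X\times A_t\mid \cF^X_T\otimes\{\emptyset,\Omega^Y\})=\pi_x(A_t)$ $\pi$-a.s. This holds by the defining property of the disintegration.

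\emph{(i)$\Rightarrow$(iii).} If $x\mapsto\pi_x(A_t)$ is ${}^\mu\cF^X_t$-measurable, then it is also a version of the conditional probability given the smaller $\sigma$-algebra $\cF^X_t\otimes\{\emptyset,\Omega^Y\}$, up to $\mu$-null sets. Indeed, conditioning on the larger $\sigma$-algebra and then on the smaller one gives, by the tower property, $\E^\pi[\pi_x(A_t)\mid\cF^X_t]=\pi_x(A_t)$ a.s. Hence both sides of (iii) coincide.

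\emph{(iii)$\Rightarrow$(ii).} For $B\in\cF^X_T$, condition first on $\cF^X_T$ and apply (iii):
\[
\pi(B\times A_t\mid\cF^X_t)=\E^\pi\big[\1_B\,\pi(\Omega^X\times A_t\mid\cF^X_T)\mid\cF^X_t\big]=\pi(\Omega^X\times A_t\mid\cF^X_t)\,\pi(B\times\Omega^Y\mid\cF^X_t).
\]
This is exactly the conditional independence in (ii).

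\emph{(ii)$\Rightarrow$(i).} Conditional independence says that $\E^\pi[\1_{A_t}\mid\cF^X_T]=\E^\pi[\1_{A_t}\mid\cF^X_t]$ a.s. The left side is $\pi_x(A_t)$, so $\pi_x(A_t)$ equals $\mu$-a.e. an $\cF^X_t$-measurable function. It is therefore ${}^\mu\cF^X_t$-measurable. The completion is what absorbs the null-set ambiguity, and it is the reason Definition~\ref{def:causal} uses ${}^\mu\cF^X_t$.

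\emph{(ii)$\Leftrightarrow$(iv).} Let $M$ be a bounded $({}^\mu\cF^X_t)$-martingale. Then $M_t=\E[\xi\mid\cF^X_t]$ with $\xi=M_T$ being $\cF^X_T$-measurable. For $s\le t$ and a generator $C\times A$ of $\cF^X_s\otimes\cF^Y_s$, the conditional independence at time $s$ gives
\[
\E^\pi[\xi\1_C\1_A]=\E^\pi\big[\1_C\,\E[\xi\mid\cF^X_s]\,\pi(A\mid\cF^X_s)\big]=\E^\pi[M_s\1_C\1_A].
\]
A monotone class argument then shows that $M$ is a martingale in the joint filtration. The argument extends to general (integrable) martingales by truncating $\xi$ and passing to $L^1$ limits. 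The completed and right-continuous versions of the filtrations cause no trouble, since null sets and right limits are preserved by these operations.

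For the converse, take $\xi=\1_B$ with $B\in\cF^X_T$ and $M_t=\pi(B\mid\cF^X_t)$. Then (iv) gives $\E[\1_B\mid\cF^X_t\otimes\cF^Y_t]=\pi(B\mid\cF^X_t)$. This is one of the standard equivalent forms of (ii).

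\emph{Main obstacle.} The only delicate point is measure-theoretic. We must pass between the completed and right-continuous filtrations ${}^\mu\cF^X_t$ and ${}^\pi(\cF^X_t\otimes\cF^Y_t)$ and the raw $\sigma$-algebras used in (ii)–(iii). The conditional-independence identities must be established on generating $\pi$-systems and then extended. Since all these facts are proved in \cite{acciaio2020causal, bartl2025wasserstein}, we would cite them for the filtration-regularity details.
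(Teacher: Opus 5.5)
The paper gives no argument of its own for this proposition; it only cites \cite{acciaio2020causal, bartl2025wasserstein}. Your cycle (i)$\Rightarrow$(iii)$\Rightarrow$(ii)$\Rightarrow$(i), together with (ii)$\Leftrightarrow$(iv), is the standard direct argument behind those references, and the algebra is correct. In particular, the disintegration identifies $\pi_x(A_t)$ with $\pi(\Omega^X\times A_t\mid\cF^X_T\otimes\{\emptyset,\Omega^Y\})$. Conditional independence given $\cF^X_t\subset\cF^X_T$ is equivalent to $\pi(A_t\mid\cF^X_T)=\pi(A_t\mid\cF^X_t)$. Your computation on generators $C\times A$ correctly gives $\E^\pi[\xi\mid\cF^X_s\otimes\cF^Y_s]=M_s$. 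So your route differs from the paper only in being self-contained: you do the bookkeeping yourself rather than delegating it.

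One small slip in (i)$\Rightarrow$(iii). The identity $\E^\pi[\pi_x(A_t)\mid\cF^X_t]=\pi_x(A_t)$ comes from hypothesis (i), not from the tower property. The tower property gives $\E^\pi[\pi_x(A_t)\mid\cF^X_t]=\pi(\Omega^X\times A_t\mid\cF^X_t)$, and you need both facts to conclude.

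The point you should not wave through is your claim that completion and right-continuity ``cause no trouble since null sets and right limits are preserved.'' Recall that ${}^\mu\cF^X_t$ is the \emph{right-continuous} $\mu$-augmentation. You use that every ${}^\mu\cF^X_t$-measurable function is $\mu$-a.e.\ equal to an $\cF^X_t$-measurable one, i.e.\ $\bigcap_{s>t}(\cF^X_s\vee\mathcal N_\mu)=\cF^X_t\vee\mathcal N_\mu$, with $\mathcal N_\mu$ the $\mu$-null sets. This is used in three places:
\begin{itemize}
\item in (i)$\Rightarrow$(iii);
\item when you write $M_t=\E[\xi\mid\cF^X_t]$ for a $({}^\mu\cF^X_t)$-martingale;
\item in (iv)$\Rightarrow$(ii), where $M_t=\pi(B\mid{}^\mu\cF^X_t)$ must be identified with $\pi(B\mid\cF^X_t)$.
\end{itemize}

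This identity is a property of $\mu$, not of the augmentation procedure, and without it the equivalence fails. For instance, take $X_s=\xi s$ on a continuous path space with $\xi=\pm1$ equiprobable, and let $Y$ be a Brownian motion started at $Y_0=\xi$. Then $\cF^X_0$ is trivial while $\xi$ is $\cF^X_{0+}$-measurable. Consequently (i) and (iv) hold for the right-continuous augmentation, but (iii) fails at $t=0$.

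In the present setting the identity does hold, and you should prove or cite it explicitly. Every path in $D([0,T];\mathbb S)$ with $\mathbb S$ finite is constant on some $[t,t+\varepsilon)$. Hence $\mu(X_r=X_t\ \forall r\in[t,t+\varepsilon])\to1$ as $\varepsilon\downarrow0$, which forces $\cF^X_{t+}\subset\cF^X_t\vee\mathcal N_\mu$. Alternatively, use Blumenthal's zero--one law for the Feller chain.

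With this stated, the only genuine right-limit issue left is in (iv): passing from the raw product filtration to its usual augmentation. That step follows from the c\`adl\`ag modification of $M$ and uniform integrability.
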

\begin{remark}\label{Hhyprem}
    The $H$-hypothesis means that under $\pi$, passing from the
    semimartingale decomposition of $X$ in
    $\left({}^\mu \cF_{t}^X\otimes \{\emptyset,\Omega^Y\}\right)_{t\in[0,T]}$
    to its decomposition in
    $\left({}^\mu \cF_{t}^X\otimes \cF_t^Y\right)_{t\in[0,T]}$ does
    not introduce an additional drift. That is, additional knowledge
    of $(Y_s)_{s\in[0,t]}$ does not change the dynamics of $X$ if
    $(X_s)_{s\in[0,t]}$ is already known.
\end{remark}
\nd
We define the causal optimal transport problem (COT) as
\be\label{causalot}\tag{COT}
V_c(\mu,\nu):=\sup_{\pi\in \Pi_c(\mu,\nu)}\E^\pi[\cC(X,Y)].
\ee

\nd
Our goal is to compare this with the value of a filtering-based
problem, defined on a second subclass of $\Pi(\mu,\nu)$: the class
of filtering couplings.
\vspace{2mm}

\nd
\textit{Filtering Couplings.} The second class, denoted
$\Pi_f(\mu,\nu)$ (the subscript $f$ standing for \emph{filtering}),
consists of all probability measures $\pi$ on $\Omega$ under which
\be\label{filteringcond}
\begin{cases}
    X\text{ has law }\mu,\\[2pt]
    dY_t=\bigl(h(t,Y_\cdot,X_t)+b_{\nu}(t,Y_t)\bigr)\,dt
      +\sigma_\nu(t,Y_t)\,dW_t^\pi,
\end{cases}
\ee
where $W^\pi$ is a $\pi$-Brownian motion with respect to the
($\pi$-completed) filtration ${}^\pi(\cF^X_t\otimes\cF^Y_t)$, and
$h\colon[0,T]\times\Omega^Y\times\mathbb{S}\to\R^d$ is a bounded
measurable functional such that $h(t,\cdot,i)$ is
$\cF_t^Y$-measurable for every $(t,i)\in[0,T]\times\mathbb{S}$, and
satisfies the zero-conditional-mean condition
\be\label{hcond}
\E^\pi\!\left[h(t,Y_\cdot,X_t)\,\middle|\,\cF_t^Y\right]=0.
\ee
Note that this latter condition is nonlinear in $h$ since $\pi$ depends on $h$. 
The associated \emph{filtering causal optimal transport} problem is
\be\label{FCOT}\tag{FCOT}
V_f(\mu,\nu):=\sup_{\pi\in \Pi_f(\mu,\nu)}\E^\pi[\cC(X,Y)].
\ee

\nd
The terminology is justified by Proposition~\ref{inclusions} below,
where we show that \eqref{hcond} forces the $\pi$-marginal of $Y$
to be $\nu$.

\begin{proposition}\label{inclusions} Under Assumption~\ref{assumption1},
    $\Pi_f(\mu,\nu)\subseteq \Pi_c(\mu,\nu)$ and
    $V_f(\mu,\nu)\le V_c(\mu,\nu)$.
\end{proposition}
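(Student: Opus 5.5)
Given $\pi\in\Pi_f(\mu,\nu)$ with drift functional $h$, we have to check three things: the $Y$-marginal of $\pi$ is $\nu$, the $X$-marginal is $\mu$, and $\pi$ is causal. The $X$-marginal holds by definition. Once $\pi\in\Pi_c(\mu,\nu)$ is known, the inequality $V_f\le V_c$ follows because the supremum over the smaller class is at most the supremum over the larger one.

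\emph{Marginal of $Y$.} We use the innovation process. Set
\[
\bar W_t:=\int_0^t\sigma_\nu(s,Y_s)^{-1}\big(dY_s-b_\nu(s,Y_s)\,ds\big)=W^\pi_t+\int_0^t\sigma_\nu(s,Y_s)^{-1}h(s,Y_\cdot,X_s)\,ds .
\]
This process is $\cF^Y$-adapted, which is possible because $\sigma_\nu$ is invertible with $\sigma_\nu\ge\kappa I_d$. We claim $\bar W$ is a $({}^\pi\cF^Y_t)$-Brownian motion. It is continuous and its quadratic variation is $tI_d$, since $Y$ is continuous and $\langle \bar W\rangle=\langle W^\pi\rangle$. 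By L\'evy's characterization it therefore suffices to show that $\bar W$ is an $\cF^Y$-martingale. For $s\le t$ and bounded $\cF^Y_s$-measurable $\xi$, the tower property gives $\E^\pi[\xi(W^\pi_t-W^\pi_s)]=0$, because $W^\pi$ is a martingale in the larger filtration. Fubini and conditioning on $\cF^Y_r$ then give
\[
\E^\pi\Big[\xi\int_s^t\sigma_\nu(r,Y_r)^{-1}h(r,Y_\cdot,X_r)\,dr\Big]=\int_s^t\E^\pi\Big[\xi\,\sigma_\nu(r,Y_r)^{-1}\E^\pi\big[h(r,Y_\cdot,X_r)\,\big|\,\cF^Y_r\big]\Big]dr=0
\]
by \eqref{hcond}. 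Boundedness of $h$ and of $\sigma_\nu^{-1}$ justifies all integrability. Hence $Y$ solves \eqref{Ydynamics} driven by the $\cF^Y$-Brownian motion $\bar W$. Under the Lipschitz and boundedness assumptions of Assumption~\ref{assumption1}, pathwise uniqueness together with Yamada--Watanabe gives uniqueness in law. We conclude that $\Law^\pi(Y)=\nu$, provided the initial law agrees, i.e.\ $\pi\circ Y_0^{-1}=\nu_0$. I would read this as part of the definition of $\Pi_f$, with $Y_0$ independent of $X$ as implicit in \eqref{filteringcond}.

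\emph{Causality.} We verify criterion (iv) of Proposition~\ref{causalprop}, the $H$-hypothesis. Since $X$ is a finite-state Markov chain, every ${}^\mu\cF^X$-martingale has a martingale representation as a stochastic integral against the compensated jump martingales $N^{ij}_t-\int_0^t\Lambda_{ij}\1_{\{X_{s-}=i\}}ds$. It therefore suffices to show that these compensated counting processes remain martingales in the joint filtration $\mathbb G_t:={}^\pi(\cF^X_t\otimes\cF^Y_t)$. Equivalently, we must show that conditionally on $\mathbb G_t$ the future of $X$ evolves with generator $\Lambda$. This step is the main obstacle, because \eqref{filteringcond} only says that $W^\pi$ is a $\mathbb G$-Brownian motion; it does not directly say that $X$ has the $\Lambda$-dynamics in $\mathbb G$.

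My first attempt is a Girsanov construction. Under $\pi$, define $\mathbb Q$ by removing the drift $\sigma_\nu^{-1}h$. This is legitimate because the drift is bounded, so Novikov's condition holds. Under $\mathbb Q$ the process $\tilde W_t:=W^\pi_t+\int_0^t\sigma_\nu^{-1}h\,ds$ is a $\mathbb G$-Brownian motion. Moreover $Y$ is a strong solution of \eqref{Ydynamics} driven by $\tilde W$ from $Y_0$, so $Y$ is a functional of $(Y_0,\tilde W)$. A $\mathbb G$-Brownian motion has increments independent of $\mathbb G_0\supseteq\sigma(X_0)$. I would combine this with the standard formulation of \eqref{filteringcond}, in which $W^\pi$ is independent of $X$, to argue that under $\mathbb Q$ the pair $(X,\tilde W)$ has independent components. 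The Radon--Nikodym density is $\mathbb G$-adapted and does not alter the Markov chain's compensators: the drift is a continuous-martingale change, so its density has no jump part and $[\,\cdot\,,N^{ij}]=0$. Hence the $\Lambda$-compensators of $X$ are preserved under $\pi$ in $\mathbb G$. This yields (iv), so $\pi\in\Pi_c(\mu,\nu)$ and the inequality follows.

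If the definition of $\Pi_f$ is read literally without independence of $W^\pi$ and $X$, I would instead interpret \eqref{filteringcond} as a weak solution built on a product space (chain $\times$ Wiener) via Girsanov, where the compensators of $X$ are manifestly unchanged.
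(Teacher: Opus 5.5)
Your treatment of the $Y$-marginal is the paper's Step~1: the innovation process, L\'evy's characterization via \eqref{hcond}, and uniqueness for the Lipschitz SDE. Reducing causality to the $\mathbb G$-compensators of $X$ via martingale representation is also sound.

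\textbf{The gap in the main line.} The problem is the claim that $X$ and $\tilde W$ are independent under $\mathbb Q$. Set $D:=\mathcal E\bigl(-\int_0^\cdot\sigma_\nu^{-1}h\,dW^\pi\bigr)$. The law of $X$ under $\mathbb Q$ has density $\E^\pi[D_T\mid X]$ with respect to $\mu$. Since $D_T$ depends on $X$ through $h$, not even $\mathbb Q\circ X^{-1}=\mu$ is automatic.

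Both facts would follow if $W^\pi$ were a Brownian motion in the initially enlarged filtration $\mathbb H_t:=\sigma(X)\vee\mathbb G_t$. Then $D$ is an $\mathbb H$-martingale, so $\E^\pi[D_T\mid\mathbb H_0]=1$. Also $\tilde W$ is an $\mathbb H$-Brownian motion under $\mathbb Q$, hence independent of $\mathbb H_0\supseteq\sigma(X)$.

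That property is, however, essentially the causality you want. It is not implied by the two facts you invoke:
\begin{enumerate}
\item independence of $\tilde W$ from $\mathbb G_0$ only concerns $X_0$, not the future of $X$;
\item $W^\pi\perp X$ says nothing about the joint law of $(X,W^\pi,Y)$ unless $Y$ carries no information beyond $(Y_0,X,W^\pi)$.
\end{enumerate}
Your remark that a continuous density leaves compensators unchanged only \emph{transfers} the $\mathbb G$-compensators of $X$ between $\pi$ and $\mathbb Q$. It does not identify them, so as written the main line is circular.

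\textbf{What the paper uses instead.} The missing input is what the paper's Step~2 relies on, using criterion (iii) rather than (iv). It takes $W^\pi\perp X$ as part of the definition of a filtering coupling. It also uses that $Y$ is a non-anticipative functional of $(X,W^\pi)$. Then every $A\in\cF^Y_t$ is $\sigma(X_{[0,t]},W^\pi_{[0,t]})$-measurable, and $\pi(A\mid\cF^X_T)=\pi(A\mid\cF^X_t)$ follows at once. Granting these inputs, $\mathbb H_t\subseteq\sigma(X,Y_0,W^\pi_{[0,t]})$ and your Girsanov step also goes through, but it is then a detour.

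\textbf{Your fallback is correct and genuinely different.} Work on the product space carrying $X\sim\mu$ and an independent Brownian motion $B$, with $Y$ solving \eqref{Ydynamics} driven by $B$. Set $d\pi/dP:=\mathcal E\bigl(\int_0^\cdot\sigma_\nu^{-1}h\,dB\bigr)_T$.
\begin{enumerate}
\item Since $B$ remains a Brownian motion after conditioning on $X$, the conditional expectation of this density given $X$ is $1$, which gives the $X$-marginal.
\item For $A\in\cF^Y_t$, $\pi(A\mid X)$ equals the $P$-conditional expectation of the time-$t$ density times $\1_A$, which is a function of $X_{[0,t]}$ only. So causality is immediate, without any compensator argument.
\end{enumerate}
This route only needs strong solvability of the Lipschitz SDE driven by $B$, which you already use in Step~1. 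The paper's argument instead needs $Y$ to be a strong functional of $(X,W^\pi)$ for a merely bounded measurable, path-dependent drift. That is not automatic: strong existence can fail for such drifts, as in Tsirelson's example.

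\textbf{The literal definition is indeed too weak.} Take $h\equiv0$ and fix $t_1\in(0,T)$. Let $X|_{[0,t_1]}$ be independent of $W^\pi$, and generate $X|_{(t_1,T]}$ from $X_{t_1}$ and the uniform variable $\Phi(W^{\pi,1}_{t_1})$. Then $X\sim\mu$ and $W^\pi$ is a Brownian motion in the joint filtration. Yet the future of $X$ carries information about $W^{\pi}_{t_1}$, so $\pi\notin\Pi_c(\mu,\nu)$.
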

\begin{proof}
    See Appendix~\ref{technical}.
\end{proof}


\begin{remark}
The filtering formulation \eqref{filteringcond} is consistent with
the $H$-hypothesis. Under any $\pi\in\Pi_f(\mu,\nu)$, the
$H$-hypothesis holds between
$\left({}^\mu \cF_t^X\otimes \{\emptyset,\Omega^Y\}\right)_{t\in[0,T]}$
and
$\left({}^\mu \cF_t^X\otimes \cF_t^Y\right)_{t\in[0,T]}$, because
enlarging the $X$-filtration by $\cF^Y$ does not alter the
semimartingale decomposition of $X$: in \eqref{filteringcond}, $X$ is
the underlying process whose dynamics are fixed by $\mu$, while the
dynamics of $Y$ are controlled by $h$. Indeed, the $H$-hypothesis
need not hold with the roles of $X$ and $Y$ exchanged: for an
appropriate choice of $h$ (or any choice of $h$ as above), it fails between
$\left(\{\emptyset,\Omega^X\}\times{}^\nu \cF_t^Y\right)_{t\in[0,T]}$
and
$\left({}^\pi(\cF_t^X\otimes \cF_t^Y)\right)_{t\in[0,T]}$, since
enlarging the $Y$-filtration by $\cF^X$ can alter the drift of $Y$.

For a concrete example, take $d=1$, $\mathbb{S}=\{-1,1\}$,
$\Lambda=0$ (so $X_t\equiv X_0$ with $X_0$ uniform on $\{-1,1\}$),
$b_\nu\equiv 0$, $\sigma_\nu\equiv 1$, and $Y_0=0$, so that
$\nu=\mathbb{W}$. Define $\pi$ by conditioning $Y$ on
$\{\operatorname{sign}(Y_T)=X_0\}$. Since
$\mathbb{W}\bigl(\operatorname{sign}(Y_T)=i\bigr)=\tfrac12$ for
$i\in\{-1,1\}$, the $Y$-marginal of $\pi$ is again $\mathbb{W}$,
so $\pi\in\Pi(\mu,\nu)$. This conditioning is a Doob transform with
kernel
\[
\mathbb{W}\bigl(\operatorname{sign}(Y_T)=i\,\big|\,Y_t=y\bigr)
   =\Phi\!\left(iy/\sqrt{T-t}\right),\qquad i\in\{-1,1\},
\]
where $\Phi$ and $\phi$ denote the standard normal cdf and density.
The resulting dynamics under $\pi$ are
\[
dY_t=\frac{X_0\,\phi\!\left(Y_t/\sqrt{T-t}\right)}{\sqrt{T-t}\,\Phi\!\left(X_0 Y_t/\sqrt{T-t}\right)}\,dt
     +dW_t^\pi,
\]
where $W^\pi$ is a Brownian motion in
${}^\pi(\cF_t^X\otimes\cF_t^Y)$. This fits
\eqref{filteringcond} with
\[
h(t,y,i):=\frac{i\,\phi\!\left(y/\sqrt{T-t}\right)}{\sqrt{T-t}\,\Phi\!\left(iy/\sqrt{T-t}\right)}:
\]
Bayes' rule yields $\pi(X_0=i\mid\cF_t^Y)=\Phi(iY_t/\sqrt{T-t})$, and
a direct computation gives $\E^\pi[h(t,Y_t,X_0)\mid\cF_t^Y]=0$, so
\eqref{hcond} holds. Consequently $Y$ is a martingale in
${}^\nu\cF_t^Y$ but acquires a nonzero drift in
${}^\pi(\cF_t^X\otimes\cF_t^Y)$, illustrating the announced
asymmetry of the $H$-hypothesis.

Strictly speaking, this example sits just outside the standing framework:
$\Lambda=0$ violates the irreducibility requirement in
Assumption~\ref{assumption1}, and $h$ is unbounded near $t=T$,
violating the boundedness condition in the definition of
$\Pi_f(\mu,\nu)$. A routine truncation --- replacing $T$ by
$T-\varepsilon$ in $h$ and perturbing $\Lambda$ to a small
irreducible generator --- produces an element of $\Pi_f(\mu,\nu)$
exhibiting the same asymmetry, with the displayed example recovered
in the limit $\varepsilon\downarrow 0$.
\end{remark}

\nd
We prove $V_f=V_c$ by exhibiting both as the value function of a
continuous-time stochastic optimal control problem governed by a
master equation (Theorem~\ref{mainpdes}, Corollary~\ref{dropassinitial}).
This master equation approach yields a new numerical scheme for the
causal optimal transport value $V_c$ (Theorem~\ref{Nmainpdes}). We
also present a dual representation for the state-constrained problem,
approximation results from above and below for the value function,
and a partial convergence result for finite state approximations.
Our main results are stated in Section~\ref{mainresults}.

\section{Dynamic formulations and associated PDEs}\label{sec:dynamic-formulations}

\nd
As mentioned in the introduction, we construct dynamic formulations
for \eqref{FCOT} and the associated PDEs. We first assume that the
initial distribution of $Y$ is a Dirac mass; this assumption will be
removed in Corollary~\ref{dropassinitial}.

\begin{assumption}\label{initialmeasures}
    The initial marginals are deterministic: $\nu_0=\delta_{y_0}$ for some $y_0\in \R^d$, and $\mu_0=p_0$ for some $p_0\in \Sigma_K$.
\end{assumption}

\nd
For any $\pi\in\Pi(\mu,\nu)$ and $t\in[0,T]$, define the
$\Sigma_K$-valued $\cF^Y$-adapted process $p_t=p_t^\pi$ by
$$p^i_t:=\pi(\{X_t=i\}\times\Omega\mid\cF_t^Y),\qquad i=1,\ldots,K.$$

\subsection{Filtering problem}
Suppose $\pi\in\Pi_f(\mu,\nu)$, so that the dynamics of $(X,Y)$ are
given by \eqref{filteringcond} for some bounded
$h\colon[0,T]\times\Omega^Y\times\mathbb{S}\to\R^d$ satisfying
\eqref{hcond}. The process $p^i$ satisfies the Kushner equation
(see \cite{Kushner1964})
\be\label{Kushner}
dp^i_s
  = (p_s\La)^i\,ds
  + p^i_s\,h(s,Y_\cdot,i)^\top
    \sigma_\nu(s,Y_s)^{-2}
    \bigl(dY_s - b_\nu(s,Y_s)\,ds\bigr),
\ee
where $h(s,Y_\cdot,i)\in\R^d$ is regarded as a column vector,
$\sigma_\nu(s,Y_s)^{-2}:=\bigl(\sigma_\nu(s,Y_s)^2\bigr)^{-1}\in\cS_d$
is the inverse of the diffusion covariance matrix --- well-defined
since $\sigma_\nu\ge\kappa I_d$ --- and the product
$h(s,Y_\cdot,i)^\top\sigma_\nu(s,Y_s)^{-2}
 \bigl(dY_s-b_\nu(s,Y_s)\,ds\bigr)\in\R$
is to be read as the canonical inner product of two $\R^d$-valued
quantities. Equivalently, with
$d\tilde W_s:=\sigma_\nu(s,Y_s)^{-1}(dY_s-b_\nu(s,Y_s)\,ds)$ the
$(\pi,\cF^Y)$-Brownian motion given by Proposition~\ref{inclusions},
\eqref{Kushner} reads
\begin{align}
    \label{eq:kushnerinnov}
    dp^i_s = (p_s\La)^i\,ds
        + p^i_s\,h(s,Y_\cdot,i)^\top\sigma_\nu(s,Y_s)^{-1}\,d\tilde W_s.
\end{align}

Note that the zero-mean condition \eqref{hcond} --- equivalently,
$\hat h(s,Y_\cdot,p_s):=\sum_{i=1}^K h(s,Y_\cdot,i)\,p^i_s
 =0\in\R^d$ --- has been used to simplify the innovation term.

\nd
Define $f,g\colon\Sigma_K\times\R^d\to\R$ by
\be\label{costs}
f(p,y):=\sum_{i=1}^K f_0(i,y)\,p^i,
\qquad
g(p,y):=\sum_{i=1}^K g_0(i,y)\,p^i.
\ee
Using \eqref{costs} and Assumption~\ref{costassumption},
\begin{align}
    \E^\pi\left[ \cC(X,Y)\right]
    &=\int_0^T\E^\pi\!\left[\E^\pi\!\left[f_0(X_s,Y_s)
      \mid \cF^Y_s \right]\right]ds
     +\E^\pi\!\left[\E^\pi[g_0(X_T,Y_T)\mid\cF^Y_T]\right]
     \nonumber\\
    &= \E^\nu\!\left[\int_0^T f(p_s,Y_s)\,ds+g(p_T,Y_T)\right].
    \label{calc}
\end{align}
We therefore consider the optimal control problem
\be\label{dynamicfcot}\tag{FCOT-df}
U_f(t,y,p):=\sup_{h,\;(p_s)_{s\in [t,T]}}
\E^\nu\!\left[\int_t^T f(p_s,Y_s)\,ds+g(p_T,Y_T)\right],
\ee
where the supremum is taken over all bounded processes
$h\colon[t,T]\times\Omega^Y\times\mathbb{S}\to\R^d$ and processes
$(p_s)_{s\in[t,T]}$ taking values in $\Sigma_K$ such that $p$
satisfies \eqref{Kushner} on $[t,T]$ with $p_t=p$, and $h$ is bounded and satisfies
the compatibility condition
\be\label{hcondp}
\hat h(s,Y_\cdot,p_s)
  :=\sum_{i=1}^K h(s,Y_\cdot,i)\,p_s^i = 0\in\R^d,
\ee
which is the expression of \eqref{hcond}. Note that in these
expressions $p$ depends on $h$, and the condition
$\hat h(s,Y_\cdot,p_s)=0$ is not a linear condition in $h$ due to the
dependence of $p$ on $h$. The above calculation shows that, under
Assumption~\ref{initialmeasures},
\be\label{filteringeq}
V_f(\mu,\nu)=U_f(0,y_0,p_0).
\ee


\subsection{State-constrained control problem}
The set $\Pi_f(\mu,\nu)$ provides a subset of couplings, so
\eqref{FCOT} provides a lower bound for \eqref{causalot}. Our goal
now is to obtain an upper bound. For this purpose, we identify a
property satisfied by every $\pi\in\Pi_c(\mu,\nu)$ that leads to a
dynamic value.

\begin{proposition}[Filter dynamics for COT]\label{lem:forward}
Let $\pi\in\Pi_c(\mu,\nu)$. Then there exists an $\cF^Y$-adapted,
square-integrable process $Z=(Z_t)_{t\in[0,T]}$ with $Z_t^i\in\R^d$
for each $i\in\mathbb{S}$, such that
\begin{align}\label{Zcond}
\sum_{i=1}^K Z_t^i = 0\in\R^d,
\qquad\text{and}\qquad
\mathbf{1}_{p_t^i=0}\,Z_t^i = 0\in\R^d,
\end{align}
and the conditional law
$p_t=\bigl(\pi(X_t=i\mid\cF_t^Y)\bigr)_{i=1,\ldots,K}$
satisfies, for $i=1,\ldots,K$,
\begin{align}\label{eq:dynfiltercot}
    dp^i_t = (p_t\La)^i\,dt
     + (Z_t^i)^\top \sigma_\nu^{-2}(t,Y_t)\bigl(dY_t-b_\nu(t,Y_t)\,dt\bigr),
\end{align}
with initial condition
\begin{align}\label{eq:initialcot}
    p^i_0 = \mu(X_0=i).
\end{align}
Here, $(Z_t^i)^\top\sigma_\nu^{-2}(t,Y_t)(dY_t-b_\nu(t,Y_t)\,dt)\in\R$
is to be read as the canonical inner product of two $\R^d$-valued
quantities. Equivalently, with
$d\tilde W_t:=\sigma_\nu(t,Y_t)^{-1}(dY_t-b_\nu(t,Y_t)\,dt)$ the
$d$-dimensional $(\pi,\cF^Y)$-Brownian motion,
$$
dp^i_t = (p_t\La)^i\,dt
  +\bigl(Z_t^i\bigr)^\top\sigma_\nu^{-1}(t,Y_t)\,d\tilde W_t,
$$
where we used the symmetry $\sigma_\nu^\top=\sigma_\nu$.
\end{proposition}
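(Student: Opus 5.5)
We will derive the filter dynamics from a martingale representation in the filtration of $Y$, using causality only to identify the drift. The first step is a drift identity. Fix $i$ and let $M^i_t:=\mathbf 1_{\{X_t=i\}}-\mathbf 1_{\{X_0=i\}}-\int_0^t(e_{X_s}\La)^i\,ds$, where $e_j$ is the $j$-th unit row vector, so that $(e_{X_s}\La)^i=\Lambda_{X_s i}$. Under $\mu$, $M^i$ is a martingale in ${}^\mu\cF^X$ (Dynkin's formula for the Markov chain). Since $\pi\in\Pi_c(\mu,\nu)$, Proposition~\ref{causalprop}(iv) (the $H$-hypothesis) shows that $M^i$ is also a martingale in the joint filtration ${}^\pi(\cF^X_t\otimes\cF^Y_t)$.

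The second step projects onto $\cF^Y$. Taking optional projections, we get
\[
p^i_t=p^i_0+\int_0^t (p_s\La)^i\,ds+N^i_t,
\]
where $N^i_t:=\E^\pi[M^i_t\mid\cF^Y_t]+\int_0^t\big((p_s\La)^i\,ds\big)$-correction terms. More precisely, $N^i$ is the $\cF^Y$-optional projection of $M^i$ minus the compensating drift. Because $M^i$ is a martingale in the larger filtration, its projection onto the smaller filtration $\cF^Y$ is a martingale. Since $\E[\Lambda_{X_s i}\mid\cF^Y_s]=(p_s\La)^i$, a Fubini argument shows that $p^i_t-p^i_0-\int_0^t(p_s\La)^i\,ds$ is a square-integrable (indeed bounded on $[0,T]$) $(\pi,\cF^Y)$-martingale. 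The initial condition \eqref{eq:initialcot} holds because $Y_0$ is deterministic under Assumption~\ref{initialmeasures}, or more generally because $p_0$ is as stated.

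The third step is martingale representation. Under $\nu$, which is the $Y$-marginal of $\pi$, $\tilde W$ is a $\cF^Y$-Brownian motion. Assumption~\ref{assumption1} (Lipschitz coefficients, uniform ellipticity) gives pathwise uniqueness and a strong solution, so ${}^\nu\cF^Y_t={}^\nu\cF^{Y_0}\vee\cF^{\tilde W}_t$, and every $\cF^Y$-martingale is represented as a stochastic integral against $\tilde W$. We therefore obtain $\cF^Y$-predictable, square-integrable $\eta^i$ with $dN^i=(\eta^i_t)^\top d\tilde W_t$. Setting $Z^i_t:=\sigma_\nu(t,Y_t)\eta^i_t$ and using the symmetry of $\sigma_\nu$ gives $(Z^i_t)^\top\sigma_\nu^{-1}d\tilde W_t=(Z^i_t)^\top\sigma_\nu^{-2}(dY_t-b_\nu\,dt)$, which is \eqref{eq:dynfiltercot}.

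The last step checks the constraints \eqref{Zcond}. Since $\sum_ip^i_t=1$ and $\sum_i(p\La)^i=0$ (rows of $\La$ sum to zero), the martingale $\sum_iN^i$ vanishes. Hence its quadratic variation $\int|\sum_i\eta^i|^2\,dt$ is zero, so $\sum_iZ^i=0$ for $dt\otimes d\pi$-a.e. $(t,\omega)$, and we can modify $Z$ on a null set. For the boundary condition, $p^i\ge0$ is a continuous semimartingale, so by the occupation times formula
\[
\int_0^T\mathbf 1_{\{p^i_t=0\}}\,d\langle p^i\rangle_t=\int\mathbf 1_{\{0\}}(a)\,L^a_T\,da=0 .
\]
Since $d\langle p^i\rangle_t=|\eta^i_t|^2dt$ with $\eta^i$ nondegenerately related to $Z^i$, this forces $\mathbf 1_{\{p^i_t=0\}}Z^i_t=0$ a.e., and we redefine $Z^i:=0$ on $\{p^i_t=0\}$. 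This redefinition does not change the stochastic integral, and it keeps $\sum_iZ^i=0$ because on $\{p^i_t=0\}$ the other components already sum to $-Z^i_t=0$ a.e.

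I expect the main obstacle to be the martingale representation step. It requires that the completed filtration of $Y$ be generated by $\tilde W$ and $Y_0$, via strong existence and uniqueness under the Lipschitz assumption, and that continuity of $p$ be justified, which follows from the representation once $N^i$ is identified as a $\cF^Y$-martingale. Care is also needed with optional projections of the joint martingale, which is where causality enters; without the $H$-hypothesis the projected drift of $\mathbf 1_{\{X_t=i\}}$ would not be $(p\La)^i$.
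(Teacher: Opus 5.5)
Your proposal is correct and follows essentially the same route as the paper: the $H$-hypothesis makes the Dynkin martingale of $\mathbf 1_{\{X_t=i\}}$ a joint-filtration martingale. Its $\cF^Y$-projection is then represented against $\tilde W$, and the constraints come from $\sum_i p^i\equiv 1$ and a local-time argument at $\{p^i=0\}$. The differences are cosmetic. You verify the drift $(p_t\Lambda)^i$ directly by the tower property, where the paper represents first and splits the conditional integral by Fubini. You use the occupation-times formula where the paper applies It\^o--Tanaka to $(p^i)^+$. Your justification that $\cF^Y$ is generated by $Y_0$ and $\tilde W$, via strong existence and pathwise uniqueness, is if anything more careful than the paper's appeal to ellipticity alone.
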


\begin{proof}
    See Appendix~\ref{technical}.\end{proof}

\nd
Proposition~\ref{lem:forward} allows us to pinpoint the upper value
as follows. By \eqref{calc}, for any $\pi\in\Pi_c(\mu,\nu)$ the
expected cost $\E^\pi[\cC(X,Y)]$ depends on $\pi$ only through the
joint dynamics of $(p_t,Y_t)_{t\in[0,T]}$, where
$p_t=(p_t^i)_{i\in\mathbb{S}}$ is the $\pi$-conditional law of $X_t$
given $\cF_t^Y$. By Proposition~\ref{lem:forward}, the process $p$
satisfies \eqref{eq:dynfiltercot}--\eqref{eq:initialcot} for some
$\cF^Y$-progressively measurable process $Z=(Z_t)_{t\in[0,T]}$ with
$Z_t^i\in\R^d$ for each $i\in\mathbb{S}$, subject to \eqref{Zcond}.

The following lemma shows that these conditions in fact enforce a state constraint.
\begin{lemma}\label{invariance}
    Suppose Assumption~\ref{assumption1} holds, and let
    $(p_t)_{t\in[0,T]}$ satisfy \eqref{Zcond},
    \eqref{eq:dynfiltercot} and \eqref{eq:initialcot}. Then
    $p_t\in\Sigma_K$ for $dt\otimes\nu$-a.e.\ $(t,Y_\cdot)$.
\end{lemma}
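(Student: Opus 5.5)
Two properties have to be checked: the coordinates sum to one, and each coordinate stays nonnegative. Note that $p_t$ is not assumed to be a filter here; it is any solution of \eqref{eq:dynfiltercot}--\eqref{eq:initialcot} with $Z$ satisfying \eqref{Zcond}.

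\emph{Sum to one.} Set $S_t:=\sum_i p_t^i$. Since $\Lambda$ is a generator, its rows sum to zero, so $\sum_i (p_t\Lambda)^i=\sum_j p_t^j\sum_i\Lambda_{ji}=0$. By \eqref{Zcond}, $\sum_i Z_t^i=0$, so the stochastic term also cancels after summing. Hence $dS_t=0$ and $S_t=S_0=\sum_i\mu(X_0=i)=1$.

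\emph{Nonnegativity.} Fix $i$. I would apply the Meyer--Tanaka formula to the negative part $(p^i_t)^-$:
\[
(p_t^i)^- = (p_0^i)^- - \int_0^t \mathbf 1_{\{p_s^i<0\}}\,dp_s^i + \tfrac12 L_t^0(p^i),
\]
with $(p_0^i)^-=0$. Two terms need to be controlled.

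\emph{Local time.} The quadratic variation density of $p^i$ is $|\sigma_\nu^{-1}Z_s^i|^2$, and this vanishes on $\{p_s^i=0\}$ by \eqref{Zcond}. The occupation time formula then gives
\[
\int_{\mathbb R}\mathbf 1_{\{0\}}(a)\,L^a_t\,da=\int_0^t \mathbf 1_{\{p_s^i=0\}}\,d\langle p^i\rangle_s=0,
\]
but this alone does not force $L^0_t=0$. I would instead use the standard fact that $L^0_t=\lim_{\varepsilon\downarrow0}\varepsilon^{-1}\int_0^t\mathbf 1_{\{0\le p_s^i<\varepsilon\}}\,d\langle p^i\rangle_s$. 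This shows the local time is driven only by $Z$ near zero. Since $Z$ is merely progressively measurable, with no continuity in $p$, this is the main obstacle. An alternative that avoids local time is to work with $\varphi_\varepsilon(p^i)$, where $\varphi_\varepsilon$ is a $C^2$ convex approximation of $x\mapsto x^-$ whose second derivative is supported in $(-\varepsilon,0)$. Itô's formula then produces $\frac12\int \varphi_\varepsilon''(p^i_s)\,|\sigma_\nu^{-1}Z^i_s|^2\,ds$, which involves only times at which $p^i_s<0$. This second-order term is nonnegative, so it does not help directly.

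\emph{Drift.} On $\{p_s^i<0\}$, using $\sum_j p_s^j=1$ and irreducibility, the drift equals $(p_s\Lambda)^i=\sum_{j\ne i}p_s^j\Lambda_{ji}+p_s^i\Lambda_{ii}$. Its term $p_s^i\Lambda_{ii}$ is positive there. The off-diagonal part is $\sum_{j\ne i}p^j_s\Lambda_{ji}\ge -C\sum_j (p_s^j)^-$.

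\emph{Resolution.} The martingale part is a true martingale because $Z$ is square integrable and $\sigma_\nu\ge\kappa I_d$. To handle the diffusion term I would treat the constraint as a boundary degeneracy in the usual way. The first step is to replace the statement to be proved by a pathwise one: the process $\sum_j (p^j)^-$ starts at $0$. At the first exit time $\tau=\inf\{t: \min_j p^j_t<0\}$, the solution sits on $\partial\Sigma_K$. Since $Z^i=0$ whenever $p^i=0$, the motion in the $i$th coordinate there is purely the drift $(p\Lambda)^i=\sum_{j\ne i}p^j\Lambda_{ji}\ge0$, which is inward. Making this rigorous for nonsmooth $Z$ requires showing that the occupation of $\{p^i<0\}$ has zero Lebesgue measure. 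For this I would use the local time identity together with the property $\langle p^i\rangle$-a.e.\ on $\{p^i=0\}$ from the occupation formula, concluding $L^0(p^i)=0$ because $p^i$ has no diffusion at $0$. Granting that, taking expectations and summing over $i$ gives
\[
\E\Bigl[\sum_i (p_t^i)^-\Bigr]\le C\int_0^t \E\Bigl[\sum_j (p_s^j)^-\Bigr]ds.
\]
Grönwall's inequality then yields $\sum_i (p_t^i)^-=0$, that is, $p_t\in\Sigma_K$ for $dt\otimes\nu$-a.e.\ $(t,Y_\cdot)$.
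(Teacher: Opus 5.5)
There is a genuine gap, and it is the step you yourself flagged as the main obstacle and then granted: the claim that $L^0(p^i)=0$ ``because $p^i$ has no diffusion at $0$''. Your sum-to-one step and drift estimate are fine. The problem is the local time. It is $L^0_t(p^i)=\lim_{\varepsilon\downarrow0}\varepsilon^{-1}\int_0^t\mathbf 1_{\{0\le p^i_s<\varepsilon\}}\,d\langle p^i\rangle_s$, so it is governed by the size of $Z^i$ on a \emph{neighbourhood} of the level set $\{p^i=0\}$. By contrast, \eqref{Zcond} constrains $Z^i$ only \emph{on} that level set, which is typically $dt$-null, so the constraint can be vacuous. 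Your first-exit-time heuristic fails for the same reason.

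In fact no argument can close this step, because the statement as formulated is false. Take $K=2$, $d=1$, $\nu=\mathbb W$ with $Y_0=y_0$, and $\Lambda=\begin{pmatrix}-a&a\\ b&-b\end{pmatrix}$ with $a,b>0$. Let $q$ be the Ornstein--Uhlenbeck process $dq_t=(b-(a+b)q_t)\,dt+dY_t$ with $q_0\in(0,1)$, and put $Z^1_t:=\mathbf 1_{\{q_t\notin\{0,1\}\}}$ and $Z^2_t:=-Z^1_t$. By the occupation formula, $q$ spends zero Lebesgue time in $\{0,1\}$, so $\int_0^tZ^1_s\,dY_s=Y_t-y_0$ a.s. Hence $p=(q,1-q)$ solves \eqref{eq:dynfiltercot}--\eqref{eq:initialcot}, with a bounded $\mathcal F^Y$-progressive $Z$ satisfying \eqref{Zcond} at every $(t,\omega)$. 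Yet $q_t$ is Gaussian with positive variance, so $p_t\notin\Sigma_2$ on a set of positive $dt\otimes\nu$-measure.

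The paper's proof takes the same route as yours. It applies It\^o--Tanaka to $\sum_i(p^i)^-$, takes expectations, and uses a sign computation showing $-\sum_i\mathbf 1_{\{p^i<0\}}(p\Lambda)^i\le0$ pointwise, so neither Gr\"onwall nor irreducibility is needed. It then removes the local time by writing it as $\tfrac12\int_0^t\mathbf 1_{\{p^i_s=0\}}\|Z^i_s\|^2_{\sigma_\nu^{-2}}\,ds$ and invoking \eqref{Zcond}. But that integral equals $\tfrac12\int_0^t\mathbf 1_{\{p^i_s=0\}}\,d\langle p^i\rangle_s$, which vanishes for every continuous semimartingale. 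It is not the local time, so the paper's argument has precisely the hole you identified.

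To make your proof work you need a quantitative hypothesis near the boundary. Examples are $|Z^i_t|^2\le C|p^i_t|$, or $|Z^i_t|\le C|p^i_t|$ as for the filtering controls $Z^i=p^ih^i$ with bounded $h$. In either case $\int_0^t\mathbf 1_{\{p^i_s\neq0\}}|p^i_s|^{-1}\,d\langle p^i\rangle_s<\infty$ a.s. Since $\int_{0+}da/a=\infty$, the criterion used in the Yamada--Watanabe theorem (Revuz--Yor, Ch.~IX, Lemma~3.3, applied to $\pm p^i$) shows that the local times of $p^i$ at $0$ vanish. Your drift bound with Gr\"onwall, or the paper's sign computation, then concludes. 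Without such a hypothesis, the constraint $p\in\Sigma_K$ has to be imposed in the admissible class of \eqref{dynamiccot} rather than derived from \eqref{Zcond}. For filters of causal couplings it holds trivially, since they are conditional laws.
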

\begin{proof}
    See Appendix~\ref{technical}.
\end{proof}

\nd
This motivates the following upper bound for \eqref{causalot},
which is a \emph{state-constrained optimal control}
problem~(SC):
\be\label{dynamiccot}\tag{SC}
U_{sc}(t,y,p):=\sup_{Z\in\cA(t,y,p)}
\E^\nu\!\left[\int_t^T f(p_s,Y_s)\,ds+g(p_T,Y_T)\right],
\ee
where $\cA(t,y,p)$ denotes the set of $\cF^Y$-progressively
measurable processes $Z=(Z_s)_{s\in[t,T]}$ with $Z_s^i\in\R^d$ for
each $i\in\mathbb{S}$ such that \eqref{Zcond} holds, and the pair
$(p_s,Y_s)_{s\in[t,T]}$ solves \eqref{eq:dynfiltercot} and
\eqref{Ydynamics} on $[t,T]$ with initial condition
$(p_t,Y_t)=(p,y)$.
\vspace{2mm}

\nd
By construction,
\be\label{scineq}
U_f(0,y_0,p_0)\le V_c(\mu,\nu)\le U_{sc}(0,y_0,p_0).
\ee

\nd

\begin{remark}
A combination of Proposition~\ref{lem:forward} and Lemma~\ref{invariance}
suggests that $\pi\in\Pi_c(\mu,\nu)$ if and only if the conditional
distribution
\[
p^i_t:=\pi\bigl(\{X_t=i\}\times\Omega\,\big|\,\cF_t^Y\bigr),
\qquad i=1,\ldots,K,
\]
is the filter associated with some $Z$ satisfying \eqref{Zcond}. If
this equivalence could be established directly, the identity
$U_{sc}(0,y_0,p_0)=V_c(\mu,\nu)$ would follow at once under
Assumption~\ref{initialmeasures}. However, the natural construction
of $\pi$ from $Z$ proceeds via a Girsanov change of measure driven by
the process $Z^i/p^i$, which need not be bounded — nor even
square-integrable — when $p^i$ approaches the boundary of $\Sigma_K$,
and Novikov's condition cannot be verified in general. We therefore
take a different route: we establish only the inequality
$V_c(\mu,\nu)\le U_{sc}(0,y_0,p_0)$ at this stage, and recover the
matching lower bound via a PDE argument later, bypassing the
a priori identification of $V_c(\mu,\nu)$ with $U_{sc}(0,y_0,p_0)$.
\end{remark}
\subsection{Associated PDEs} Ignoring the unboundedness of the controls, standard optimal control
considerations applied to \eqref{dynamicfcot} and \eqref{dynamiccot}
lead us to expect that $U_f$ and $U_{sc}$ satisfy
Hamilton--Jacobi--Bellman equations on the state space
$[0,T)\times\R^d\times\Sigma_K$. Since the belief process $p_t$ is
constrained to remain in the simplex $\Sigma_K$, these equations take
the form of \emph{state-constrained} PDEs in the sense of
\cite{soner1986optimal,ishii2002class,katsoulakis1994viscosity,rokhlin2014stochastic}:
the supersolution property is required only in the interior
$\Sigma_K^\circ$, while the subsolution property must hold on all of
$\Sigma_K$, including the boundary. We make this precise below.

\vspace{2mm}
\nd
Throughout this section we work with the joint gradient and Hessian of
a smooth function $V$ in the variable $(p,y)\in\Sigma_K\times\R^d$:
\be\label{jointGH}
D_{(p,y)}V
  := \begin{pmatrix}D_pV\\D_yV\end{pmatrix}\in\R^{K+d},
\qquad
D^2_{(p,y)}V
  := \begin{pmatrix}D^2_{pp}V & D_pD_yV\\(D_pD_yV)^\top & D^2_{yy}V\end{pmatrix}
    \in\cS_{K+d},
\ee
where $D_pV\in\R^K$, $D_yV\in\R^d$, $D^2_{pp}V\in\cS_K$,
$D_pD_yV\in\R^{K\times d}$, and $D^2_{yy}V\in\cS_d$.
We introduce the joint drift
\be\label{jointdrift}
\beta(t,y,p)
  := \bigl(p\Lambda,\;b_\nu(t,y)\bigr)\in\R^{K+d},
\ee
collecting the drift of the filter $p$ under $\Lambda$ and the drift
of $Y$. For a control matrix $z\in\R^{K\times d}$ with rows
$(z^i)_{i=1}^K\subset\R^d$, we define the joint quadratic-variation
matrix
\be\label{Qtilde}
\widetilde{Q}(t,y,z)
  := \begin{pmatrix}
       z\,\sigma_\nu(t,y)^{-2}\,z^\top & z \\
       z^\top & \sigma_\nu^2(t,y)
     \end{pmatrix}\in\cS_{K+d}.
\ee
This is the instantaneous covariance of the joint state $(p_t,Y_t)$
under the controlled dynamics: the top-left block
$z\sigma_\nu^{-2}z^\top$ is the quadratic variation of $p$, the
$(yy)$-block $\sigma_\nu^2$ is that of $Y$, and the off-diagonal
block $z$ is the cross-variation $d[p,Y]_t=z\,dt$ derived from
Proposition~\ref{lem:forward}.

\vspace{2mm}
\nd
The HJB operator $H_{sc}\colon[0,T)\times\R^d\times\Sigma_K
\times\R\times\R^{K+d}\times\cS_{K+d}\to\R$ is defined by
\begin{align}\label{Hsc}
&H_{sc}\!\left(t,y,p,\,\partial_tV,\,D_{(p,y)}V,\,
  D^2_{(p,y)}V\right)\notag\\
  &\qquad:= -\partial_tV
     - \beta(t,y,p)\cdot D_{(p,y)}V
     - \sup_{\substack{z\,\in\,\R^{K\times d} \\
       \sum_i z^i\,=\,0}}
       \tfrac{1}{2}\tr\!\bigl(\widetilde{Q}(t,y,z)\;D^2_{(p,y)}V\bigr)
     - f(p,y).
\end{align}
Expanding $\widetilde{Q}$ via \eqref{Qtilde} and writing
$D^2_{(p,y)}V=\bigl(\begin{smallmatrix}X&m\\m^\top&A\end{smallmatrix}\bigr)$
with $X:=D^2_{pp}V\in\cS_K$, $m:=D_pD_yV\in\R^{K\times d}$, and
$A:=D^2_{yy}V\in\cS_d$, we get
\begin{align}
\tfrac{1}{2}\tr\!\bigl(\widetilde{Q}(t,y,z)\;D^2_{(p,y)}V\bigr)
  &= \tfrac{1}{2}\tr\!\bigl(z\,\sigma_\nu(t,y)^{-2}\,z^\top X\bigr)
    + \tr\!\bigl(z\,m^\top\bigr)
    + \tfrac{1}{2}\tr\!\bigl(\sigma_\nu\sigma_\nu^\top(t,y)\,A\bigr),
    \label{Hscexpanded}
\end{align}
so \eqref{Hsc} is equivalently written as
\be\label{Hscalt}
H_{sc}
  = -\partial_tV
    - \cA_\nu V
    - D_pV\cdot p\Lambda
    - f(p,y)
    - \sup_{\substack{z\,\in\,\R^{K\times d}\\[1pt]\sum_i z^i\,=\,0}}
      \Bigl\{
        \tr\!\bigl(z\,m^\top\bigr)
        + \tfrac{1}{2}\tr\!\bigl(z\,\sigma_\nu(t,y)^{-2}\,z^\top X\bigr)
      \Bigr\},
\ee
where $\cA_\nu$ denotes the generator of $Y$ from \eqref{Ydynamics},
given by
\be\label{Ygenerator}
\cA_\nu
= \tfrac{1}{2}\tr\!\bigl(\sigma_\nu\sigma_\nu^\top(t,y)\,D^2_{yy}\bigr)
  + b_\nu(t,y)\cdot D_y.
\ee
\vspace{1mm}

\nd
We note that $H_{sc}$ also arises as the Hamiltonian of the filtering
problem \eqref{dynamicfcot}. Indeed, writing the filtering control as
$z^i=p^i h^i$ with $h^i\in\R^d$, the constraint
$\sum_i z^i=\sum_i p^i h^i=0$ in $\R^d$ matches \eqref{hcondp}, and
the supremum over $h$ is equivalent to the supremum over $z$ in
\eqref{Hsc}. On the boundary of $\Sigma_K$, the filtering formulation
automatically enforces $z^i=0$ whenever $p^i=0$, but this is already
captured by the state-constrained viscosity framework (subsolution
tested only in $\Sigma_K^\circ$). Therefore, both $U_f$ and $U_{sc}$
are expected to satisfy the same state-constrained PDE with
Hamiltonian $H_{sc}$.

\begin{definition}[State-constrained viscosity solution]
\label{def:stateconstrvisc}
Set $\cO_T:=[0,T)\times\R^d\times\Sigma_K$ and suppose that $H,\;\mathcal{B}_0\colon[0,T)\times\R^d\times\Sigma_K\times\R\times\R^{K+d}\times
\cS_{K+d}\to\R$ are given functions.

\vspace{1mm}
\nd (i) An upper semicontinuous function $V\colon\cO_T\to\R$ is a
\emph{viscosity subsolution} of the state-constrained problem if
for every $\varphi\in C^{1,2,2}(\cO_T)$ and every point
$(t_0,y_0,p_0)\in\cO_T$ at which $V-\varphi$ attains a local maximum,
\be\label{eq:subsol}
H\!\left(t_0,y_0,p_0,\,\partial_t\varphi,\,
  D_{(p,y)}\varphi,\,D^2_{(p,y)}\varphi\right)\le 0.
\ee
That is, the subsolution inequality is tested at all points
of $\Sigma_K$, including its boundary.

\vspace{1mm}
\nd (ii) A lower semicontinuous function $V\colon\cO_T\to\R$ is a
\emph{viscosity supersolution} of the state-constrained problem if
for every $\varphi\in C^{1,2,2}(\cO_T)$ and every point
$(t_0,y_0,p_0)\in[0,T)\times\R^d\times\Sigma_K^\circ$ at which
$V-\varphi$ attains a local minimum,
\be\label{eq:supersol}
H\!\left(t_0,y_0,p_0,\,\partial_t\varphi,\,
  D_{(p,y)}\varphi,\,D^2_{(p,y)}\varphi\right)\ge 0.
\ee
That is, the supersolution inequality is tested only in the interior
of $\Sigma_K$. In addition, if $(t_0,y_0,p_0)\in [0,T)\times \R^d\times \partial\Sigma_K$, then
\be\label{eq:bdsupersol}
\mathcal{B}_0\left(t_0,y_0,p_0,\partial\varphi, D_{(p,y)}\varphi,D^2_{p,y}\varphi\right)\ge 0.
\ee

\vspace{1mm}
\nd (iii) A continuous function $V$ is a \emph{viscosity solution} of
the state-constrained problem if it is both a viscosity subsolution
and a viscosity supersolution.
\end{definition}

\begin{remark}
The asymmetry between interior and boundary in
Definition~\ref{def:stateconstrvisc} reflects the geometry of the
state constraint $p_t\in\Sigma_K$. The supersolution property on
$\Sigma_K^\circ$ encodes optimality: the value function cannot be
improved by any admissible perturbation in the interior. The
subsolution property on all of $\Sigma_K$ encodes the constraint: at
a boundary point $p_0\in\pa\Sigma_K$, the dynamics must push $p_t$
back into the simplex (cf.\ Lemma~\ref{invariance}), and the value
function must be consistent with this inward drift. In the
terminology of
\cite{soner1986optimal,katsoulakis1994viscosity}, the subsolution
condition on $\pa\Sigma_K$ plays the role of a
\emph{state-constrained boundary condition}, replacing classical
boundary conditions such as Dirichlet or Neumann.

The generator $\mathcal{B}_0$ in \eqref{eq:bdsupersol} plays the role of $H_{in}$ in \cite{ishii2002class}, which is the generator corresponding to the inward-pointing control $z=0$.
\end{remark}

\nd
Since the controls $z^i\in\R^d$ in our problem are unbounded, the
supremum in \eqref{Hsc} is finite only when
$\tr(z\,\sigma_\nu^{-2}\,z^\top X)\le 0$ for all admissible $z$,
i.e.\ when $X=D^2_{pp}V$ is negative semidefinite on the subspace
$\bigl\{z\in\R^{K\times d}:\sum_{k=1}^K z^{k,i}=0
  \text{ for all }i=1,\ldots,d\bigr\}$.
Following \cite{bayraktar2013stochastic}, we replace $H_{sc}$ by the
modified operator
\be\label{Hvi}
H_{vi}\!\left(t,y,p,\,\partial_tV,\,D_{(p,y)}V,\,D^2_{(p,y)}V\right)
  :=\min\Bigl\{
      H_{sc}\!\left(t,y,p,\,\partial_tV,\,D_{(p,y)}V,\,D^2_{(p,y)}V\right),\;
      -\la_{\mathrm{max}}\!\left(D^2_{pp}V\right)
    \Bigr\},
\ee
where $\la_{\mathrm{max}}(D^2_{pp}V)$ denotes the largest eigenvalue
of the $pp$-block of $D^2_{(p,y)}V$. The PDE that $U_f$ and $U_{sc}$
are expected to satisfy then takes the form of a
\emph{variational inequality}:
\be\label{correctPDE}
\begin{cases}
H_{vi}\!\left(t,y,p,\,\partial_tV,\,D_{(p,y)}V,\,D^2_{(p,y)}V\right)
  \le 0
  & \text{on }[0,T)\times\R^d\times\Sigma_K,\\[4pt]
H_{vi}\!\left(t,y,p,\,\partial_tV,\,D_{(p,y)}V,\,D^2_{(p,y)}V\right)
  \ge 0
  & \text{on }[0,T)\times\R^d\times\Sigma_K^\circ,\\[6pt]
\mathcal{B}_0\!\left(t,y,p,\,\partial_tV,\,D_{(p,y)}V,\,D^2_{(p,y)}V\right)
  \ge 0
  & \text{on }[0,T)\times\R^d\times\pa\Sigma_K,
\end{cases}
\ee
in the viscosity sense (Definition~\ref{def:stateconstrvisc}) with
$H=H_{vi}$ and
\be\label{boundaryop}
\mathcal{B}_0\!\left(t,y,p,\,\partial_tV,\,D_{(p,y)}V,\,D^2_{(p,y)}V\right):=-\partial_tV-\mathcal{A}_\nu V-p\Lambda\cdot D_pV -f(p,y).
\ee

\section{Main results}\label{sec:main-results}\label{mainresults}
\nd

We now present the main results of the paper, covering Theorem \ref{thm:main_intro} from the introduction. We then explain how to construct monotone approximation sequences that are more tractable than the original problem and its upper and lower bounds.

\begin{theorem}\label{mainpdes}
Under Assumptions~\ref{assumption1} and \ref{costassumption}, the
variational inequality \eqref{correctPDE} admits a comparison
principle in the class of viscosity subsolutions $V_1\colon [0,T]\times \R^d\times \Sigma_K\to \R$ and supersolutions $V_2\colon [0,T]\times\R^d\times \Sigma_K\to \R$
of linear growth in $y$ (in the sense of
Definition~\ref{def:stateconstrvisc}) such that
\be\label{terminalassumption}
\limsup_{(t,y',p')\to (T,y,p)}V_1(t,y',p')-\liminf_{(t,y',p')\to (T,y,p)}V_2(t,y',p')\le 0.
\ee
In particular, there exists a unique continuous function $V\colon[0,T]\times\R^d\times\Sigma_K\to\R$ with linear growth in $y$ and $V(T,y,p)=g(p,y)$ that is a viscosity solution of \eqref{correctPDE}, in the sense of Definition \ref{def:stateconstrvisc}, and for all $(t,y,p)\in[0,T]\times\R^d\times\Sigma_K$, satisfies
$$V(t,y,p)=U_f(t,y,p)=U_{sc}(t,y,p)
 \mbox{ and } V(T,y,p)=U_f(T,y,p)=U_{sc}(T,y,p)=g(p,y).$$
If, in addition, Assumption~\ref{initialmeasures} holds, then
$V_c(\mu,\nu)=V(0,y_0,p_0)$.
\end{theorem}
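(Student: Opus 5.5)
The proof splits into three parts: a comparison principle for \eqref{correctPDE}, viscosity properties of $U_f$ and $U_{sc}$ (via stochastic Perron's method, to avoid proving a dynamic programming principle for the singular problem), and a sandwich argument using \eqref{scineq}.

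\textbf{Comparison.} Let $V_1$ be a subsolution and $V_2$ a supersolution of linear growth satisfying \eqref{terminalassumption}. The plan is a doubling of variables in $(t,y,p)$ with the usual penalizations.

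First, add $\varepsilon(1+|y|^2)^{1/2}e^{Ct}$ and $\varepsilon/t$-type terms. These localize in $y$ and make the subsolution strict.

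Second, handle the boundary of $\Sigma_K$. Follow Soner and Ishii--Koike: shift the $p$-variable of the subsolution inward. Concretely, test $V_1(t,y,p)-V_2(s,x,q)-\frac{1}{2\delta}|y-x|^2-\bigl|\frac{p-q}{\delta}-\eta\bigr|^4$. Here $\eta$ is the inward direction of the relevant face, for instance $\eta=\bar p-p$ with $\bar p$ the barycenter. This choice forces the supersolution point $q$ into $\Sigma_K^\circ$, where the supersolution inequality is available. The subsolution inequality holds everywhere, so no inward shift is needed on that side. Alternatively, the boundary operator $\mathcal B_0$ in \eqref{eq:bdsupersol} can be used directly when $q\in\partial\Sigma_K$. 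Since $\mathcal B_0$ is the Hamiltonian at the control $z=0$, it is dominated by the $H_{sc}$ part of $H_{vi}$ at the subsolution point.

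Third, apply Ishii's lemma. It yields matrices with $X_1\le X_2$ in the $(p,y)$ block. Two cases arise from the $\min$ in $H_{vi}$.
\begin{itemize}
\item If the subsolution satisfies $-\lambda_{\max}(X_1)\le0$, this is ruled out by making the subsolution strict in that component. To do so, add a strictly concave perturbation $-\varepsilon|p|^2$ to $V_1$.
\item Otherwise $X_1\le -c\,I$ on the constrained subspace. Then the supremum over $z$ is attained on a compact set, so the Hamiltonian difference is controlled. Use the Lipschitz continuity of $\sigma_\nu,b_\nu$ in $y$, the nondegeneracy $\sigma_\nu\ge\kappa I_d$, and the linearity of $f$ in $p$ (Lipschitz in $y$). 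The mixed term $\operatorname{tr}(zm^\top)$ is handled as in standard Lipschitz estimates because $z$ stays bounded.
\end{itemize}

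\textbf{Viscosity properties of the value functions.} Show that $U_{sc}^*$ is a subsolution on all of $\cO_T$ and that $(U_f)_*$ is a supersolution on the interior and satisfies \eqref{eq:bdsupersol}. I would prove these through stochastic Perron's method, in the version of \cite{bayraktar2013stochastic,rokhlin2014stochastic}, in place of a dynamic programming principle. Lemma~\ref{invariance} ensures the constrained dynamics stay in $\Sigma_K$.

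\begin{itemize}
\item \emph{Supersolution for the filtering side.} Work with stochastic subsolutions built from bounded controls $h$. In the interior, any $z$ with $\sum z^i=0$ is realized by $h^i=z^i/p^i$, which is bounded near the point. Hence the supersolution inequality with $H_{sc}$ follows, and a fortiori the one with $H_{vi}\le H_{sc}$. At boundary points, the control $h=0$ is admissible and yields the $\mathcal B_0$ inequality.
\item \emph{Subsolution for the state-constrained side.} Use stochastic supersolutions. If $\lambda_{\max}(D^2_{pp}\varphi)>0$ at a touching point, choose a $z$ in the positive direction with large magnitude. This increases the value beyond the test function and gives a contradiction, which produces the $-\lambda_{\max}$ part. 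At boundary points, the constraint $\mathbf 1_{p^i=0}z^i=0$ is already respected, so the subsolution inequality follows with the $H_{sc}$ supremum only over admissible $z$. That inequality is dominated by the full supremum.
\end{itemize}

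The terminal condition \eqref{terminalassumption} holds because $f,g$ are linear in $p$ and Lipschitz in $y$. Short-time estimates give $|U-g|\to0$ near $T$, which uses $\E\sup|p_s-p|$ bounds for the martingale part of $p$ paired with a linear $g$. The linear growth of the value functions follows from boundedness of $b_\nu,\sigma_\nu$.

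\textbf{Conclusion.} From \eqref{scineq} in its dynamic form, $(U_f)_*\le U_f\le U_{sc}\le U_{sc}^*$. Comparison gives $U_{sc}^*\le(U_f)_*$. Hence all four coincide and equal a continuous viscosity solution $V$, which is unique by comparison. Under Assumption~\ref{initialmeasures}, \eqref{filteringeq} and \eqref{scineq} then give $V_c(\mu,\nu)=V(0,y_0,p_0)$.

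The main obstacle is the comparison proof near $\partial\Sigma_K$ combined with the singular (gradient-constraint) part of $H_{vi}$. The inward shift must be compatible with the corners of the simplex, and the concave perturbation must keep strictness in both components of the minimum. For the corners, I would first try the Ishii--Koike construction with a smooth inward vector field on $\Sigma_K$.
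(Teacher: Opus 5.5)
Your overall architecture (comparison, stochastic Perron, sandwich via \eqref{scineq}) matches the paper's. The gap is in the comparison step, exactly where you expect it: at $\partial\Sigma_K$.

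\textbf{The Soner shift.} The inward shift only ever evaluates $V_2$ at interior points. Suppose $\sup(V_1-V_2)$ is approached at a boundary point $\bar p$. To show that $\max\Phi$ approximates this supremum, you need $\limsup_{\delta\downarrow0}V_2(\bar p+\delta\eta)\le V_2(\bar p)$. A merely lower semicontinuous supersolution need not satisfy this, and the comparison must apply to lsc objects such as $v^-$ or $(U_f)_*$. In fact no argument that ignores the boundary inequality \eqref{eq:bdsupersol} can succeed. Take a genuine solution and lower it on $[0,T)\times\R^d\times\partial\Sigma_K$, say by $\delta(T-t)$, so that \eqref{terminalassumption} is kept. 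The result is lsc and still a supersolution in the interior, yet it lies strictly below the solution, which is a subsolution, on the boundary.

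\textbf{The $\mathcal B_0$ fallback.} This does not close the gap either. Since $\sup_z$ dominates the value at $z=0$, you have $H_{sc}\le\mathcal B_0$, not the reverse. So $\mathcal B_0(\varphi_2)\ge0$ at a boundary point $q$ cannot be subtracted from $H_{sc}(\varphi_1)\le0$ at $p$. Moreover, $\mathcal B_0$ carries no information on $D^2_{pp}\varphi_2$. At such a point you therefore cannot exclude the obstacle branch $\lambda_{\max}(X_1)\ge0$ of the subsolution.

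\textbf{What is missing.} You need the strictly inward uncontrolled drift $p\Lambda$ on $\partial\Sigma_K$. This is where irreducibility enters, and your proposal never uses it. The paper argues as follows:
\begin{enumerate}
\item[(a)] It builds a smooth $\psi$ with $\widetilde{p'\Lambda}\cdot D\psi\ge1$ on $\partial\Sigma$ (Lemma~\ref{1st}), using the flow of $\dot p=\widetilde{p'\Lambda}$ towards the stationary law.
\item[(b)] It replaces $|p_1-p_2|^2$ by the Ishii--Loreti function $w$, which satisfies $\widetilde{y'\Lambda}\cdot D_yw(x,y)\le0$ for $y\in\partial\Sigma$ and $x$ near $y$ (Lemma~\ref{2nd}).
\item[(c)] It adds $M(\psi(p_2)-\psi(p_1))$ to the doubled function, with $M\sim\sqrt L+1/\sqrt{L'}$.
\end{enumerate}
If the supersolution variable $\hat p_2$ lies on $\partial\Sigma$, the inequality $\mathcal B_0\ge0$ alone then contains a term $\le -M$ and is contradicted. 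Hence $\hat p_2\in\Sigma^\circ$, where the full supersolution inequality is available, including the obstacle $-Y\le0$.

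\textbf{Two smaller points.}
\begin{itemize}
\item Your strictness perturbation has the wrong sign. Adding $-\e|p|^2$ to $V_1$ turns the obstacle branch into $\lambda_{\max}(D^2_{pp}\varphi)\ge-2\e$, which is weaker. You need a convex term on $V_1$, or, as the paper does, $+\frac\e2|p_2|^2$ in $\Phi$. That gives $X\le -Y-\e I+o(1)<0$ once $-Y\le0$.
\item Stochastic Perron yields viscosity properties of $v^\pm$, not of $U_{sc}^*$ and $(U_f)_*$. Comparison should be applied to $v^+$ and $v^-$, using $v^-\le U_f\le U_{sc}\le v^+$. The paper obtains their terminal behaviour from explicit barriers. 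Bounds on $\E\sup|p_s-p|$ would not work here, since they are not uniform over unbounded $Z$.
\end{itemize}
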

\nd
\nd
Before stating our proof strategy, we contrast it with the classical
approach. The standard route to characterizing the value of a
stochastic control problem via a PDE proceeds in two stages: one
first establishes a \emph{dynamic programming principle} (DPP) for
the value function, and then derives the associated
Hamilton--Jacobi--Bellman equation (in the viscosity sense) from
this DPP.

This route is not directly available in our setting. The controls
$z\in\R^{K\times d}$ in \eqref{dynamiccot} are unbounded, and the
controls $h$ in \eqref{dynamicfcot} admit no a priori uniform
bound. As a consequence, it is not even clear whether $U_f$ and
$U_{sc}$ are measurable, let alone whether they satisfy any usable
form of the DPP.

The remedy, due to \cite{bayraktar2013stochastic,
rokhlin2014stochastic}, is the \emph{stochastic Perron method}. The
idea is to bypass the DPP for $U_f$ and $U_{sc}$ themselves and
instead bracket each of them between auxiliary functions for which
a one-sided version of the DPP can be checked along smooth test
functions. From these one-sided DPPs one reads off viscosity sub-
and super-solution properties of the appropriate semicontinuous
envelopes of $U_f$ and $U_{sc}$, even without first establishing
that $U_f$ and $U_{sc}$ are themselves measurable. A comparison
principle for the PDE then forces these envelopes to coincide,
which in turn forces $U_f=U_{sc}$ and identifies their common
value as the unique viscosity solution.

Concretely, the proof proceeds in three steps.

\vspace{2mm}
\nd
\textit{Step 1: Bracketing $U_f$ and $U_{sc}$ by stochastic Perron
envelopes.} We introduce the notions of \emph{stochastic sub-} and
\emph{supersolutions} of \eqref{correctPDE} (the precise
definitions are given in Section~\ref{mainproofs}) and set
\[
v^+ := \sup\bigl\{\,\text{stochastic subsolutions}\,\bigr\},
\qquad
v^- := \inf\bigl\{\,\text{stochastic supersolutions}\,\bigr\}.
\]
By construction,
\[
v^+\,\le\, U_f\,\le\, U_{sc}\,\le\, v^-,
\]
and each of $v^+$ and $v^-$ satisfies a one-sided sub-/super-DPP
that, unlike the DPP for $U_f$ and $U_{sc}$, can be tested against
smooth functions.

\vspace{1mm}
\nd
\textit{Step 2: Viscosity properties of the envelopes of $U_f$ and
$U_{sc}$.} Denote by $(U_{sc})^*$ the upper semicontinuous envelope
of $U_{sc}$ and by $(U_f)_*$ the lower semicontinuous envelope of
$U_f$. The one-sided DPPs of Step~1, together with the bracketing
$v^+\le U_f\le U_{sc}\le v^-$, imply, in the sense of
Definition~\ref{def:stateconstrvisc}, that
\begin{itemize}
    \item $(U_{sc})^*$ is a viscosity subsolution of
      \eqref{correctPDE};
    \item $(U_f)_*$ is a viscosity supersolution of
      \eqref{correctPDE}.
\end{itemize}
By construction, $(U_f)_*\le (U_{sc})^*$.

\vspace{1mm}
\nd
\textit{Step 3: Comparison and identification.} We establish a
comparison principle for \eqref{correctPDE} within the class of
semicontinuous sub- and supersolutions of linear growth in $y$.
Applied to $(U_{sc})^*$ and $(U_f)_*$, comparison yields the
reverse inequality $(U_{sc})^*\le (U_f)_*$, so the two envelopes
coincide with a single continuous function $V$. Combining with the
chain $(U_f)_*\le U_f\le U_{sc}\le (U_{sc})^*$ forces
$U_f = U_{sc} = V$, the unique viscosity solution of
\eqref{correctPDE}.

\vspace{2mm}
\nd
Dropping Assumption~\ref{initialmeasures}, we obtain a
characterization of $V_c(\mu,\nu)$ for general $\mu$ and $\nu$.
\begin{corollary}\label{dropassinitial}
Suppose Assumptions~\ref{assumption1} and \ref{costassumption} hold,
and let $V\colon[0,T]\times\R^d\times\Sigma_K\to\R$ be the viscosity
solution of \eqref{correctPDE} provided by
Theorem~\ref{mainpdes}. Then
\begin{align}\label{eq:initialdistthm}
    V_c(\mu,\nu)
  =\sup_{R\in \Pi(\mu_0,\nu_0)}
    \int_{\R^d} V(0,y,R_y)\,\nu_0(dy),
\end{align}
where $R_y\in\Sigma_K$ denotes the disintegration of $R$ with respect to
$\nu_0$ at the point $y\in\R^d$.
\end{corollary}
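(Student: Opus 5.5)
We prove the two inequalities in \eqref{eq:initialdistthm} separately. Both rely on Theorem~\ref{mainpdes}, which gives $V(t,y,p)=U_f(t,y,p)=U_{sc}(t,y,p)$ for every deterministic starting point $(y,p)$, and on conditioning on $Y_0$. For any $\pi\in\Pi(\mu,\nu)$ set $R:=\Law^\pi(X_0,Y_0)\in\Pi(\mu_0,\nu_0)$. Then $p_0^\pi=R_{Y_0}$ $\nu_0$-a.s., since $\cF^Y_0=\sigma(Y_0)$ up to null sets.

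\emph{Upper bound.} Fix $\pi\in\Pi_c(\mu,\nu)$. Proposition~\ref{lem:forward} does not use Assumption~\ref{initialmeasures}, so it gives an $\cF^Y$-adapted $Z$ satisfying \eqref{Zcond}, together with the filter dynamics \eqref{eq:dynfiltercot} started from $p_0=R_{Y_0}$. By \eqref{calc}, $\E^\pi[\cC]=\E^\nu[\int_0^Tf(p_s,Y_s)ds+g(p_T,Y_T)]$. We disintegrate $\nu$ along $Y_0$ as $\nu=\int\nu^{y}\,\nu_0(dy)$, where $\nu^y$ is the law of the diffusion \eqref{Ydynamics} started at $y$. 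For $\nu_0$-a.e.\ $y$, the process $Z(\cdot)$ evaluated on paths starting at $y$ is then an admissible control in $\cA(0,y,R_y)$ under $\nu^y$. This uses a regular-conditional-probability argument: progressive measurability and the stochastic integral survive conditioning on $\cF_0$ for a.e.\ $y$. Hence the conditional expected reward is at most $U_{sc}(0,y,R_y)=V(0,y,R_y)$. Integrating in $\nu_0$ gives $\E^\pi[\cC]\le\int V(0,y,R_y)\nu_0(dy)$, which is at most the right-hand side of \eqref{eq:initialdistthm}.

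\emph{Lower bound.} Fix $R\in\Pi(\mu_0,\nu_0)$ and $\varepsilon>0$. By Theorem~\ref{mainpdes}, $V=U_f$ is continuous, so for each $(y,p)$ there is a bounded filtering control $h^{y,p}$ that is $\varepsilon$-optimal for $U_f(0,y,p)$. We need these controls to be chosen measurably in $(y,p)$. To do this, we use continuity of $V$ together with local uniform continuity, which comes from the linear growth and the compactness of $\Sigma_K$. We partition a large compact subset of $\R^d\times\Sigma_K$ into finitely many small Borel cells $C_j$ with representatives $(y_j,p_j)$. We then check that a control which is $\varepsilon$-optimal at $(y_j,p_j)$ remains $2\varepsilon$-optimal from nearby starting points. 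This stability step is the \textbf{main obstacle}. For $y$, I would shift the path, using the Lipschitz continuity of $b_\nu,\sigma_\nu,f_0,g_0$ and the boundedness of $h$. For $p$, the natural approach is to use the structure $z^i=p^ih^i$: starting the Kushner equation \eqref{Kushner} from a nearby $p$ with the same $h$ breaks \eqref{hcondp}. Instead I would use the linearity in $p$ of the unnormalized (Zakai) equation, or argue by mixing: write a nearby $p$ as a convex combination and use concavity of $V$ in $p$, which follows from $-\lambda_{\max}(D^2_{pp}V)\ge0$ in $H_{vi}$. The tail outside the compact set is controlled by linear growth. Having made these choices, we define $h(t,Y_\cdot,i):=h^{j}(t,Y_\cdot,i)$ on $\{(Y_0,R_{Y_0})\in C_j\}$. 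We then build $\pi$ by drawing $Y_0\sim\nu_0$, then $X_0\sim R_{Y_0}$, running $X$ as the chain with generator $\Lambda$, and solving \eqref{filteringcond} with drift $h$ (weak existence via Girsanov, since $h$ is bounded).

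The remaining checks follow the proof of Proposition~\ref{inclusions}, applied conditionally on $\cF_0$. First, condition \eqref{hcond} holds conditionally on $Y_0$, so the $Y$-marginal is $\nu$. Second, causality holds: given $X$ and $Y_0$, the construction of $Y$ is adapted, and $Y_0$ depends on $X$ only through $X_0$. Finally, summing the $2\varepsilon$-optimal rewards over the cells and sending $\varepsilon\to0$ gives $V_c(\mu,\nu)\ge V_f$-type value $\ge\int V(0,y,R_y)\nu_0(dy)$, which yields \eqref{eq:initialdistthm}.
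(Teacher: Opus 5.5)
Your upper bound is sound, and it takes a different route from the paper's. The paper conditions the coupling $\pi$ on $Y_0$ and uses the chain rule for conditional independence to show that $\pi(\cdot\mid Y_0)$ is causal. You instead condition the filter SDE of Proposition~\ref{lem:forward} on $\cF^Y_0$ and land directly in $\cA(0,y,R_y)$. For this you must rerun the martingale representation there with the nontrivial $\cF^Y_0=\sigma(Y_0)$, with initial value $R_{Y_0}$ in place of $\mu(X_0=i)$; this is harmless because $\tilde W$ is independent of $Y_0$.

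The gap is in the lower bound, at exactly the step you flag as the main obstacle. Nothing you propose actually produces, from a start $(y,p)$ near $(y_j,p_j)$, an admissible filtering control that is $2\varepsilon$-optimal.

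\emph{Zakai linearity.} Linearity of the unnormalized filter in its initial condition does not restore the constraint. With the same $h^j$ started from $p\neq p_j$, the normalized filter has $\sum_i h^j(t,Y_\cdot,i)p^i_t\neq0$. So \eqref{hcondp} fails and the $Y$-marginal is no longer $\nu$.

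\emph{Mixing.} Running the $q_k$-control on $\{U=k\}$ needs an auxiliary randomization $U$. The resulting coupling is then not of the form \eqref{filteringcond} that you build at the end, so its causality must be checked separately. Moreover, concavity is used in the wrong direction. It gives $V(0,y,p)\ge\sum_k\alpha_kV(0,y,q_k)$, which bounds the mixture's value from above. What you need is $\sum_k\alpha_kV(0,y,q_k)\ge V(0,y,p)-\varepsilon$, and that comes from uniform continuity of $V$ on compacts, not from concavity.

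\emph{The $y$-direction.} If ``shift the path'' means evaluating $h^j$ on $Y_\cdot-y+y_j$, this does not preserve \eqref{hcondp} when $b_\nu,\sigma_\nu$ depend on $y$, because the Kushner equation along the shifted path changes.

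\emph{Cell sizes.} Any stability constant depends on $\|h^j\|_\infty$. So the cells $C_j$ cannot be fixed before the controls are chosen.

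The paper avoids stability altogether by working with couplings rather than controls. It uses Theorem~\ref{mainpdes} at each deterministic start to identify $V_c(R_{y_0}\cdot\mu,\nu^{y_0})=V(0,y_0,R_{y_0})$. It then selects $\varepsilon$-optimal $\pi_{y_0}\in\Pi_c(R_{y_0}\cdot\mu,\nu^{y_0})$ universally measurably in $y_0$ alone, via Jankov--von Neumann on the Polish space $\cP(\Omega)$; the filter variable enters only through $R_{y_0}$. Finally it glues $\pi=\int\pi_{y_0}\,\nu_0(dy_0)$ and checks the marginals and causality. The only new ingredient there is $X\amalg_{(X_s)_{s\le t}}Y_0$, which holds because $Y_0$ interacts with $X$ only through $X_0$.

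If you want to keep your control-level route, which would additionally show that filtering couplings attain the value for random $Y_0$, here is a workable repair. Express $h^j$ as a functional of the innovation $\tilde W$. From $(y,p)$, use the recentered control $h^j(t,\cdot,i)-\sum_k h^j(t,\cdot,k)q^k_t$, where $q$ solves the full Kushner--Stratonovich equation from $p$. This control satisfies \eqref{hcondp} by construction. The resulting reward is continuous in $(y,p)$, with a modulus depending on $\|h^j\|_\infty$. A finite subcover of the compact set, followed by a Borel refinement, then gives the measurable choice.
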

\begin{remark} Problem
\eqref{eq:initialdistthm} is a weak optimal transport problem, in the sense of Gozlan, Roberto, Samson and Tetali \cite{GoRoSaTe17}, between the time-zero marginals of the laws of the processes.
\end{remark}

\subsection{Approximation and Duality}
\nd
We now show that the value function $V$ characterized in
Theorem~\ref{mainpdes} admits a dual representation in terms of a
more tractable auxiliary singular stochastic control problem,
yielding an approximation from above. We then present a
complementary approximation from below, obtained by truncating the
controls in the state-constrained formulation \eqref{dynamiccot}.
\vspace{1mm}

\textit{Dual representation.} We derive a dual representation for $V$
by reformulating \eqref{dynamiccot} as an optimization over the
terminal random variable $p_T$. By Theorem~\ref{mainpdes},
\be\label{eq:primal_state}
V(t,y,p)
= \sup_{p_T} \E^\nu\!\left[\sum_{i=1}^K g_0(i,Y_T)\,p^i_T
+ \int_t^T\!\!\sum_{i,j=1}^K p^i_T\,P_{i,j}(s-T)\,f_0(j,Y_s)\,ds\right],
\ee
where the supremum is taken over $\cF^Y_T$-measurable
$\Sigma_K$-valued random variables $p_T$ subject to
\begin{align}
&(\textup{C1})\quad
\sum_{i=1}^K \E^\nu\!\left[p^i_T\,\big|\,\cF^Y_s\right]
  P_{i,j}(s-T)\ge 0
\qquad\text{for all }j\in\mathbb{S}\text{ and }s\in[t,T],
\label{eq:Cone}\\
&(\textup{C2})\quad
\sum_{i=1}^K \E^\nu\!\left[p^i_T\right]P_{i,j}(t-T)= p^j
\qquad\text{for all }j\in\mathbb{S}.\label{eq:Ctwo}
\end{align}

\nd
The constraints \eqref{eq:Cone}--\eqref{eq:Ctwo} arise from solving
the Kushner equation \eqref{eq:kushnerinnov} backward in time, as
follows. Recalling that $P(t)=e^{t\La}$, set $\tilde p_s:=p_s\,P(T-s)$.
Since $P(T-s)$ is deterministic with
$\tfrac{d}{ds}P(T-s)=-\La\,P(T-s)$, the product rule and
\eqref{eq:kushnerinnov} give
\[
d\tilde p^i_s
  = (dp_s\,P(T-s))^i - (p_s\La\,P(T-s))^i\,ds
  = \sum_{j=1}^K p^j_s\,h(s,Y_\cdot,j)^\top
    \sigma_\nu(s,Y_s)^{-1}P_{j,i}(T-s)\,d\tilde W_s,
\]
the deterministic drifts cancelling. Hence $\tilde p$ is an
$\cF^Y$-martingale, so
\begin{align}
\E^\nu\!\bigl[p_T\,\big|\,\cF^Y_s\bigr]=\tilde p_s=p_s\,P(T-s),
\qquad\text{equivalently}\qquad
p_s = \E^\nu\!\bigl[p_T\,\big|\,\cF^Y_s\bigr]\,P(s-T). \label{eq:tilde_p}
\end{align}
The requirement $p_s\in\Sigma_K$ for all $s\in[t,T]$ therefore reduces
to componentwise non-negativity --- since $P(s-T)$ has row sums equal
to one, the unit-sum is automatic --- which is exactly \eqref{eq:Cone}.
The initial condition $p_t=p$ becomes \eqref{eq:Ctwo}.
\vspace{1mm}

\nd
We dualize \eqref{eq:Cone}--\eqref{eq:Ctwo} by introducing Lagrange
multipliers: a static $\lambda\in\R^K$ for the equality
\eqref{eq:Ctwo}, and a family of non-negative
$\cF^Y$-progressively measurable processes
$(\lambda^j_s)_{j\in\mathbb{S},\,s\in[t,T]}$ for the inequality
\eqref{eq:Cone}. The set of $\cF^Y_T$-measurable $\Sigma_K$-valued
$p_T$ is convex and weak-$\ast$ compact in $L^\infty$, and the
Lagrangian is linear in both $p_T$ and the multipliers, so Sion's
minimax theorem permits the interchange of $\sup$ and $\inf$. Since
the supremum of a linear functional over $\Sigma_K$ is attained at an
extreme point, we obtain
\be\label{eq:saddle}
\begin{split}
V(t,y,p) &= \inf_{\lambda,\,(\lambda^j_s)}
  \E^\nu\!\Bigg[\,\max_{i\in\mathbb{S}}\!\bigg(
     g_0(i,Y_T)
   - \sum_{j=1}^K P_{i,j}(t-T)\,\lambda^j\\
&\hspace{3cm}
   + \int_t^T \sum_{j=1}^K P_{i,j}(s-T)\,
     \bigl(f_0(j,Y_s)+\lambda^j_s\bigr)\,ds\bigg)\Bigg]
+ \sum_{j=1}^K p^j\lambda^j,
\end{split}
\ee
where the infimum is taken over $\lambda\in\R^K$ and over
$\cF^Y$-progressively measurable $\R_+^K$-valued processes
$(\lambda^j_s)_{j\in\mathbb{S},\,s\in[t,T]}$ satisfying
$\sum_{j=1}^K\int_t^T\E^\nu[\lambda^j_s]\,ds<\infty$. Any admissible
choice of multipliers gives an upper bound on $V(t,y,p)$, yielding
the announced approximation from above.

\vspace{2mm}
\nd
\textit{Approximation from above.}
For each $i\in\mathbb{S}$, let
$G_0(i,\cdot,\cdot)\colon[0,T]\times\R^d\to\R$ denote the unique
bounded classical solution of the Kolmogorov backward equation
\be\label{eq:bkwd}
\partial_t G_0(i,t,y) + \cA_\nu G_0(i,t,y) = 0
  \quad\text{on }[0,T)\times\R^d,
\qquad G_0(i,T,y)=g_0(i,y),
\ee
where $\cA_\nu$ is given by \eqref{Ygenerator}. Existence is ensured
by Assumption~\ref{assumption1} and the boundedness of $g_0$ from
Assumption~\ref{costassumption}. We write
$G_0(t,y):=(G_0(i,t,y))_{i\in\mathbb{S}}\in\R^K$ for the resulting
vector. We further define the auxiliary value function
$u\colon[0,T]\times\R^d\times\R^K\to\R$ by
\be\label{eq:aux_u}
u(t,y,z)
:= \inf_{(\lambda^j_s)} \E^\nu\!\bigg[\max_{i\in\mathbb{S}}\bigg(z^i
+ \int_t^T \partial_y G_0(i,s,Y_s)\,\sigma_\nu(s,Y_s)\,dW_s
+ \int_t^T \sum_{j=1}^K P_{i,j}(s-T)\bigl(f_0(j,Y_s)+\lambda^j_s\bigr)\,ds\bigg)\bigg],
\ee
where $(Y_s)_{s\in[t,T]}$ solves \eqref{Ydynamics} with $Y_t=y$ and
the infimum is over $\cF^Y$-progressively measurable
$\R_+^K$-valued processes
$(\lambda^j_s)_{j\in\mathbb{S},\,s\in[t,T]}$ with
$\sum_{j=1}^K\int_t^T\E^\nu[\lambda^j_s]\,ds<\infty$.


\nd
For $N\in\N$, define
\be\label{eq:uN}
u^N(t,y,z)
:= \inf_{(\lambda^j_s)} \E^\nu\!\bigg[\max_{i\in\mathbb{S}}\bigg(z^i
+ \int_t^T \partial_y G_0(i,s,Y_s)\,\sigma_\nu(s,Y_s)\,dW_s
+ \int_t^T \sum_{j=1}^K P_{i,j}(s-T)\bigl(f_0(j,Y_s)+\lambda^j_s\bigr)\,ds\bigg)\bigg],
\ee
where the infimum is now over $\cF^Y$-progressively measurable
processes $(\lambda^j_s)_{j\in\mathbb{S},\,s\in[t,T]}$ satisfying
$0\le\lambda^j_s\le N$. Set
\be\label{eq:vN}
v^N(t,y,p)
:= \inf_{\lambda\in\R^K} \Bigl\{
   u^N\!\bigl(t,y,\,G_0(t,y)-P(t-T)\lambda\bigr)
   + p\cdot\lambda\Bigr\},
\ee
where $p\cdot\lambda:=\sum_{j=1}^K p^j\lambda^j$. The bound $\lambda^j_s\le N$ in \eqref{eq:uN} places $u^N$ within the scope of
standard numerical methods for stochastic control with bounded
controls (cf.\ \cite{kushner-dupuis2001,barles-souganidis1991,
fahim-touzi-warin2011,gobet-lemor-warin2005,han-jentzen-e2018,
hure-pham-warin2020,sirignano-spiliopoulos2018}).

\begin{theorem}\label{thm:dual}
Under Assumptions~\ref{assumption1} and \ref{costassumption}, we have for every $(t,y,p)\in[0,T]\times\R^d\times\Sigma_K$:
\begin{enumerate}
\item[\textup{(i)}] \emph{(Dual representation)} The value function
$V$ from Theorem~\ref{mainpdes} satisfies
\be\label{eq:final_representation}
V(t,y,p) = \inf_{\lambda\in\R^K}
  \Bigl\{u\bigl(t,y,\,G_0(t,y) - P(t-T)\lambda\bigr)
  + p\cdot\lambda\Bigr\}.
\ee
\item[\textup{(ii)}] \emph{(Approximation from above)} The sequence
$(v^N(t,y,p))_{N\in\N}$ is non-increasing, $v^N(t,y,p)\ge V(t,y,p)$
for every $N$, and
$$v^N(t,y,p)\;\downarrow\;V(t,y,p)\quad\text{as }N\to\infty.$$
\end{enumerate}
\end{theorem}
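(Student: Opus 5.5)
Part (i) will come from the primal reformulation \eqref{eq:primal_state} combined with a minimax argument, and part (ii) will then follow from monotonicity in $N$ and a truncation argument on the multipliers.

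\emph{Part (i).} The first step is to justify \eqref{eq:primal_state}. By Theorem~\ref{mainpdes}, $V=U_{sc}=U_f$. For any admissible $Z$, the computation leading to \eqref{eq:tilde_p} (run with $Z$ in place of $p^ih^i$) shows that $p_s=\E^\nu[p_T\mid\cF^Y_s]P(s-T)$. Rewriting the running cost then gives
\[
\E\int_t^T f(p_s,Y_s)\,ds=\E\int_t^T\sum_{i,j}p_T^iP_{ij}(s-T)f_0(j,Y_s)\,ds
\]
by the tower property, and $p_T$ satisfies (C1)--(C2).

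Conversely, take any $\cF^Y_T$-measurable $p_T$ satisfying (C1)--(C2). Define $p_s$ by the formula in \eqref{eq:tilde_p}. Martingale representation in the Brownian filtration of $Y$ (which holds under Assumption~\ref{assumption1}) produces $Z$ with $\sum_iZ^i=0$, because the row sums of $P$ equal one. The condition $\mathbf 1_{p^i=0}Z^i=0$ holds automatically, since a nonnegative continuous semimartingale has vanishing quadratic-variation density on its zero set. Hence $Z\in\cA(t,y,p)$, and the two suprema coincide.

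Next I dualize. Form the Lagrangian with $\lambda\in\R^K$ for (C2) and $\lambda^j_s\ge0$ for (C1), and apply Sion's theorem on the weak-$*$ compact convex set of $\Sigma_K$-valued $p_T$. Taking $\sup_{p_T}$ pointwise then yields \eqref{eq:saddle}.

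Finally I rewrite $g_0(i,Y_T)$. Itô's formula applied to $G_0$ solving \eqref{eq:bkwd} gives
\[
g_0(i,Y_T)=G_0(i,t,y)+\int_t^T\partial_yG_0\,\sigma_\nu\,dW.
\]
Setting $z=G_0(t,y)-P(t-T)\lambda$ turns the bracket in \eqref{eq:saddle} into exactly the quantity inside $u$ in \eqref{eq:aux_u}, and this gives \eqref{eq:final_representation}.

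\emph{Part (ii).} Monotonicity and the bound $v^N\ge V$ are immediate, since $u^N$ is an infimum over a set that increases in $N$ and is contained in the admissible set for $u$. Thus $u^N\downarrow$ and $u^N\ge u$, so $v^N\ge V$ by (i).

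For convergence, fix $\varepsilon>0$ and pick an $\varepsilon$-optimal pair $(\lambda,(\lambda^j_s))$ for \eqref{eq:saddle} with $\E\int\lambda_s\,ds<\infty$. Set $\lambda^{j,N}_s:=\lambda^j_s\wedge N$. The map inside the expectation is Lipschitz in $\int_t^T\lambda_s\,ds$ with constant $\max_{i,j,s}|P_{ij}(s-T)|$. Hence the difference in cost is at most $C\,\E\int(\lambda^j_s-N)^+\,ds$, which tends to $0$ by dominated convergence. Therefore
\[
\limsup_N v^N\le u(t,y,z_\lambda)+p\cdot\lambda+o(1)\le V+\varepsilon+o(1).
\]

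\textbf{Main obstacle.} I expect the hardest step to be the rigorous minimax interchange in (i). Sion's theorem needs compactness on the $p_T$ side and lower semicontinuity of the Lagrangian in $p_T$ for the weak-$*$ topology. This requires the Lagrangian to be continuous in $p_T$ with respect to $\sigma(L^\infty,L^1)$, which holds because the $\lambda$-terms and cost terms are integrable and linear in $p_T$.

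A second difficulty is avoiding a duality gap for the inequality constraint (C1). This is handled as follows. The multiplier term $\int\sum_j\lambda^j_s(\cdots)\,ds$ can be rewritten, via conditional expectation, as $\E[\sum_i p_T^i\int P_{ij}(s-T)\lambda^j_s\,ds]$. The Lagrangian is therefore affine in $p_T$, and Sion's theorem applies without needing attainment on the multiplier side.
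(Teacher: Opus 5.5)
Your proposal is correct and follows essentially the same route as the paper. For (i), the paper starts from the saddle representation \eqref{eq:saddle}, which it obtains in the text from the terminal-filter reformulation \eqref{eq:primal_state} and Sion's theorem, and then substitutes the It\^o expansion of $G_0$ to identify $u$; for (ii), it uses the nested feasible sets and the truncation $\lambda^{*,j}_s\wedge N$ with dominated convergence, exactly as you do. The only difference is that you also justify \eqref{eq:primal_state} itself, via martingale representation and the occupation-time argument giving $\mathbf 1_{\{p^i=0\}}Z^i=0$, whereas the paper asserts it without proof, and your argument for that step is sound.
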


\vspace{2mm}
\nd
\textit{Approximation from below.} We now construct an approximation
$V^N\uparrow V$ by truncating the controls $Z$ in the
state-constrained formulation \eqref{dynamiccot}. Throughout,
$|z^i|$ and $|Z^i_s|$ denote the Euclidean norm on $\R^d$.

\nd
For $N\in\N$, define the truncated Hamiltonian $H_{sc}^N\colon[0,T)\times\R^d\times\Sigma_K\times\R\times\R^{K+d}\times\cS_{K+d}\to\R$ by
\begin{align}
&H_{sc}^N\!\left(t,y,p,\,\partial_tV,\,D_{(p,y)}V,\,
  D^2_{(p,y)}V\right)\notag\\
  &\qquad:= -\partial_tV
     - \beta(t,y,p)\cdot D_{(p,y)}V
     - \sup_{\substack{z\in\R^{K\times d}\\[1pt]\sum_{i=1}^K z^i=0\\[1pt]
                       |z^i|\le N\ \forall i\in\mathbb{S}}}
       \tfrac{1}{2}\tr\!\bigl(\widetilde{Q}(t,y,z)\;D^2_{(p,y)}V\bigr)
     - f(p,y),\label{HscN}
\end{align}
with $\beta$ and $\widetilde{Q}$ as in \eqref{jointdrift} and
\eqref{Qtilde}. This is the HJB operator associated with
\eqref{dynamiccot} under the additional admissibility constraint
$|Z^i_s|\le N$ for all $i\in\mathbb{S}$ and $s\in[t,T]$.

\begin{theorem}\label{Nmainpdes}
Suppose Assumptions~\ref{assumption1} and \ref{costassumption}
hold. Then the state-constrained problem
\be\label{NPDE}
\begin{cases}
    H_{sc}^N\!\left(t,y,p,\,\partial_tV^N,\,D_{(p,y)}V^N,\,
      D^2_{(p,y)}V^N\right)\leq0,
    & \text{in }[0,T)\times\R^d\times\Sigma_K,\\[4pt]
    H_{sc}^N\!\left(t,y,p,\,\partial_tV^N,\,D_{(p,y)}V^N,\,
      D^2_{(p,y)}V^N\right)\geq 0,
    & \text{in }[0,T)\times\R^d\times\Sigma_K^\circ,\\[4pt]
    \mathcal{B}_0\!\left(t,y,p,\,\partial_tV^N,\,D_{(p,y)}V^N,\,
      D^2_{(p,y)}V^N\right)\ge 0,& \text{in }[0,T)\times\R^d\times\pa\Sigma_K,\\
    V^N(T,y,p)=g(p,y),
    & \text{in }\R^d\times\Sigma_K,
\end{cases}
\ee
admits a unique viscosity solution
$V^N\colon[0,T]\times\R^d\times\Sigma_K\to\R$ (in the sense of
Definition~\ref{def:stateconstrvisc}) of linear growth in $y$.
Moreover, the sequence $(V^N)_{N\in\N}$ is non-decreasing,
$V^N(t,y,p)\le V(t,y,p)$ for every $N$, and
$$V^N(t,y,p)\;\uparrow\;V(t,y,p)\quad\text{as }N\to\infty,$$
for every $(t,y,p)\in[0,T]\times\R^d\times\Sigma_K$, where $V$ is
as in Theorem~\ref{mainpdes}.
\end{theorem}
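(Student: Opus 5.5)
We identify $V^N$ with the truncated state-constrained control problem
\[
U^N_{sc}(t,y,p):=\sup_{Z\in\cA^N(t,y,p)}\E^\nu\Bigl[\int_t^T f(p_s,Y_s)\,ds+g(p_T,Y_T)\Bigr],
\]
where $\cA^N(t,y,p)$ consists of the controls in $\cA(t,y,p)$ that also satisfy $|Z^i_s|\le N$. We then run the three-step scheme used for Theorem~\ref{mainpdes}.

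\textbf{Existence and uniqueness.} Since the controls are now bounded, the Hamiltonian $H^N_{sc}$ is finite and continuous, so no variational-inequality modification is needed. Stochastic Perron's method in its state-constrained form, as in \cite{rokhlin2014stochastic}, carries over verbatim. The supremum of stochastic subsolutions, $v^{N,+}$, is a viscosity supersolution in $\Sigma_K^\circ$. On $\partial\Sigma_K$ it also satisfies the boundary inequality $\mathcal{B}_0\ge0$, because the control $z=0$ is always admissible and keeps $p$ in $\Sigma_K$ (Lemma~\ref{invariance}). The infimum of stochastic supersolutions, $v^{N,-}$, is a viscosity subsolution on all of $\Sigma_K$.

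We then need a comparison principle for \eqref{NPDE}. We would adapt the one proved for \eqref{correctPDE}: the doubling-of-variables argument with an inward shift of the $p$-variable, in the spirit of \cite{soner1986optimal,ishii2002class}, together with a linear-growth penalization in $y$. Boundedness of $z$ only simplifies the estimates, since $H^N_{sc}$ is Lipschitz in the Hessian uniformly in the controls. The terminal condition is handled by checking continuity of $v^{N,\pm}$ at $t=T$, using bounded controls and Lipschitz $g$. Comparison then gives $v^{N,+}=v^{N,-}=U^N_{sc}=V^N$, which is continuous and has linear growth.

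\textbf{Monotonicity and upper bound.} The admissible sets $\cA^N$ increase in $N$, so $V^N=U^N_{sc}$ is non-decreasing in $N$. Since $\cA^N\subset\cA$, we get $V^N\le U_{sc}=V$ by Theorem~\ref{mainpdes}. Alternatively, this follows from comparison: $V^N$ is a subsolution of \eqref{correctPDE}. Indeed, at a test point with $\lambda_{\max}(D^2_{pp}\varphi)\le 0$ we have $H_{sc}\le H^N_{sc}$, and otherwise $H_{vi}\le -\lambda_{\max}<0$. Hence $H_{vi}\le H^N_{sc}\le 0$ in all cases.

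\textbf{Convergence, the main step.} Let $\bar V:=\lim_N V^N=\sup_N V^N\le V$. It is lower semicontinuous as a supremum of continuous functions. We show that $\bar V$ is a viscosity supersolution of \eqref{correctPDE}; comparison then yields $V\le\bar V$, hence $V^N\uparrow V$ pointwise.

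On the boundary, $\mathcal{B}_0\ge0$ passes to the limit by the standard stability argument, since $\mathcal{B}_0$ does not depend on $N$.

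In the interior, take a strict local minimum of $\bar V-\varphi$ at an interior point. For the half-relaxed lower limit of a monotone sequence, there are minimum points $(t_N,y_N,p_N)$ of $V^N-\varphi$ converging to this point, with $V^N(t_N,y_N,p_N)\to\bar V$ there. At these points $H^N_{sc}(\cdot,D\varphi,D^2\varphi)\ge0$, and we must show $H_{vi}\ge0$ in the limit.
\begin{itemize}
\item If $\lambda_{\max}(D^2_{pp}\varphi)\ge0$ at the limit point, then $-\lambda_{\max}\le 0$, and by the definition of $H_{vi}$ as a minimum it remains to show $H_{sc}\ge0$ there.
\item If $\lambda_{\max}<0$, the supremum over $z$ in $H^N_{sc}$ is eventually attained in a bounded set, uniformly near the point. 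So $H^N_{sc}=H_{sc}$ for large $N$, and the inequality passes to the limit.
\end{itemize}

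The delicate case is when $D^2_{pp}\varphi$ has a positive direction on the constraint subspace $\{\sum_i z^i=0\}$ but $\lambda_{\max}$ is attained off it. We would first try the standard fact for Perron/singular problems: $H^N_{sc}\ge0$ for all large $N$ forces $\sup_{\sum z=0}\tr(z\sigma^{-2}z^\top X)\le0$ in the limit, because otherwise the quadratic term grows like $N^2$ and makes $H^N_{sc}\to-\infty$. Hence $X$ is negative semidefinite on the relevant subspace. If needed, we would replace $\lambda_{\max}(D^2_{pp})$ by its restriction to the tangent space of $\Sigma_K$, which is the form the comparison proof presumably uses. Then the supremum in $H_{sc}$ is a limit of the supremum in $H^N_{sc}$, and $H_{sc}\ge0$ follows.
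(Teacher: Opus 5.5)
Your existence/uniqueness, monotonicity and $V^N\le V$ steps match the paper's level of detail. The paper likewise identifies $V^N$ with \eqref{dynamiccot} restricted to $|Z^i_s|\le N$ and simply asserts that the Perron and comparison arguments adapt.

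The convergence step is where you genuinely differ, and your route is correct. The paper runs the two-sided half-relaxed-limit argument: it shows $\limsup^*V^N$ is a subsolution and $\liminf_*V^N$ a supersolution of \eqref{correctPDE}, then compares them. You use monotonicity instead. For a nondecreasing sequence of continuous functions, $\liminf_*V^N=\sup_NV^N=\bar V$. You already have $\bar V\le V$, and $V$ solves \eqref{correctPDE} by Theorem~\ref{mainpdes}. Comparing the subsolution $V$ with the supersolution $\bar V$ then gives $V\le\bar V$. This removes the paper's Step~1 entirely; it is redundant there too, since the paper's Step~3 already uses $V^N\le V$. The two-sided route is what one would need for non-monotone approximations, e.g.\ numerical schemes, where no a priori ordering against $V$ is available.

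Four steps need tightening.

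(a) Your first bullet has the logic reversed. If $\lambda_{\max}>0$, then $H_{vi}\le-\lambda_{\max}<0$ whatever $H_{sc}$ is, so this case must be excluded, not reduced to showing $H_{sc}\ge0$. Your ``delicate case'' paragraph does exclude it: the $N^2$ blow-up is exactly the paper's ``obstacle is forced'' step, and the paper reads the obstacle in reduced coordinates as you propose (Lemma~\ref{viscositysol}). A simpler route: for each fixed $z$ with $\sum_iz^i=0$ one eventually has $|z^i|\le N_k$. Hence $\liminf_k\sup_{|z^i|\le N_k}\{\cdots\}\ge\sup_z\{\cdots\}$ at the limit point, and so $0\le\limsup_kH^{N_k}_{sc}\le H_{sc}(\varphi)(t_0,y_0,p_0)$. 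Finiteness of the supremum then forces the obstacle, and you need neither attainment of the supremum nor convergence $H^{N_k}_{sc}\to H_{sc}$.

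(b) At a point of $\partial\Sigma_K$, the minimizers $(t_k,y_k,p_k)$ may lie in $\Sigma_K^\circ$, where only $H^{N_k}_{sc}\ge0$ is known. What passes to the limit is $\mathcal B_0\ge H^{N_k}_{sc}\ge0$, obtained by taking $z=0$ in the supremum. The $N$-independence of $\mathcal B_0$ is not the reason.

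(c) To invoke comparison you must check linear growth and \eqref{terminalassumption} for the pair $(V,\bar V)$. Both follow from $v_0\le V^1\le\bar V\le V$, where $v_0$ is the $Z\equiv0$ value, which is continuous up to $t=T$ with terminal value $g$.

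(d) For the comparison for \eqref{NPDE}, the relevant property is not Lipschitz dependence of $H^N_{sc}$ on the Hessian. The proof for \eqref{correctPDE} discards the $p$-second-order terms using the signs $X<0\le Y$ furnished by the obstacle, and these are unavailable here. You therefore need the usual Ishii-lemma structure estimate for the controlled diffusion $(z\sigma_\nu^{-1},\sigma_\nu)$, which is Lipschitz in $y$ uniformly over $|z^i|\le N$. This also requires a common penalization scale in $p$ and $y$, not the $L$ versus $\sqrt L$ split. The paper is equally brief on this point.
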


\nd
We now provide a verification theorem characterizing $V_c$ in the
case where \eqref{correctPDE} admits a smooth solution. The proof
follows standard arguments and is omitted.

\begin{theorem}\label{verification}
Suppose that Assumption~\ref{assumption1} holds, except for the irreducibility
of $X$, that Assumption~\ref{costassumption}
holds, and that the variational inequality \eqref{correctPDE} admits a
solution
\[
V\in C^{1,2,2}\!\bigl([0,T)\times\R^d\times\Sigma_K\bigr)
\,\cap\, C^{0}\!\bigl([0,T]\times\R^d\times\Sigma_K\bigr).
\]
Suppose further that the maximizer $Z^*(t,y,p)$ in \eqref{Hsc} is
such that \eqref{eq:dynfiltercot} driven by $Z^*$ admits a
$\Sigma_K$-valued solution. Then the conclusions of
Theorem~\ref{mainpdes} hold.
\end{theorem}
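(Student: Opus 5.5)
The approach is a standard verification argument run in both directions. We show $U_{sc}\le V$ and $V\le U_f$, and then combine these with $U_f\le U_{sc}$ from \eqref{scineq} (which holds at every initial point $(t,y,p)$, not only at $(0,y_0,p_0)$). Uniqueness of the smooth solution then follows, and so does the identity $V_c(\mu,\nu)=V(0,y_0,p_0)$ under Assumption~\ref{initialmeasures}.

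For the upper bound, fix $(t,y,p)$ and an admissible $Z\in\cA(t,y,p)$. By Lemma~\ref{invariance} the controlled pair $(p_s,Y_s)$ stays in $\Sigma_K\times\R^d$. Apply It\^o's formula to $V(s,Y_s,p_s)$ on $[t,\tau_n\wedge(T-\varepsilon)]$, where $\tau_n$ localizes both the stochastic integrals and $|Z|$. The drift equals $\partial_tV+\beta\cdot D_{(p,y)}V+\tfrac12\tr(\widetilde Q(s,Y_s,Z_s)D^2_{(p,y)}V)$.

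The subsolution inequality $H_{vi}\le0$ holds classically on all of $\Sigma_K$, boundary included. It gives either $H_{sc}\le0$ directly, or $\la_{\max}(D^2_{pp}V)\ge0$. In the latter case we must still extract $H_{sc}\le 0$, and this requires care. The cleaner route is to observe that the supersolution side forces $D^2_{pp}V\le0$ on the constraint subspace in the interior, by the min structure and $\ge0$. By continuity this extends to $\Sigma_K$, so the supremum in \eqref{Hsc} is finite. Since $H_{vi}\le0$ holds, we then get $H_{sc}\le0$ wherever the eigenvalue term is strictly positive, and at degenerate points we use continuity.

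Given $H_{sc}\le0$, each admissible $z$ with $\sum_iz^i=0$ yields a drift $\le -f$. Hence
\[V(t,y,p)\ge \E\Big[\int_t^{\tau_n\wedge(T-\varepsilon)}f(p_s,Y_s)\,ds+V(\tau_n\wedge(T-\varepsilon),Y,p)\Big].\]
Letting $n\to\infty$ and $\varepsilon\to0$ uses the linear growth in $y$ and boundedness in $p$ (from compactness of $\Sigma_K$). It also uses continuity up to $T$ with $V(T,\cdot)=g$ and the moment bounds on $Y$. This gives $V\ge U_{sc}$.

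For the lower bound, use the feedback maximizer $Z^*$. By hypothesis \eqref{eq:dynfiltercot} has a $\Sigma_K$-valued solution with $Z^*$. On the interior, the supersolution inequality together with the subsolution inequality gives $H_{sc}=0$ wherever $-\la_{\max}>0$. Along $Z^*$ the It\^o drift equals $-f$, so It\^o's formula yields equality $V=\E[\int f+g]$. This shows $U_{sc}\ge V$.

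To reach $U_f$ we must produce filtering controls, which is the main obstacle. First we need $h^i=Z^{*,i}/p^i$, and $Z^{*,i}$ must vanish when $p^i=0$. On $\pa\Sigma_K$ the boundary operator $\mathcal B_0\ge0$ corresponds to the control $z=0$ and suggests choosing $Z^{*,i}=0$ there. Second, $h$ must be bounded. My plan is to truncate: define $h^{N,i}=Z^{*,i}/p^i$ on $\{p^i\ge 1/N\}$ and $0$ otherwise, recentered to satisfy \eqref{hcondp}. The resulting $U_f$-values then converge to $V$ by stability of the It\^o expansion. If this fails, I would instead invoke the Perron/comparison machinery of Theorem~\ref{mainpdes}, which remains valid without irreducibility, to identify $U_f=U_{sc}=V$.
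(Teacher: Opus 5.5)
Your overall plan (It\^o verification in both directions, then a bridge from $U_{sc}$ to $U_f$) is reasonable. However, the domination step uses the wrong side of the variational inequality. Set $L^z:=-\partial_tV-\beta\cdot D_{(p,y)}V-\tfrac12\tr\bigl(\widetilde Q(t,y,z)D^2_{(p,y)}V\bigr)-f$. Then $H_{sc}=\inf_{\sum_iz^i=0}L^z$, and under a control $Z$ the drift of $V(s,Y_s,p_s)+\int_t^sf(p_r,Y_r)\,dr$ is $-L^{Z_s}$.

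To get $V\ge U_{sc}$ you need $L^z\ge0$ for every admissible $z$, i.e.\ $H_{sc}\ge0$. The inequality $H_{sc}\le0$ that you extract from the subsolution side only says $\inf_zL^z\le0$. That means the best available drift is at least $-f$, which is the ingredient for the opposite bound. Your ``continuity at degenerate points'' does not rescue this: $H_{sc}$ is an infimum of continuous functions, hence only upper semicontinuous, so $H_{sc}\le0$ does not pass to limits.

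The fix is to use the supersolution inequality itself, not just its eigenvalue part. On $\Sigma_K^\circ$, $H_{vi}\ge0$ gives $L^z\ge0$ for each fixed $z$ with $\sum_iz^i=0$. Since $V\in C^{1,2,2}$ up to $\pa\Sigma_K$, each $L^z$ is continuous on $[0,T)\times\R^d\times\Sigma_K$, so $L^z\ge0$ everywhere. Your localization argument then gives $V\ge U_{sc}$; neither the subsolution inequality nor $\mathcal B_0$ is needed for this direction.

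Conversely, in the $Z^*$ step you need $H_{sc}(V)\le0$ at every point the optimal path visits. The VI gives this only where $\la_{\max}(D^2_{pp}V)<0$ (including on $\pa\Sigma_K$, via the subsolution inequality). On $\{\la_{\max}(D^2_{pp}V)=0\}$ it gives only $H_{sc}\ge0$, so ``drift $=-f$ along $Z^*$'' must be read into the hypothesis on $Z^*$.

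The second gap is the bridge to $U_f$, which you have not established. This step is what yields $V_c(\mu,\nu)\ge V(0,y_0,p_0)$, since \eqref{scineq} only gives $V_c\le U_{sc}$.

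Your fallback is unavailable. The comparison principle of Subsection~\ref{comp} rests on Lemma~\ref{1st}, i.e.\ on a $\psi$ with $\widetilde{p'\Lambda}\cdot D\psi\ge1$ on $\pa\Sigma$. That $\psi$ is built from irreducibility and cannot exist without it: $\widetilde{p'\Lambda}\equiv0$ when $\Lambda=0$, as in Example~\ref{ex:constant}, and it vanishes at the absorbing vertex in Example~\ref{ex:absorption}. Irreducibility is exactly the hypothesis the verification theorem is meant to dispense with.

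The truncation, in turn, is only asserted. Nothing ensures that the filter driven by the truncated, recentred feedback exists, since no regularity of $Z^*$ is assumed. Moreover, ``stability of the It\^o expansion'' has to control precisely the region near $\pa\Sigma_K$ where $Z^{*,i}/p^i$ blows up. In Example~\ref{ex:constant}, $z^*_1/p_1\sim-\sqrt{2\log(1/p_1)/\tau}$ as $p_1\downarrow0$, and the optimal $p_T$ is a vertex.

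A workable handle is the identity from It\^o's formula and localization:
$\E^\nu[\int_t^Tf(p^h_s,Y_s)\,ds+g(p^h_T,Y_T)]=V(t,y,p)-\E^\nu\int_t^TL^{Z^h_s}(s,Y_s,p^h_s)\,ds$,
valid for any bounded filtering control $h$ with $Z^{h,i}=p^{h,i}h^i$, where $L^z\ge0$ by the corrected first step. You must then exhibit bounded $h$ that make this loss small. Alternatively, realize the $Z^*$-filter directly by a causal coupling, as Example~\ref{ex:constant} does by hand.
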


\subsection{Extensions and Future Perspectives}
A question for future investigation is whether $X$ can be allowed to have
a continuous state space. In that case, although the stochastic optimal
control problems \eqref{dynamicfcot} and \eqref{dynamiccot} can still be
formulated, the PDE expected to be satisfied by the value functions is
defined on an infinite-dimensional space and, once again, has a state
constraint. To our knowledge, the methods used here to obtain a
comparison principle for \eqref{correctPDE} do not extend to infinite
dimensions, and new techniques are required to address this problem.
\vspace{2mm}

\nd
However, we believe that the causal optimal transport value for a
finite-state process $X$ may be useful for approximating the causal
optimal transport value for a continuous-state process $X$. Consider the
stochastic differential equation (together with an initial condition)
    \be \label{Xdynamics}
    dX_t=b_{\mu}(t,X_t)dt+\sigma_{\mu}(t,X_t)dW_t,
    \ee
    and assume that this equation satisfies pathwise uniqueness. Let
    $\mu$ be the law of its solution $X$. As explained in
    \cite[Theorem 8.1]{KuDi78}, if the coefficients of this equation
    are bounded, one can construct a sequence $\mu_n$ converging weakly
    to $\mu$, where $\mu_n$ is the law of a continuous-time Markov
    process $X^n$ with a countable state space. By a further
    approximation argument, we may assume that $X^n$ evolves in a
    finite state space. We show how the causal transport problem from
    $\mu$ to $\nu$ relates to the causal transport problem from $\mu_n$
    to $\nu$, at least in one direction:

\begin{proposition}\label{thm.cv}Suppose Assumption~\ref{costassumption} holds, and suppose that
$\mu$ and each $\mu_n$ are Feller, with $\mu_n\to\mu$ weakly. Then,
    $$\limsup_{n\to \infty}V_c(\mu_n,\nu)\le V_c(\mu,\nu).$$
\end{proposition}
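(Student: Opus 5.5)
The plan is a compactness argument for causal couplings under weak convergence. Causality is not closed under weak convergence in general. Here, however, the Feller property of the limit $\mu$ should rescue closedness.

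We first pass to a subsequence realizing the $\limsup$. For each $n$ we choose $\pi_n\in\Pi_c(\mu_n,\nu)$ that is $1/n$-optimal, so that $\E^{\pi_n}[\cC(X,Y)]\ge V_c(\mu_n,\nu)-1/n$. The marginals $\mu_n\to\mu$ and $\nu$ are tight on the Polish spaces $\Omega^X$ (Skorokhod) and $\Omega^Y$. Hence $(\pi_n)$ is tight on $\Omega$, and after extracting a further subsequence $\pi_n\to\pi$ weakly with $\pi\in\Pi(\mu,\nu)$.

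The second step is upper semicontinuity of the cost. By Assumption~\ref{costassumption}, $f_0$ and $g_0$ are Lipschitz in $y$ and, as used elsewhere in the paper, bounded. Since $\mathbb S$ is discrete, the map $(x,y)\mapsto\int_0^T f_0(x_s,y_s)\,ds$ is continuous at every $x$ whose jump set has zero Lebesgue measure, which holds for all càdlàg paths. The terminal term $g_0(x_T,y_T)$ is continuous at every $x$ with no jump at $T$, and this holds $\mu$-a.s. by the Feller/quasi-left-continuity property of $\mu$. By the continuous mapping theorem, $\E^{\pi_n}[\cC]\to\E^\pi[\cC]$. It follows that $\limsup_n V_c(\mu_n,\nu)\le \E^\pi[\cC]$.

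The third step, which I expect to be the main obstacle, is to show $\pi\in\Pi_c(\mu,\nu)$. I would use characterization (iv) of Proposition~\ref{causalprop}, or equivalently (ii), through test functions. Causality of $\pi_n$ is equivalent to
\[
\E^{\pi_n}\big[\,\phi_t(Y)\,\big(F(X)-\E^{\mu_n}[F(X)\mid\cF^X_t]\big)\big]=0
\]
for all bounded continuous $\cF^Y_t$-measurable $\phi_t$ and bounded continuous $F$. It suffices to take $F(X)=\prod_k f_k(X_{t_k})$ with $t_k\ge t$. The Markov and Feller properties give $\E^{\mu_n}[F\mid\cF^X_t]=\Psi_n(X_t)$ for explicit semigroup compositions $\Psi_n$. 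The key point is to show $\Psi_n\to\Psi$ uniformly on compacts. I would derive this from weak convergence of the Feller processes $\mu_n\to\mu$, which by standard results (Kurtz/Ethier–Kurtz) is equivalent to convergence of semigroups, provided the initial laws converge; if that is not available directly I would assume it as part of the Feller hypothesis. Together with $\pi_n\to\pi$, this lets us pass to the limit in the identity for times $t,t_k$ outside the countable set of fixed jump times. Right-continuity and a monotone class argument then extend the identity to all $t$ and all $\cF^X_T$-measurable $F$, which gives causality of $\pi$.

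Finally, $\limsup_n V_c(\mu_n,\nu)\le\E^\pi[\cC]\le V_c(\mu,\nu)$, which proves Proposition~\ref{thm.cv}.
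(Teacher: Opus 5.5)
Your route is the paper's: near-optimal $\pi_n\in\Pi_c(\mu_n,\nu)$, a weak limit point by tightness, and closedness of causality. The paper proves closedness by testing against $\phi_t(Y)\bigl(F(X)-\E^{\mu_n}[F\mid\cF^X_t]\bigr)$. It uses the Markov property to write the conditional expectation as an iterated semigroup expression $\Psi_n(X_t)$, whose convergence comes from semigroup convergence (a backward recursion plus Kallenberg's Theorem~19.25).

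Your explicit limit in the cost is a useful addition, since the paper hides it in ``by tightness''. Two adjustments are needed there. In the intended application $\mu$ is a continuous-state diffusion, so ``$\mathbb S$ is discrete'' must be replaced by joint continuity of $f_0,g_0$ in $(x,y)$. For the terminal term no quasi-left-continuity is needed, because $x\mapsto x_T$ is continuous on $D([0,T])$.

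The step that fails as written is the reduction of the test class. Take $\phi_t$ a function of $Y$ alone and $F=\prod_k f_k(X_{t_k})$ with $t_k\ge t$. The resulting identities only say that $(Y_s)_{s\le t}$ and $(X_s)_{s\ge t}$ are conditionally independent given $X_t$. For Markov $X$, causality at time $t$ says more: the pair $\bigl((Y_s)_{s\le t},(X_s)_{s\le t}\bigr)$ is conditionally independent of $(X_s)_{s\ge t}$ given $X_t$. Correspondingly, a monotone class argument over your products reaches only $\sigma(X_s:s\ge t)$-measurable $F$, not all of $\cF^X_T$.

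The loss is real. Let $X$ be the stationary chain on $\{-1,1\}$ with jump rate $a$ (and $T\ge 2$), and set $\phi(X):=\tfrac12+c\,(X_0-e^{-a}X_{1/2})X_2$ with $c>0$ small. Couple $X$ with a Brownian motion $Y$ so that $\pi(Y_1>Y_{1/2}\mid X)=\phi(X)$ and, given $\mathbf 1_{\{Y_1>Y_{1/2}\}}$, $Y$ is independent of $X$. For $t\ge\tfrac12$ we have $\E[X_0\mid X_t]=e^{-a}\E[X_{1/2}\mid X_t]$, so the Markov property gives $\E[\phi(X)\mid (X_s)_{s\ge t}]=\tfrac12$. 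Also $(Y_s)_{s\le 1/2}$ is independent of $X$. Hence all of your reduced identities hold for every $t$. Yet $\phi(X)$ is not $\cF^X_1$-measurable, so $\pi$ is not causal.

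The repair is cheap. Use $F=G(X_{s_1},\dots,X_{s_m})\prod_k f_k(X_{t_k})$ with $s_j\le t\le t_k$, for which $\E^{\mu_n}[F\mid\cF^X_t]=G(X_{s_1},\dots,X_{s_m})\Psi_n(X_t)$. Your limit passage is unchanged, since Feller processes have no fixed jump times, and the monotone class argument now covers $\cF^X_T$. The paper's displayed reduction (times $t\le t_1\le\dots\le t_k$, conditioning on $X_t$) drops the past factor in the same way.

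Finally, your hesitation about $\Psi_n\to\Psi$ is justified. Kallenberg's Theorem~19.25 derives semigroup convergence from convergence in law for every convergent sequence of initial distributions. It does not follow from the single convergence $\mu_n\to\mu$, so this is an additional hypothesis, and the paper uses it implicitly as well.
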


\nd
We first need some preparation:

\begin{lemma}
    Let $\{\mu_n\}_n $ be a sequence of Feller processes converging weakly to a Feller process $\mu_\infty$. Suppose $\pi_n \in\Pi_{c}(\mu_n,\nu)$ are such that $\pi_n\to\pi_\infty$ weakly. Then $\pi_\infty \in\Pi_{c}(\mu_\infty,\nu)$.
\end{lemma}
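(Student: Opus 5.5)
We prove the lemma by checking causality of the limit through the conditional-independence characterization in Proposition~\ref{causalprop}(ii)/(iii). The marginals are immediate: since the projections are continuous, $\pi_\infty$ has $X$-marginal $\mu_\infty$ (because $\mu_n\to\mu_\infty$) and $Y$-marginal $\nu$. Hence $\pi_\infty\in\Pi(\mu_\infty,\nu)$.

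For causality we use a test-function form of (iii). Fix $t\in[0,T]$, a bounded continuous $\cF^Y_t$-measurable $\psi(Y)$, a bounded continuous $\cF^X_t$-measurable $\phi(X)$, and a bounded continuous $\cF^X_T$-measurable $\xi(X)$. Causality of $\pi_n$ is equivalent to
\[
\E^{\pi_n}\bigl[\psi(Y)\phi(X)\bigl(\xi(X)-\E^{\mu_n}[\xi(X)\mid\cF^X_t]\bigr)\bigr]=0 .
\]
We want to pass to the limit. The Feller property is what makes this possible, so we choose $\xi$ of product form
\[
\xi(X)=\prod_k \xi_k(X_{t_k}),\qquad t_k\ge t ,
\]
with each $\xi_k$ continuous. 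By the Markov property,
\[
\E^{\mu_n}[\xi\mid\cF^X_t]=F_n(X_t),
\]
where $F_n$ is obtained by iterating the semigroup of $\mu_n$. The Feller property makes each $F_n$ continuous.

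The key step is to show $F_n\to F_\infty$ uniformly on compacts. For processes with the Feller property and $\mu_n\to\mu_\infty$, weak convergence of the laws together with the Feller property should give convergence of the semigroups. If this does not follow directly from the hypotheses, we will use the standard equivalence in \cite{KuDi78}/Ethier--Kurtz between convergence of Feller semigroups and weak convergence of the processes. This is the main obstacle, and some care is needed about how much uniformity the hypothesis actually provides.

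A second technical issue is that the evaluation maps $X\mapsto X_{t_k}$ are continuous on Skorokhod space only at times where the limit process has no fixed jump. A Feller process has no fixed discontinuities, so we restrict $t$ and the $t_k$ to a dense, co-countable set of times. We then combine uniform convergence on compacts with tightness of $\{\pi_n\}$ to conclude
\[
\E^{\pi_n}[\psi\phi\,F_n(X_t)]-\E^{\pi_n}[\psi\phi\,F_\infty(X_t)]\to0 .
\]
With this in hand, weak convergence of $\pi_n$ gives the identity for $\pi_\infty$ with $F_\infty(X_t)=\E^{\mu_\infty}[\xi\mid\cF^X_t]$, because the test integrand is then a fixed bounded continuous function at $\pi_\infty$-a.e.\ point.

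Finally we extend the identity from the test class. A monotone class argument gives it for all bounded $\cF^Y_t\otimes\cF^X_t$-measurable products and all bounded $\cF^X_T$-measurable $\xi$, for $t$ in the dense set. By right-continuity of the filtrations, and since $\cF^Y_t$ is continuous in $t$ for continuous paths, the identity then holds for all $t$. This is exactly conditional independence of $\cF^Y_t$ and $\cF^X_T$ given $\cF^X_t$, that is, item (ii) of Proposition~\ref{causalprop}. So $\pi_\infty\in\Pi_c(\mu_\infty,\nu)$.
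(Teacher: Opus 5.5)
Your proposal is essentially the paper's proof. Both test causality against product-form future functionals, use the Markov property to write $\E^{\mu_n}[\xi\mid\cF^X_t]$ as an iterated-semigroup function $F_n(X_t)$ (the paper's $G_{n,1}$), get $F_n\to F_\infty$ from the Feller convergence theorem, and pass to the limit along $\pi_n\to\pi_\infty$. The step you flag is closed in the paper by citing \cite[Theorem~19.25]{Ka01} inside an induction on the number of times, via $P^\infty_sG_{\infty,i+1}-P^n_sG_{n,i+1}=P^n_s\{G_{\infty,i+1}-G_{n,i+1}\}-\{P^n_s-P^\infty_s\}G_{\infty,i+1}$ and the sup-norm contractivity of $P^n_s$, which gives uniform convergence outright and makes your compacts-plus-tightness detour unnecessary. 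As you suspected, this requires reading ``$\mu_n\to\mu_\infty$ weakly'' in that theorem's Feller-process sense, namely convergence for all converging initial laws (equivalently, of the semigroups), since weak convergence of a single sequence of path laws does not yield semigroup convergence; this is also how the paper implicitly uses the hypothesis. Your extra $\cF^X_t$-measurable factor $\phi(X)$ and your attention to continuity of the evaluation maps are, if anything, more careful than the paper's write-up.
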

\begin{proof}
It is known that $\pi \in\Pi_{c}(\mu,\nu)$ is equivalent to the statement
$$\mathbb E_\pi\left [ f_t(Y)\left\{  g(X) -\mathbb E_\mu[g(X)|\mathcal F^X_t] \right\} \right ]=0,$$
for all continuous bounded functions $f_t,g\in C_b(\Omega)$, with
$f_t$ being $\mathcal F^Y_t$-measurable, and every $t$. By standard
approximation arguments, when we specialize to the case where $\mu$ is
Markov, this is equivalent to
$$\mathbb E_\pi\left [ f_t(Y)\left\{  g(X_{t_1},\dots,X_{t_k}) -\mathbb E_\mu[g(X_{t_1},\dots,X_{t_k})|X_t] \right\} \right ]=0,$$
for all continuous bounded functions $f_t\in C_b(\Omega)$, with $f_t$
being $\mathcal F^Y_t$-measurable, every $g\in C_b(\mathbb R^k)$,
every $k\in\mathbb N$, and every $t\leq t_1\leq\dots\leq t_k$. An
application of the functional monotone class theorem further shows that
we may take $g$ of the form
$g(x_1,\dots,x_k)=g_1(x_1)\times\dots\times g_k(x_k)$. We now fix all
these quantities and functions. To obtain
$\pi_\infty \in\Pi_{c}(\mu_\infty,\nu)$, it suffices to show the uniform
convergence of
$\mathbb E_{\mu_n}[g_1(X_{t_1})\times\dots\times g_k(X_{t_k})|X_t=\cdot]$
to
$\mathbb E_{\mu_\infty}[g_1(X_{t_1})\times\dots\times g_k(X_{t_k})|X_t=\cdot]$.

\nd
For $n\in\mathbb N\cup\{\infty\}$, define the functions $G_{n,i}$
recursively as follows: set $G_{n,k}(x_k):= g_k(x_k)$, and, assuming
that $G_{n,i+1}(x_{i+1})$ has been defined for $1<i+1\leq k$, set
$G_{n,i}(x_i):=g_i(x_i)\times\mathbb E_{\mu_n}[G_{n,i+1}(X_{t_{i+1}})|X_{t_i}=x_i]$.
Thus, by the tower and Markov properties,
$\mathbb E_{\mu_n}[g_1(X_{t_1})\times\dots\times g_k(X_{t_k})|X_t=\cdot]=G_{n,1}(\cdot)$.
Clearly, $G_{n,k}\to G_{\infty,k}$ uniformly as $n\to\infty$. Now
assume that $G_{n,i+1}\to G_{\infty,i+1}$ uniformly as $n\to\infty$,
and note that
$$G_{n,i}(x_i):=g_i(x_i)\times P^n_{t_{i+1}-t_i}G_{n,i+1}(x_i),$$
where $(P^n_s)_s$ is the semigroup associated to $\mu_n$. Next we write 
$$P^\infty_{t_{i+1}-t_i}G_{\infty,i+1}-P^n_{t_{i+1}-t_i}G_{n,i+1}= P^n_{t_{i+1}-t_i}\{G_{\infty,i+1}-G_{n,i+1}\}-\{P^n_{t_{i+1}-t_i}-P^\infty_{t_{i+1}-t_i}\}G_{\infty,i+1},$$
so the first term on the right-hand side goes to zero uniformly by the
induction hypothesis, while the second goes to zero uniformly by the
assumption on weak convergence and \cite[Theorem 19.25]{Ka01}. We
conclude that $G_{n,1}\to G_{\infty,1}$ uniformly, as desired.
\end{proof}

\begin{proof}[Proof of Proposition \ref{thm.cv}]
If $\pi_{n}$ is (almost) optimal for $V_c(\mu_{n},\nu)$, then the previous lemma shows that any accumulation point is feasible for $V_c(\mu,\nu)$. By tightness, we conclude that 
$$\limsup_{n\to \infty}V_c(\mu_n,\nu)\leq V_c(\mu,\nu).$$
\end{proof}

\nd
We leave it as an interesting and challenging open question whether
Proposition \ref{thm.cv} holds as an equality. In fact, it may even
happen that the limit of $V_c(\mu_n,\nu)$ is not quite $V_c(\mu,\nu)$
but a ``relaxed'' version of it as in \cite{bartl2025wasserstein}.

Another possible generalization of our work is to drop the Markovian
structure of the cost function and/or the marginal processes. In such
cases one would need to do filtering in path space, and both the
analysis, which becomes infinite-dimensional, and the notation would
become significantly more complicated.

\section{Examples}\label{sec:examples}
\subsection{Examples beyond the irreducible case}\label{examples-no-irreducibility}

We next give examples in which the finite-state process $X$ is not
irreducible.  These examples are therefore not directly covered by
Theorem~\ref{mainpdes}.  Instead, we exhibit explicit candidates and
verify the variational inequality \eqref{correctPDE}; the identification
with the causal transport value then follows from the smooth verification
result, Theorem~\ref{verification}.  In the absorbing two-state example
below, we also record a dual/free-boundary representation.

Throughout this subsection, $Y$ is a one-dimensional Brownian motion
under $\nu=\mathbb W$, so $b_\nu=0$ and $\sigma_\nu=1$.  We write
$\Lambda$ for the generator of $X$ and $P(t):=e^{t\Lambda}$.  Since the
filter process $(p_t)_{t\in[0,T]}$ is required to satisfy
\eqref{eq:dynfiltercot}, the process $p_t e^{(T-t)\Lambda}$ is an
$(\cF^Y_t)$-martingale.  Equivalently,
\be\label{eq:filter-id}
p_t=\E\!\left[p_T\,\middle|\,\cF^Y_t\right]P(t-T),
\qquad 0\le t\le T,
\ee
where $P(t-T)=e^{(t-T)\Lambda}$ denotes the inverse semigroup.

\begin{example}[$\nu=\mathbb W$, $f\equiv0$, $g(x,y)=xy$, $X_t\equiv X_0$]\label{ex:constant}
Let $\mu$ be the law of a constant process $X_t\equiv X_0$, where
\[
\mu_0=\sum_{i=1}^K p_i\delta_{x_i},
\qquad x_1<\cdots<x_K,
\]
and take $f_0\equiv0$ and $g_0(i,y)=x_i y$.  For $p\in\Sigma_K$ set
\[
\rho_p:=\sum_{i=1}^K p_i\delta_{x_i},
\qquad
m(p):=\sum_{i=1}^K x_i p_i,
\]
and define, for probability measures $\eta,\rho$ on $\R$,
\[
\mathcal M(\eta,\rho)
:=
\sup_{\gamma\in\Pi(\eta,\rho)}
\int_{\R^2}uv\,\gamma(du,dv).
\]
For $\tau:=T-t$, the value of the causal problem is
\begin{equation}
V^{\rm term}(t,y,p)
=
\mathcal M\bigl(\rho_p,N(y,\tau)\bigr)
=
y\,m(p)+\sqrt{\tau}\,L(p),
\qquad
L(p):=\mathcal M\bigl(\rho_p,N(0,1)\bigr).
\end{equation}

Indeed, the upper bound follows by forgetting causality and optimizing
only over the terminal pair $(X_T,Y_T)$.  Conversely, let $\gamma^*$ be
the comonotone coupling between $\rho_p$ and $N(y,\tau)$, known to be optimal for $\mathcal M\bigl(\rho_p,N(y,\tau)\bigr)$, and disintegrate
it as
\[
\gamma^*(dx,dw)=\gamma^*_w(dx)N(y,\tau)(dw).
\]
Run a Brownian path $Y$ from $Y_t=y$, sample $X_0$ conditionally on
$Y_T=w$ according to $\gamma^*_w$, and set $X_s\equiv X_0$ for
$s\in[t,T]$.  This realizes the optimal terminal joint law.  It is also
causal: since $X$ is constant, the full $X$-path is determined by $X_0$,
and $\cF^X_s=\sigma(X_0)$ for every $s$.  Therefore the conditional law
of $Y_{[t,s]}$ given the full $X$-path is a function of $X_0$, hence is
$\cF^X_s$-measurable.

We now verify \eqref{correctPDE}.  Let $\Phi$ and $\varphi$ denote the
standard normal cdf and density, and put
\[
h(u):=\varphi(\Phi^{-1}(u)),
\qquad h(0)=h(1)=0.
\]
For $s_i:=p_1+\cdots+p_i$, $s_0:=0$, the monotone coupling gives
\[
L(p)=\sum_{i=1}^K x_i\{h(s_{i-1})-h(s_i)\}.
\]
Using reduced coordinates $(p_1,\ldots,p_{K-1})$ on the simplex, direct
differentiation yields
\[
\partial^2_{p_i p_j}L(p)
=
\sum_{\ell=i\vee j}^{K-1}
\frac{x_\ell-x_{\ell+1}}{h(s_\ell)}.
\]
Consequently, for every $\alpha,\beta>0$,
\begin{equation}
\sup_{\sum_i z_i=0}
\left\{
\alpha\sum_{i=1}^K x_i z_i
+
\frac{\beta}{2}z^\top D^2_{pp}L(p)z
\right\}
=
\frac{\alpha^2}{2\beta}L(p),
\end{equation}
where the quadratic form is read in the reduced coordinates.  The
maximizer is
\[
z_i^*
=
\frac{\alpha}{\beta}\{h(s_{i-1})-h(s_i)\},
\qquad i=1,\ldots,K,
\]
which satisfies $\sum_i z_i^*=0$.

For $V^{\rm term}$, the Hamiltonian term in \eqref{correctPDE} is the
preceding identity with $\alpha=1$ and $\beta=\sqrt{\tau}$, hence equals
$L(p)/(2\sqrt{\tau})$.  On the other hand,
\[
-\partial_tV^{\rm term}(t,y,p)
=
\frac{L(p)}{2\sqrt{\tau}},
\qquad
A_\nu V^{\rm term}=0,
\qquad
f\equiv0.
\]
Thus $V^{\rm term}$ solves \eqref{correctPDE} in the interior of each
face of the simplex; boundary faces are obtained by deleting zero-mass
states.  Since $V^{\rm term}(T,y,p)=y\,m(p)=g(p,y)$, the terminal
condition is satisfied.  Theorem~\ref{verification} therefore identifies
the displayed formula with the causal transport value.
\end{example}

\begin{remark}[Bicausality]
The preceding construction uses causality from $X$ to $Y$.  If the
opposite causality is also imposed and $Y_0$ is deterministic, then the
only admissible coupling is the product coupling.  Indeed, bicausality
requires $\operatorname{Law}(X_0\mid Y)$ to be $\cF^Y_0$-measurable;
since $\cF^Y_0$ is trivial and $X$ is constant, $X$ must be independent
of $Y$.  Hence, unless $\mu_0$ is degenerate, the causal and bicausal
values differ.
\end{remark}

\begin{example}[Initial optimization for constant $X$]\label{ex:constant2}
We next isolate the initial optimization that underlies both the terminal ($f\equiv 0$)
and running ($g\equiv 0$) versions  of the constant-state example ($X_0=X_t,\forall t$).  Assume
\[
\nu_0=\mu_0=\sum_{j=1}^K q_j\delta_{x_j},
\qquad x_1<\cdots<x_K,
\]
and let $Y_0\sim\nu_0$.  For $a>0$ and $s>0$, let
$Z\sim N(0,1)$ be independent of $Y_0$ and define
\[
S_{a,s}:=aY_0+sZ.
\]
Whenever the deterministic-start value has the form
\[
V^{a,s}(0,y,r)=\mathcal M\bigl(\rho_r,N(ay,s^2)\bigr),
\qquad r\in\Sigma_K,
\]
the conditioning argument used in the proof of
Corollary~\ref{dropassinitial}, together with the verification from
Theorem~\ref{verification}, reduces the initial problem to the static
maximal-covariance problem \eqref{eq:initialdistthm}, i.e.,
\begin{equation}
V_c^{a,s}(\mu,\nu)
=
\mathcal M\bigl(\mu_0,\operatorname{Law}(S_{a,s})\bigr).
\end{equation}

Let $Q_k:=q_1+\cdots+q_k$, $Q_0:=0$, and define
\[
F_{a,s}(r)
:=
\mathbb{P}(S_{a,s}\le r)
=
\sum_{j=1}^K q_j
\Phi\left(\frac{r-a x_j}{s}\right).
\]
Set
\[
w_0:=-\infty,
\qquad
w_K:=+\infty,
\qquad
w_k:=F_{a,s}^{-1}(Q_k),
\quad k=1,\ldots,K-1.
\]
The comonotone optimizer assigns the statistic $S_{a,s}$ to the atom
$x_k$ precisely on the quantile bin $(w_{k-1},w_k]$.  Hence the optimal
initial kernel is
\begin{equation}
R_y^{*,a,s}(\{x_k\})
=
\Phi\left(\frac{w_k-a y}{s}\right)
-
\Phi\left(\frac{w_{k-1}-a y}{s}\right),
\qquad k=1,\ldots,K.
\end{equation}
The marginal constraint follows immediately:
\[
\sum_{j=1}^K q_j R_{x_j}^{*,a,s}(\{x_k\})
=
\mathbb{P}\bigl(S_{a,s}\in(w_{k-1},w_k]\bigr)
=
q_k.
\]

For the terminal cost of Example~\ref{ex:constant}, the relevant
statistic is $Y_T$, so $a=1$ and $s=\sqrt T$.  Thus
\[
V_c^{\rm term}(\mu,\nu)
=
\mathcal M\bigl(\mu_0,\mu_0*N(0,T)\bigr),
\]
and the displayed formula for $R_y^{*,a,s}$ gives the corresponding
optimal initial kernel of $X_0$ given $Y_0=y$.
\end{example}

\begin{example}[$\nu=\mathbb W$, $f(x,y)=xy$, $g(x,y)=0$, $X_t=X_0$ constant]\label{ex:constant3}
We now replace the terminal cost by a running cost.  Keep
$X_t\equiv X_0$, but take
\[
f_0(i,y)=x_i y,
\qquad
g_0\equiv0.
\]
For $\tau:=T-t$,
\[
\int_t^T Y_s\,ds
=
\tau y+\int_t^T (T-s)\,dY_s
\sim
N\left(\tau y,\frac{\tau^3}{3}\right)
\quad\text{conditionally on }Y_t=y.
\]
Since $X$ is constant,
\[
\int_t^T X_sY_s\,ds
=
X_0\int_t^T Y_s\,ds.
\]
The same considerations as in Example~\ref{ex:constant} give the optimal causal value
\begin{equation}
V^{\rm run}(t,y,p)
=
\mathcal M\left(\rho_p,N\left(\tau y,\frac{\tau^3}{3}\right)\right)
=
\tau y\,m(p)+\frac{\tau^{3/2}}{\sqrt3}L(p),
\end{equation}
where $L(p)=\mathcal M(\rho_p,N(0,1))$.

The PDE verification is a direct reuse of the static Hamiltonian identity
from Example~\ref{ex:constant}.  Here
\[
D_yD_pV^{\rm run}\cdot z
=
\tau\sum_{i=1}^K x_i z_i,
\qquad
D^2_{pp}V^{\rm run}
=
\frac{\tau^{3/2}}{\sqrt3}D^2_{pp}L.
\]
Thus the Hamiltonian term equals the identity of Example~\ref{ex:constant}
with $\alpha=\tau$ and $\beta=\tau^{3/2}/\sqrt3$, namely
\[
\frac{\alpha^2}{2\beta}L(p)
=
\frac{\sqrt3}{2}\sqrt{\tau}\,L(p).
\]
On the other hand,
\[
-\partial_tV^{\rm run}(t,y,p)-f(p,y)
=
\frac{\sqrt3}{2}\sqrt{\tau}\,L(p),
\qquad
A_\nu V^{\rm run}=0.
\]
Hence \eqref{correctPDE} holds, with terminal condition
$V^{\rm run}(T,y,p)=0$.  By Theorem~\ref{verification}, the displayed
formula is the causal transport value.

For a non-degenerate initial law, the relevant scalar statistic is not
$Y_T$ but the time integral of $Y$.  If $Y_0\sim\nu_0$ and
$Z\sim N(0,1)$ is independent of $Y_0$, then
\[
\int_0^T Y_s\,ds
\quad\text{has the same conditional law as}\quad
TY_0+\frac{T^{3/2}}{\sqrt3}Z.
\]
Therefore
\begin{equation}
V_c^{\rm run}(\mu,\nu)
=
\mathcal M\left(
\mu_0,
\operatorname{Law}\left(TY_0+\frac{T^{3/2}}{\sqrt3}Z\right)
\right).
\end{equation}
In the special case
$\nu_0=\mu_0=\sum_j q_j\delta_{x_j}$, the optimal initial kernel is the
one in Example~\ref{ex:constant2} with
\[
a=T,
\qquad
s=\frac{T^{3/2}}{\sqrt3}.
\]
Thus the terminal and running constant-state examples differ only in the
scalar statistic used for the final static rearrangement.
\end{example}

We now leave the constant-state case.  The next example is still outside
the irreducible setting, since one of the two states is absorbing.  We
derive its dual/free-boundary representation directly from the
terminal-filter constraints \eqref{eq:Cone}--\eqref{eq:Ctwo}.  The
resulting candidate is then justified by Theorem~\ref{verification}.

\begin{example}[$\nu=\mathbb W$, $f\equiv0$, $g(x,y)=xy$, $X_t$ two-state with absorption]\label{ex:absorption}
Let $X$ take values in $S=\{1,2\}$, where state $1$ is non-absorbing and
state $2$ is absorbing, with generator and transition matrix
\[
\Lambda=
\begin{pmatrix}
-a & a\\
0 & 0
\end{pmatrix},
\qquad
a>0,
\qquad
P(t)=e^{t\Lambda}
=
\begin{pmatrix}
e^{-at} & 1-e^{-at}\\
0 & 1
\end{pmatrix},
\qquad t\in\R.
\]
State 1 is identified with $+1$ and state 2 with $0$.
We take $f_0\equiv0$, $g_0(1,y)=y$, and $g_0(2,y)=0$.  Fix
$(t,y,p)\in[0,T)\times\R\times\Sigma_2$ and write $p=p^1\in[0,1]$ for
the first coordinate.

\smallskip
\nd
\emph{Kolmogorov backward equations and auxiliary value.}
Since $A_\nu=\frac{1}{2}\partial^2_{yy}$ and the terminal data are harmonic (see \eqref{eq:bkwd}),
\[
G_0(1,t,y)=y,
\qquad
G_0(2,t,y)=0.
\]
Thus $G_0(t,y)=(y,0)$, $\partial_yG_0(1,\cdot,\cdot)=1$, and
$\partial_yG_0(2,\cdot,\cdot)=0$.

\smallskip
\nd
\emph{Direct dual reduction.}
Although Theorem~4.4 is stated under irreducibility, the Lagrange-duality
calculation leading to the dual representation can be repeated directly
here.  The constraints \eqref{eq:Cone}--\eqref{eq:Ctwo} simplify
considerably.  The $j=1$ inequality in \eqref{eq:Cone} reads
\[
\E^\nu[p_T^1\mid\cF^Y_s]e^{a(T-s)}\ge0,
\]
which is automatic.  Hence the corresponding process multiplier may be
set to zero.  The $j=2$ equality in \eqref{eq:Ctwo} is redundant, because
it follows from the $j=1$ equality and $p_T^1+p_T^2=1$.

Keeping only the remaining static multiplier $\lambda:=\lambda^1\in\R$
and the remaining non-negative process multiplier
$\lambda_\cdot:=\lambda^2_\cdot\ge0$, and using
\[
P_{1,2}(s-T)=1-e^{a(T-s)},
\qquad
P_{2,2}(s-T)=1,
\]
we obtain
\be\label{eq:v-inf}
V(t,y,p)
=
\inf_{\lambda\in\R,\,\lambda_\cdot\ge0}
\E^\nu\left[
\left(
Y_T-e^{a(T-t)}\lambda
-\int_t^T e^{a(T-s)}\lambda_s\,ds
\right)_+
+p\lambda+\int_t^T\lambda_s\,ds
\right].
\ee
The positive part arises from the inner maximum over $i\in\{1,2\}$:
after factoring out the common term $\int_t^T\lambda_s\,ds$, the two
alternatives are the displayed expression and zero.  The corresponding
optimal terminal filter is
\[
p_T^*
=
\mathbf 1_{\left\{
Y_T\ge e^{a(T-t)}\lambda
+\int_t^T e^{a(T-s)}\lambda_s\,ds
\right\}}.
\]

\smallskip
\nd
\emph{PDE reformulation.}
Introduce the auxiliary state
\[
\xi_s
:=
x+\int_t^s e^{a(T-r)}\lambda_r\,dr,
\qquad
\xi_t=x,
\]
and define
\[
u(t,x,y)
:=
\inf_{\lambda_\cdot\ge0}
\E^\nu\left[
(Y_T-\xi_T)_+
+\int_t^T\lambda_s\,ds
\right].
\]
Standard Bellman analysis gives
\be\label{PDEex2}
\partial_tu+\frac{1}{2}\partial^2_{yy}u
+
\inf_{\lambda\ge0}
\left\{
\lambda\left(1+e^{a(T-t)}\partial_xu\right)
\right\}
=0,
\qquad
u(T,x,y)=(y-x)_+.
\ee
By construction, \eqref{eq:v-inf} becomes
\be\label{eq:v-conj}
V(t,y,p)
=
\inf_{\lambda\in\R}
\left\{
u\bigl(t,e^{a(T-t)}\lambda,y\bigr)
+p\lambda
\right\}.
\ee

\smallskip
\nd
\emph{Reduction to one space variable and free boundary.}
Since the terminal condition depends only on $z:=y-x$, so does $u$.
Writing $u(t,x,y)=w(t,y-x)$, we obtain
\be\label{eq:wPDE}
\partial_tw+\frac{1}{2}\partial^2_{zz}w
+
\inf_{\lambda\ge0}
\left\{
\lambda\left(1-e^{a(T-t)}\partial_zw\right)
\right\}
=0,
\qquad
w(T,z)=z_+.
\ee
The Hamiltonian is finite exactly on the gradient-constrained region
\[
\partial_zw(t,z)\le e^{-a(T-t)}.
\]
This gives a free boundary $b:[0,T]\to\R$ determined by the smooth-pasting
condition
\[
\partial_zw(t,b(t))=e^{-a(T-t)},
\qquad
b(T)=0.
\]
For $z<b(t)$, $w$ solves the heat equation
\[
\partial_tw+\frac{1}{2}\partial^2_{zz}w=0,
\]
whereas for $z\ge b(t)$, instantaneous intervention gives the linear
extension
\[
w(t,z)=w(t,b(t))+(z-b(t))e^{-a(T-t)}.
\]
This is a one-sided monotone follower problem; see
Karatzas~\cite{Kar81} and Karatzas--Shreve~\cite{KS84} for the classical
monotone-follower formulation, and
Bene\v{s}, Shepp, and Witsenhausen~\cite{BSW80} and
Karatzas--Shreve~\cite{KS85} for related singular-control and
bounded-variation-follower problems.  By the classical regularity theory
for parabolic free-boundary problems with monotone oblique-derivative
condition \cite{kinderlehrer1978smoothness}, $b\in C^\infty(0,T)$ and
$w(t,\cdot)\in C^\infty(\R)$ for every $t<T$.  Consequently
\eqref{eq:v-conj} defines a smooth candidate on
$[0,T)\times\R\times\Sigma_2$.

Finally, substituting this candidate into \eqref{correctPDE} gives the
same HJB/free-boundary system as \eqref{PDEex2}--\eqref{eq:wPDE}, and
the free-boundary constraint is precisely the condition ensuring that the
maximizing filter remains in $\Sigma_2$.  
Therefore the hypotheses of
Theorem~\ref{verification} are satisfied, and the candidate
\eqref{eq:v-conj} is the causal transport value in the absorbing
two-state model.
\end{example}

\subsection{Example in the irreducible case}
\begin{example}[$\nu=\mathbb{W}$, $f\equiv 0$, $g(x,y)=xy$, $X$ a two-state irreducible chain]\label{ex:twostates}
Let $X$ take values in $\mathbb{S}=\{1,2\}$, where state~$1$ is
identified with $+1$ and state~$2$ with $-1$, with irreducible
generator
\[
\Lambda=\begin{pmatrix}-a & a\\b & -b\end{pmatrix},\qquad
c:=a+b,\qquad a,b>0.
\]
The transition matrix at time $s\in\R$ is
\[
P(s)=e^{s\Lambda}=\frac{1}{c}\begin{pmatrix}
b+ae^{-cs} & a(1-e^{-cs})\\
b(1-e^{-cs}) & a+be^{-cs}
\end{pmatrix}.
\]
We take $\nu=\mathbb{W}$ ($b_\nu=0$, $\sigma_\nu=1$), costs
$f_0\equiv 0$, $g_0(1,y)=y$, $g_0(2,y)=-y$, so $g(p,y)=y(2p^1-1)$.
Fix $(t,y,p)\in[0,T]\times\R\times\Sigma_2$; abusing notation, we
write $p=p^1\in[0,1]$ for the first coordinate.

\smallskip
\nd
\emph{Kolmogorov backward equations and auxiliary value.}\
With $\cA_\nu=\tfrac12\partial^2_{yy}$ and terminal data linear in
$y$, the solution of \eqref{eq:bkwd} is
\[
G_0(1,t,y)=y,\qquad G_0(2,t,y)=-y,
\]
so $G_0(t,y)=(y,-y)$ and $\partial_y G_0(1,\cdot,\cdot)=1$,
$\partial_y G_0(2,\cdot,\cdot)=-1$.

\smallskip
\nd
\emph{Specializing the constraints \eqref{eq:Cone}--\eqref{eq:Ctwo}.}\
Let 
\[
M_s:=\E^\nu\!\bigl[p^1_T\,\big|\,\cF^Y_s\bigr],\qquad s\in[t,T].
\]
By the martingale identity
$\E^\nu[ p_T\mid\cF^Y_s]= p_s\,P(T-s)$ from \eqref{eq:tilde_p}, we have
\[
M_s=\ell(s)+e^{-c(T-s)}\,p_s,\qquad
\ell(s):=\frac{b\bigl(1-e^{-c(T-s)}\bigr)}{c},\quad
u(s):=\frac{b+a\,e^{-c(T-s)}}{c},
\]
with $u(s)-\ell(s)=e^{-c(T-s)}$. A direct computation shows that the
inequalities \eqref{eq:Cone} for $j=1,2$ are equivalent to
\[
\ell(s)\le M_s\le u(s),
\]
respectively, and that \eqref{eq:Ctwo} for $j=1$ fixes the initial
value
\[
M_t=\frac{b}{c}+e^{-c(T-t)}\!\left(p-\frac{b}{c}\right).
\]
The equality \eqref{eq:Ctwo} for $j=2$ follows from the $j=1$
equality together with $p^1_T+p^2_T=1$ and is therefore redundant;
as in Example~\ref{ex:absorption}, the corresponding static
multiplier $\lambda^2$ may be set to $0$. In contrast with the
absorbing case, however, \emph{both} process multipliers
$\lambda^1_\cdot,\lambda^2_\cdot$ are active here, since the filter
is constrained from both above and below.

\smallskip
\nd
\emph{Primal form.}\
Since $X_T\in\{+1,-1\}$ and $\E^\pi[X_T\mid\cF^Y_T]=2p^1_T-1$,
\be\label{eq:primal-2state}
V(t,y,p)=2\sup_{M}\E^\nu\!\bigl[Y_T M_T\bigr]-y,
\ee
where the supremum is over $\cF^Y$-martingales $(M_s)_{s\in[t,T]}$
with prescribed initial value $M_t$ as above and satisfying
$\ell(s)\le M_s\le u(s)$ a.s.\ for all $s\in[t,T]$.

\smallskip
\nd
\emph{Dualization.}\
Following the strategy of Example~\ref{ex:absorption}, we
specialize Theorem~\ref{thm:dual}\,(i). With $\lambda^2=0$,
$\lambda:=\lambda^1\in\R$, and using that $M$ is a martingale (so
$\E^\nu[M_T]=M_t$), the dual representation
\eqref{eq:final_representation} takes the form
\be\label{eq:v-dual-2state}
V(t,y,p)=\inf_{\lambda\in\R}\Bigl\{w(t,\lambda)+2M_t(y-\lambda)\Bigr\}-y,
\ee
where the dual value is the bounded-variation-follower value
\[
w(t,z):=2\inf_{\xi^+,\xi^-\uparrow}
\E^\nu\!\biggl[(Z_T)_+
+\int_t^T e^{-c(T-s)}\bigl(d\xi^+_s+d\xi^-_s\bigr)\biggr],
\qquad Z_t=z,
\]
the infimum being over non-decreasing $\cF^Y$-adapted processes
$\xi^+,\xi^-$ with $\xi^\pm_t=0$, and
\[
Z_s=Y_s-\xi^+_s\,e^{a(T-s)}+\xi^-_s\,e^{b(T-s)}.
\]
Here $d\xi^+_s,d\xi^-_s$ play the role of the process multipliers
$\lambda^1_s\,ds,\lambda^2_s\,ds$ in \eqref{eq:saddle}, enforcing the
lower and upper bounds $M_s\ge\ell(s)$ and $M_s\le u(s)$
respectively.

\smallskip
\nd
\emph{Consistency check at $t=T$.}\
Since $\xi^\pm_T=0$, $M_T=p$, and the running integral vanishes, we
have $w(T,z)=2(z)_+$, hence
\[
V(T,y,p)
=\inf_{\lambda\in\R}\bigl\{2(\lambda)_++2p(y-\lambda)\bigr\}-y
=2py-y=y(2p-1)=g(p,y),
\]
the minimiser being $\lambda=0$.

\smallskip
\nd
\emph{Reduced PDE.}\
The function $\tilde w(t,z):=w(t,z)/2$ satisfies $\tilde w(T,z)=(z)_+$
and, formally, the one-dimensional parabolic variational inequality
with two oblique singular controls
\begin{align}
\partial_t\tilde w+\tfrac12\partial^2_{zz}\tilde w
&+\inf_{\lambda^+\ge 0}\lambda^+\!\Bigl(1-e^{2a(T-t)}\partial_z\tilde w-e^{-c(T-t)}\Bigr)\notag\\
&+\inf_{\lambda^-\ge 0}\lambda^-\!\Bigl(1+e^{2b(T-t)}\partial_z\tilde w-e^{-c(T-t)}\Bigr)
=0,
\label{eq:1D-dual-asymmetric}
\end{align}
or, equivalently, in complementarity form
\begin{align*}
\min\!\Bigl\{
\partial_t\tilde w+\tfrac12\partial^2_{zz}\tilde w,\;\;
1-e^{2a(T-t)}\partial_z\tilde w-e^{-c(T-t)},\;\;
1+e^{2b(T-t)}\partial_z\tilde w-e^{-c(T-t)}
\Bigr\}=0.
\end{align*}


This is a parabolic bounded-variation-follower problem; see, for instance,
Karatzas and Shreve~\cite{KS84,KS85} and the survey of Bene\v{s}, Shepp, and
Witsenhausen~\cite{BSW80} for classical singular stochastic control and
bounded-variation follower problems. In the absorbing case of Example~\ref{ex:absorption},
one of the two controls is inactive and the problem reduces to a one-sided
monotone follower problem in the sense of
Karatzas~\cite{Kar81} and Karatzas--Shreve~\cite{KS84}.

\end{example}
\subsection{Numerical Examples}
\begin{example}[A numerical illustration of Theorems~\ref{Nmainpdes} and~\ref{thm:dual}]\label{ex:numerics}
We consider the simplest non-trivial two-state case. Take $K=2$, $d=1$,
$T=1$, and
\[
    dY_t=dW_t,\qquad Y_0=0,
\]
so that $b_\nu\equiv 0$ and $\sigma_\nu\equiv 1$. Let $X$ have generator
\[
    \Lambda=
    \begin{pmatrix}
        -1/2 & 1/2\\[2pt]
        1/2 & -1/2
    \end{pmatrix},
\qquad
P(s)=e^{s\Lambda}=
\begin{pmatrix}
\tfrac12(1+e^{-s}) & \tfrac12(1-e^{-s})\\[2pt]
\tfrac12(1-e^{-s}) & \tfrac12(1+e^{-s})
\end{pmatrix},
\]
so $c:=a+b=1$, and choose the initial filter
\[
    p_0=(1/2,1/2),
\]
which is the (unique) stationary distribution of $X$. We set $f_0\equiv 0$
and choose the bounded terminal payoff
\[
    g_0(1,y)=\rho(y):=\frac{1}{1+e^{-8y}},\qquad g_0(2,y)=0,
\]
so $\rho$ is a smoothed indicator of $\{y>0\}$ with transition zone
$|y|\lesssim 0.2$. The objective is then to correlate the terminal filter
mass on state $1$ with the value of $\rho(Y_T)$.

\medskip
\noindent\textit{Discretisation.} We approximate $Y$ by a
binomial tree with $M=12$ steps,
\[
    \Delta t=\tfrac{1}{12},\qquad
    \Delta Y=\pm\sqrt{\Delta t}\ \text{each with $\nu$-probability $1/2$.}
\]
We write $\cF^Y_n:=\sigma(Y_0,\ldots,Y_n)$ for the tree filtration and
let $q_n:=p^1_n$ denote the first component of the (discrete-time) filter,
so $\tilde p_n=(q_n,1-q_n)$.

The tree analogue of the Wonham conditional-expectation identity
$\E^\nu[\tilde p_{n+1}\mid\cF^Y_n]=\tilde p_n\,P(\Delta t)$ reads
\be\label{eq:tree_drift}
    \tfrac12\bigl(q_{n+1}^{+}+q_{n+1}^{-}\bigr)
    =\tfrac12+\bigl(q_n-\tfrac12\bigr)e^{-\Delta t},
\ee
where $q_{n+1}^\pm$ are the two values of $q_{n+1}$ in the $\pm$ branches
out of node $n$. The state constraint $\tilde p\in\Sigma_2$ becomes
\be\label{eq:tree_state}
0\le q_{n+1}^\pm\le 1,
\ee
and the truncation $|Z^1|\le N$ from Theorem~\ref{Nmainpdes} (which,
since $\sum_i Z^i=0$ forces $Z^2=-Z^1$ when $K=2$, automatically
implements $|Z^2|\le N$ as well) becomes
\be\label{eq:tree_trunc}
    |q_{n+1}^{+}-q_{n+1}^{-}|
    \le 2N\sqrt{\Delta t}.
\ee

\medskip
\noindent\textit{Lower approximation.} The truncated state-constrained
problem of Theorem~\ref{Nmainpdes} discretises to the finite-dimensional
linear program
\[
    V^{N,M}
    :=
    \sup_{q}\ \E\bigl[q_M\,\rho(Y_M)\bigr],
\]
where the supremum is taken over $\cF^Y$-adapted processes
$(q_n)_{n=0}^M$ on the tree satisfying $q_0=1/2$ together with
\eqref{eq:tree_drift}--\eqref{eq:tree_trunc} at every node.

\medskip
\noindent\textit{Upper approximation.} For the dual approximation of
Theorem~\ref{thm:dual}, we use the saddle form \eqref{eq:saddle}.
Its tree counterpart is
\[
\begin{aligned}
    v^{N,M}
    :=
    \inf_{\lambda,(\lambda^j_n)}\,
    \E\Bigg[
        \max_{i\in\{1,2\}}
        \bigg\{
            g_0(i,Y_M)
            &-\sum_{j=1}^2 P_{i,j}(-1)\,\lambda^j\\
            &+\Delta t\sum_{n=0}^{M-1}\sum_{j=1}^2
                P_{i,j}(t_n-1)\,\lambda^j_n
        \bigg\}
    \Bigg]
    +p_0\cdot\lambda,
\end{aligned}
\]
where $\lambda\in\R^2$, the processes $(\lambda^j_n)$ are $\cF^Y$-adapted
on the tree with $0\le\lambda^j_n\le N$, and
$p_0\cdot\lambda:=\sum_j p_0^j\lambda^j$. After introducing one auxiliary
variable per leaf to linearise the inner $\max_i$, this is again a
finite-dimensional linear program.

\medskip
\noindent\textit{Results.} Let $V^M$ denote the value of the
fully discretised tree problem, i.e.\ the common limit
$\lim_{N\to\infty}V^{N,M}=\lim_{N\to\infty}v^{N,M}$. Solving the two LPs
gives:
\begin{table}[h]
\centering
\begin{tabular}{c|cc}
\toprule
$N$ & $V^{N,M}$ & $v^{N,M}$ \\
\midrule
$0$      & $0.250000$ & $0.441703$\\
$0.25$   & $0.313991$ & $0.436859$\\
$0.5$    & $0.375733$ & $0.432210$\\
$1$      & $0.416128$ & $0.426318$\\
$2$      & $0.417777$ & $0.422032$\\
$5$      & $0.417777$ & $0.418043$\\
$10$     & $0.417777$ & $0.417777$\\
\bottomrule
\end{tabular}
\caption{Tree values $V^{N,M}$ and $v^{N,M}$ as functions of the truncation
level $N$, with $M=12$ time steps. The two sequences sandwich the limiting
tree value $V^M\approx 0.417777$.}
\label{tab:numerics}
\end{table}

\noindent
The table illustrates the
two monotone approximations
\[
    V^{N,M}\uparrow V^M,
    \qquad
    v^{N,M}\downarrow V^M,
    \qquad
    V^M\approx 0.417777,
\]
so the bounded-control state-constrained problems approximate the value
from below, as in Theorem~\ref{Nmainpdes}, while the truncated dual
problems approximate the same value from above, as in
Theorem~\ref{thm:dual}. The asymmetric speeds are informative: the
primal $V^{N,M}$ stabilises to six digits already at $N=2$, suggesting
that the optimiser $Z^*$ is bounded by approximately $2$ and the
truncation \eqref{eq:tree_trunc} becomes inactive; the dual $v^{N,M}$
requires $N\gtrsim 10$, reflecting the fact that the singular dual
multipliers $\lambda^j_n$ can be much larger than the primal control.
\end{example}
\section{Viscosity properties of the value functions and proofs of main results}\label{sec:main-proofs}\label{mainproofs}

\subsection{Dimension reduction and notation}
Since $\Sigma_K\subset\R^K$ has empty interior in $\R^K$, we
transfer the problem to the standard $(K-1)$-simplex
$$
\Sigma:=\Bigl\{(p_1,\ldots,p_{K-1})\in[0,1]^{K-1}:
  \sum_{i=1}^{K-1}p_i\le 1\Bigr\},
$$
which has non-empty interior in $\R^{K-1}$. We adopt the following
notation throughout this section.

\begin{notation}\label{restrictions}
For $p=(p_1,\ldots,p_K)\in\R^K$, we write
$\tilde p:=(p_1,\ldots,p_{K-1})\in\R^{K-1}$. Conversely, for
$p=(p_1,\ldots,p_{K-1})\in\Sigma$, we write
$p':=\bigl(p_1,\ldots,p_{K-1},\,1-\sum_{i=1}^{K-1}p_i\bigr)\in\Sigma_K$.
For a function $u\colon\Sigma_K\to\R$, its restriction
$\tilde u\colon\Sigma\to\R$ is defined by
$$
\tilde u(p_1,\ldots,p_{K-1})
  :=u\bigl(p_1,\ldots,p_{K-1},\,1-\sum_{i=1}^{K-1}p_i\bigr).
$$
\end{notation}
\vspace{2mm}

\nd
Accordingly, for any
$V\colon[0,T]\times\R^d\times\Sigma_K\to\R$ we define
$\tilde V\colon[0,T]\times\R^d\times\Sigma\to\R$ by
\be\label{res}
\tilde V(t,y,p_1,\ldots,p_{K-1})
  := V\!\Bigl(t,y,\,p_1,\ldots,p_{K-1},\,1-\sum_{i=1}^{K-1}p_i\Bigr).
\ee
The first derivatives satisfy
$$
\partial_{p_i}\tilde V
  =\partial_{p_i}V-\partial_{p_K}V,
\qquad i=1,\ldots,K-1,
$$
where $p_K=1-\sum_{i=1}^{K-1}p_i$. The second derivatives satisfy
$$
\partial^2_{p_ip_j}\tilde V
  =\partial^2_{p_ip_j}V-\partial^2_{p_ip_K}V-\partial^2_{p_jp_K}V
  +\partial^2_{p_Kp_K}V,
\qquad i,j=1,\ldots,K-1.
$$

\nd
Any control $z\in\R^{K\times d}$ subject to $\sum_{i=1}^K z^i=0$ in
$\R^d$ can be written as $z=J\tilde z$, where
$\tilde z\in\R^{(K-1)\times d}$ collects the first $K-1$ rows of $z$
and
$$
J = \begin{pmatrix}
 1 & 0 & 0 & \cdots & 0 \\
 0 & 1 & 0 & \cdots & 0 \\
 0 & 0 & 1 & \cdots & 0 \\
 \vdots & & & \ddots & \vdots \\
 0 & 0 & 0 & \cdots & 1 \\
 -1 & -1 & -1 & \cdots & -1
\end{pmatrix}
\in\R^{K\times(K-1)}.
$$
The map $\tilde z\mapsto J\tilde z$ is a linear bijection from
$\R^{(K-1)}$ onto $\bigl\{z\in\R^{K}:\sum_{i=1}^K
z_i=0\bigr\}$, with inverse $z\mapsto(z^1,\ldots,z^{K-1})$.
In matrix form, the identities above read
$$
D_p\tilde V= J^\top D_pV,
\qquad
D_pD_y\tilde V= J^\top D_pD_yV,
\qquad
D^2_{pp}\tilde V = J^\top D^2_{pp}V\,J,
$$
and consequently, for any $z=J\tilde z$,
$$
z^\top D^2_{pp}V\,z = \tilde z^\top D^2_{pp}\tilde V\,\tilde z,
\qquad
\tr\!\bigl(z\,m^\top\bigr) = \tr\!\bigl(\tilde z\,\tilde m^\top\bigr)
  \text{ with }\tilde m:=J^\top m.
$$
The map $\tilde z\mapsto J\tilde z$ is a linear bijection from
$\R^{K-1}$ onto the subspace $\{z\in\R^K:\sum_i z_i=0\}$, but it is
not an isometry. Thus the numerical constrained maximum eigenvalue of
$D^2_{pp}V$ on this subspace need not equal
 $\lambda_{\max}(D^2_{pp}\tilde V)$ computed with the Euclidean norm in
$\R^{K-1}$. What is invariant, and what is needed for the obstacle
term, is the sign condition:
\[
z^\top D^2_{pp}V\,z\le 0\quad\text{for all }\sum_i z_i=0
\quad\Longleftrightarrow\quad
\tilde z^\top D^2_{pp}\tilde V\,\tilde z\le 0
\quad\text{for all }\tilde z\in\R^{K-1}.
\]
Consequently the reduced obstacle may be written using $\lambda_{\max}(D^2_{pp}\tilde V)$, since only the sign of this
quantity enters \eqref{Hvi}.

\vspace{2mm}
\nd
We now define the joint gradient and Hessian of $\tilde V$ in the
reduced variable $(p,y)\in\Sigma\times\R^d$, in parallel with
\eqref{jointGH}:
\be\label{jointGHtilde}
D_{(p,y)}\tilde V
  := \begin{pmatrix}D_p\tilde V\\D_y\tilde V\end{pmatrix}
     \in\R^{K-1+d},
\qquad
D^2_{(p,y)}\tilde V
  := \begin{pmatrix}
       D^2_{pp}\tilde V & D_pD_y\tilde V \\
       (D_pD_y\tilde V)^\top & D^2_{yy}\tilde V
     \end{pmatrix}
     \in\cS_{K-1+d},
\ee
where $D_p\tilde V\in\R^{K-1}$, $D_y\tilde V\in\R^d$,
$D^2_{pp}\tilde V\in\cS_{K-1}$, $D_pD_y\tilde V\in\R^{(K-1)\times d}$,
and $D^2_{yy}\tilde V\in\cS_d$.

\nd
The joint drift in reduced coordinates is
\be\label{jointdrifttilde}
\beta(t,y,p)
  := \bigl(\widetilde{p'\Lambda},\,b_\nu(t,y)\bigr)\in\R^{K-1+d},
\ee
where $\widetilde{p'\Lambda}\in\R^{K-1}$ is the unique vector
satisfying $J\,\widetilde{p'\Lambda}=p'\Lambda$ (which is well-defined
because $\sum_i(p'\Lambda)_i=0$, and $J$ is bijective onto this set). For
$\tilde z\in\R^{(K-1)\times d}$, the joint quadratic-variation matrix
in reduced coordinates is
\be\label{Qtildered}
\widetilde Q(t,y,\tilde z)
  := \begin{pmatrix}
       \tilde z\,\sigma_\nu(t,y)^{-2}\,\tilde z^\top & \tilde z \\
       \tilde z^\top & \sigma_\nu^2(t,y)
     \end{pmatrix}
     \in\cS_{K-1+d},
\ee
the $(K-1+d)\times(K-1+d)$ analogue of \eqref{Qtilde} after the
simplex reduction.

\nd
Define $\tilde f,\tilde g\colon\Sigma\times\R^d\to\R$ by
\be\label{tildefg}
\tilde f(p,y)
  := f_0(K,y) + \sum_{i=1}^{K-1}\bigl(f_0(i,y)-f_0(K,y)\bigr)p_i,
\qquad
\tilde g(p,y)
  := g_0(K,y) + \sum_{i=1}^{K-1}\bigl(g_0(i,y)-g_0(K,y)\bigr)p_i,
\ee
i.e., $\tilde f(p,y)=f(p',y)$ and $\tilde g(p,y)=g(p',y)$ in the
notation of \eqref{costs}. Whenever a function is initially defined on
$[0,T)$ only, the notation $V(T^-,\cdot,\cdot)$ below denotes a
specified terminal trace, when such a trace exists; otherwise the
terminal viscosity inequalities are to be understood through the
corresponding upper and lower half-relaxed terminal traces. The
identities collected above allow us to rewrite \eqref{correctPDE} as
the reduced problem
\be\label{correctPDE2}
\begin{cases}
    \min\!\Bigl\{-\partial_t\tilde V
    -\beta(t,y,p)\cdot D_{(p,y)}\tilde V
    -\tilde f(p,y)\\[2pt]
    \hspace{1.5cm}-\!\displaystyle\sup_{\tilde z\in\R^{(K-1)\times d}}\!
       \tfrac{1}{2}\tr\!\bigl(\widetilde Q(t,y,\tilde z)\,
         D^2_{(p,y)}\tilde V\bigr),\;
       -\lambda_{\max}\!\bigl(D^2_{pp}\tilde V\bigr)\Bigr\}=0,&\text{in }[0,T)\times\R^d\times\Sigma,\\[8pt]
-\partial_t \tilde{V}-\mathcal{A}_\nu\tilde{V}-\widetilde{p'\Lambda}\cdot D_p\tilde{V}-\tilde{f}(p,y)\ge0,&\text{in }[0,T)\times \R^d\times \pa\Sigma.

\end{cases}
\ee
As in \eqref{correctPDE}, the equality on the first line is to be understood in the
state-constrained sense: $\ge 0$ on the corresponding interior
$[0,T)\times\R^d\times\Sigma^\circ$ and $\le 0$ on the full set
$[0,T)\times\R^d\times\Sigma$.

Define the operators
$F,\; \widetilde{\mathcal{B}_0}\colon[0,T]\times\R^d\times\Sigma\times\R\times\R^{K-1+d}\times
\cS_{K-1+d}\to\R$, $G\colon\Sigma\times\R^d\times\R\to\R$ by
\begin{align}
F(t,y,p,r,\cG,\cH)
  &:= -r - \beta(t,y,p)\cdot\cG
     - \sup_{\tilde z\in\R^{(K-1)\times d}}
       \tfrac{1}{2}\tr\!\bigl(\widetilde Q(t,y,\tilde z)\,\cH\bigr)
     - \tilde f(p,y),
     \label{F}\\
\widetilde{\mathcal{B}_0}(t,y,p,r,\cG,\cH) &:= -r-\beta(t,y,p)\cdot \mathcal{G}-\frac{1}{2}\tr\left(\widetilde{Q}(t,y,0)\cH \right)-\tilde{f}(p,y).\label{B}
\end{align}
Viscosity solutions of \eqref{correctPDE2} are defined in direct
analogy with Definition~\ref{def:stateconstrvisc}, with $\Sigma$
replacing $\Sigma_K$ and the operators $F,\widetilde{\mathcal{B}_0}$ replacing $H_{sc},H_{vi}$ and $\mathcal{B}_0$
as appropriate. We have the following lemma.

\begin{lemma}\label{viscositysol}
Let $V\colon[0,T)\times\R^d\times\Sigma_K\to\R$ be a
(semi-)continuous function and
$\tilde V\colon[0,T)\times\R^d\times\Sigma\to\R$ its restriction
defined in \eqref{res}. Then:
\begin{enumerate}
    \item[(i)] $V$ is a viscosity sub-solution of \eqref{correctPDE}
    on $[0,T)\times\R^d\times\Sigma_K$ if and only if $\tilde V$
    is a viscosity sub-solution of \eqref{correctPDE2} on
    $[0,T)\times\R^d\times\Sigma$; that is,
    \begin{align*}
        \min\bigl\{F\!\left(t_0,y_0,p_0,\,\partial_t\varphi,\,
          D_{(p,y)}\varphi,\,D^2_{(p,y)}\varphi\right),\,
          -\lambda_{\max}(D^2_{pp}\varphi)\bigr\}\le 0
    \end{align*}
    at every point $(t_0,y_0,p_0)\in[0,T)\times\R^d\times\Sigma$
    where $\tilde V-\varphi$ attains a local maximum, for any test
    function $\varphi\in C^{1,2,2}$.

    \item[(ii)] $V$ is a viscosity super-solution of
    \eqref{correctPDE} on $[0,T)\times\R^d\times\Sigma_K^\circ$ if
    and only if $\tilde V$ is a viscosity super-solution of
    \eqref{correctPDE2} on $[0,T)\times\R^d\times\Sigma^\circ$;
    that is, at every point
    $(t_0,y_0,p_0)\in[0,T)\times\R^d\times\Sigma$ where
    $\tilde V-\varphi$ attains a local minimum, for any test
    function $\varphi\in C^{1,2,2}$ we have
    \begin{align*}
        \min\bigl\{F\!\left(t_0,y_0,p_0,\,\partial_t\varphi,\,
          D_{(p,y)}\varphi,\,D^2_{(p,y)}\varphi\right),\,
          -\lambda_{\max}(D^2_{pp}\varphi)\bigr\}&\ge 0,\;\;\text{if }p_0\in \Sigma^\circ,\\
           \widetilde{\mathcal{B}_0}\!\left(t_0,y_0,p_0,\,\partial_t\varphi,\,
          D_{(p,y)}\varphi,\,D^2_{(p,y)}\varphi\right)&\ge 0,\;\;\text{if }p_0\in \partial\Sigma.
    \end{align*}
\end{enumerate}
In particular, $V$ is a viscosity solution of \eqref{correctPDE} in
the sense of Definition~\ref{def:stateconstrvisc} if and only if
$\tilde V$ is a viscosity solution of \eqref{correctPDE2} in the
sense described above.
\end{lemma}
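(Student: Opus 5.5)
The proof is a change-of-variables argument. Test functions on the two spaces correspond to each other, and the operators are algebraically equal under this correspondence.

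Let $\Phi\colon\Sigma\to\Sigma_K$ be $p\mapsto p'$ and $\Psi\colon\Sigma_K\to\Sigma$ be $p\mapsto\tilde p$. These are mutually inverse affine homeomorphisms, and $\Psi$ is the restriction of a linear map on $\R^K$. Since $\tilde V=V\circ(\mathrm{id},\mathrm{id},\Phi)$, local extrema of $V-\varphi$ at $(t_0,y_0,p_0)$ correspond exactly to local extrema of $\tilde V-\tilde\varphi$ at $(t_0,y_0,\tilde p_0)$. Here $\tilde\varphi:=\varphi\circ\Phi$.

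For the converse direction, a test function $\psi\in C^{1,2,2}$ on $[0,T)\times\R^d\times\Sigma$ is extended to a neighbourhood of $\Sigma_K$ in $\R^K$ by $\varphi(t,y,p):=\psi(t,y,p_1,\ldots,p_{K-1})$. Then $\tilde\varphi=\psi$, so every reduced test function arises this way. Interior points go to interior points ($\Phi(\Sigma^\circ)$ is the relative interior $\Sigma_K^\circ$) and boundary points go to boundary points. Hence the case split for the supersolution in (ii), interior versus $\pa\Sigma$, is preserved.

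Next I would verify that the operator values coincide at corresponding points for any $C^{1,2,2}$ function $\varphi$. I would use the identities already recorded: $D_p\tilde\varphi=J^\top D_p\varphi$, $D_pD_y\tilde\varphi=J^\top D_pD_y\varphi$, $D^2_{pp}\tilde\varphi=J^\top D^2_{pp}\varphi\,J$, and $y$- and $t$-derivatives unchanged.

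\emph{Drift term.} The drift term matches because $p'\Lambda=J\,\widetilde{p'\Lambda}$. This gives $D_p\varphi\cdot p'\Lambda=J^\top D_p\varphi\cdot\widetilde{p'\Lambda}=D_p\tilde\varphi\cdot\widetilde{p'\Lambda}$.

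\emph{Control term.} The constrained supremum over $\{z:\sum_iz^i=0\}$ is reparametrized by $z=J\tilde z$, a bijection onto that subspace. Term by term:
\[
\tr\bigl(z\sigma_\nu^{-2}z^\top D^2_{pp}\varphi\bigr)=\tr\bigl(\tilde z\sigma_\nu^{-2}\tilde z^\top D^2_{pp}\tilde\varphi\bigr)
\]
by cyclicity of the trace, and $\tr(zm^\top)=\tr(\tilde z(J^\top m)^\top)$. So the suprema in $H_{sc}$ and $F$ agree, including when both equal $+\infty$. Finally $\tilde f(\tilde p,y)=f(p,y)$, so $H_{sc}=F$ at corresponding points.

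\emph{Boundary operator.} With $z=0$ the same computation gives $\mathcal{B}_0=\widetilde{\mathcal{B}_0}$.

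The only genuinely delicate point is the obstacle term. As noted in the text, $\lambda_{\max}(D^2_{pp}\varphi)$ on $\R^K$ and $\lambda_{\max}(D^2_{pp}\tilde\varphi)$ are not equal, and even $D^2_{pp}\varphi$ on all of $\R^K$ is not the right object. So I would interpret $\lambda_{\max}(D^2_{pp}V)$ in \eqref{Hvi} as the constrained maximal eigenvalue on the tangent subspace $\{\sum_iz_i=0\}$. This is the reading justified by the finiteness discussion preceding \eqref{Hvi}.

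One could worry that the min in $H_{vi}$ depends on more than the sign of $-\lambda_{\max}$. It does not. If $\lambda_{\max}>0$ on the subspace, then the supremum is $+\infty$, so $H_{sc}=-\infty$ and $H_{vi}=-\infty$ in both formulations. If $\lambda_{\max}\le0$, the second entry is $\ge0$ and the value of the min is determined by its sign together with $H_{sc}$.

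It therefore suffices to show that the inequalities $\min\{H,-\lambda\}\le0$ and $\ge0$ hold simultaneously in both coordinates. For this I would use the sign equivalence
\[
z^\top D^2_{pp}\varphi\,z\le0\ \ \forall z\perp\mathbf 1\quad\Longleftrightarrow\quad D^2_{pp}\tilde\varphi\le0,
\]
which follows from $J$ being a bijection. I would state this carefully as the main step and, if needed, phrase $H_{vi}$ with $\lambda_{\max}$ replaced by its sign to make the equivalence literal. Combining the extremum correspondence with the operator identities yields (i), (ii), and the final statement.
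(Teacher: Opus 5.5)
Your proposal is correct and is essentially the paper's own argument. The paper likewise transfers touch points through the restriction map \eqref{res}. It matches $H_{sc}$ with $F$ and $\mathcal B_0$ with $\widetilde{\mathcal B_0}$ via $D_p\tilde V=J^\top D_pV$, $D_pD_y\tilde V=J^\top D_pD_yV$, $D^2_{pp}\tilde V=J^\top D^2_{pp}V\,J$ and the bijection $z=J\tilde z$, and it handles the obstacle term only through the sign of the quadratic form on $\{\sum_i z_i=0\}$. Your version spells out details the paper leaves implicit: the extension of reduced test functions, the drift identity $p'\Lambda=J\,\widetilde{p'\Lambda}$, and why only the sign of $\lambda_{\max}$ matters inside the $\min$.
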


\begin{proof}
All three equivalences follow from the chain-rule identities
established above. Indeed, the relations $D_p\tilde V=J^\top D_pV$,
$D_pD_yV\cdot z = D_pD_y\tilde V\cdot\tilde z$ (for $z=J\tilde z$),
and $z^\top D^2_{pp}V\,z = \tilde z^\top D^2_{pp}\tilde V\,\tilde z$
show that, at any test-function touch point, the arguments of $H_{vi}$
in \eqref{correctPDE} and the arguments of $F$ in \eqref{correctPDE2}
are related by the same identities. The domain correspondence
$\Sigma_K^\circ\leftrightarrow\Sigma^\circ$ (resp.\
$\Sigma_K\leftrightarrow\Sigma$) is immediate from the definition of
the restriction map \eqref{res}. Finally, the sign of the constrained
quadratic form of $D^2_{pp}V$ on $\{\sum_i z_i=0\}$ is the sign of the
quadratic form associated with $D^2_{pp}\tilde V$, by the bijection
$z=J\tilde z$. This transfers the sign condition on the obstacle term between the two formulations and completes the argument.
\end{proof}

\subsection{Proof of the comparison principle for the main PDE (\ref{correctPDE})}\label{comp}
Our proof for the comparison principle utilizes ideas from
\cite{ishii2002class} in order to handle the state constraint. The
results there do not apply directly to our setting because of the
unboundedness of the controls, so we adapt the arguments to our
context. 

\begin{lemma}[{\cite[Lemma 3.3]{ishii2002class}}]\label{1st}
    Consider $\Lambda$ from Assumption~\ref{assumption1} and recall
    Notation~\ref{restrictions}. There exists a function
    $\psi\in C^\infty(\Sigma)$ such that
    \be\label{ineq1}
    \widetilde{p'\Lambda}\cdot D\psi(p)\ge 1
    \qquad\text{for every }p\in\partial\Sigma.
    \ee
\end{lemma}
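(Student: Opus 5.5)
The plan is to build the function explicitly on $\Sigma_K$ by averaging a strictly convex Lyapunov functional along the deterministic Kolmogorov flow $p\mapsto pe^{s\Lambda}$, and then pass to reduced coordinates. One could also just verify the hypotheses of \cite[Lemma 3.3]{ishii2002class}, but an explicit construction avoids checking their exact setting.

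\emph{Setup.} Let $\pi\in\Sigma_K^\circ$ be the stationary distribution of $X$, which exists, is unique and has strictly positive entries by the irreducibility in Assumption~\ref{assumption1}. Define the $\chi^2$-type functional
\[
D(q):=\sum_{i=1}^K \frac{(q_i)^2}{\pi_i},\qquad q\in\R^K.
\]
Fix $\tau>0$ and set
\[
\Psi(q):=-\int_0^\tau D\bigl(qe^{s\Lambda}\bigr)\,ds,\qquad q\in\R^K .
\]
$\Psi$ is a polynomial in $q$, so it is $C^\infty$. The candidate is $\psi(p):=c^{-1}\Psi(p')$ for $p\in\Sigma$, with a constant $c>0$ chosen in the last step.

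\emph{Drift identity.} Since $\Psi$ is smooth on all of $\R^K$, the chain rule gives $D\psi(p)=c^{-1}J^\top D\Psi(p')$. Because $J\,\widetilde{p'\Lambda}=p'\Lambda$, it follows that
\[
\widetilde{p'\Lambda}\cdot D\psi(p)=c^{-1}\,p'\Lambda\cdot D\Psi(p').
\]
The right-hand side is $c^{-1}$ times the derivative of $\Psi$ along the flow $r\mapsto p'e^{r\Lambda}$ at $r=0$. By the semigroup property, $\Psi(p'e^{r\Lambda})=-\int_r^{r+\tau}D(p'e^{s\Lambda})\,ds$. Differentiating at $r=0$ gives
\[
p'\Lambda\cdot D\Psi(p')=D(p')-D\bigl(p'e^{\tau\Lambda}\bigr).
\]

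\emph{Strict decrease on the boundary.} Write $D(q)=\sum_i\pi_i\phi(q_i/\pi_i)$ with $\phi(x)=x^2$, and let $P=e^{\tau\Lambda}$. Stationarity gives $\pi P=\pi$, so
\[
\frac{(qP)_j}{\pi_j}=\sum_i w_{ij}\,\frac{q_i}{\pi_i},\qquad w_{ij}:=\frac{\pi_iP_{ij}}{\pi_j},\quad \sum_i w_{ij}=1 .
\]
Applying Jensen's inequality in each $j$ and summing over $j$ (using $\sum_j P_{ij}=1$) yields $D(qP)\le D(q)$.

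For strictness, irreducibility implies that $P$ has all entries strictly positive for $\tau>0$, so every $w_{ij}>0$. Since $\phi$ is strictly convex, equality in Jensen forces $q_i/\pi_i$ to be constant in $i$. For $q\in\Sigma_K$ this means $q=\pi$. Every $p\in\partial\Sigma$ has $p'\in\partial\Sigma_K$, so $p'\neq\pi$. Hence the continuous function $p\mapsto D(p')-D(p'e^{\tau\Lambda})$ is strictly positive on the compact set $\partial\Sigma$.

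\emph{Conclusion.} Let $c>0$ be the minimum of this function over $\partial\Sigma$. Then $\widetilde{p'\Lambda}\cdot D\psi(p)\ge 1$ for every $p\in\partial\Sigma$, which is \eqref{ineq1}.

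The main obstacle is exactly this strict, uniform positivity on the whole boundary. A naive choice such as $\sum_i\theta(p_i)$ fails there: near lower-dimensional faces the inward component of the drift, $(p'\Lambda)_i$ for $p_i=0$, can degenerate to $0$. The flow-averaged, strictly convex functional avoids this because irreducibility makes $e^{\tau\Lambda}$ entrywise positive.
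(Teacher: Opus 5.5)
Your proof is correct, but it takes a genuinely different route from the paper. The two share one device: integrate a function along the Kolmogorov flow, so that the derivative along the drift $\widetilde{p'\Lambda}$ reduces to an endpoint term.

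\emph{The paper's route.} It integrates a smooth cutoff $h$ over the infinite horizon, $\psi(p)=-\int_0^\infty h(\phi_t(p))\,dt$, where $h$ equals $1$ on $\partial\Sigma$ and vanishes near $\tilde\pi$. This requires showing that the reduced affine flow converges to $\tilde\pi$ uniformly exponentially, using the spectrum of $\Lambda$ on the zero-sum subspace, so that the integral effectively runs only over a fixed $[0,T_0]$. In return it gives the exact identity $\widetilde{p'\Lambda}\cdot D\psi=h$, hence the value $1$ on $\partial\Sigma$.

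\emph{Your route.} You integrate your functional $D(q)=\sum_i q_i^2/\pi_i$ over a finite window, so the derivative along the drift is $D(p')-D(p'e^{\tau\Lambda})$. Positivity comes from strict contraction of this functional under the entrywise positive stochastic matrix $e^{\tau\Lambda}$ (Jensen with weights $w_{ij}>0$). You then normalize using compactness of $\partial\Sigma$.

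\emph{What each buys.} Your argument avoids the spectral and stability analysis entirely and produces an explicit quadratic polynomial $\psi$. Such a $\psi$ is trivially smooth on a neighbourhood of $\Sigma$ with all derivatives bounded. This is convenient in the comparison proof, which uses Lipschitz continuity of $D^2_{pp}\psi$ and of $p\mapsto D\psi(p)\cdot\widetilde{p'\Lambda}$. It also shows that the drift derivative of your $\Psi$ is strictly positive on all of $\Sigma_K\setminus\{\pi\}$, not only on the boundary. The cost is that your normalizing constant $c$ is defined only implicitly, as a minimum over $\partial\Sigma$. The paper's construction instead fixes the drift derivative exactly through the choice of $h$.
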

\begin{proof}
Since $X$ is irreducible, $\Lambda$ admits a unique left null vector
$\pi\in\Sigma_K$ (the stationary distribution), satisfying
$\pi\Lambda=0$ and $\pi_j>0$ for all $j\in\mathbb{S}$.

\vspace{1mm}

\nd
We claim that $\widetilde{p'\Lambda}\neq 0$ for every
$p\in\partial\Sigma$. Indeed, if $\widetilde{p'\Lambda}=0$, then since
$\sum_j(p'\Lambda)_j=p'(\Lambda\mathbf{1})=0$, every component of
$p'\Lambda$ vanishes; that is, $p'\Lambda=0$, so $p'$ is a left null
vector of $\Lambda$ and hence $p'\in\langle\pi\rangle$. The
constraint $\sum_j p'_j=1$ then forces $p'=\pi$. But $\pi_j>0$ for
all $j$ places $\pi$ in $\Sigma_K^\circ$, contradicting
$p'\in\partial\Sigma_K$.

\vspace{1mm}

\nd
We observe that the map $v(p):=\widetilde{p'\Lambda}=pA+b$, where
$b=(\Lambda_{K,1},\ldots,\Lambda_{K,K-1})$, is \emph{affine} on
$\R^{K-1}$ (writing $p_K'=1-\sum_{k<K}p_k$), so
$\dot p = \widetilde{p'\Lambda}$ generates a global flow
$(\phi_t)_{t\ge 0}$ on $\Sigma$ with unique equilibrium
$\tilde\pi:=(\pi_1,\ldots,\pi_{K-1})$. Its linearization
\[
A = \bigl(\Lambda_{jk}-\Lambda_{Kk}\bigr)_{j,k=1}^{K-1}
\]
is the matrix of $\Lambda$ (acting on rows) restricted to the
invariant subspace $\{x\in\R^K:\sum_i x_i=0\}$, which is
complementary to the $0$-eigenspace $\{\alpha\pi:\alpha\in\R\}$. Hence the
eigenvalues of $A$ are the non-zero eigenvalues of $\Lambda$, which
all have strictly negative real parts. Indeed, Gershgorin's theorem
places the spectrum of $\Lambda$ in $\{\operatorname{Re}z\le0\}$, and
irreducibility implies that the zero eigenvalue is simple while all
remaining spectral values lie in the open left half-plane (equivalently,
by the Perron--Frobenius theorem for the stochastic semigroup
$e^{t\Lambda}$, $e^{t\Lambda}\to\mathbf{1}\pi$ exponentially on the
subspace $\{\sum_i x_i=0\}$). Consequently the flow satisfies
\[
|\phi_t(p) - \tilde\pi| \leq C e^{-\gamma t}|p-\tilde\pi|,
\qquad p\in\Sigma,\; t\ge 0,
\]
for some constants $C,\gamma>0$, and can be written explicitly as
\[
\phi_t(p) = pe^{tA} + bA^{-1}(e^{tA}-I),\qquad p\in\Sigma.
\]
We may now construct $\psi$ as follows. Since $\pi_j>0$ for all
$j\in\mathbb{S}$, every $p'\in\partial\Sigma_K$ has some coordinate
$p_k'=0$ for some $k\in\mathbb{S}$, so
\[
\delta_0 := \min_{p\in\partial\Sigma}|p'-\pi|^2
  \;\ge\; \min_j\pi_j^2 \;>\; 0.
\]
We choose $\rho\in C^\infty([0,\infty);[0,1])$ with $\rho\equiv 0$
on $[0,\delta_0/2]$ and $\rho\equiv 1$ on $[\delta_0,\infty)$, and we
define
\[
h(p) := \rho\!\bigl(|p'-\pi|^2\bigr) \in C^\infty(\Sigma).
\]
By construction, $h=0$ in a neighborhood of $\tilde\pi$ and
$h\equiv 1$ on $\partial\Sigma$. By the exponential convergence,
there exists a uniform $T_0<\infty$ such that $h(\phi_t(p))=0$ for
all $t\ge T_0$ and all $p\in\Sigma$. Define
\[
\psi(p) := -\int_0^\infty h(\phi_t(p))\,dt
  = -\int_0^{T_0} h(\phi_t(p))\,dt.
\]
Since $\phi_t(p)$ is affine in $p$ for each $t$ and $h\in C^\infty$,
the integrand and all its $p$-derivatives are bounded on
$\Sigma\times[0,T_0]$. Differentiating under the integral sign yields
$\psi\in C^\infty(\Sigma)$.

\vspace{1mm}

\nd
We finally verify inequality \eqref{ineq1}. For any $p\in\Sigma$ and
$\varepsilon>0$, the semigroup property
$\phi_t(\phi_\varepsilon(p))=\phi_{t+\varepsilon}(p)$ gives
\begin{align*}
\psi(\phi_\varepsilon(p)) - \psi(p)
&= -\int_0^\infty h(\phi_t(\phi_\varepsilon(p)))\,dt
  + \int_0^\infty h(\phi_t(p))\,dt \\
&= -\int_0^\infty h(\phi_{t+\varepsilon}(p))\,dt
  + \int_0^\infty h(\phi_t(p))\,dt \\
&= \int_0^\varepsilon h(\phi_t(p))\,dt.
\end{align*}
Dividing by $\varepsilon$ and sending $\varepsilon\to 0$, the chain
rule gives
\[
D\psi(p)\cdot v(p) = h(p),
\]
where $v(p)=\widetilde{p'\Lambda}$. Hence
$\widetilde{p'\Lambda}\cdot D\psi(p) = h(p)$. On $\partial\Sigma$,
every $p$ satisfies $|p'-\pi|^2\ge\delta_0$, so
$h(p)=\rho(|p'-\pi|^2)=1$. Therefore
\[
\widetilde{p'\Lambda}\cdot D\psi(p)\,\Big|_{\partial\Sigma} = 1\ge 1,
\]
which completes the proof.
\end{proof}
\vspace{3mm}

\begin{lemma}[{\cite[Lemma 3.4]{ishii2002class}}]\label{2nd}
    Consider $\Lambda$ from Assumption~\ref{assumption1} and recall
    Notation~\ref{restrictions}. There exists
    $w\in C^{1,1}(\Sigma\times\Sigma)$ and constants $C,r>0$ such that
    $$
    \widetilde{y'\Lambda}\cdot D_yw(x,y)\le 0
    \qquad\text{for all }y\in\partial\Sigma
       \text{ and }x\in\Sigma\cap B_r(y),
    $$
    and for all $x,y\in\Sigma$,
    \begin{align*}
        |x-y|^2 \le w(x,y) &\le C|x-y|^2,\\
        \max\bigl\{|D_xw(x,y)|,\,|D_yw(x,y)|\bigr\}
            &\le C|x-y|,\\
        |D_xw(x,y)+D_yw(x,y)|
            &\le C|x-y|^2,\\
        D^2w(x,y) &\le C\begin{pmatrix}I & -I \\ -I & I\end{pmatrix}
            + C|x-y|^2\begin{pmatrix}I & 0 \\ 0 & I\end{pmatrix},
    \end{align*}
    where $D^2w$ is understood in the distributional sense.
\end{lemma}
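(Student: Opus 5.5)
The plan is to follow the construction of \cite[Lemma 3.4]{ishii2002class}. We build $w$ as a quadratic form $w(x,y)=(x-y)^\top S(y)(x-y)$ whose matrix $S(y)$ is adapted to the drift $v(y):=\widetilde{y'\Lambda}$ near $\partial\Sigma$. The geometric fact needed is that, at a boundary point $y$, the vector field $v$ does not point outward: on a face $\{p_k=0\}$ one has $(p'\Lambda)_k=\sum_{j\ne k}p'_j\Lambda_{jk}\ge0$. Lemma~\ref{1st} upgrades this to a quantitative, strict statement: $v\cdot D\psi\ge1$ on $\partial\Sigma$. In addition, $v(y)\neq0$ on $\partial\Sigma$, as shown in that proof.

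\textbf{Step 1: local quadratic forms.} For $y$ on $\partial\Sigma$ we have
\[
D_yw(x,y)\cdot v(y)=-2(x-y)^\top S(y)v(y)+O(|x-y|^2).
\]
If $-S(y)v(y)$ lies in the normal cone $N_\Sigma(y)$, then $(x-y)\cdot S(y)v(y)\ge0$ for every $x\in\Sigma$, so the leading term has the right sign. We therefore want, for each $y_0\in\partial\Sigma$, a positive definite $S_{y_0}$ such that
\[
S_{y_0}v(y)\in-N_\Sigma(y)\qquad\text{for all } y\in\partial\Sigma \text{ near } y_0 .
\]
To get it, let $F_1,\dots,F_m$ be the facets containing $y_0$, with outward normals $\nu_1,\dots,\nu_m$. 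For boundary $y$ near $y_0$, $N_\Sigma(y)$ is the cone generated by the normals of the facets containing $y$, a subset of $\{\nu_1,\dots,\nu_m\}$. Using the strict inwardness $v\cdot D\psi\ge1$, pick $\eta:=-D\psi(y_0)$, which satisfies $v(y_0)\cdot\eta<0$. Then take $S_{y_0}$ to be a positive definite matrix with $S_{y_0}v(y_0)=c\,\eta$, adjusted so that $\eta$ is replaced by an element of $-\mathrm{cone}(\nu_1,\dots,\nu_m)$. To make this sign robust near corners, the error term must be absorbed: add a term $\theta\bigl(\psi(x)-\psi(y)\bigr)|x-y|^2$, or an exponential weight $e^{-\theta\psi}$, to the form. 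Then the residual $O(|x-y|^2)$ is dominated by the $-\theta\,v\cdot D\psi\,|x-y|^2\le-\theta|x-y|^2$ contribution. This is exactly how $\psi$ enters in \cite{ishii2002class}.

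\textbf{Step 2: patching.} Cover the compact set $\partial\Sigma$ by finitely many balls $B_{r_\ell}(y_\ell)$ on which Step~1 holds, add one interior patch with $S=I$, and take a smooth partition of unity $\{\chi_\ell\}$ subordinate to the cover. Define
\[
w(x,y):=\Bigl(\sum_\ell\chi_\ell(y)(x-y)^\top S_\ell(x-y)\Bigr)e^{-\theta(\psi(x)+\psi(y))},
\]
normalised by a constant so that $w\ge|x-y|^2$.

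The required estimates follow by direct differentiation. The bounds $|x-y|^2\le w\le C|x-y|^2$ and $|D_xw|,|D_yw|\le C|x-y|$ are immediate from the form. The estimate $|D_xw+D_yw|\le C|x-y|^2$ holds because the $(x-y)$-derivatives cancel, leaving only the derivatives falling on $\chi_\ell(y)$ and on $\psi$, each multiplied by $|x-y|^2$. The Hessian bound comes from $D^2$ of $(x-y)^\top S(x-y)$, which is $\begin{pmatrix}S&-S\\-S&S\end{pmatrix}\le C\begin{pmatrix}I&-I\\-I&I\end{pmatrix}$, plus cross terms. Cross terms of the form (first-order in $x-y$) times (derivative of $\chi$ or $\psi$) are handled by Young's inequality and land in $C\begin{pmatrix}I&-I\\-I&I\end{pmatrix}+C|x-y|^2 I$. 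Regularity $C^{1,1}$, rather than smooth, is enough, since only $\psi\in C^\infty$ and the partition of unity enter.

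\textbf{Main obstacle.} The hardest step is the sign condition $v(y)\cdot D_yw(x,y)\le0$ for $y\in\partial\Sigma$ and $x$ close to $y$, uniformly across the patch overlaps. When two patches $S_\ell\ne S_{\ell'}$ overlap at a corner, the convex combination $\sum\chi_\ell S_\ell v(y)$ must still lie in $-N_\Sigma(y)$. This holds because $-N_\Sigma(y)$ is a convex cone, so choosing each $S_\ell$ valid on its whole patch suffices. The derivative of $\chi_\ell$ only produces $O(|x-y|^2)$ terms, which the $\theta$-weight absorbs once $\theta$ is large and $r$ is small. I would first check this carefully at a vertex of $\Sigma$, where the most facets meet. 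Irreducibility is used there through Lemma~\ref{1st} to guarantee the strict margin.
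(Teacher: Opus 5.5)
\textbf{The gap is in Step~1, and the $\psi$-weight and patching cannot repair it once $K\ge3$.} For any $w$ of your form, with $\bar S(y):=\sum_\ell\chi_\ell(y)S_\ell$ continuous and positive definite, you get
\[
v(y)\cdot D_yw(x,y)=e^{-\theta(\psi(x)+\psi(y))}\Bigl(-2\,(x-y)^\top\bar S(y)\,v(y)+O(|x-y|^2)\Bigr).
\]
Take $y$ in the relative interior of a facet $F_k$ with normal $\nu_k$. Then both $x=y\pm\varepsilon d$ with $d\perp\nu_k$ are admissible, so letting $\varepsilon\downarrow0$ forces $\bar S(y)v(y)\in\R\nu_k$.

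Now let $y$ tend to a point $y_0\in F_k\cap F_l$ with $k\neq l$; such points exist as soon as $K\ge3$. Approaching along $F_k$ and then along $F_l$ gives $\bar S(y_0)v(y_0)\in\R\nu_k\cap\R\nu_l=\{0\}$. This contradicts $\bar S(y_0)\succ0$ and $v(y_0)\neq0$.

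Consequences for your proposal:
\begin{enumerate}
\item The matrix $S_{y_0}$ you ask for does not exist, so the convexity argument in your last paragraph has nothing to average.
\item The weight $e^{-\theta\psi}$ cannot compensate. It only contributes $-\theta\,(v\cdot D\psi)\,w=O(|x-y|^2)$, while the defect is of first order in $|x-y|$.
\item Your construction only works for $K=2$. There $\Sigma=[0,1]$ has no corners and already $w(x,y)=|x-y|^2$ does the job.
\end{enumerate}

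\textbf{Irreducibility together with Lemma~\ref{1st} is not enough, because the statement itself forces a uniform transversality of $v$ to the facets.} At a relative-interior point $y$ of a facet, split $v(y)=v_T+v_N$ into tangential and normal parts. The sign condition, together with $|D_xw+D_yw|\le C|x-y|^2$ and $|D_xw|\le C|x-y|$, gives
\[
v_T\cdot D_xw(x,y)\ge -C|v(y)|\,|x-y|^2-C|v_N|\,|x-y|.
\]
Integrate $w(\cdot,y)$ from $y$ along $-v_T$ inside the facet and use $w\ge|x-y|^2$; letting the step length go to $0$ yields $|v_T|\le\tfrac C2|v_N|$.

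This fails for a generator allowed by Assumption~\ref{assumption1}. Take $K=3$ and the irreducible cyclic generator with $\Lambda_{12}=\Lambda_{23}=\Lambda_{31}=1$ and all other off-diagonal rates zero. On the facet $\{p_1=0\}$ one computes $v(0,s)=(1-s,-s)$, so $|v_T|/|v_N|=s/(1-s)\to\infty$ as $s\uparrow1$, and no such $w$ exists.

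The paper gives no proof of its own here, only the citation of \cite[Lemma~3.4]{ishii2002class}. Under Assumption~\ref{assumption1} alone, your plan therefore has to add a hypothesis such as $\Lambda_{jk}>0$ for all $j\neq k$, which gives $|v\cdot\nu_k|\ge c>0$ on each closed facet.

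Even under such a hypothesis, the local building blocks must be non-quadratic in $x-y$. One option is squared gauges of convex bodies elongated along $-v(y_0)$ and not centred at the origin; these are $2$-homogeneous and only $C^{1,1}$ on the diagonal, which the statement allows. Your partition of unity (the sign condition is linear in $w$) and your $\psi$-weight can then be used as you intended, to absorb the $O(|x-y|^2)$ terms coming from the $y$-dependence.
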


\begin{proof}[Proof of the comparison principle]
    By Lemma \ref{viscositysol}, it suffices to show that \eqref{correctPDE2} has a comparison principle. 
    \vspace{2mm}

    \nd
    Suppose that $V_1,\;V_2\colon [0,T]\times\R^d\times \Sigma_K\to \R$ are viscosity sub- and super-solutions of \eqref{correctPDE2}, respectively, in the sense of Lemma \ref{viscositysol} such that \eqref{terminalassumption} holds. Let $\tilde{V}_1,\;\tilde{V}_2\colon [0,T]\times\R^d\times\Sigma\to \R$ be their restrictions defined in \eqref{res}. To ease notation, we still denote the restricted functions by $V_1,\; V_2$. In what follows, we also unwind the definitions of $\beta,\widetilde{Q}$ from \eqref{jointdrifttilde} and \eqref{Qtildered} to treat all terms separately.
    \vspace{1mm}

    \nd
    We aim to show that $V_1\le V_2$ everywhere. Assume, for contradiction, that 
    \be\label{contr2}
    \sup_{(t,y,p)\in [0,T]\times \R\times\Sigma}( V_1(t,y,p)-V_2(t,y,p))=\theta>0.
    \ee
    Let $\psi,w$ be the functions constructed in Lemmas \ref{1st} and \ref{2nd}, respectively. Consider the function
    \begin{align}
        \Phi(p_1,p_2,y_1,y_2,t,s):=V_1(t,&\;y_1,p_1)-V_2(s,y_2,p_2)-L(w(p_1,p_2)+|t-s|^2)-\sqrt{L}|y_1-y_2|^2\nonumber\\
        &-L'(|y_1|^2+|y_2|^2)+\gamma_1t+\gamma_2 s+\frac{\e }{2}|p_2|^2+M(\psi(p_2)-\psi(p_1)),\label{Phivisc}
    \end{align}
    where the constants $L,L',\gamma_1,\gamma_2,\e,M>0$ will be chosen later. Let $r$ be the constant from Lemma \ref{2nd}. We note that there are $\hat{p_1},\hat{p_2}\in\tilde{\Sigma}$, $\hat{y_1},\hat{y_2}\in \R^d$ and $\hat{t},\hat{s}\in [0,T]$ such that
        $$\Phi(\hat{p_1},\hat{p_2},\hat{y_1},\hat{y_2},\hat{t},\hat{s})=\sup\left\{\Phi(p_1,p_2,y,t,s)|\;  p_1,p_2\in \Sigma,\; |p_1-p_2|\le r, \;y_1,y_2\in \R^d, \;t,s\in [0,T]  \right\}$$
    and by choosing $L$ to be large enough, we may assume that $|\hat{p_1}-\hat{p_2}|<r$. In addition, by choosing $L'$ to be small enough, we may assume that 
    \be\label{positivity}
    \Phi(\hat{p_1},\hat{p_2},\hat{y_1},\hat{y_2},\hat{t},\hat{s})\ge \frac{\theta}{2}>0.
    \ee
    We now use the viscosity properties of $V_1,V_2$, depending on whether either $\hat{t}$ or $\hat{s}$ equals $T$.
    We first note that by \cite[Proposition 3.7]{crandall1992user} we have
    \begin{align}
        L|\hat{p_1}-\hat{p_2}|^2+\sqrt{L}|\hat{y_1}-\hat{y_2}|^2+L|\hat{t}-\hat{s}|^2&\xrightarrow{L\rightarrow +\infty}0,\;\;\text{and}\label{conv2'}\\
        L'(|\hat{y_1}|^2+|\hat{y_2}|^2)&\xrightarrow{L'\to 0} 0.\label{conv1'}
    \end{align}
     If $\hat{t}=T$ or $\hat{s}=T$, then \eqref{conv2'} gives $\hat{t},\hat{s}\xrightarrow{L\to +\infty}T$. Choose $\gamma_1,\gamma_2$ and $\e$ small enough so that $T(\gamma_1+\gamma_2)<\theta/8$ and $\frac{\e}{2}\sup_{p\in \Sigma}|p|^2<\theta/8$. Then \eqref{positivity}, after letting $L\to +\infty$ and keeping $L'$ fixed, gives
     $$0<\frac{\theta}{4}\le \limsup_{\substack{(t,y_1,p_1),(s,y_2,p_2)\to (T,y,p)}}\left( V_1(t,y_1,p_1)-V_2(s,y_2,p_2)\right),$$
     for some $(y,p)\in \R^d\times \Sigma$.
     This contradicts our assumption. Thus, we may assume that $\hat{t},\hat{s}\in [0,T)$. 
     \vspace{2mm}

    \nd
    If $\hat{p_2}\in\partial\Sigma$, then the super-solution property of $V_2$ gives
    \be\nonumber
    \begin{split}
        -\gamma_2+2L(\hat s-\hat t)&+\sqrt{L}\tr\!\left(\sigma_\nu^2(\hat s,\hat{y_2})\right)+b_\nu(\hat s,\hat{y_2})\cdot\bigl(2\sqrt{L}(\hat{y_2}-\hat{y_1})+2L'\hat{y_2}\bigr)-\tilde f(\hat{p_2},\hat{y_2})\\
        &+\bigl(-\e\hat{p_2}-MD_p\psi(\hat{p_2})+LD_yw(\hat{p_1},\hat{p_2})\bigr)\cdot\widetilde{\hat{p_2}'\Lambda}\ge 0.
    \end{split}
    \ee
    By the properties of $\psi,w$, this inequality yields
    $$2L(\hat s-\hat t)+\sqrt{L}\tr\!\left(\sigma_\nu^2(\hat s,\hat{y_2})\right)+b_\nu(\hat s,\hat{y_2})\cdot2\sqrt{L}(\hat{y_2}-\hat{y_1})+2L'b_\nu(\hat s,\hat{y_2})\cdot\hat{y_2}-\tilde f(\hat{p_2},\hat{y_2})\ge M/2.$$
    The first three terms on the left-hand side are of order $\sqrt{L}$, by \eqref{conv2'} and the boundedness of $\sigma_\nu,b_\nu$. The fourth term is of order $\sqrt{L'}$, while the fifth term, by the linear growth of $f(p,y)$ in $y$, is of order $1/\sqrt{L'}$. Thus, choosing $M=C(\sqrt{L}+1/\sqrt{L'})$ for some sufficiently large $C>0$ yields a contradiction. This choice of $M$ does not affect the previous argument; hence we may assume that $\hat{p_2}\in \Sigma^\circ$, with \eqref{conv2'} and \eqref{conv1'} in place.
     \vspace{2mm}

     \nd
     If $\hat{t},\hat{s}\in [0,T)$ and $\hat{p_2}\in \Sigma^\circ$, then by Lemma~\ref{2nd} and Ishii's lemma \cite[Theorem~3.2]{crandall1992user} we may find $X,Y\in\mathcal{S}_{K-1}$ and $a_1,a_2\in \mathcal{S}_d$ such that 
    \be\label{matricesXYstep2}
    \begin{split}
        &\left(2\sqrt{L}(\hat{y_1}-\hat{y_2})+2L'\hat{y_1}, LD_xw(\hat{p_1},\hat{p_2})+MD_p\psi(\hat{p_1}),\begin{pmatrix}
    X & 0 \\
    0 & a_1
    \end{pmatrix}  \right)\in \overline{J}^{2,+}V_1(\hat{t},\hat{y_1},\hat{p_1}),\\
    &\left(-2\sqrt{L}(\hat{y_2}-\hat{y_1})-2L'\hat{y_2},-LD_yw(\hat{p_1},\hat{p_2})+\e\hat{p_2}+MD_p\psi(\hat{p_2}),\begin{pmatrix}
    -Y & 0 \\
    0 & -a_2
    \end{pmatrix}  \right)\in \overline{J}^{2,-}V_2(\hat{s},\hat{y_2},\hat{p_2}),\\
        & \begin{pmatrix}
    X & 0 \\
    0 & Y
    \end{pmatrix} \le LC\left(\begin{pmatrix}
    I & -I \\
    -I & I
    \end{pmatrix}+|\hat{p_1}-\hat{p_2}|^2 \begin{pmatrix}
    I & 0 \\
    0 & I
    \end{pmatrix} - \begin{pmatrix}
    -MD^2_{pp}\psi(\hat{p_1}) & 0 \\
    0 & \e I+MD^2_{pp}\psi(\hat{p_2})
    \end{pmatrix} \right),\\
    & \begin{pmatrix}
    a_1 & 0 \\
    0 & a_2
    \end{pmatrix}\le 3\sqrt{L} \begin{pmatrix}
    I_d & -I_d \\
    -I_d & I_d
    \end{pmatrix}+3L'\begin{pmatrix}
    I_d & 0 \\
    0 & I_d
    \end{pmatrix}.
    \end{split}
    \ee
    The third part implies the matrix inequality
    \be\label{conv1}
    X+Y\le  2CL |\hat{p_1}-\hat{p_2}|^2I-\e I +M(D^2_{pp}\psi(\hat{p_1})-D^2_{pp}\psi(\hat{p_2})),
    \ee
    obtained by testing the block inequality against $(z,z)$, while
    the fourth gives
    \be\label{conv2}
    \tr\!\left(a_1\sigma_\nu^2(\hat t,\hat{y_1})+a_2\sigma_\nu^2(\hat s,\hat{y_2})\right)\le 3\sqrt{L}|\sigma_\nu(\hat t,\hat{y_1})-\sigma_\nu(\hat s,\hat{y_2})|^2+3L'\tr\!\left(\sigma_\nu^2(\hat t,\hat{y_1})+\sigma_\nu^2(\hat s,\hat{y_2})\right).
    \ee
    The viscosity sub-solution property for $V_1$ gives
    \be\label{subsol}
    \begin{split}
        \min&\bigg\{\gamma_1-2L(\hat t-\hat s)-\tfrac{1}{2}\tr\!\left(\sigma_\nu^2(\hat t,\hat{y_1})a_1\right)-b_\nu(\hat t,\hat{y_1})\cdot\bigl(2\sqrt{L}(\hat{y_1}-\hat{y_2})+2L'\hat{y_1}\bigr)-\tilde f(\hat{p_1},\hat{y_1})\\
        &-(LD_xw(\hat{p_1},\hat{p_2})+MD_p\psi(\hat{p_1}))\cdot\widetilde{\hat{p_1}'\Lambda}-\sup_{z\in\R^{(K-1)\times d}}\tfrac{1}{2}\text{tr}(z\sigma_{\nu}^{-2}(\hat{t},\hat{y_1})z^\top X),\;-\lambda_{\max}(X)\bigg\}\le 0,
    \end{split}
    \ee
    and, since $\hat{p_2}\in \Sigma^\circ$, the viscosity super-solution property for $V_2$ gives
    \be\label{supersol}
    \begin{split}
        &\min\bigg\{-\gamma_2+2L(\hat s-\hat t)+\tfrac{1}{2}\tr\!\left(\sigma_\nu^2(\hat s,\hat{y_2})a_2\right)+b_\nu(\hat s,\hat{y_2})\cdot\bigl(2\sqrt{L}(\hat{y_2}-\hat{y_1})+2L'\hat{y_2}\bigr)-\tilde f(\hat{p_2},\hat{y_2})\\
        &+\bigl(-\e\hat{p_2}-MD_p\psi(\hat{p_2})+LD_yw(\hat{p_1},\hat{p_2})\bigr)\cdot\widetilde{\hat{p_2}'\Lambda}+\inf_{z\in\R^{(K-1)\times d}}\tfrac{1}{2}\text{tr}(z\sigma_{\nu}^{-2}(\hat{s},\hat{y_2})z^\top Y),\;-\lambda_{\max}(-Y)\bigg\}\ge 0.
    \end{split}
    \ee
    Observe that if $X$ has a non-negative eigenvalue with $z\in \R^{K-1}$ the corresponding eigenvector, then \eqref{conv1} implies
    $$
	    Yz\cdot z\le  2CL |\hat{p_1}-\hat{p_2}|^2|z|^2-\e |z|^2+M(D^2_{pp}\psi(\hat{p_1})-D^2_{pp}\psi(\hat{p_2}))z\cdot z<0,\quad\text{for }L\text{ large enough,}
    $$
    by the Lipschitz continuity of $D^2_{pp}\psi$, the choice of $M$, and \eqref{conv2'}. This contradicts \eqref{supersol}; hence we may assume that $X< 0$.
     Adding \eqref{subsol} and \eqref{supersol} yields
\begin{align*}
        (\gamma_1+\gamma_2)&-\frac{1}{2}\tr\!\left(a_1\sigma_\nu^2(\hat t,\hat{y_1})+a_2\sigma_\nu^2(\hat s,\hat{y_2})\right)-2\sqrt{L}(\hat{y_1}-\hat{y_2})\cdot\bigl(b_\nu(\hat t,\hat{y_1})-b_\nu(\hat s,\hat{y_2})\bigr)\\
        &-2L'\bigl(\hat{y_1}\cdot b_\nu(\hat t,\hat{y_1})+\hat{y_2}\cdot b_\nu(\hat s,\hat{y_2})\bigr)-LD_xw(\hat{p_1},\hat{p_2})\cdot\widetilde{\hat{p_1}'\Lambda}-LD_yw(\hat{p_1},\hat{p_2})\cdot\widetilde{\hat{p_2}'\Lambda}\\
        & +M\left(D_p\psi(\hat{p_2})\cdot \widetilde{\hat{p_2}'\Lambda}-D_p\psi(\hat{p_1})\cdot \widetilde{\hat{p_1}'\Lambda}\right)+\e \hat{p_2}\cdot \widetilde{\hat{p_2}'\Lambda}\\
        &-\sup_{z\in\R^{(K-1)\times d}}\tfrac{1}{2}\text{tr}(z\sigma_{\nu}^{-2}(\hat{t},\hat{y_1})z^\top X)-\inf_{z\in\R^{(K-1)\times d}}\tfrac{1}{2}\text{tr}(z\sigma_{\nu}^{-2}(\hat{s},\hat{y_2})z^\top Y)\\
        &\quad-\tilde f(\hat{p_1},\hat{y_1})+\tilde f(\hat{p_2},\hat{y_2})\le 0.
    \end{align*}
    Since $X< 0$ and $Y\ge 0$, the $\sup$ and $\inf$ terms above are equal to zero. Letting $L\to \infty$ eliminates the third term (by \eqref{conv2'} and the Lipschitz continuity of $b_\nu$), the fifth term (by \eqref{conv2'} and the properties of $w$), the sixth term (for the same reason), the seventh term (by the Lipschitz continuity of $p\mapsto D_p\psi(p)\cdot \widetilde{p'\Lambda}$ and \eqref{conv2}), and the last two terms (by the Lipschitz continuity of $\tilde{f}$). Sending $L'\to 0$ eliminates the second term (by \eqref{conv2}, \eqref{conv2'}, and \eqref{conv1'}) and the fourth term (because $b_\nu$ is bounded and \eqref{conv1'} holds). Choosing $\e$ small enough, we conclude from the above inequality that $0<\gamma_1+\gamma_2\le\delta$ for any $\delta>0$ arbitrarily close to $0$. This contradicts the choice of $\gamma_1,\gamma_2>0$.

    The proof is complete.
    \end{proof}
    
\subsection{Proof of the viscosity characterization for (\ref{dynamicfcot})}

In this subsection we construct a viscosity super-solution of
\eqref{correctPDE} which is dominated by $U_f$ given in
\eqref{dynamicfcot}. Our main idea is to use the stochastic Perron method (see \cite{bayraktar2013stochastic}). We
introduce the notion of stochastic sub-solutions.

\begin{definition}\label{stochsub-corrected}
A continuous function
$v\colon[0,T]\times\R^d\times\Sigma_K\to\R$ belongs to the set
$\cV^-$ of stochastic sub-solutions if:

\begin{enumerate}
    \item[(i)] (Terminal condition and growth)
    \[
    v(T,y,p)\le g(p,y)\quad\forall\,(y,p)\in\R^d\times\Sigma_K,
    \]
    and $v(t,\cdot,p)$ has at most linear growth in $y$, uniformly
    in $t\in[0,T]$ and $p\in\Sigma_K$.

    \item[(ii)] (Dynamic sub-solution property) There exists
    $L(v)<\infty$ such that for every stopping time
    $\tau\in[0,T]$ and every $\cF^Y_\tau$-measurable random variables
    $\xi\colon\Omega^Y\to\R^d$ and $\zeta\colon\Omega^Y\to\Sigma_K$,
    there exists a bounded $\cF^Y$-progressively measurable processes
    \[
    (p_t,h_t)_{t\in[\tau,T]},\quad p_t\in \Sigma_K,\quad h_t=(h_t^i)_{i=1}^K\in(\R^d)^K,
    \quad \|h\|_\infty\le L(v),
    \]
    satisfying the condition
    \be\label{eq:filt-compat}
    \sum_{i=1}^K h_t^i\,p_t^i=0\in\R^d\quad\text{a.s.\ for }t\in[\tau,T]
    \ee
    such that for every stopping time
    $\rho\in[\tau,T]$,
    \[
    v(\tau,\xi,\zeta)
    \le \E^\nu\!\!\left[\int_\tau^\rho
    f\!\bigl(p_s^{h,\tau,\zeta},Y_s^{h,\tau,\xi}\bigr)\,ds
    + v\!\bigl(\rho,Y_\rho^{h,\tau,\xi},p_\rho^{h,\tau,\zeta}\bigr)
    \,\bigg|\,\cF^Y_\tau\right],
    \]
    where $(Y^{h,\tau,\xi},p^{h,\tau,\zeta})$ solves the controlled
    forward system
    \be\label{sSDEs1}
    \begin{cases}
        dY_s=b_\nu(s,Y_s)\,ds+\sigma_\nu(s,Y_s)\,dW_s,\\[4pt]
        dp^i_s=(p_s\Lambda)^i\,ds
              +p^i_s\,(h^i_s)^{\!\top}\sigma_\nu^{-2}(s,Y_s)
               \bigl(dY_s-b_\nu(s,Y_s)\,ds\bigr),
              \quad i=1,\ldots,K,\\[4pt]
        Y_\tau=\xi,\quad p_\tau=\zeta.
    \end{cases}
    \ee
\end{enumerate}
\end{definition}
\begin{remark}
        This definition is consistent with the Kushner equation \eqref{Kushner} and
    with the filter dynamics \eqref{eq:dynfiltercot} under the
    identification $Z^i_s=p^i_s\,h^i_s$.

\end{remark}
\begin{lemma}\label{stochsubnonempty}
The set $\cV^-$ of stochastic sub-solutions is non-empty.
\end{lemma}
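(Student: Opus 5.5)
The simplest plan is to exhibit an explicit element of $\cV^-$ built from the trivial control $h\equiv 0$. With $h\equiv0$, the compatibility condition \eqref{eq:filt-compat} holds trivially. The filter equation in \eqref{sSDEs1} then reduces to the linear ODE $dp_s=p_s\Lambda\,ds$, whose solution $p_s=\zeta e^{(s-\tau)\Lambda}$ stays in $\Sigma_K$ because $e^{t\Lambda}$ is a stochastic matrix. We therefore look for a continuous function $v$ such that $v(s,Y_s,p_s)+\int_\tau^s f(p_r,Y_r)\,dr$ is a supermartingale's mirror image, i.e.\ a submartingale, along this uncontrolled dynamics. One then takes $L(v)=0$.

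The first candidate is $v(t,y,p):=-C(1+|y|)e^{A(T-t)}$ with constants $C,A>0$ to be chosen. Since $f_0,g_0$ are Lipschitz in $y$, they have linear growth: $|f(p,y)|+|g(p,y)|\le C_0(1+|y|)$ uniformly in $p\in\Sigma_K$. Taking $C\ge C_0$ gives $v(T,y,p)=-C(1+|y|)\le g(p,y)$, and linear growth is clear. Because $|y|$ is not smooth at the origin, I would instead use $\phi(y):=\sqrt{1+|y|^2}$, which is comparable to $1+|y|$, and set $v(t,y,p):=-C\,\phi(y)\,e^{A(T-t)}$. Then $v$ is independent of $p$ and continuous.

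For the dynamic property, fix $\tau,\xi,\zeta$ and apply It\^o's formula to $v(s,Y_s)$ on $[\tau,\rho]$. The drift equals
\[
\bigl(\partial_t v+\cA_\nu v\bigr)(s,Y_s)=Ce^{A(T-s)}\bigl(A\phi(Y_s)-\cA_\nu\phi(Y_s)\bigr).
\]
Since $D\phi$ and $D^2\phi$ are bounded and $b_\nu,\sigma_\nu$ are bounded (Assumption~\ref{assumption1}), $|\cA_\nu\phi|\le C_1$. Taking $A\ge C_1+1$, the drift is at least $Ce^{A(T-s)}\phi(Y_s)\ge C\phi(Y_s)\ge -f(p_s,Y_s)$. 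The local martingale term has integrand $-Ce^{A(T-s)}D\phi\,\sigma_\nu$, which is bounded, so it is a true martingale. Taking conditional expectations given $\cF^Y_\tau$ then yields
\[
v(\tau,\xi,\zeta)\le \E^\nu\Bigl[\int_\tau^\rho f(p_s,Y_s)\,ds+v(\rho,Y_\rho,p_\rho)\,\Big|\,\cF^Y_\tau\Bigr].
\]

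The only point needing care is integrability when $\xi$ is merely $\cF^Y_\tau$-measurable, possibly without finite moments. To handle it, I would argue conditionally: the strong Markov property and the flow of \eqref{sSDEs1} let us fix $\xi=x$. Alternatively, one can localize on $\{|\xi|\le n\}\in\cF^Y_\tau$ and use that the inequality is stated conditionally on $\cF^Y_\tau$. Since moments of $\sup_s|Y_s|$ are bounded by $C(1+|x|)$, all terms are integrable on each such event. I expect this to be the only technical obstacle; the rest is routine. Continuity of $v$ is immediate, so $v\in\cV^-$.
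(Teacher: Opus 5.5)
Your proof is correct, but it takes a different route from the paper. The paper also uses $h\equiv 0$, but its candidate is the value of that strategy itself, $v_0(t,y,p)=\E^\nu\bigl[\int_t^T f(pe^{(s-t)\Lambda},Y^{t,y}_s)\,ds+g(pe^{(T-t)\Lambda},Y^{t,y}_T)\bigr]$. For $v_0$ the sub-solution inequality holds with equality by the strong Markov property. The work then goes into showing that $v_0$ is continuous with linear growth: Gronwall handles $(y,p)$, and a Feynman--Kac/parabolic-regularity appeal handles $t$. You instead take the explicit lower barrier $-C\phi(y)e^{A(T-t)}$. For it, continuity is trivial and the inequality follows from a one-line It\^o computation with a bounded martingale integrand. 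You therefore need neither the strong Markov property at random initial data nor any regularity of a value function. In that sense your route is more self-contained: the paper's continuity-in-$t$ step invokes boundedness of $f_0,g_0$, which Assumption~\ref{costassumption} does not provide. The price is that your element generally satisfies $v(T,y,p)<g(p,y)$, whereas the paper's $v_0$ satisfies $v_0(T,\cdot,\cdot)=g$ exactly. The paper reuses this in Step~3 of Theorem~\ref{supsubsolissuper} to obtain $v^-(T,\cdot,\cdot)=g$ and \eqref{terminalge}. Your barrier would still supply the lower linear-growth bound in Step~2, but not the terminal estimate. So your argument settles the lemma as stated, while the paper's choice of element also serves the downstream proof. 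One bookkeeping point: after replacing $1+|y|$ by $\phi(y)=\sqrt{1+|y|^2}\ge(1+|y|)/\sqrt2$, take $C\ge\sqrt2\,C_0$. This is what makes both $-C\phi\le g$ at $T$ and $C\phi\ge -f$ hold.
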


\begin{proof}
Take the constant control $h^i_t\equiv 0\in\R^d$. This satisfies
$\|h\|_\infty=0$ and \eqref{eq:filt-compat} trivially. Define
\[
v(t,y,p):=\E^\nu\!\!\left[\int_t^T
f(p_s^{0,t,p},Y_s^{0,t,y})\,ds
+g(p_T^{0,t,p},Y_T^{0,t,y})\right],
\]
where $(Y^{0,t,y},p^{0,t,p})$ is the unique strong solution of
$$
\begin{cases}
dY_s=b_\nu(s,Y_s)\,ds+\sigma_\nu(s,Y_s)\,dW_s,\quad Y_t=y,\\
dp^i_s=(p_s\Lambda)^i\,ds,\quad p_t=p\quad(i=1,\dots,K).
\end{cases}
$$
By Assumption~\ref{assumption1}, $b_\nu,\sigma_\nu$ are Lipschitz in
$y$, so the SDE is well-posed; $p_s$ evolves deterministically via
the linear ODE $p_s=p\,e^{(s-t)\Lambda}$, which preserves
$\Sigma_K$. We verify that $v\in\cV^-$ in three steps.

\medskip
\noindent\emph{(i) Terminal condition and growth.}\
Clearly $v(T,y,p)=g(p,y)$. For linear growth, the boundedness of
$b_\nu,\sigma_\nu$ (Assumption~\ref{assumption1}) yields
$\E^\nu[|Y_s^{0,t,y}|^2]\le |y|^2+CT$ uniformly in $t\le s\le T$ for
some constant $C$; by the linear growth of $f_0,g_0$ in $y$
(Assumption~\ref{costassumption}),
\[
|v(t,y,p)|\le C\,\E^\nu\!\left[\int_t^T(1+|Y_s|)\,ds+(1+|Y_T|)\right]
\le C(1+|y|),
\]
with a constant $C$ depending only on $T$ and the bounds in the
assumptions.

\medskip
\noindent\emph{(ii) Continuity.}\
Let $(y^1,p^1),(y^2,p^2)\in\R^d\times\Sigma_K$ and write
$(Y^k,p^k):=(Y^{0,t,y^k},p^{0,t,p^k})$ for $k=1,2$. By Lipschitzness
of $b_\nu,\sigma_\nu$ and Gronwall,
\[
\E^\nu[|Y_s^1-Y_s^2|^2]\le C\,|y^1-y^2|^2,
\qquad |p_s^1-p_s^2|\le e^{(s-t)\|\Lambda\|}|p^1-p^2|,
\]
uniformly in $s\in[t,T]$. Combined with the Lipschitzness of
$f_0,g_0$ in $y$, this gives that $v(t,\cdot,\cdot)$ is Lipschitz
in $(y,p)$ uniformly in $t$. Continuity in $t$ follows from standard
parabolic regularity: by Feynman--Kac, $v$ is the unique
classical solution with linear growth of the linear parabolic equation
\[
\partial_t v+\cA_\nu v+(p\Lambda)\cdot D_p v+f(p,y)=0
\quad\text{on }[0,T)\times\R^d\times\Sigma_K,
\qquad v(T,y,p)=g(p,y),
\]
under Assumption~\ref{assumption1} and the boundedness of $f_0,g_0$.
Hence $v\in C^{1,2,1}$ in $(t,y,p)$, and in particular continuous in
all variables.

\medskip
\noindent\emph{(iii) Dynamic sub-solution inequality.}\
Fix stopping times $\tau\le\rho$ and $\cF^Y_\tau$-measurable
$(\xi,\zeta)$. With the same control $h\equiv 0$, by the strong
Markov property of $(Y,p)$,
\begin{align*}
v(\tau,\xi,\zeta)
&=\E^\nu\!\left[\int_\tau^T
f(p_s^{0,\tau,\zeta},Y_s^{0,\tau,\xi})\,ds
+g(p_T^{0,\tau,\zeta},Y_T^{0,\tau,\xi})
\,\bigg|\,\cF^Y_\tau\right]\\
&=\E^\nu\!\left[\int_\tau^\rho
f(p_s^{0,\tau,\zeta},Y_s^{0,\tau,\xi})\,ds
+v(\rho,Y_\rho^{0,\tau,\xi},p_\rho^{0,\tau,\zeta})
\,\bigg|\,\cF^Y_\tau\right].
\end{align*}
The required inequality holds with equality, hence certainly $\le$.
\end{proof}

\begin{remark}\label{maxprop}
(i) Following the proof of \cite[Proposition~3.1]{bayraktar2013stochastic},
the pointwise maximum of two stochastic sub-solutions is again a
stochastic sub-solution; that is, if $v_1,v_2\in\cV^-$, then
$\max\{v_1,v_2\}\in\cV^-$.
\vspace{0.5mm}

\nd
(ii) (Notation) For any $v\colon [0,T]\times \R^d\times \Sigma\to \R$, we write $v\in \mathcal{V}^-$ if and only if its extension $v'\colon[0,T]\times \R^d\times \Sigma_K\to \R$ with $v'(t,y,p^1,\ldots,p^K):=v(t,y,p^1,\ldots,p^{K-1})$ is in $\mathcal{V}^-$.
\end{remark}

\begin{lemma}[Local pasting for sub-solutions]\label{localmaxprop}
Let $B$ be an open cylinder compactly contained in
$[0,T)\times\R^d\times\Sigma_K^\circ$. Suppose $v\in\cV^-$ and
$u$ is continuous, has linear growth, satisfies $u\le v$ on
$\partial B$, and satisfies the dynamic sub-solution inequality of
Definition~\ref{stochsub-corrected}(ii) up to the first exit time from
$B$ with a bounded admissible control. Then the function equal to
$\max\{v,u\}$ on $B$ and to $v$ outside $B$ belongs to $\cV^-$.
\end{lemma}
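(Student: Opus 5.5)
Write $w$ for the function equal to $\max\{v,u\}$ on $B$ and to $v$ outside $B$. We check the three requirements of Definition~\ref{stochsub-corrected} for $w$ in turn: terminal/growth, continuity, and the dynamic inequality. The argument follows the pasting construction of \cite[Lemma~2.3]{bayraktar2013stochastic}, adapted to the filter dynamics \eqref{sSDEs1}.

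\emph{Terminal condition, growth, continuity.} Since $\bar B\subset[0,T)\times\R^d\times\Sigma_K^\circ$, we have $w(T,\cdot,\cdot)=v(T,\cdot,\cdot)\le g$. Linear growth holds because $u$ and $v$ both have it. For continuity, $w=v$ outside $B$ and $w=\max\{v,u\}$ inside. On $\partial B$ we have $u\le v$, so $\max\{v,u\}=v$ there, and the two definitions agree on the boundary. Hence $w$ is continuous.

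\emph{Dynamic inequality.} Fix a stopping time $\tau$ and $\cF^Y_\tau$-measurable $(\xi,\zeta)$. Let $A:=\{(\tau,\xi,\zeta)\in B,\ u(\tau,\xi,\zeta)>v(\tau,\xi,\zeta)\}\in\cF^Y_\tau$. We build the control by concatenation.

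On $A^c$, start from the control $h^v$ provided by $v\in\cV^-$ at $(\tau,\xi,\zeta)$.

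On $A$, use the bounded control $h^u$ for $u$ up to the stopping time
\[
\tau_1:=\inf\{s\ge\tau:\ (s,Y_s,p_s)\notin B \text{ or } u(s,Y_s,p_s)\le v(s,Y_s,p_s)\}.
\]
At $\tau_1$ we have $w=v$: either we are on $\partial B$, where $u\le v$, or $u\le v$ holds. From $\tau_1$ on, switch to the control given by $v\in\cV^-$ started at $(\tau_1,Y_{\tau_1},p_{\tau_1})$.

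Each time the $v$-control meets a point of $B$ where $u>v$, we again switch to $u$. This produces an alternating sequence of stopping times $\tau\le\tau_1\le\tau_2\le\cdots$.

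Three properties of the concatenated control then need to be checked.

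First, the concatenation is progressively measurable.

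Second, it is bounded by $\max\{L(v),L(u)\}$.

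Third, it satisfies \eqref{eq:filt-compat}, since each piece does. Since $h\mapsto p$ is the strong solution of the linear SDE \eqref{sSDEs1}, pasting controls at stopping times pastes the solutions, and $p$ stays in $\Sigma_K$.

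For any stopping time $\rho\ge\tau$, we chain the conditional inequalities on each interval $[\tau_k\wedge\rho,\tau_{k+1}\wedge\rho]$. On intervals where $u$ is active, $w\le$ (the $u$-inequality) holds with $w=u$ at the start and $w\ge u$ at the end. On intervals where $v$ is active, $w=v$ at the start and $w\ge v$ at the end. The tower property then yields
\[
w(\tau,\xi,\zeta)\le\E^\nu\Big[\int_\tau^\rho f\,ds+w(\rho,Y_\rho,p_\rho)\,\Big|\,\cF^Y_\tau\Big].
\]

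\emph{Main obstacle.} The hardest step is ensuring that the switching times do not accumulate before $T$. The choice of a strict threshold is what prevents this. Following \cite{bayraktar2013stochastic}, we avoid infinite switching by switching back to $u$ only when $u>v+\delta$, and then letting $\delta\downarrow0$. Equivalently, we use that $v$'s own control may be kept after the first exit, because $v\in\cV^-$ already gives the inequality for $v$ on $[\tau_1,\rho]$ and $w\ge v$ everywhere.

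With this second option only one switch is needed: $w\ge v$ bounds the terminal term, and $w(\tau_1)=v(\tau_1)$. So we use the simpler two-piece control, $u$'s control up to $\tau_1$ and then $v$'s control. The remaining issue is verifying that the $u$-inequality "up to the first exit time from $B$" also holds at the earlier time $\tau_1$ and at $\rho\wedge\tau_1$. This is part of the assumed dynamic property, which is required for every stopping time up to exit.
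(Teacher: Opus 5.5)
Your final two-piece construction is correct and is essentially the paper's argument. Both concatenate the control for $u$ with the control for $v$ at stopping times, use $u\le v$ on $\partial B$ and $v\le w$ everywhere, chain the inequalities by optional sampling, and bound the control by $\max\{L(v),L(u)\}$. The only difference is ordering: the paper runs $v$'s control until the path enters $\{u>v\}$, then $u$'s control until exit from $B$, then $v$'s again, whereas you use $u$'s control only when starting in $\{u>v\}\cap B$ and switch at most once. Neither scheme switches more than twice, so your discussion of accumulating switching times and the $\delta$-threshold is unnecessary.
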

\begin{proof}
Starting from any stopping time and initial state, use the control for
$v$ until the path first enters the set where $u>v$, switch to the
local control for $u$ until the exit time from $B$, and then switch
back to the control for $v$. The boundary condition $u\le v$ on
$\partial B$ and continuity make the pasted value continuous at the
switching times. Optional sampling on each interval and the strong
Markov property for the controlled system concatenate the required
submartingale inequalities. The bound on the control is the maximum of
the bounds for the two pieces.
\end{proof}

\noindent
The main result of this subsection is the following.

\begin{theorem}\label{supsubsolissuper}
Under the assumptions of Theorem~\ref{mainpdes}, the function
\[
v^-:=\sup_{v\in\cV^-}v\colon[0,T]\times\R^d\times\Sigma_K\to\R
\]
is a viscosity super-solution of \eqref{correctPDE} in the sense of
Lemma~\ref{viscositysol}(ii). The function $v^-$ satisfies $U_f\ge v^-$ everywhere and 
\be\label{terminalge}
\liminf_{(t',y',p')\to (T,y,p)}v^-(t',y',p')\ge g(p,y),\;\;\text{for any }(y,p)\in \R^d\times\Sigma_K.
\ee

\end{theorem}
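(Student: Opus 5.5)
The plan has three parts: the easy bound $U_f\ge v^-$, the terminal inequality \eqref{terminalge}, and the viscosity super-solution property. The last follows the stochastic Perron scheme of \cite{bayraktar2013stochastic}.

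\emph{Step 1: $U_f\ge v^-$ and \eqref{terminalge}.} Fix $v\in\cV^-$ and $(t,y,p)$. Apply Definition~\ref{stochsub-corrected}(ii) with $\tau\equiv t$, $\rho\equiv T$, $\xi\equiv y$, $\zeta\equiv p$, and use $v(T,\cdot,\cdot)\le g$. This gives
\[
v(t,y,p)\le \E^\nu\Bigl[\int_t^T f(p_s,Y_s)\,ds+g(p_T,Y_T)\Bigr]
\]
for a bounded control $h$ that satisfies \eqref{eq:filt-compat} and keeps $p$ in $\Sigma_K$. This is an admissible pair for \eqref{dynamicfcot}, so $v\le U_f$. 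Taking the supremum over $v$ gives $v^-\le U_f$. For \eqref{terminalge}, let $\bar v$ be the continuous element of $\cV^-$ built in Lemma~\ref{stochsubnonempty}, which satisfies $\bar v(T,\cdot,\cdot)=g$. Then
\[
\liminf v^-\ge\liminf \bar v = g .
\]

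\emph{Step 2: interior super-solution property.} By Remark~\ref{maxprop}(ii) and Lemma~\ref{viscositysol}, we may work in reduced coordinates. Since $v^-$ is a supremum of continuous functions, it is lower semicontinuous. Suppose $\varphi\in C^{1,2,2}$ and $v^--\varphi$ has a strict local minimum, equal to $0$, at $(t_0,y_0,p_0)$ with $t_0<T$ and $p_0\in\Sigma^\circ$, and that
\[
\min\{F,-\lambda_{\max}(D^2_{pp}\varphi)\}<0
\]
there. Two cases arise. If $\lambda_{\max}(D^2_{pp}\varphi)>0$, the supremum in $F$ is $+\infty$, so $F=-\infty$. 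Otherwise $F<0$ directly. In both cases there is a fixed finite $\tilde z^*$ such that
\[
-\partial_t\varphi-\beta\cdot D\varphi-\tfrac12\tr\bigl(\widetilde Q(\cdot,\tilde z^*)D^2\varphi\bigr)-\tilde f<0
\]
at the point, and by continuity this inequality holds with a margin on a small cylinder $B$ compactly contained in the interior. On $B$ the coordinates $p^i$ are bounded away from $0$. We therefore set $z^*=J\tilde z^*$ and $h^i:=z^{*,i}/p^i$, which is bounded, and $\sum_i h^ip^i=\sum_i z^{*,i}=0$, so \eqref{eq:filt-compat} holds.

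Strictness of the minimum and lower semicontinuity give $v^--\varphi\ge\eta>0$ on $\partial B$. Remark~\ref{maxprop}(i) gives closure of $\cV^-$ under maxima, so we can take an increasing sequence in $\cV^-$ converging to $v^-$. A Dini-type compactness argument on $\partial B$, as in \cite{bayraktar2013stochastic}, then produces $v\in\cV^-$ with $\varphi+\eta/2\le v$ on $\partial B$.

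Set $u:=\varphi+\delta$ with $\delta<\eta/2$. Itô's formula up to the exit time from $B$, with the feedback control above, gives the dynamic sub-solution inequality for $u$, since the drift of $u(s,Y_s,p_s)+\int f\,ds$ is nonpositive. Lemma~\ref{localmaxprop} then shows that $\max\{v,u\}$ on $B$ (and $v$ outside) lies in $\cV^-$. But $u(t_0,y_0,p_0)=v^-(t_0,y_0,p_0)+\delta$, which contradicts the definition of $v^-$.

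\emph{Step 3: boundary points, which I expect to be the main obstacle.} Take $p_0\in\partial\Sigma$ and suppose $\widetilde{\mathcal B_0}<0$ at the touching point. Use the control $h\equiv0$, i.e.\ $z=0$. This is admissible, satisfies \eqref{eq:filt-compat} trivially, and makes $p$ follow $\dot p=p\Lambda$, which keeps $p$ in the simplex. The same Itô computation, now with the operator $\widetilde{\mathcal B_0}$, gives the local sub-solution inequality on a relative neighborhood $B\cap([0,T)\times\R^d\times\Sigma)$.

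The difficulty is that Lemma~\ref{localmaxprop} is stated only for cylinders compactly inside the interior. I would redo its pasting proof on relatively open neighborhoods of boundary points. With $z=0$ nothing degenerates: no division by $p^i$ is needed, and the state stays in $\Sigma_K$ by the flow invariance used in Lemma~\ref{stochsubnonempty}. Exit from $B$ then only occurs through the relative boundary $\partial B\cap\Sigma$, where $u\le v$ still holds by the same Dini argument. The contradiction then follows exactly as in Step~2.
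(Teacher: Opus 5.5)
Your proposal is correct and is essentially the paper's proof: $v^-\le U_f$ via $\rho=T$, the terminal bound via the $h\equiv 0$ element of Lemma~\ref{stochsubnonempty}, and a Perron bump with the feedback $h^i=z^{*,i}/p^i$ in the interior and $h\equiv 0$ on $\partial\Sigma$. The paper differs only in small ways. It gets the gap $v>\varphi+\eta$ from a finite cover of a compact annulus, rather than from an increasing sequence plus Dini, which would need a Choquet-type countability step that closure under maxima alone does not give. It also simply asserts the boundary case ``follows exactly'' as in the interior, so your relative-neighbourhood version of Lemma~\ref{localmaxprop} is a welcome extra detail. One slip: by your own displayed inequality, the drift of $u(s,Y_s,p_s)+\int f\,ds$ is nonnegative, not nonpositive, and nonnegative is the sign the sub-solution (submartingale) inequality needs.
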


\begin{proof}
We proceed in four steps.

\medskip
\noindent\textit{Step 1.} (Relation to $U_f$.)\
Fix $(t,y,p)\in[0,T]\times\R^d\times\Sigma_K$ and $v\in\cV^-$. The
dynamic sub-solution inequality with $\tau=t$, $\rho=T$, $\xi=y$,
$\zeta=p$ yields
\[
v(t,y,p)\le\E^\nu\!\left[\int_t^T
f\bigl(p_s^{h,t,p},Y_s^{h,t,y}\bigr)\,ds
+g\bigl(p_T^{h,t,p},Y_T^{h,t,y}\bigr)\right],
\]
for the control $h$ associated to $v$. Since $U_f(t,y,p)$ is the
supremum over all admissible controls $h$ (see \eqref{dynamicfcot}),
$v(t,y,p)\le U_f(t,y,p)$. Taking the supremum over $v\in\cV^-$ gives
$v^-\le U_f$.
\medskip

\noindent\textit{Step 2.} (Lower semicontinuity and growth.)\
Every $v\in\cV^-$ is
continuous on $[0,T]\times\R^d\times\Sigma_K$, so the pointwise supremum $v^-$ is lower semicontinuous.
For the linear growth, we use two bounds rather than inheritance from
the supremum. Let $v_0\in\cV^-$ be the explicit element constructed in
the lemma above. Step~1 gives $v^-\le U_f$, and by definition
$v_0\le v^-$. Since both $v_0$ and $U_f$ satisfy
$|\,\cdot\,|\le C(1+|y|)$, possibly with different constants, the
sandwich bound $v_0\le v^-\le U_f$ implies that $v^-$ has at most
linear growth in $y$ .

 \medskip
\noindent\textit{Step 3.} (Terminal time property and \eqref{terminalge}.)\
By definition of $\cV^-$, every $v\in\cV^-$ satisfies
$v(T,y,p)\le g(p,y)$, so $v^-(T,y,p)\le g(p,y)$. Conversely, the
explicit stochastic sub-solution $v_0$ constructed in Lemma \ref{stochsubnonempty} with $h\equiv 0$ satisfies
$v_0(T,y,p)=g(p,y)$, so $v^-(T,y,p)\ge g(p,y)$. Combining the previous inequalities gives
\[
v^-(T,y,p)=g(p,y),\;\text{ for all }(y,p)\in\R^d\times\Sigma_K.
\]
To show \eqref{terminalge} we write
$$\liminf_{(t',y',p')\to (T,y,p)}v^-(t',y',p')\ge \liminf_{(t',y',p')\to (T,y,p)}v_0(t',y',p')=v_0(T,y,p)=g(p,y)$$

\medskip
\noindent\textit{Step 4.} (Interior super-solution property,
first inequality of Lemma~\ref{viscositysol}(ii).)\
We show that $v^-$ is a supersolution of the PDE on
$[0,T)\times\R^d\times\Sigma_K^\circ$ ; equivalently
(after the simplex reduction of Notation~\ref{restrictions}), that
$\tilde v^-$ is a viscosity super-solution of \eqref{correctPDE2}
on $[0,T)\times\R^d\times\Sigma^\circ$. To ease notation we continue
to write $v^-$ for $\tilde v^-$.

\medskip
\noindent\emph{Step 4a. Setup and contradiction assumption.}\
Suppose, for contradiction, that $v^-$ fails the viscosity
super-solution property at some point
$(t_0,y_0,p_0)\in[0,T)\times\R^d\times\Sigma^\circ$. Then there
exists $\varphi\in C^{1,2,2}([0,T]\times\R^d\times\Sigma)$ such that
$v^--\varphi$ attains a strict local minimum at $(t_0,y_0,p_0)$ with
$(v^--\varphi)(t_0,y_0,p_0)=0$, and
\be\label{tocontradict1}
\min\Bigl\{
  F\!\bigl(t_0,y_0,p_0,\,\partial_t\varphi,\,
          D_{(p,y)}\varphi,\,D^2_{(p,y)}\varphi\bigr),\;
  -\lambda_{\max}\bigl(D^2_{pp}\varphi(t_0,y_0,p_0)\bigr)
\Bigr\}<0,
\ee
where $F$ is the operator from \eqref{F} and the joint gradient and
Hessian are as in \eqref{jointGHtilde}.

We claim that it suffices to establish
\be\label{toprove1}
F\!\bigl(t_0,y_0,p_0,\,\partial_t\varphi,\,
        D_{(p,y)}\varphi,\,D^2_{(p,y)}\varphi\bigr)\ge 0.
\ee
Indeed, \eqref{toprove1} forces the supremum in the definition of
$F$ to be finite, i.e.,
$$
\sup_{\tilde z\in\R^{(K-1)\times d}}\!\Bigl\{
\tr\!\bigl(\tilde z\,(D_pD_y\varphi)^{\!\top}\bigr)
+\tfrac12 \tr\!\bigl(\tilde z\,\sigma_\nu^{-2}(t_0,y_0)\,\tilde z^{\!\top}\,
   D^2_{pp}\varphi\bigr)
\Bigr\}<\infty,
$$
which in turn forces $D^2_{pp}\varphi(t_0,y_0,p_0)\le 0$ on the
relevant subspace, and hence
$\lambda_{\max}(D^2_{pp}\varphi(t_0,y_0,p_0))\le 0$. Together with
\eqref{toprove1}, this contradicts \eqref{tocontradict1}.

\medskip
\noindent\emph{Step 4b. Localization.}\
Suppose \eqref{toprove1} fails. By continuity of $\varphi$ and its
derivatives, there exist $r>0$ and $\gamma>0$ such that
\be\label{tocontradict2}
F\!\bigl(t,y,p,\,\partial_t\varphi,\,
        D_{(p,y)}\varphi,\,D^2_{(p,y)}\varphi\bigr)
   \le -2\gamma\quad\text{on }\bar B_r(t_0,y_0,p_0).
\ee
By the strict local-minimum property of $v^--\varphi$ at
$(t_0,y_0,p_0)$, we may further shrink $r$ so that
\be\label{strict-min-gap}\nonumber
v^--\varphi\ge 2\alpha\quad\text{on the annulus }
\bar B_r(t_0,y_0,p_0)\setminus B_{r/2}(t_0,y_0,p_0)
\ee
for some $\alpha>0$.

\medskip
\noindent\emph{Step 4c. Choice of approximating sub-solution.}\
Fix $\eta\in(0,\alpha)$ small enough and note that \eqref{tocontradict2}
still holds with $\varphi$ replaced by $\varphi+\eta$. For each
$q\in \bar B_r(t_0,y_0,p_0)\setminus B_{r/2}(t_0,y_0,p_0)$, the definition of $v^-$ as the
pointwise supremum and the strict inequality
$v^-(q)\ge\varphi(q)+2\alpha>\varphi(q)+\eta$ guarantee the
existence of $v_q\in\cV^-$ (recall the notation from Remark \ref{maxprop}(ii)) with
$v_q(q)>\varphi(q)+\eta$. By continuity of $v_q$ and $\varphi$,
this inequality extends to an open neighborhood $U_q$ of $q$.
Cover the compact annulus $\bar B_r(t_0,y_0,p_0)\setminus B_{r/2}(t_0,y_0,p_0)$ by finitely
many $U_{q_1},\ldots,U_{q_N}$, and define
$\bar v_\delta:=\max\{v_{q_1},\ldots,v_{q_N}\}\in\cV^-$ (by
Remark~\ref{maxprop}(i)). Then
\be\label{vdelta-annulus}
\bar v_\delta>\varphi+\eta
\quad\text{on the annulus }
\bar B_r(t_0,y_0,p_0)\setminus B_{r/2}(t_0,y_0,p_0).
\ee
By definition of $v^-$ and a further pairwise max, we may additionally
arrange
\be\label{vdelta-center}\nonumber
\bar v_\delta(t_0,y_0,p_0)>v^-(t_0,y_0,p_0)-\eta
=\varphi(t_0,y_0,p_0)-\eta.
\ee

\medskip
\noindent\emph{Step 4d. Bumped function.}\
Define
\[
\bar v(t,y,p):=
\begin{cases}
\max\!\bigl\{\bar v_\delta(t,y,p),\,\varphi(t,y,p)+\eta\bigr\}
  & (t,y,p)\in B_r(t_0,y_0,p_0),\\[2pt]
\bar v_\delta(t,y,p) & \text{otherwise}.
\end{cases}
\]
By \eqref{vdelta-annulus}, on the annulus
$\bar B_r(t_0,y_0,p_0)\setminus B_{r/2}(t_0,y_0,p_0)$ we have
$\varphi+\eta<\bar v_\delta$, so the two pieces of $\bar v$ agree
there and $\bar v$ is continuous on the whole domain. At the center,
\[
\bar v(t_0,y_0,p_0)\ge\varphi(t_0,y_0,p_0)+\eta
=v^-(t_0,y_0,p_0)+\eta>v^-(t_0,y_0,p_0).
\]
If $\bar v\in\cV^-$, this contradicts $v^-=\sup_{v\in\cV^-}v$ and
proves \eqref{toprove1}.

\medskip
\noindent\emph{Step 4e. Verification that $\bar v\in\cV^-$.}\
Terminal condition: since $t_0<T$, for $r$ small enough we have
$\bar v(T,y,p)=\bar v_\delta(T,y,p)\le g(p,y)$. Linear growth in $y$
follows from that of $\bar v_\delta$ and $\varphi$.

Dynamic sub-solution inequality: outside $B_r(t_0,y_0,p_0)$,
$\bar v=\bar v_\delta\in\cV^-$, so the inequality holds there with
the control of $\bar v_\delta$. Inside $B_r(t_0,y_0,p_0)$,
$\bar v=\max\{\bar v_\delta,\varphi+\eta\}$; we show that
$\varphi+\eta$ itself satisfies a local sub-solution inequality, and
the maximum of two local sub-solutions is again a local sub-solution by Lemma~\ref{localmaxprop}.

By \eqref{tocontradict2} and the continuity of $\varphi$, there exists a $z_0\in \R^{(K-1)\times d}$ such that
\be\label{eq101}
\partial_t \varphi+\mathcal{A}_\nu\varphi +D_p\varphi \cdot \widetilde{p'\Lambda}+\text{tr}\left(z_0(D_pD_y\varphi)^\top\right)+\frac{1}{2}\text{tr}\left(z_0\sigma_{\nu}^{-2}(t,y) z_0^\top D^2_{pp}\varphi\right)+\widetilde{f}(p,y)\ge \gamma,
\ee
for all $(t,y,p)\in \overline{B}_r(t_0,y_0,p_0)$, where $\mathcal{A}_\nu$ was introduced in \eqref{Ygenerator} and the notation $\widetilde{p'\Lambda}$ was introduced in Notation \ref{restrictions}. By shrinking $r$ further if needed, we may assume
$\bar B_r(t_0,y_0,p_0)\subset[0,T)\times\R^d\times\Sigma^\circ$, so
that
\[
\delta_r:=\min_{i\in\mathbb{S}}\inf_{(t,y,p)\in\bar B_r}p^i>0.
\]
Setting $z^K_0=-\sum_{i=1}^{K-1}z^i_0\in \R^d$, we define the feedback control
\[
h^{i}_s:=z^{i}_0/p^i_s,\;\; i\in\mathbb{S},\;s\ge t_0.
\]
By construction, $h_s$ is admissible in the sense of Definition~\ref{stochsub-corrected}(ii) up to the exit time
\[\sigma_r:=\inf\{s\ge t_0:(s,Y_s^{h},p_s^{h})\notin B_r(t_0,y_0,p_0)\},\]
where $p_s^h=(p_s^{h,1},\ldots,p_s^{h,K-1})\in \Sigma$. We apply It\^o's formula to $\varphi(s,Y_s^{h},p_s^{h})$ along the
controlled path starting from $(t_0,y_0,p_0)$, using the dynamics
\eqref{sSDEs1}. With $z^{i}_0=p^i_s h^{i}_s$, the quadratic
covariations are
\[
d[p^i,p^j]_s=(z^{i}_0)^{\!\top}\sigma_\nu^{-2}(s,Y_s)\,z^{j}_0\,ds,
\qquad
d[Y,p^i]_s=z^{i}_0\,ds\in\R^d,
\]
so the drift of $\varphi(s,Y_s^{h},p_s^{h})$ is
\begin{align*}
\partial_t\varphi+\cA_\nu\varphi+D_p\varphi\cdot\widetilde{(p_s^h)'\Lambda}
+\tr\!\bigl(z_0\,(D_pD_y\varphi)^{\!\top}\bigr)
+\tfrac12 \tr\!\bigl(z_0\,\sigma_\nu^{-2}(s,Y_s)\,
   (z_0)^{\!\top}\,D^2_{pp}\varphi\bigr).
\end{align*}
Using \eqref{eq101}, we conclude that the drift of $\widetilde{\varphi}(s,Y_s^{h},p_s^{h})$ is greater than or equal to $\gamma-\widetilde{f}(p_s^h,Y_s^h)$ for all $s\in [t_0,\rho]$, where $\rho$ is a stopping time such that $t_0\le \rho\le \sigma_r$. Thus,
\[
\E^\nu\!\!\left[\varphi(\rho,Y_\rho^{h},p_\rho^{h})
-\varphi(t_0,y_0,p_0)\right]
\ge\E^\nu\!\!\left[\int_{t_0}^\rho
\bigl(\gamma-\tilde f(p_s^{h},Y_s^{h})\bigr)\,ds\right].
\]
Rearranging and using the equality $\widetilde{f}(p,y)=f(p',y)$ for any $p\in \Sigma$,
\[
\varphi(t_0,y_0,p_0)+\eta
\le \E^\nu\!\!\left[\int_{t_0}^\rho
f((p_s^{h})',Y_s^{h^n})\,ds
+\varphi(\rho,Y_\rho^{h^n},p_\rho^{h^n})+\eta\right]
-\gamma\,\E^\nu[\rho-t_0],
\]
which gives the dynamic sub-solution inequality for $\varphi+\eta$
locally on $B_r(t_0,y_0,p_0)$ with control $h$ (the
$-\gamma\,\E^\nu[\rho-t_0]$ term strengthens the inequality).

Combining the local inequalities for $\bar v_\delta$ (outside) and
$\varphi+\eta$ (inside $B_r$) via Lemma~\ref{localmaxprop} yields
$\bar v\in\cV^-$, completing the contradiction and proving the first part of the
super-solution property.
\vspace{2mm}

\nd
\textit{Step 5.} (Boundary super-solution property, second inequality of Lemma \ref{viscositysol}(ii).) We show that $v^-$ satisfies
$$\mathcal{B}_0(t,y,p,\partial_t v^-,D_{(p,y)}v^-,D^2_{(p,y)}v^-)\ge 0,\;\;\text{in }[0,T)\times\R^d\times\Sigma_K,$$
in the sense of Definition \ref{def:stateconstrvisc}. By Lemma \ref{viscositysol} we may restrict to $\partial\Sigma$. Suppose that $\varphi\in C^{1,2,2}([0,T)\times \R^d\times \Sigma)$ is such that $\tilde{v}^--\varphi$ attains a strict local minimum at $(t_0,y_0,p_0)\in [0,T)\times \R^d\times \partial\Sigma$. Then it suffices to show that
$$-\partial_t\varphi-\mathcal{A}_\nu\varphi-\widetilde{p'\Lambda}\cdot D_p\varphi-\tilde{f}(p,y)=:\widetilde{\mathcal{B}_0}(t_0,y_0,p_0,\partial_t \varphi,D_{(p,y)}\varphi,D^2_{(p,y)}\varphi)\ge 0,$$
at $(t_0,y_0,p_0)\in[0,T)\times\R^d\times\partial\Sigma$. The proof follows exactly the one from Step 4 with $\widetilde{\mathcal{B}_0}$ in place of $F$ and in Step 4e we use the control $h^i_s=0,\;\;i\in \mathbb{S}$.

\end{proof}

\medskip
\subsection{\texorpdfstring{Proof of the viscosity characterization for \eqref{dynamiccot}}{Proof of the viscosity characterization for (SC)}}
In this subsection we construct a viscosity sub-solution of \eqref{correctPDE} that dominates $U_{sc}$. The argument is the
mirror image of Theorem~\ref{supsubsolissuper}: instead of bumping
test functions upward against a supremum, we bump them downward
against an infimum.
\begin{definition}\label{stochsuper}
The set $\cV^+$ of stochastic super-solutions for \eqref{correctPDE}
is the set of all continuous functions
$v\colon[0,T]\times\R^d\times\Sigma_K\to\R$ such that:

\begin{enumerate}
    \item[(i)] (Terminal condition and growth)
    \[
    v(T,y,p)\ge g(p,y)\quad\forall\,(y,p)\in\R^d\times\Sigma_K,
    \]
    and $v(t,\cdot,p)$ has at most linear growth in $y$, uniformly
    in $t\in[0,T]$ and $p\in\Sigma_K$.

    \item[(ii)] (Super-martingale property) For every
    $(t,y,p)\in[0,T]\times\R^d\times\Sigma_K$ and every $\cF^Y$-progressively measurable process
    $Z=(Z_s^i)_{s\in[t,T]}^{i=1,\ldots,K}$ with values in
    $\R^{d\times K}$ such that
    \be\label{eq:Zadm}
    \sum_{i=1}^K Z_s^i=0\in\R^d
    \quad\text{and}\quad
    \mathbf{1}_{p_s^i=0}\,Z_s^i=0\in\R^d
    \quad\text{for }i=1,\ldots,K,
    \ee
    the process
    \[
    \left(v\bigl(s,Y_s^{t,y},p_s^{Z,t,p}\bigr)
    +\int_t^s f\bigl(p_r^{Z,t,p},Y_r^{t,y}\bigr)\,dr\right)_{s\in[t,T]}
    \]
    is an $(\cF_s^Y)_{s\in[t,T]}$-super-martingale, where
    $(Y_s,p_s)=(Y_s^{t,y},p_s^{Z,t,p})_{s\in[t,T]}$ solves
    \be\label{sSDEs2}
    \begin{cases}
        dY_s=b_\nu(s,Y_s)\,ds+\sigma_\nu(s,Y_s)\,dW_s,\\[4pt]
        dp^i_s=(p_s\Lambda)^i\,ds
              +(Z_s^i)^{\!\top}\sigma_\nu^{-2}(s,Y_s)
               \bigl(dY_s-b_\nu(s,Y_s)\,ds\bigr),
              \quad i=1,\ldots,K,\\[4pt]
        Y_t=y,\quad p_t=p.
    \end{cases}
    \ee
\end{enumerate}
\end{definition}

\begin{remark}\label{minprop}
(i) Following the proof of \cite[Proposition~3.1]{bayraktar2013stochastic},
the pointwise minimum of two stochastic super-solutions is again a
stochastic super-solution: if $v_1,v_2\in\cV^+$, then
$\min\{v_1,v_2\}\in\cV^+$.
\vspace{0.5mm}

\nd
(ii) (Notation) For any $v\colon [0,T]\times \R^d\times \Sigma\to \R$, we write $v\in \mathcal{V}^+$ if and only if its extension $v'\colon[0,T]\times \R^d\times \Sigma_K\to \R$ with $v'(t,y,p^1,\ldots,p^K):=v(t,y,p^1,\ldots,p^{K-1})$ is in $\mathcal{V}^+$.
\end{remark}

\begin{lemma}[Local pasting for super-solutions]\label{localminprop}
Let $B$ be an open cylinder compactly contained in
$[0,T)\times\R^d\times\Sigma_K$. Suppose $v\in\cV^+$ and
$u$ is continuous, has linear growth, satisfies $u\ge v$ on
$\partial B$, and has the local supermartingale property of
Definition~\ref{stochsuper}(ii) up to the first exit time from $B$ for
every admissible control. Then the function equal to $\min\{v,u\}$ on
$B$ and to $v$ outside $B$ belongs to $\cV^+$.
\end{lemma}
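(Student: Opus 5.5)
Write $w:=\min\{v,u\}$ on $B$ and $w:=v$ outside $B$. The plan has three parts: first check that $w$ is continuous with the right terminal data and growth, then verify the supermartingale property of Definition~\ref{stochsuper}(ii) by splitting time at a sequence of stopping times, and finally concatenate the pieces by optional sampling.

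\emph{Continuity, terminal data, growth.} Since $u\ge v$ on $\partial B$, $w$ is continuous. The set $B$ is compactly contained in $[0,T)\times\R^d\times\Sigma_K$, so $w(T,\cdot,\cdot)=v(T,\cdot,\cdot)\ge g$. Linear growth holds because both $u$ and $v$ have it.

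\emph{Supermartingale property.} Fix $(t,y,p)$ and an admissible $Z$ satisfying \eqref{eq:Zadm}, and let $(Y,p)$ solve \eqref{sSDEs2}. Set $G_s:=w(s,Y_s,p_s)+\int_t^s f(p_r,Y_r)\,dr$. Define stopping times recursively: $\tau_0:=t$; $\sigma_n$ is the first time after $\tau_n$ at which $(s,Y_s,p_s)\in \bar B'$ and $u\le v$ there; $\tau_{n+1}$ is the first exit time from $B$ after $\sigma_n$. Here $B'$ is a neighborhood chosen so the construction is well defined. A cleaner alternative is to use $\{u<v\}\cap B$, as in the pasting argument of \cite{bayraktar2013stochastic}.
\begin{itemize}
\item On $[\tau_n,\sigma_n]$ we have $w=v$ up to the switching time, because at $\sigma_n$ the minimum is attained by $v$ or the two functions coincide. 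The supermartingale property of $v$, restarted at $\tau_n$, gives $\E[G_{\rho\wedge\sigma_n}\mid\cF^Y_{\rho'\vee\tau_n}]\le G_{\rho'\vee\tau_n}$ on suitable events.
\item On $[\sigma_n,\tau_{n+1}]$ we have $w\le u$, and $w=u$ at $\sigma_n$. The local supermartingale property of $u$ up to the exit time from $B$ gives the analogous inequality for $u$. At $\tau_{n+1}$ the path sits on $\partial B$, so $u\ge v=w$ there and $w$ only decreases.
\end{itemize}
Admissibility of $Z$ is a property of the path only, so restarting at a stopping time $\tau$ with $\cF^Y_\tau$-measurable initial data is allowed for both $u$ and $v$. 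To justify this, use regular conditional probabilities given $\cF^Y_\tau$, which reduce to deterministic initial conditions, together with the strong Markov property of $Y$ under $\nu$. Optional sampling on each interval then yields the one-step inequalities.

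\emph{Concatenation.} Chain the one-step inequalities via the tower property to get $\E[G_\rho\mid\cF^Y_{\rho'}]\le G_{\rho'}$ for $\rho'\le\rho$. Two points need care:
\begin{itemize}
\item The switching times must accumulate only at $T$ or stabilize. Since $B$ is a cylinder at positive distance from $T$ and paths are continuous, I expect only finitely many crossings a.s. If needed, I would replace $\{u\le v\}$ by thresholds $\{u\le v-\epsilon\}$ and $\{u\ge v\}$ to separate the entry and exit sets.
\item Passing to the limit uses uniform integrability, which follows from linear growth in $y$, the boundedness of $b_\nu,\sigma_\nu$, and the fact that $p$ is bounded in $\Sigma_K$.
\end{itemize}

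The main obstacle is this concatenation: showing finitely many switches, or controlling infinitely many, together with the conditional restart of the supermartingale property at random initial data. The restart is the delicate measurability point that \cite{bayraktar2013stochastic} handles by defining stochastic super-solutions with stopping times and random initial conditions. If Definition~\ref{stochsuper} as stated, with deterministic initial data only, proves insufficient, I would first strengthen it to random initial data, which the proof of Remark~\ref{minprop} implicitly uses anyway.
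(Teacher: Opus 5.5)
Your route is the paper's. Its proof is a short sketch: stop at the entrance and exit times of the region where $u$ is active, use the supermartingale property of the active piece on each interval, and concatenate by optional sampling. Write $G^v_s:=v(s,Y_s,p_s)+\int_t^s f(p_r,Y_r)\,dr$ and define $G^u_s$ analogously.

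\textbf{Finiteness of the switches.} This is the step that fails as you justify it. The distance of $B$ from $\{t=T\}$ is irrelevant. What you need is a positive distance between the entry set and the exit set, and the hypothesis $u\ge v$ on $\partial B$ (equality allowed) does not give one. Take $B'=B$. If the path leaves $B$ at a point where $u=v$, then $\sigma_{n+1}=\tau_{n+1}$ and $\tau_{n+2}=\tau_{n+1}$, so the recursion stalls. With the entry set $\{u<v\}\cap B$, the switches can accumulate strictly before $T$. This happens when that set touches $\partial B$ at a point with $u=v$ and $Y$ exits through the lateral boundary, since a nondegenerate diffusion recrosses it infinitely often right after hitting it.

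So your threshold fallback is necessary, not optional. The set $\{u\le v-\epsilon\}\cap\bar B$ is compact and disjoint from the closed set $(\{u\ge v\}\cap\bar B)\cup B^c$, hence at positive distance from it. Uniform continuity of $s\mapsto(s,Y_s,p_s)$ on $[t,T]$ then gives finitely many switches a.s. The chain closes for three reasons:
\begin{itemize}
\item $G\le G^v$ everywhere;
\item $G\le G^u$ whenever the path is in $\bar B$, since $w=v\le u$ on $\partial B$;
\item $G$ equals the active piece at every switching time, including at $\rho'$ if you start the schedule there with the piece realizing the minimum.
\end{itemize}
In the paper's application this separation holds automatically, because $v_\delta<\varphi^\eta$ on the whole annulus by \eqref{vdelta-annulus2}.

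You can also avoid switching sequences entirely. On $\{w=v\text{ at }\rho'\}$, since $w\le v$, you get $\E[G_\rho\mid\cF^Y_{\rho'}]\le\E[G^v_\rho\mid\cF^Y_{\rho'}]\le G^v_{\rho'}=G_{\rho'}$. On the complement, let $\theta$ be the exit time from $B$ after $\rho'$. Applying the first case at $\theta\wedge\rho$ gives $\E[G_\rho\mid\cF^Y_{\theta\wedge\rho}]\le G_{\theta\wedge\rho}\le G^u_{\theta\wedge\rho}$, and the local property of $u$ finishes.

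\textbf{The ``delicate restart.''} This is not an issue for $v$. For the fixed $(t,y,p)$ and $Z$, Definition~\ref{stochsuper}(ii) already says $G^v$ is a supermartingale on all of $[t,T]$ along that very path. Optional sampling between any two stopping times is therefore immediate. You do not need regular conditional probabilities, the strong Markov property, or any strengthening of the definition. Remark~\ref{minprop} does not use random initial data either: it is just the minimum of two supermartingales along the same path. Only the hypothesis on $u$ must be read as holding from stopping times at which the path lies in $B$. That is exactly what It\^o's formula provides for $u=\varphi^\eta$ in Theorem~\ref{infsupersolissub}.
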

\begin{proof}
The proof is the supermartingale analogue of
Lemma~\ref{localmaxprop}. Along a controlled path, stop at the entrance
and exit times of the region where the local function is active. On
each interval use the supermartingale property of the active piece.
The boundary inequality and continuity identify the pasted values at
the switching times, and optional sampling concatenates the estimates.
\end{proof}

\begin{lemma}\label{Vplusnonempty}
The set $\cV^+$ of stochastic super-solutions is non-empty.
\end{lemma}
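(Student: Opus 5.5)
The plan is to exhibit an explicit element of $\cV^+$ that does not depend on $p$ at all. It is obtained by discarding the causality constraint and letting the controller choose the best state pointwise. Define
\[
\bar v(t,y,p):=\E^\nu\!\left[\int_t^T \max_{i\in\mathbb S} f_0\bigl(i,Y_s^{t,y}\bigr)\,ds+\max_{i\in\mathbb S} g_0\bigl(i,Y_T^{t,y}\bigr)\right],
\]
where $Y^{t,y}$ solves \eqref{Ydynamics} with $Y_t=y$. The key structural fact is that in \eqref{sSDEs2} the $Y$-dynamics do not depend on the control $Z$. Hence $\bar v$ is a function of $(t,y)$ alone, and along any controlled path it is evaluated on the same diffusion $Y^{t,y}$.

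\emph{Step 1: regularity properties (i).} Since $\sum_i p^i=1$ and $p^i\ge0$, we have $g(p,y)=\sum_i g_0(i,y)p^i\le\max_i g_0(i,y)=\bar v(T,y,p)$, so the terminal condition holds. The functions $\max_i f_0(i,\cdot)$ and $\max_i g_0(i,\cdot)$ are Lipschitz in $y$. Together with the boundedness of $b_\nu,\sigma_\nu$, which gives $\E^\nu|Y_s^{t,y}|\le |y|+C$, this yields linear growth uniformly in $(t,p)$. Continuity should follow as in part (ii) of the proof of Lemma~\ref{stochsubnonempty}. Lipschitz continuity in $y$ comes from the Gronwall estimate $\E^\nu|Y^{t,y^1}_s-Y^{t,y^2}_s|^2\le C|y^1-y^2|^2$. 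Continuity in $t$ comes from the flow/Markov property and the estimate $\E|Y^{t,y}_{t'}-y|^2\le C|t'-t|$, or alternatively from Feynman--Kac for the linear equation $\partial_t u+\cA_\nu u+\max_i f_0(i,y)=0$.

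\emph{Step 2: supermartingale property (ii).} Fix $(t,y,p)$ and an admissible $Z$ satisfying \eqref{eq:Zadm}, with $(Y,p)$ solving \eqref{sSDEs2}. By the Markov property of $Y$,
\[
M_s:=\bar v(s,Y_s)+\int_t^s\max_i f_0(i,Y_r)\,dr=\E^\nu\!\left[\int_t^T \max_i f_0(i,Y_r)\,dr+\max_i g_0(i,Y_T)\,\Big|\,\cF^Y_s\right]
\]
is an $(\cF^Y_s)$-martingale; it is integrable by linear growth and the moment bounds on $Y$. By Lemma~\ref{invariance}, applied on $[t,T]$ with initial condition $p_t=p\in\Sigma_K$ (its proof does not depend on the starting time), $p_r\in\Sigma_K$ for $dr\otimes\nu$-a.e.\ $(r,\omega)$. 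Therefore $f(p_r,Y_r)\le \max_i f_0(i,Y_r)$ a.e., and
\[
\bar v(s,Y_s)+\int_t^s f(p_r,Y_r)\,dr = M_s-\int_t^s\Bigl(\max_i f_0(i,Y_r)-f(p_r,Y_r)\Bigr)dr .
\]
This is a martingale minus a nondecreasing adapted process, hence a supermartingale. Integrability of the integral term follows from $|f(p_r,Y_r)|\le C(1+|Y_r|)$ for $p_r\in\Sigma_K$.

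\emph{Main obstacle.} The only delicate point is the use of Lemma~\ref{invariance} from an arbitrary initial time and state, together with the fact that it gives $p_r\in\Sigma_K$ only $dr\otimes\nu$-a.e. This suffices here because $p$ enters only through the time integral. It must also be checked that the definition of $\cV^+$ tacitly presupposes existence of the solution $p$ of \eqref{sSDEs2}, which is immediate since the equation is linear in $p$ for fixed $Z$. Everything else is routine. Consequently $\bar v\in\cV^+$, and $\cV^+\neq\emptyset$.
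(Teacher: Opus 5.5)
Your proof is correct, and it takes a different route from the paper's. The paper writes down an explicit smooth barrier
\[
v_\varepsilon(t,y,p)=C_1(T-t)+\bigl(C_2+C_3(T-t)\bigr)\sqrt{\varepsilon^2+|y-y_0|^2}+g(p,y_0).
\]
It checks the terminal inequality through the Lipschitz bound on $g_0$. It then gets the supermartingale property from It\^o's formula, choosing $C_3$ and then $C_1$ (depending on $\varepsilon,y_0$) large enough to make the drift nonpositive.

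You instead take the $p$-independent value of the relaxation in which the state is chosen pointwise. The supermartingale property then follows from the Markov property of $Y$ alone, as a true martingale minus a nondecreasing process. This buys you a cleaner argument:
\begin{itemize}
\item there are no drift estimates and no It\^o formula in $p$;
\item you therefore avoid the tacit lower-bound/Fatou step needed to upgrade the paper's local supermartingale to a supermartingale when $Z$ is unbounded;
\item you only need $p_r\in\Sigma_K$ for a.e.\ $r$.
\end{itemize}

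What the paper's choice buys is reuse. Because $v_\varepsilon(T,y_0,p)=C_2\varepsilon+g(p,y_0)$, the family $(v_\varepsilon)_{\varepsilon,y_0}$ is exactly what Step~3 of Theorem~\ref{infsupersolissub} uses to obtain $v^+(T,\cdot,\cdot)=g$ and the terminal bound \eqref{terminalle} required by the comparison principle. Your $\bar v(T,y,p)=\max_i g_0(i,y)$ generally lies strictly above $g(p,y)$. So it settles non-emptiness but could not play that second role.

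On the point you flagged as delicate: both proofs need $p_s\in\Sigma_K$. Yours gets it through Lemma~\ref{invariance}. The paper's uses it implicitly: its bound \eqref{est1} uses boundedness of $p$, and $v(s,Y_s,p_s)$ is only defined on $\Sigma_K$.

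Be aware that the proof of Lemma~\ref{invariance} misidentifies the Tanaka local time at $0$. The term $\tfrac12\int\mathbf{1}_{\{p^i_s=0\}}\|Z^i_s\|^2_{\sigma_\nu^{-2}}\,ds$ it uses is always zero by the occupation-time formula, whereas the local time need not be. In fact the lemma fails as stated. Take $K=2$, $d=1$, $\nu=\mathbb W$, and let $p^1$ be the Ornstein--Uhlenbeck process $dp^1_s=(p_s\Lambda)^1\,ds+dW_s$ started in $(0,1)$. Set $p^2:=1-p^1$ and $Z^1=-Z^2:=\mathbf{1}_{\{p^1_s\notin\{0,1\}\}}$. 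Then \eqref{Zcond} and \eqref{eq:dynfiltercot} hold, since $p^1$ spends zero Lebesgue time in $\{0,1\}$. Yet $p^1$ exits $[0,1]$ with positive probability.

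The safe reading is that the admissible $Z$ in Definition~\ref{stochsuper} are those keeping $p$ in $\Sigma_K$, which the definition presupposes anyway. Under that reading your argument goes through verbatim without invoking Lemma~\ref{invariance}.
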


\begin{proof}
    Let $y_0\in \R^d$ be fixed, and let $\varepsilon,C_1,C_2,C_3>0$ be constants to be chosen later. Consider the function $v_{\varepsilon}\colon[0,T]\times \R^d\times \Sigma_K\to \R$ with
    $$v_{\varepsilon}(t,y,p):= C_1(T-t)+(C_2+C_3(T-t))\sqrt{\varepsilon^2+|y-y_0|^2}+g(p,y_0).$$
    We claim that $v_{\varepsilon}\in \mathcal{V}^+$.
    \vspace{1mm}

    \noindent
    (i) (Terminal condition and growth)\\
    Clearly, $v_{\varepsilon}$ has linear growth and is continuous. We compute
    $$v_{\varepsilon}(T,y,p)=C_2\sqrt{\varepsilon^2+|y-y_0|^2}+g(p,y_0)\ge C_2|y-y_0|+g(p,y_0)\ge g(p,y),$$
    if $C_2$ is chosen to be large enough (depending only on $g$).
    \vspace{1mm}

    \noindent
    (ii) (Super-martingale property)\\
    For any $(t,y,p)\in [0,T]\times\R^d\times \Sigma_K$ and any $\mathcal{F}^Y$-progressively measurable process $Z=(Z_s^i)_{s\in [t,T]}^{i=1,\ldots,K}$ such that \eqref{eq:Zadm}, we check that the process
    \[
    \left(v_{\varepsilon}\bigl(s,Y_s^{t,y},p_s^{Z,t,p}\bigr)
    +\int_t^s f\bigl(p_r^{Z,t,p},Y_r^{t,y}\bigr)\,dr\right)_{s\in[t,T]},
    \]
    where $(Y_s,p_s)$ satisfies \eqref{sSDEs2}, is an $(\mathcal{F}_s^Y)_{s\in [t,T]}$-super-martingale. We set $\phi_{\varepsilon}(y)=\sqrt{\varepsilon^2+|y-y_0|^2}$. By It\^o's formula, the drift of this process is given by
    $$-C_1-C_3\phi_{\varepsilon}(Y_s)+(C_2+C_3(T-s))\mathcal{A}_\nu\phi_{\varepsilon}(Y_s)+p_s\Lambda\cdot D_pg(p_s,y_0)+f(p_s,Y_s),\;\;s\in [t,T].$$
    In order to obtain the super-martingale property, it suffices to show that
\be\label{toprove101}
    -C_1-C_3\phi_{\varepsilon}(y)+(C_2+C_3(T-s))\mathcal{A}_\nu\phi_{\varepsilon}(y)+p\Lambda\cdot D_pg(p,y)+f(p,y)\le 0,
    \ee
    for any $(s,y,p)\in [t,T]\times \R^d\times\Sigma_K.$ By Assumption \ref{costassumption} and \eqref{costs}, we have
    \be\label{est1}
|p\Lambda\cdot D_pg(p,y)+f(p,y)|\le M_1(1+|y|),
    \ee
    for some constant $M_1>0$ depending only on $\Lambda, f_0,g_0$. On the other hand, by Assumption \ref{assumption1} and a straightforward computation we verify
    \be\label{est2}
(C_2+C_3(T-s))|\mathcal{A}_\nu\phi_{\varepsilon}(y)|\le (C_2+C_3T)M_2\left(1+\frac{1}{\e}\right),
    \ee
    for some constant $M_2>0$ depending only on the $L^\infty$ norms of $b_\nu,\sigma_\nu$. We may choose $C_3$ large enough so that
    \be\label{est3}
C_3\phi_\e(y)\ge M_1(1+|y|)
    \ee
    and then $C_1$ large enough so that
    \be\label{est4}
C_1\ge (C_2+C_3T)M_2\left(1+\frac{1}{\e}\right).
    \ee
    The desired inequality \eqref{toprove101} follows by combining \eqref{est1}, \eqref{est2}, \eqref{est3}, and \eqref{est4}.
\end{proof}

\begin{theorem}\label{infsupersolissub}
Under the assumptions of Theorem~\ref{mainpdes}, the function
\[
v^+:=\inf_{v\in\cV^+}v
\colon[0,T]\times\R^d\times\Sigma_K\to\R
\]
is a viscosity sub-solution of \eqref{correctPDE} in the sense of
Lemma~\ref{viscositysol}(i). The function $v^+$ satisfies $U_{sc}\le v^+$ everywhere and 
\be\label{terminalle}
\limsup_{(t',y',p')\to (T,y,p)}v^+(t',y',p')\le g(p,y),\;\;\text{for all }(y,p)\in \R^d\times \Sigma_K.
\ee
\end{theorem}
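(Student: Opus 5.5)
The proof mirrors Theorem~\ref{supsubsolissuper}, with infima and supermartingales in place of suprema and submartingales. It has four steps.

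\textit{Step 1: $U_{sc}\le v^+$.} Fix $v\in\cV^+$ and $(t,y,p)$, and take any $Z\in\cA(t,y,p)$. By Lemma~\ref{invariance}, $p_s\in\Sigma_K$, so $Z$ is an admissible control in Definition~\ref{stochsuper}(ii). The supermartingale property between $t$ and $T$, together with $v(T,\cdot)\ge g$, gives
\[
v(t,y,p)\ge \E^\nu\Bigl[\int_t^T f(p_s,Y_s)\,ds+g(p_T,Y_T)\Bigr].
\]
We then take the supremum over $Z$ and the infimum over $v$. If the supermartingale property only holds locally, we localize with stopping times and use the linear growth of $v$ and $f,g$ to pass to the limit.

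\textit{Step 2: semicontinuity, growth and the terminal bound.} Since $v^+$ is an infimum of continuous functions, it is upper semicontinuous. By Lemma~\ref{Vplusnonempty} and Step~1 we have
\[
-C(1+|y|)\le U_{sc}\le v^+\le v_\varepsilon,
\]
so $v^+$ has linear growth. For \eqref{terminalle}, fix $(y,p)$ and use $v_\varepsilon$ from Lemma~\ref{Vplusnonempty} centred at $y_0=y$. It gives
\[
\limsup v^+\le v_\varepsilon(T,y,p)=C_2\varepsilon+g(p,y).
\]
We then let $\varepsilon\to0$, noting that $C_2$ does not depend on $\varepsilon$.

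\textit{Step 3: subsolution property on all of $\Sigma$, including $\partial\Sigma$.} Suppose, for contradiction, that $\varphi$ touches $\tilde v^+$ from above at $(t_0,y_0,p_0)$ with a strict maximum and that
\[
\min\{F,-\lambda_{\max}(D^2_{pp}\varphi)\}>0 .
\]
Then $D^2_{pp}\varphi<0$, so the supremum over $\tilde z$ in $F$ is finite and attained. By continuity,
\[
F\ge 2\gamma \quad\text{and}\quad D^2_{pp}\varphi\le -cI \quad\text{on } \bar B_r\cap([0,T)\times\R^d\times\Sigma).
\]
As in Step 4c, compactness of the annulus and Remark~\ref{minprop}(i) give $\bar v_\delta\in\cV^+$ with $\bar v_\delta<\varphi-\eta$ on the annulus. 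We then set $\bar v:=\min\{\bar v_\delta,\varphi-\eta\}$ inside $B_r$ and $\bar v:=\bar v_\delta$ outside. This makes $\bar v(t_0,y_0,p_0)<v^+(t_0,y_0,p_0)$, which is a contradiction once $\bar v\in\cV^+$ is shown via Lemma~\ref{localminprop}.

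It remains to verify the local supermartingale property of $\varphi-\eta$ for \emph{every} admissible $Z$, which may be unbounded. By It\^o's formula, the drift of $\varphi(s,Y_s,p_s)+\int f$ equals
\[
\partial_t\varphi+\cA_\nu\varphi+D_p\varphi\cdot\widetilde{p'\Lambda}+\tr\bigl(\tilde Z(D_pD_y\varphi)^\top\bigr)+\tfrac12\tr\bigl(\tilde Z\sigma_\nu^{-2}\tilde Z^\top D^2_{pp}\varphi\bigr)+\tilde f .
\]
For each fixed $Z$ this is at most $-F\le-2\gamma<0$, because $F$ involves the supremum over all $\tilde z$. Hence the process is a local supermartingale up to the exit time from $B_r$. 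The boundedness of $\varphi$ on $\bar B_r$ upgrades this to a true supermartingale on the stopped interval.

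Boundary points need no separate treatment. Indeed, the state constraint is encoded by Lemma~\ref{invariance}: admissible paths stay in $\Sigma_K$, and the drift bound holds at every point of $\bar B_r\cap\Sigma$. This is why the subsolution property is obtained on all of $\Sigma$ and no boundary operator is needed.

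\textit{Main obstacle.} The delicate part is Lemma~\ref{localminprop} applied near $\partial\Sigma$ with unbounded controls. The concatenation must work uniformly over all admissible $Z$, including those with $\mathbf 1_{p^i=0}Z^i=0$ that touch the boundary. The first thing to try is to stop at the entrance and exit times of $\{\varphi-\eta<\bar v_\delta\}\cap B_r$ and to use the uniform negativity of the drift established above.
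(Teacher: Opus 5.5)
Your proposal is correct and follows the paper's proof essentially step for step. You derive $U_{sc}\le v^+$ from the supermartingale property between $t$ and $T$. Upper semicontinuity and linear growth come from the sandwich $U_{sc}\le v^+\le v_\varepsilon$, and the terminal bound from $v_\varepsilon$ centred at $y$. The subsolution property on all of $\Sigma$, boundary included, comes from the downward Perron bump $\min\{v_\delta,\varphi-\eta\}$: its drift is at most $-F\le -2\gamma$ for every admissible, possibly unbounded, $Z$, and it is pasted via Lemma~\ref{localminprop}. You also note that the stopped process is bounded, so the local supermartingale is a true supermartingale; the paper leaves this detail implicit.
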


\begin{proof}
We proceed in four steps.

\medskip
\noindent\textit{Step 1.} (Upper semicontinuity and growth.)\
The set $\cV^+$ is non-empty by Lemma~\ref{Vplusnonempty}. Every
$v\in\cV^+$ is continuous on $[0,T]\times\R^d\times\Sigma_K$ and has
at most linear growth in $y$ uniformly in $(t,p)$. The pointwise
infimum $v^+$ is therefore upper semicontinuous. The linear-growth
bound is not inherited from an infimum with possibly non-uniform
constants. Instead, the explicit element of $\cV^+$ constructed in
Lemma~\ref{Vplusnonempty} gives the upper bound
$v^+\le C(1+|y|)$, while Step~2 below gives $U_{sc}\le v^+$ and the
standard estimate for \eqref{dynamiccot} gives
$U_{sc}\ge -C(1+|y|)$. Hence $v^+$ has at most linear growth.

\medskip
\noindent\textit{Step 2.} (Relation to $U_{sc}$.)\
Fix $(t,y,p)\in[0,T]\times\R^d\times\Sigma_K$ and $v\in\cV^+$. By
the super-martingale property applied at $s=t$ and $s=T$, for every
$Z\in\cA(t,y,p)$,
\[
v(t,y,p)\ge\E^\nu\!\left[\int_t^T f\bigl(p_r^{Z,t,p},Y_r^{t,y}\bigr)\,dr
+g\bigl(p_T^{Z,t,p},Y_T^{t,y}\bigr)\right].
\]
Taking the supremum over $Z\in\cA(t,y,p)$ on the right and then the
infimum over $v\in\cV^+$ on the left yields $v^+(t,y,p)\ge U_{sc}(t,y,p)$.

\medskip
\noindent\textit{Step 3.} (Terminal time property and \eqref{terminalle}.)\
By the definition of $\mathcal{V}^+$, every $v\in \mathcal{V}^+$ satisfies $v(T,y,p)\ge g(p,y)$, so $v^+(T,y,p)\ge g(p,y)$. Conversely, for any $y_0\in \R^d$, the explicit element of $\mathcal{V}^+$ constructed in Lemma \ref{Vplusnonempty} gives $v_\e(t,y_0,p)=C_2\e+g(p,y_0)$ with $C_2$ depending only on $g$. Letting $\e\to 0^+$, we obtain $v^+(T,y_0,p)=g(p,y_0)$ for any $y_0\in \R^d$ and $p\in \Sigma_K$.
\vspace{1mm}

\nd
For \eqref{terminalle}, since $v^+\le v_\e$,
$$\limsup_{(t',y',p')\to (T,y,p)}v^+(t',y',p')\le \limsup_{(t',y',p')\to (T,y,p)}v_\e(t',y',p')=v^\e(T,y,p)=C_2\e+g(p,y),$$
for $\e>0$. The result follows by letting $\e\to 0^+$.

\medskip
\noindent\textit{Step 4.} (Interior sub-solution property,
Lemma~\ref{viscositysol}(i).)\
We show that $v^+$ satisfies the PDE on
$[0,T)\times\R^d\times\Sigma_K$ in the viscosity sense; equivalently
(after the simplex reduction of Notation~\ref{restrictions}), that
$\tilde v^+$ is a viscosity sub-solution of \eqref{correctPDE2} on
$[0,T)\times\R^d\times\Sigma$. To ease notation in this proof we continue to write $v^+$ for $\tilde v^+$.

\medskip
\noindent\emph{Step 4a. Setup and contradiction assumption.}\
Suppose, for contradiction, that $v^+$ fails the viscosity
sub-solution property at some point
$(t_0,y_0,p_0)\in[0,T)\times\R^d\times\Sigma$. Then there
exists $\varphi\in C^{1,2,2}([0,T]\times\R^d\times\Sigma)$ such that
$v^+-\varphi$ attains a strict local maximum at $(t_0,y_0,p_0)$
with $(v^+-\varphi)(t_0,y_0,p_0)=0$, and
\be\label{tocontradict3}
\min\Bigl\{
  F\!\bigl(t_0,y_0,p_0,\,\partial_t\varphi,\,
          D_{(p,y)}\varphi,\,D^2_{(p,y)}\varphi\bigr),\;
  -\lambda_{\max}\bigl(D^2_{pp}\varphi(t_0,y_0,p_0)\bigr)
\Bigr\}>0,
\ee
where $F$ is the operator from \eqref{F}. We claim that it suffices to
establish
\be\label{toprove2}
F\!\bigl(t_0,y_0,p_0,\,\partial_t\varphi,\,
        D_{(p,y)}\varphi,\,D^2_{(p,y)}\varphi\bigr)\le 0.
\ee
Indeed, the second
component $-\lambda_{\max}(D^2_{pp}\varphi)>0$ from
\eqref{tocontradict3} guarantees that $D^2_{pp}\varphi(t_0,y_0,p_0)$
is negative definite. Hence the supremum in the definition of $F$ is finite, and \eqref{toprove2} holds as a numerical inequality contradicting \eqref{tocontradict3}.

\medskip
\noindent\emph{Step 4b. Localization.}\
Suppose \eqref{toprove2} fails. By continuity of $\varphi$ and its
derivatives, there exist $r>0$ and $\gamma>0$ such that
\be\label{tocontradict4}
F\!\bigl(t,y,p,\,\partial_t\varphi,\,
        D_{(p,y)}\varphi,\,D^2_{(p,y)}\varphi\bigr)
   \ge 2\gamma\quad\text{on }\bar B_r(t_0,y_0,p_0).
\ee
For convenience we assume $t_0>0$, choosing $r<t_0\wedge(T-t_0)$ so
that $\bar B_r(t_0,y_0,p_0)\subset(0,T)\times\R^d\times\Sigma$;
the case $t_0=0$ requires only a one-sided ball
$[0,r)\times \bar B_r(y_0,p_0)$ and is treated similarly.

By the strict local-maximum property of $v^+-\varphi$ at
$(t_0,y_0,p_0)$ and the upper semicontinuity of $v^+$, we may further
shrink $r$ so that
\be\label{strict-max-gap}\nonumber
v^+(t,y,p)-\varphi(t,y,p)\le -2\alpha,
\;\;\text{for all }
(t,y,p)\in\bar B_r(t_0,y_0,p_0)\setminus B_{r/2}(t_0,y_0,p_0)\text{ such that }p\in \Sigma,
\ee
for some $\alpha>0$.

\medskip
\noindent\emph{Step 4c. Choice of approximating super-solution.}\
Fix $\eta\in(0,\alpha)$ small enough that \eqref{tocontradict4}
still holds with $\varphi$ replaced by $\varphi^\eta:=\varphi-\eta$
(possibly with $\gamma$ slightly reduced). For each
$q\in\bar B_r\setminus B_{r/2}$, the definition of $v^+$ as the
pointwise infimum and the strict inequality
$v^+(q)\le\varphi(q)-2\alpha<\varphi(q)-\eta=\varphi^\eta(q)$
guarantee the existence of $v_q\in\cV^+$ (recall the notation from Remark \ref{minprop}(ii)) such that $v_q(q)<\varphi^\eta(q)$.
By continuity of $v_q$ and $\varphi$, this inequality extends to an
open neighborhood $U_q$ of $q$. Cover the compact annulus
$\bar B_r\setminus B_{r/2}$ by finitely many
$U_{q_1},\ldots,U_{q_N}$, and define
$v_\delta:=\min\{v_{q_1},\ldots,v_{q_N}\}\in\cV^+$ (by
Remark~\ref{minprop}(i)). Then
\be\label{vdelta-annulus2}
v_\delta(t,y,p)<\varphi^\eta(t,y,p),
\;\;\text{for all }
(t,y,p)\in\bar B_r(t_0,y_0,p_0)\setminus B_{r/2}(t_0,y_0,p_0)\text{ such that }p\in \Sigma.
\ee

\medskip
\noindent\emph{Step 4d. Bumped function.}\
Define
\[
v^\eta(t,y,p):=
\begin{cases}
\min\!\bigl\{v_\delta(t,y,p),\,\varphi^\eta(t,y,p)\bigr\}
  & (t,y,p)\in\bar B_r(t_0,y_0,p_0),\\[2pt]
v_\delta(t,y,p) & \text{otherwise}.
\end{cases}
\]
By \eqref{vdelta-annulus2}, on the annulus
$\bar B_r\setminus B_{r/2}$ we have $v_\delta<\varphi^\eta$, so the
two pieces of $v^\eta$ agree there and $v^\eta$ is continuous on
the whole domain. At the center,
\[
v^\eta(t_0,y_0,p_0)\le\varphi^\eta(t_0,y_0,p_0)
=\varphi(t_0,y_0,p_0)-\eta
=v^+(t_0,y_0,p_0)-\eta<v^+(t_0,y_0,p_0).
\]
If $v^\eta\in\cV^+$, this contradicts $v^+=\inf_{v\in\cV^+}v$ and
proves \eqref{toprove2}.

\medskip
\noindent\emph{Step 4e. Verification that $v^\eta\in\cV^+$.}\
Terminal condition: since $t_0+r<T$ by the choice of $r$, we have
$v^\eta(T,y,p)=v_\delta(T,y,p)\ge g(p,y)$. Linear growth in $y$
follows from that of $v_\delta$.\\

Super-martingale property: fix $(t,y,p)\in[0,T]\times\R^d\times\Sigma_K$
and an admissible $Z=(Z_s^i)$ satisfying \eqref{eq:Zadm}, and let
$(Y_s,p_s)=(Y_s^{t,y},p_s^{Z,t,p})$ solve \eqref{sSDEs2}. By
Lemma~\ref{invariance}, $p_s\in\Sigma_K$ for all $s\in[t,T]$. To ease notation, we write $p_s$ instead of the restriction to the first $K-1$ coordinates $\widetilde{p}_s$ (see Notation \ref{restrictions}). We also denote by $\widetilde{Z}=(Z_s)_{s\in[t,T]}^{i=1,\ldots,K-1}$.
\vspace{1.5mm}

\nd
If the path $(Y_s,p_s)$ never enters $\bar B_r(t_0,y_0,p_0)$, then
$v^\eta(s,Y_s,p_s)=v_\delta(s,Y_s,p_s)$ for all $s\in[t,T]$, and
the super-martingale property follows from $v_\delta\in\cV^+$.
Otherwise, let
\[
\theta:=\inf\bigl\{s\ge t:(s,Y_s,p_s)\in\bar B_r(t_0,y_0,p_0)\bigr\},
\qquad
\sigma:=\inf\bigl\{s\ge\theta:(s,Y_s,p_s)\notin\bar B_r(t_0,y_0,p_0)\bigr\}
\wedge T,
\]
so $(s,Y_s,p_s)\in\bar B_r$ for $s\in[\theta,\sigma]$.

On $[t,\theta]$, $v^\eta=v_\delta$ and the increment of
$v_\delta(s,Y_s,p_s)+\int_t^s f(p_r,Y_r)\,dr$ is a super-martingale because $v_\delta\in\cV^+$.

On $[\theta,\sigma]$, we show that $\varphi^\eta(s,Y_s,\widetilde{p}_s)+\int_t^s f\,dr$
is a super-martingale. By Itô's formula applied to
$\varphi^\eta=\varphi-\eta$ along the dynamics \eqref{sSDEs2}, the
quadratic covariations
$d[p^i,p^j]_s=(Z_s^i)^{\!\top}\sigma_\nu^{-2}(s,Y_s)\,Z_s^j\,ds$ and
$d[Y,p^i]_s=Z_s^i\,ds\in\R^d$ yield the drift
\begin{align*}
\partial_t\varphi+\cA_\nu\varphi+D_p\varphi\cdot \widetilde{p_s'\Lambda}
+\tr\!\bigl(\widetilde{Z}_s\,(D_pD_y\varphi)^{\!\top}\bigr)
+\tfrac12 \tr\!\bigl(\widetilde{Z}_s\,\sigma_\nu^{-2}(s,Y_s)\,\widetilde{Z}_s^{\!\top}\,
   D^2_{pp}\varphi\bigr),
\end{align*}
where $\cA_\nu$ was introduced in \eqref{Ygenerator}. For any
admissible $Z_s$ with $\sum_i Z_s^i=0$, this is bounded above by
\[
\partial_t\varphi+\cA_\nu\varphi+D_p\varphi\cdot\widetilde{p_s'\Lambda}
+\sup_{\substack{\widetilde{z}=(z^i)\\i=1,\ldots,K-1}}\Bigl\{
\tr\!\bigl(\widetilde{z}\,(D_pD_y\varphi)^{\!\top}\bigr)
+\tfrac12 \tr\!\bigl(\widetilde{z}\,\sigma_\nu^{-2}(s,Y_s)\,\widetilde{z}^{\!\top}\,
   D^2_{pp}\varphi\bigr)\Bigr\}
= -F\bigl(\cdots\bigr)-\tilde f(p_s,Y_s),
\]
by the definition of $F$ (see \eqref{F}). Using
\eqref{tocontradict4}, on $[\theta,\sigma]$ this is at most
$-2\gamma-\tilde f(p_s,Y_s)$. Hence
\[
d\!\left(\varphi^\eta(s,Y_s,p_s)+\int_t^s f(p_r,Y_r)\,dr\right)
\le \bigl(-2\gamma\bigr)ds+(\text{martingale increment}),
\]
so $\varphi^\eta(s,Y_s,p_s)+\int_t^s f\,dr$ is a super-martingale on
$[\theta,\sigma]$, with drift bounded by $-2\gamma<0$.

Since $v^\eta=\min\{v_\delta,\varphi^\eta\}$ on $\bar B_r$, the local pasting property in Lemma~\ref{localminprop} applies, and the process
$v^\eta(s,Y_s,p_s)+\int_t^s f(p_r,Y_r)\,dr$ is a super-martingale on
$[\theta,\sigma]$.

On $[\sigma,T]$ the path is again outside $\bar B_r$, so
$v^\eta=v_\delta$ and the super-martingale property continues by
$v_\delta\in\cV^+$. Continuity of $v^\eta$ ensures the increments
match at $\theta$ and $\sigma$, so combining the three intervals
yields the super-martingale property on $[t,T]$, hence $v^\eta\in\cV^+$.

\medskip
This gives the desired contradiction, proving \eqref{toprove2} and hence the interior sub-solution property. This completes the proof of Theorem~\ref{infsupersolissub}.
\end{proof}

\subsection{Proofs of Theorem~\ref{mainpdes} and Corollary~\ref{dropassinitial}}

\begin{proof}[Proof of Theorem~\ref{mainpdes}]
The comparison principle for \eqref{correctPDE} in the class of
viscosity sub- and super-solutions of linear growth in $y$ was
established in Subsection~\ref{comp}. Let $v^-,v^+$ be the functions
constructed in Theorems~\ref{supsubsolissuper} and
\ref{infsupersolissub}, respectively. Combining \eqref{scineq} with
those theorems yields the chain of inequalities
\[
v^-\le U_f\le U_{sc}\le v^+.
\]
Since $v^+$ is a viscosity sub-solution and $v^-$ is a viscosity
super-solution of \eqref{correctPDE}, and since they satisfy \eqref{terminalge} and \eqref{terminalle}, the comparison principle gives
$v^+\le v^-$. Combined with the chain above, all four functions
coincide. We set
\[
V:=U_f=U_{sc}=v^-=v^+.
\]
Since $v^+(T,y,p)=v^-(T,y,p)=g(p,y)$ (Step~3 of Theorems~\ref{supsubsolissuper} and
\ref{infsupersolissub}), we get $V(T,y,p)=g(p,y)$. Furthermore, the semicontinuity of $v^+,v^-$ implies that $V$ is upper and lower semicontinuous, hence continuous. $V$ also has linear growth in $y$, because $v^-,v^+$ have linear growth. Thus, $V$ is the unique viscosity solution of \eqref{correctPDE} in the sense of Definition \ref{def:stateconstrvisc} with $V(T,y,p)=g(p,y)$ for all $(y,p)\in \R^d\times\Sigma_K$.
\vspace{1mm}

\nd
Finally, if Assumption~\ref{initialmeasures} holds, \eqref{scineq}
specializes to
\[
U_f(0,y_0,p_0)\le V_c(\mu,\nu)\le U_{sc}(0,y_0,p_0),
\]
and the equality $U_f=U_{sc}$ forces $V_c(\mu,\nu)=V(0,y_0,p_0)$.
\end{proof}

\begin{proof}[Proof of Corollary~\ref{dropassinitial}]
We first establish the upper bound, then the matching lower bound.

\medskip
\noindent
\emph{Notation.} For a measure $\rho$ on $\R^d$ (resp.\ on
$\mathbb{S}$) and a Markov kernel $x_0\mapsto\mu_{x_0}(dx)$ on
$\Omega^X$, write
\[
\rho\cdot\mu(dx):=\int_{\mathbb{S}}\mu_{x_0}(dx)\,\rho(dx_0)
\]
for the path-measure on $\Omega^X$ with initial distribution $\rho$.
Analogously, $\nu^{y_0}$ denotes the conditional law of $Y$ under
$\nu$ given $Y_0=y_0$, so that $\nu(dy)=\int\nu^{y_0}(dy)\,\nu_0(dy_0)$.

\medskip
\noindent
\emph{Upper bound.} Let $\pi\in\Pi_c(\mu,\nu)$ and write $(X,Y)$ for
the canonical process under $\pi$. The causality (item~(iv) of
Proposition~\ref{causalprop}) and Markovianity of $X$ under $\mu$
give
\[
\Law(X\mid Y_0)
  =\E^\pi\!\bigl[\Law(X\mid X_0,Y_0)\,\big|\,Y_0\bigr]
  =\E^\pi\!\bigl[\Law(X\mid X_0)\,\big|\,Y_0\bigr]
  =\pi_{Y_0}\!\cdot\mu,
\]
where $\pi_{Y_0}(dx_0)$ denotes the regular conditional distribution
of $X_0$ given $Y_0$ under $\pi$. Hence the conditional coupling
$\pi(\,\cdot\,\mid Y_0)$ has $X$-marginal $\pi_{Y_0}\!\cdot\mu$ and
$Y$-marginal $\nu^{Y_0}$, $\nu_0$-almost surely.

We next show that $\pi(\,\cdot\,\mid Y_0)$ is causal, $\nu_0$-a.s.
Fix $t\in[0,T]$. By causality of $\pi$ (Proposition~\ref{causalprop}(iv)),
\be\label{eq:caus1}
X\amalg_{(X_s)_{s\in[0,t]}}(Y_s)_{s\in[0,t]}\qquad\text{under }\pi,
\ee
where $\amalg_C$ denotes conditional independence given $C$. Since
$Y_0$ is $(Y_s)_{s\in[0,t]}$-measurable, \eqref{eq:caus1} is
equivalent to
\[
X\amalg_{(X_s)_{s\in[0,t]}}\!\bigl(Y_0,(Y_s)_{s\in[0,t]}\bigr)
\]
(the pair on the right has the same $\sigma$-algebra as
$(Y_s)_{s\in[0,t]}$ alone). By the chain rule for conditional
independence (cf.\ \cite[Proposition~3.1]{constantinou2017extended}),
applied in the present standard-Borel path-space setting where regular
conditional probabilities exist, this in turn is equivalent to the conjunction
\be\label{eq:chain}
X\amalg_{(X_s)_{s\in[0,t]}}Y_0
\qquad\text{and}\qquad
X\amalg_{Y_0,\,(X_s)_{s\in[0,t]}}(Y_s)_{s\in[0,t]}.
\ee
The second condition in \eqref{eq:chain}, holding for every $t$, is
exactly causality of $\pi(\,\cdot\,\mid Y_0)$.

By definition of $V_c$, for $\nu_0$-a.e.\ $y_0$,
\[
\E^{\pi(\cdot\mid Y_0=y_0)}[\cC(X,Y)]
\le V_c\bigl(\pi_{y_0}\!\cdot\mu,\,\nu^{y_0}\bigr).
\]
Integrating against $\nu_0$ and using the tower property,
\begin{align*}
\E^\pi[\cC(X,Y)]
  &= \E^\pi\!\bigl[\E^\pi[\cC(X,Y)\mid Y_0]\bigr]
  \le \int_{\R^d} V_c\bigl(\pi_{y_0}\!\cdot\mu,\,\nu^{y_0}\bigr)\,\nu_0(dy_0)\\
  &\le \sup_{R\in\Pi(\mu_0,\nu_0)}
      \int_{\R^d} V_c\bigl(R_{y_0}\!\cdot\mu,\,\nu^{y_0}\bigr)\,\nu_0(dy_0),
\end{align*}
where the last inequality uses that the joint law of $(X_0,Y_0)$
under $\pi$ belongs to $\Pi(\mu_0,\nu_0)$, with disintegration
$y_0\mapsto\pi_{y_0}$. Taking the supremum over $\pi\in\Pi_c(\mu,\nu)$
yields
\be\label{eq:UB}
V_c(\mu,\nu)\le\sup_{R\in\Pi(\mu_0,\nu_0)}
   \int_{\R^d} V_c\bigl(R_{y_0}\!\cdot\mu,\,\nu^{y_0}\bigr)\,\nu_0(dy_0).
\ee

\medskip
\noindent
\emph{Lower bound.} Fix $R\in\Pi(\mu_0,\nu_0)$, $\varepsilon>0$, and
choose a measurable selection $y_0\mapsto\pi_{y_0}\in\cP(\Omega)$ of
$\varepsilon$-optimizers for $V_c(R_{y_0}\!\cdot\mu,\,\nu^{y_0})$, i.e.,
\[
\E^{\pi_{y_0}}[\cC(X,Y)]\ge V_c\bigl(R_{y_0}\!\cdot\mu,\,\nu^{y_0}\bigr)
-\varepsilon,\qquad\nu_0\text{-a.e.\ }y_0.
\]
The existence of such a selection follows from the Jankov--von Neumann measurable-selection
theorem for analytic multifunctions; see
\cite[Section~7]{bertsekas1996stochastic}. Indeed, the path space $\Omega$ is Polish,
the graph of causal couplings with fixed marginals $\R^d\ni y_0\stackrel{\Gamma}{\mapsto} \Pi_c(R^{y_0}\cdot \mu,\nu^{y_0})$ is analytic (indeed
Borel under the regular conditional probability formulation), and the cost functional is Borel. Hence the $\varepsilon$-argmax
correspondence 
$$\R^d\ni y_0\mapsto \left\{ \pi\in \Gamma(y_0): \E^{\pi_{y_0}}[\cC(X,Y)]\ge V_c\bigl(R_{y_0}\!\cdot\mu,\,\nu^{y_0}\bigr)
-\varepsilon\right\}$$
admits a universally measurable selector $\R^d\ni y_0\mapsto \pi^{y_0}$. 

\smallskip

Since a universally measurable map is measurable with respect to the completion of $\nu_0$, we may define
\[
\pi(dx,dy):=\int_{\R^d}\pi_{y_0}(dx,dy)\,\nu_0(dy_0).
\]
We verify in turn that $\pi$ is a causal coupling of $\mu$ and $\nu$.

\nd
\emph{Second marginal.} Since $\pi_{y_0}$ is supported on
$\{Y_0=y_0\}$ with $Y$-marginal $\nu^{y_0}$,
\[
\int_{\Omega^X}\pi(dx,dy)
=\int_{\R^d}\nu^{y_0}(dy)\,\nu_0(dy_0)=\nu(dy).
\]

\smallskip

\nd
\emph{First marginal.} Denote $\mu_{x_0}[f]:=\int f(x)\,\mu_{x_0}(dx)$. For any bounded measurable $f\colon\Omega^X\to\R$,
\begin{align*}
\int f(x)\,\pi(dx,dy)
&= \int_{\R^d}\!\int_{\Omega^X\times\Omega^Y} f(x)\,\pi_{y_0}(dx,dy)\,\nu_0(dy_0)\\
&= \int_{\R^d}\!\int_{\mathbb{S}} \mu_{x_0}\!\bigl[f\bigr]\,R_{y_0}(dx_0)\,\nu_0(dy_0)
   \qquad(\text{$X$-marginal of }\pi_{y_0}\text{ is }R_{y_0}\!\cdot\mu)\\
&= \int_{\mathbb{S}\times\R^d} \mu_{x_0}\!\bigl[f\bigr]\,R(dx_0,dy_0)
   \qquad(R(dx_0,dy_0)=R_{y_0}(dx_0)\,\nu_0(dy_0))\\
&= \int_{\mathbb{S}} \mu_{x_0}\!\bigl[f\bigr]\,\mu_0(dx_0)
   \qquad(\text{$X_0$-marginal of }R\text{ is }\mu_0)\\
&= \int f\,d\mu.
\end{align*}
Hence the
$X$-marginal of $\pi$ is $\mu$.

\smallskip
\emph{Causality.} We verify
$X\amalg_{(X_s)_{s\in[0,t]}}(Y_s)_{s\in[0,t]}$ under $\pi$ for each
$t\in[0,T]$. Since each $\pi_{y_0}$ is causal, we have
$X\amalg_{Y_0,\,(X_s)_{s\in[0,t]}}(Y_s)_{s\in[0,t]}$ under $\pi$. By
the chain rule \eqref{eq:chain}, it remains to show
\be\label{eq:goal-CI}
X\amalg_{(X_s)_{s\in[0,t]}}Y_0\qquad\text{under }\pi.
\ee
For any bounded measurable $A\colon\Omega^X\to\R$ and
$H\colon D([0,t];\mathbb{S})\to\R$, the $X$-marginal of $\pi_{y_0}$
is $R_{y_0}\!\cdot\mu$, so
\begin{align*}
\E^{\pi_{y_0}}\!\bigl[A(X)\,H\bigl((X_s)_{s\le t}\bigr)\bigr]
&= \int_{\mathbb{S}}\!\int_{\Omega^X} A(x)\,H\bigl((x_s)_{s\le t}\bigr)\,
   \mu_{x_0}(dx)\,R_{y_0}(dx_0)\\
&= \int_{\mathbb{S}}\!\int_{\Omega^X}
   \E^{\mu_{x_0}}\!\bigl[A(X)\,\big|\,(X_s)_{s\le t}\!=\!(x_s)_{s\le t}\bigr]
   H\bigl((x_s)_{s\le t}\bigr)\,\mu_{x_0}(dx)\,R_{y_0}(dx_0)\\
&= \E^{\pi_{y_0}}\!\Bigl[
   \E^{\mu_{X_0}}\!\bigl[A(X)\,\big|\,(X_s)_{s\le t}\bigr]
   H\bigl((X_s)_{s\le t}\bigr)\Bigr],
\end{align*}
using the tower property in the second equality. It follows that
\[
\E^\pi\!\bigl[A(X)\,\big|\,(X_s)_{s\le t},Y_0\bigr]
=\E^{\pi_{Y_0}}\!\bigl[A(X)\,\big|\,(X_s)_{s\le t}\bigr]
=\E^{\mu_{X_0}}\!\bigl[A(X)\,\big|\,(X_s)_{s\le t}\bigr],
\]
which is $(X_s)_{s\le t}$-measurable (since $X_0$ is determined by
$(X_s)_{s\le t}$). This is exactly \eqref{eq:goal-CI}, and combined
with the second condition in \eqref{eq:chain} gives causality of
$\pi$. Thus $\pi\in\Pi_c(\mu,\nu)$.

\smallskip
By construction,
\[
\E^\pi[\cC(X,Y)]
=\int_{\R^d}\E^{\pi_{y_0}}[\cC(X,Y)]\,\nu_0(dy_0)
\ge \int_{\R^d} V_c\bigl(R_{y_0}\!\cdot\mu,\,\nu^{y_0}\bigr)\,\nu_0(dy_0)
-\varepsilon,
\]
so $V_c(\mu,\nu)\ge\int_{\R^d} V_c(R_{y_0}\!\cdot\mu,\,\nu^{y_0})\,\nu_0(dy_0)-\varepsilon$.
Taking the supremum over $R$ and letting $\varepsilon\downarrow 0$
gives the matching lower bound for \eqref{eq:UB}, hence
\be\label{eq:VcVc}
V_c(\mu,\nu)
=\sup_{R\in\Pi(\mu_0,\nu_0)}
   \int_{\R^d} V_c\bigl(R_{y_0}\!\cdot\mu,\,\nu^{y_0}\bigr)\,\nu_0(dy_0).
\ee

\medskip
\noindent
\emph{Conclusion.} For each $y_0\in\R^d$ and $R_{y_0}\in\Sigma_K$, the
pair $(R_{y_0}\!\cdot\mu,\nu^{y_0})$ satisfies
Assumption~\ref{initialmeasures} (its initial marginals are
deterministic: $\delta_{y_0}$ for $Y_0$ and $R_{y_0}$ for the
distribution of $X_0$). By Theorem~\ref{mainpdes},
\[
V_c\bigl(R_{y_0}\!\cdot\mu,\,\nu^{y_0}\bigr)=V(0,y_0,R_{y_0}).
\]
Substituting into \eqref{eq:VcVc} yields
\[
V_c(\mu,\nu)=\sup_{R\in\Pi(\mu_0,\nu_0)}\int_{\R^d}V(0,y_0,R_{y_0})\,\nu_0(dy_0),
\]
which is \eqref{eq:initialdistthm}.
\end{proof}

\subsection{Proofs of approximation and duality}

\begin{proof}[Proof of Theorem~\ref{thm:dual}]
\textit{Part \textup{(i)}.}
Apply It\^o's formula to $G_0(i,\cdot,\cdot)$ along $Y$ on $[t,T]$.
Since $G_0(i,\cdot,\cdot)$ solves \eqref{eq:bkwd}, the drift vanishes
and (using $Y_t=y$)
\[
g_0(i,Y_T) = G_0(i,t,y)
  + \int_t^T \partial_y G_0(i,s,Y_s)\,\sigma_\nu(s,Y_s)\,dW_s,
\]
which is a true martingale (and so has zero mean), since
$\partial_y G_0$ is bounded by parabolic regularity for \eqref{eq:bkwd}
(Assumption~\ref{costassumption}) and $\sigma_\nu$ is bounded by
Assumption~\ref{assumption1}. Substituting into \eqref{eq:saddle}, the
inner expression for index $i$ becomes
\begin{align*}
&G_0(i,t,y) - \sum_{j=1}^K P_{i,j}(t-T)\,\lambda^j
+ \int_t^T \partial_y G_0(i,s,Y_s)\,\sigma_\nu(s,Y_s)\,dW_s\\
&\qquad{}+ \int_t^T \sum_{j=1}^K P_{i,j}(s-T)\,
   \bigl(f_0(j,Y_s)+\lambda^j_s\bigr)\,ds.
\end{align*}
Set
$z^i := G_0(i,t,y) - \sum_{j=1}^K P_{i,j}(t-T)\,\lambda^j$,
equivalently $z=G_0(t,y)-P(t-T)\lambda\in\R^K$. Taking the infimum
over the dynamic multipliers $(\lambda^j_s)$ identifies $u(t,y,z)$
inside the expectation as in \eqref{eq:aux_u}; the outer infimum
over $\lambda\in\R^K$, together with the term $p\cdot\lambda$ from
\eqref{eq:saddle}, yields \eqref{eq:final_representation}.

\vspace{2mm}
\nd
\textit{Part \textup{(ii)}.}
Since
$\{(\lambda^j_s):0\le\lambda^j_s\le N\}\subset\{(\lambda^j_s):0\le\lambda^j_s\le N+1\}\subset\{(\lambda^j_s):\lambda^j_s\ge 0\}$,
the feasible set in \eqref{eq:uN} grows with $N$ and exhausts that of
\eqref{eq:aux_u} in the limit. Consequently
\[
u^N(t,y,z)\;\ge\;u^{N+1}(t,y,z)\;\ge\;u(t,y,z)
  \qquad\text{for all }(t,y,z),
\]
and taking the infimum over $\lambda\in\R^K$ in \eqref{eq:vN}
preserves the ordering, so $v^N\ge v^{N+1}\ge V$ by Part~(i).

\vspace{1mm}
\nd
It remains to establish convergence. Fix $(t,y,p)$ and $\e>0$. By
Part~(i), there exists $\lambda^*\in\R^K$ such that, with
$z^*:=G_0(t,y)-P(t-T)\lambda^*$,
\be\label{eq:lambdastar}
u(t,y,z^*) + p\cdot\lambda^* < V(t,y,p) + \e.
\ee
By definition of $u(t,y,z^*)$, there is an $\cF^Y$-progressively
measurable $\R_+^K$-valued process $(\lambda^{*,j}_s)$ with
$\sum_{j=1}^K\int_t^T\E^\nu[\lambda^{*,j}_s]\,ds<\infty$ such that
\be\label{eq:dynstar}
\E^\nu\!\bigg[\max_{i\in\mathbb{S}}\!\Big(z^{*,i} + M^i_T
  + \int_t^T \!\sum_{j=1}^K P_{i,j}(s-T)\,
    \bigl(f_0(j,Y_s)+\lambda^{*,j}_s\bigr)\,ds\Big)\bigg]
< u(t,y,z^*) + \e,
\ee
where
$M^i_T:=\int_t^T \partial_y G_0(i,s,Y_s)\,\sigma_\nu(s,Y_s)\,dW_s$.
For $N\in\N$, set $\lambda^{N,j}_s:=\lambda^{*,j}_s\wedge N$, which is
admissible for $u^N$ and satisfies $\lambda^{N,j}_s\uparrow\lambda^{*,j}_s$
pointwise as $N\to\infty$. Define
\[
X^N_i := z^{*,i} + M^i_T
       + \int_t^T \!\sum_{j=1}^K P_{i,j}(s-T)\,
         \bigl(f_0(j,Y_s)+\lambda^{N,j}_s\bigr)\,ds,
\]
and $X^*_i$ analogously with $\lambda^{*,j}_s$ in place of
$\lambda^{N,j}_s$. Since $0\le\lambda^{N,j}_s\le\lambda^{*,j}_s$,
\[
|X^N_i-X^*_i|
  \;\le\; \int_t^T \sum_{j=1}^K |P_{i,j}(s-T)|\,\lambda^{*,j}_s\,ds,
\]
which has finite $\nu$-expectation because $|P_{i,j}|$ is bounded on
the compact interval $[t,T]$ and
$$\sum_{j=1}^K\int_t^T\E^\nu[\lambda^{*,j}_s]\,ds<\infty.$$ By
dominated convergence, $X^N_i\to X^*_i$ in $L^1(\nu)$ for each
$i\in\mathbb{S}$; since $\mathbb{S}$ is finite,
$\max_{i\in\mathbb{S}}X^N_i\to\max_{i\in\mathbb{S}}X^*_i$ in
$L^1(\nu)$ as well, and therefore
\[
\E^\nu\!\bigl[\max_{i\in\mathbb{S}}X^N_i\bigr]
  \;\longrightarrow\;
\E^\nu\!\bigl[\max_{i\in\mathbb{S}}X^*_i\bigr]
\quad\text{as }N\to\infty.
\]
Since $\E^\nu[\max_{i\in\mathbb{S}}X^N_i]$ is an upper bound for
$u^N(t,y,z^*)$ by definition,
\[
\limsup_{N\to\infty} u^N(t,y,z^*)
  \;\le\; \E^\nu\!\bigl[\max_{i\in\mathbb{S}}X^*_i\bigr]
  \;<\; u(t,y,z^*) + \e,
\]
the last inequality being \eqref{eq:dynstar}. As $\lambda^*$ is a
feasible point in \eqref{eq:vN}, combining with \eqref{eq:lambdastar}
gives
\[
\limsup_{N\to\infty} v^N(t,y,p)
  \;\le\; \limsup_{N\to\infty} u^N(t,y,z^*) + p\cdot\lambda^*
  \;<\; u(t,y,z^*) + p\cdot\lambda^* + \e
  \;<\; V(t,y,p) + 2\e.
\]
Since $\e>0$ was arbitrary and $v^N\ge V$ for all $N$ (already shown),
we conclude $v^N(t,y,p)\downarrow V(t,y,p)$.
\end{proof}

\begin{proof}[Proof of Theorem~\ref{Nmainpdes}]
We consider the problem \eqref{dynamiccot} with controls
$Z=(Z^i_s)_{s\in[t,T]}^{i=1,\ldots,K}$ subject to $|Z^i_s|\le N$, and
denote its value function by $V^N$. Since the constraint
$|Z^i|\le N$ weakens as $N$ increases, the family $V^N$ is
non-decreasing in $N$ and satisfies $V^N\le V$ for every $N$. Under
Assumptions~\ref{assumption1} and \ref{costassumption}, standard
estimates give the uniform linear-growth bound
\be\label{lingrowth}
|V^N(t,y,p)|\le C(1+|y|),
\qquad\forall\,(t,y,p)\in[0,T]\times\R^d\times\Sigma_K,
\ee
for a constant $C>0$ depending only on $T,f,g,b_\nu,\sigma_\nu$ and
independent of $N$. Adapting the stochastic Perron and comparison
arguments from the proof of Theorem~\ref{mainpdes} to the
bounded-control setting --- where the sup defining $H^N_{sc}$ is
finite for every $C^{1,2,2}$ test function so that the obstacle term
$-\lambda_{\max}(D^2_{pp}V)$ is unnecessary --- $V^N$ is the unique
viscosity solution of the state-constrained problem \eqref{NPDE} in
the sense of Definition~\ref{def:stateconstrvisc}.

To prove the convergence $V^N\to V$, define for
$(t,y,p)\in[0,T]\times\R^d\times\Sigma_K$ the half-relaxed semilimits
\be\label{semilimits}
\overline{V}(t,y,p):=\limsup_{\substack{N\to\infty\\(t',y',p')\to(t,y,p)}}V^N(t',y',p'),
\quad
\underline{V}(t,y,p):=\liminf_{\substack{N\to\infty\\(t',y',p')\to(t,y,p)}}V^N(t',y',p').
\ee
By \eqref{lingrowth} both are well-defined and inherit the same growth
bound; in particular, the family $\{V^N\}$ is locally uniformly bounded
on every compact subset of $[0,T]\times\R^d\times\Sigma_K$. 
We know that $\underline{V}\le\overline{V}$ by construction, with
$\overline{V}$ upper- and $\underline{V}$ lower-semicontinuous. We
will show that $\overline{V}$ is a viscosity sub-solution and
$\underline{V}$ a viscosity super-solution of \eqref{correctPDE}, so
the comparison principle from Subsection~\ref{comp} gives the reverse
inequality $\overline{V}\le\underline{V}$, hence
$\overline{V}=\underline{V}=:V_\infty$, which is then the unique
viscosity solution of \eqref{correctPDE} and equals $V$ by
Theorem~\ref{mainpdes}. Combined with the monotonicity
$V^N\le V^{N+1}\le V$, this yields $V^N\uparrow V$ pointwise.

\vspace{2mm}
\nd
\textit{Step 1.} (Sub-solution property of $\overline{V}$)

\vspace{1mm}
\nd
\emph{1a. Interior.}\
Let $\varphi\in C^{1,2,2}([0,T)\times\R^d\times\Sigma_K)$ and
$(t_0,y_0,p_0)\in[0,T)\times\R^d\times\Sigma_K$ be such that
$\overline{V}-\varphi$ attains a strict local maximum at
$(t_0,y_0,p_0)$. By the standard half-relaxed-limit construction
\cite[Lemma~4.2]{barles1994solutions}, there exist sequences
$N_k\to\infty$ and $(t_k,y_k,p_k)\to(t_0,y_0,p_0)$ with $t_k<T$
eventually, such that $V^{N_k}-\varphi$ attains a local maximum at
$(t_k,y_k,p_k)$ and $V^{N_k}(t_k,y_k,p_k)\to\overline{V}(t_0,y_0,p_0)$.
The sub-solution property of $V^{N_k}$ for \eqref{NPDE} gives
\be\label{HscN_ub}
H^{N_k}_{sc}(\varphi)(t_k,y_k,p_k):=H^{N_k}_{sc}\bigl(t_k,y_k,p_k,\,\partial_t\varphi,\,D_{(p,y)}\varphi,\,
   D^2_{(p,y)}\varphi\bigr)\le 0.
\ee
If $-\lambda_{\max}(D^2_{pp}\varphi(t_0,y_0,p_0))>0$, then, by continuity, $D^2_{pp}\varphi$ is negative definite in a neighborhood of $(t_0,y_0,p_0)$. Since $D_yD_p\varphi$ remains bounded in that neighborhood, for all sufficiently large $k$ the supremum in $H^{N_k}_{sc}$ is attained in a fixed compact set independent of $k$.
Thus $H^{N_k}_{sc}\to H_{sc}$ pointwise at
$(t_0,y_0,p_0)$. Passing to the limit in \eqref{HscN_ub} yields
$H_{sc}(\varphi)(t_0,y_0,p_0)\le 0$,
and 
the sub-solution inequality 
$$H_{vi}(\varphi)(t_0,y_0,p_0)=\min\left\{H_{sc}(\varphi)(t_0,y_0,p_0),-\lambda_{\max}(D^2_{pp}\varphi(t_0y_0,p_0))\right\}\le 0$$
follows. If instead $-\lambda_{\max}(D^2_{pp}\varphi(t_0,y_0,p_0))\le 0$,
we directly get $H_{vi}(\varphi)(t_0,y_0,p_0)\le-\lambda_{\max}(D^2_{pp}\varphi)\le 0$.

\vspace{2mm}
\nd
\textit{Step 2.} (Super-solution property of $\underline{V}$)

\vspace{1mm}
\nd
\emph{2a. Interior.}\
Let $\varphi\in C^{1,2,2}([0,T)\times\R^d\times\Sigma_K)$ and
$(t_0,y_0,p_0)\in[0,T)\times\R^d\times\Sigma_K^\circ$ be such that
$\underline{V}-\varphi$ attains a strict local minimum at
$(t_0,y_0,p_0)$ (recall from Definition~\ref{def:stateconstrvisc}
that super-solution tests are performed only at interior simplex
points, by the state-constraint convention). We again obtain sequences $N_k\to\infty$ and
$[0,T)\times \R^d\times \Sigma_K^\circ\ni (t_k,y_k,p_k)\to(t_0,y_0,p_0)$ with $t_k<T$ eventually, such that
$V^{N_k}-\varphi$ attains a local minimum at $(t_k,y_k,p_k)$ and
$V^{N_k}(t_k,y_k,p_k)\to\underline{V}(t_0,y_0,p_0)$. The
super-solution property of $V^{N_k}$ for \eqref{NPDE} gives
\be\label{HscN_lb}
H^{N_k}_{sc}\bigl(t_k,y_k,p_k,\,\partial_t\varphi,\,D_{(p,y)}\varphi,\,
   D^2_{(p,y)}\varphi\bigr)\ge 0.
\ee
\emph{The eigenvalue obstacle is forced.}\
We claim $-\lambda_{\max}(D^2_{pp}\varphi(t_0,y_0,p_0))\ge 0$. Suppose
otherwise: $D^2_{pp}\varphi(t_0,y_0,p_0)$ admits a direction
$z^*\in\R^{K\times d}$ with $\sum_i z^{*,i}=0$, $|z^{*,i}|\le 1$, and
\[
\tfrac12\,\tr\bigl(z^*\sigma_\nu^{-2}(z^*)^{\!\top}D^2_{pp}\varphi(t_0,y_0,p_0)\bigr)=c>0.
\]
By continuity, this lower bound also holds on a neighborhood of
$(t_0,y_0,p_0)$. Selecting $z=N_k z^*$ (feasible in the constrained
sup for $N_k$ large, because $\max_i|z^{*,i}|\le1$), the supremum term in the definition of $H_{sc}^{N_k}$ is of order at least $cN_k^2-\mathcal O(N_k)$,
where the order-$N_k$ term comes from the linear cross term
$\tr(z(D_pD_y\varphi)^\top)$ and the remaining drift and running-cost
terms are independent of $z$. This forces $H^{N_k}_{sc}\to-\infty$, contradicting \eqref{HscN_lb}. We thus have $-\lambda_{\max}(D^2_{pp}\varphi(t_0,y_0,p_0))\ge 0$.
\vspace{1mm}

\nd
Under this negative semidefiniteness, the supremum in $H^{N_k}_{sc}$ is
uniformly bounded for $k$ large and $H^{N_k}_{sc}\to H_{sc}$
pointwise at $(t_0,y_0,p_0)$. Passing to the limit in
\eqref{HscN_lb}, $H_{sc}(\varphi)(t_0,y_0,p_0)\ge 0$. Combining this with
the obstacle inequality we just established,
\[
H_{vi}(\varphi)(t_0,y_0,p_0)
=\min\!\bigl\{H_{sc}(\varphi),\,-\lambda_{\max}(D^2_{pp}\varphi)\bigr\}(t_0,y_0,p_0)\ge 0.
\]
\vspace{1mm}

\nd
\emph{2b. Inequality up to the boundary.} Let $\varphi\in C^{1,2,2}([0,T)\times\R^d\times\Sigma_K)$ and
$(t_0,y_0,p_0)\in[0,T)\times\R^d\times\Sigma_K$ be such that
$\underline{V}-\varphi$ attains a strict local minimum at
$(t_0,y_0,p_0)$. We again obtain sequences $N_k\to\infty$ and
$[0,T)\times \R^d\times\Sigma_K\ni(t_k,y_k,p_k)\to(t_0,y_0,p_0)$ with $t_k<T$ eventually, such that
$V^{N_k}-\varphi$ attains a local minimum at $(t_k,y_k,p_k)$ and
$V^{N_k}(t_k,y_k,p_k)\to\underline{V}(t_0,y_0,p_0)$. If \eqref{HscN_lb} holds for infinitely many $k\in \mathbb{N}$ (that is if $p_k\in\Sigma_K^\circ$ for infinitely many $k$), then choosing $z=0$ in the supremum term of $H_{sc}^{N_k}$, we get
\be\label{ineq101}
\widetilde{\mathcal{B}_0}\big(t_k,y_k,p_k,\partial_t\varphi,D_{(p,y)}\varphi, D^2_{(p,y)}\varphi \big)\ge0,\;\;\text{ for infinitely many }k\in\mathbb{N}.
\ee
Otherwise, by the definition of state-constrained viscosity super solutions,  \eqref{ineq101} still holds. Letting $k\to +\infty$, we get \eqref{eq:bdsupersol}, as desired.
\vspace{2mm}

\nd
\textit{Step 3.} (Terminal condition, inequality \eqref{terminalassumption}.)\
\vspace{1mm}

\nd
\emph{Lower bound.}\ 
The admissible control $Z\equiv 0$ produces, from initial data
$(t',y',p')\in [0,T)\times \R^d\times\Sigma_K$, a trajectory $(Y_s,p_s)$ with $Y$ governed by $dY_s=b_\nu(s,Y_s)\,ds+\sigma_\nu(s,Y_s)\,dW_s$
and $p$ the deterministic flow $dp_s=p\Lambda\,ds$. By the optimal control formulation of $V^N$ we have
\[
V^N(t',y',p')\ge\E\!\Bigl[g(p_T,Y_T)+\int_t^T\!f(p_s,Y_s)\,ds\,\Big|\,Z\equiv 0\Bigr].
\]
Consider $(t,y,p)\in \R^d\times \Sigma_K$. Letting $(t',y',p')\to (t,y,p)$ and $N\to \infty$ in the above inequality, gives standard SDE
estimates and the linear growth of $f,g$ give
\[
\underline{V}(t,y,p)=\liminf_{\substack{N\to\infty\\(t',y',p')\to (t,y,p)}}V^N(t',y',p')\ge\E\!\Bigl[g(p_T,Y_T)+\int_{t}^T\!f(p_s,Y_s)\,ds\,\Big|\,Z\equiv 0\Bigr],
\]
where now the dynamics $(Y_s,p_s)$ start from $(y,p)$ with initial time $s=t$. Letting $(t,y,p)\to (T,y_0,p_0)\in [0,T]\times \R^d\times \Sigma_K$ in the above yields
\be\label{ineq102}
\liminf_{(t,y,p)\to (T,y_0,p_0)}\underline{V}(t,y,p)\ge g(p_0,y_0).
\ee
\vspace{1mm}

\nd
\emph{Upper bound.} By the control formulation of $V^N$ we know that $V^N(t,y,p)\le V(t,y,p)$, for any $(t,y,p)\in [0,T]\times \R^d\times \Sigma_K$, where $V$ is the continuous function constructed in Theorem \ref{mainpdes}. Thus, we have $\overline{V}(t,y,p)\le V(t,y,p)$. Letting $(t,y,p)\to (T,y_0,p_0)$ yields
\be\label{ineq103}
\limsup_{(t,y,p)\to (T,y_0,p_0)}\overline{V}(t,y,p)\le \lim_{(t,y,p)\to (T,y_0,p_0)}V(t,y,p)=g(p_0,y_0).
\ee

Inequality \eqref{terminalassumption} for the subsolution $\overline{V}$ and the supersolution $\underline{V}$ follows be subtracting \eqref{ineq102} and \eqref{ineq103}.
\end{proof}

\appendix

\section{Technical proofs}\label{sec:technical-proofs}\label{technical}
\nd
In this appendix we collect the longer technical proofs of
propositions stated in the main part of the paper.

\begin{proof}[Proof of Proposition~\ref{inclusions}]
Let $\pi\in\Pi_f(\mu,\nu)$. Under $\pi$, the dynamics of $Y$ are
given by \eqref{filteringcond}:
\[
dY_t = \bigl(h(t,Y_\cdot,X_t)+b_\nu(t,Y_t)\bigr)\,dt
       + \sigma_\nu(t,Y_t)\,dW_t^\pi,
\qquad Y_0\sim\nu_0.
\]

\vspace{1mm}
\nd
\textit{Step 1: The law of $Y$ under $\pi$ is $\nu$.}\
By Assumption~\ref{assumption1}, $\sigma_\nu(t,y)\succeq\kappa I_d$ for
some $\kappa>0$, so $\sigma_\nu(t,Y_t)$ is $\pi$-a.s.\ invertible for
every $t\in[0,T]$. Define the $\R^d$-valued innovation process
\[
\tilde W_t:=\int_0^t\sigma_\nu(s,Y_s)^{-1}
\bigl(dY_s-b_\nu(s,Y_s)\,ds\bigr).
\]
Substituting \eqref{filteringcond} gives
\be\label{eq:tildeW-decomp}
\tilde W_t = W^\pi_t + \int_0^t\sigma_\nu(s,Y_s)^{-1}\,h(s,Y_\cdot,X_s)\,ds,
\ee
so $\tilde W$ differs from $W^\pi$ by a continuous, bounded-variation
process. In particular, $\tilde W$ is continuous, $\cF^Y$-adapted
(both the integrand $\sigma_\nu(\cdot,Y_\cdot)^{-1}$ and the
integrator $Y$ are $\cF^Y$-adapted), and shares the quadratic
covariation of $W^\pi$:
\[
[\tilde W^i,\tilde W^j]_t = [W^{\pi,i},W^{\pi,j}]_t = \delta_{ij}\,t,
\qquad 1\le i,j\le d.
\]

We verify the martingale property. For $0\le s\le t\le T$ and each
component $\ell\in\{1,\ldots,d\}$, \eqref{eq:tildeW-decomp} gives
\[
\E^\pi\!\left[\tilde W^\ell_t-\tilde W^\ell_s\,\Big|\,\cF^Y_s\right]
=\E^\pi\!\left[(W^\pi_t-W^\pi_s)_\ell\,\Big|\,\cF^Y_s\right]
+\E^\pi\!\left[\int_s^t
\bigl(\sigma_\nu(r,Y_r)^{-1}h(r,Y_\cdot,X_r)\bigr)_\ell\,dr\,\Big|\,\cF^Y_s\right].
\]
Since $W^\pi$ is a $(\pi,\cF^{X,Y})$-Brownian motion and
$\cF^Y_s\subset\cF^{X,Y}_s$, the first term vanishes by the tower
property. For the second, applying the tower property with
intermediate $\sigma$-algebra $\cF^Y_r$, using the
$\cF^Y_r$-measurability of $\sigma_\nu(r,Y_r)^{-1}$ and the filtering
condition \eqref{hcond},
\[
\E^\pi\!\left[\bigl(\sigma_\nu(r,Y_r)^{-1}h(r,Y_\cdot,X_r)\bigr)_\ell
\,\Big|\,\cF^Y_s\right]
=\E^\pi\!\left[\bigl(\sigma_\nu(r,Y_r)^{-1}
\,\E^\pi[h(r,Y_\cdot,X_r)\,|\,\cF^Y_r]\bigr)_\ell\,\Big|\,\cF^Y_s\right]
=0
\]
for every $r\in[s,t]$, so Fubini gives that the entire integral
vanishes. Hence $\E^\pi[\tilde W^\ell_t-\tilde W^\ell_s\,|\,\cF^Y_s]=0$
for every $\ell$, and by L\'evy's characterization $\tilde W$ is a
$d$-dimensional $(\pi,\cF^Y)$-Brownian motion.

Substituting into \eqref{filteringcond}, $Y$ satisfies the
$\cF^Y$-adapted SDE
\[
dY_t = b_\nu(t,Y_t)\,dt + \sigma_\nu(t,Y_t)\,d\tilde W_t,
\qquad Y_0\sim\mu.
\]
Since $b_\nu$ and $\sigma_\nu$ are Lipschitz
(Assumption~\ref{assumption1}), this SDE has a pathwise unique strong
solution, and the law of $Y$ under $\pi$ coincides with $\nu$. In
particular, $\pi\in\Pi(\mu,\nu)$.

\vspace{2mm}
\nd
\textit{Step 2: $\pi$ is causal.}\
By Proposition~\ref{causalprop}, causality is equivalent to the
conditional-independence condition
\be\label{eq:cond-ind}
\pi\bigl(\Omega^X\times A\,\big|\,\cF^X_t\otimes\{\emptyset,\Omega^Y\}\bigr)
=\pi\bigl(\Omega^X\times A\,\big|\,\cF^X_T\otimes\{\emptyset,\Omega^Y\}\bigr)
\quad\text{for all }A\in\cF^Y_t,\;t\in[0,T].
\ee
By the definition of a filtering coupling, the driving noise $W^\pi$
is independent of $X$ under $\pi$. The SDE \eqref{filteringcond}
expresses $Y$ as a strong functional of $W^\pi$ and the path
$X_{[0,T]}$, with the $X$-dependence at time $s$ entering through
$(X_u)_{u\le s}$ alone (non-anticipativity of $h$). Consequently every
$A\in\cF^Y_t$ is measurable with respect to
$\sigma\bigl(X_{[0,t]},W^\pi_{[0,t]}\bigr)$, and the conditional
distribution of $A$ given $\cF^X_t$ — equivalently, the joint law of
$W^\pi_{[0,t]}$ given $X_{[0,t]}$ — does not change upon enlarging the
conditioning to $\cF^X_T$, since $W^\pi$ is independent of
$X_{(t,T]}$. Hence \eqref{eq:cond-ind} holds and
$\pi\in\Pi_c(\mu,\nu)$.

The inequality $V_f(\mu,\nu)\le V_c(\mu,\nu)$ is then immediate from
$\Pi_f(\mu,\nu)\subset\Pi_c(\mu,\nu)$.
\end{proof}

\begin{proof}[Proof of Proposition~\ref{lem:forward}]
Fix $i\in\mathbb S=\{1,\ldots,K\}$. For any bounded
$\phi\colon\mathbb S\to\R^K$, write $\phi_j\colon\mathbb S\to\R$ for
its $j$-th component, and define
\[
M^\phi_t:=\phi_i(X_t)-\int_0^t\bigl(\phi(X_s)\La\bigr)^i ds,
\qquad t\in[0,T],
\]
where $\phi(X_s)\La$ is the row-vector–matrix product, so
$(\phi(X_s)\La)^i=\sum_j\phi_j(X_s)\La_{ji}$. By the martingale
problem for the Markov chain $X$, $M^\phi$ is a bounded
$(\pi,\cF^X_t\otimes\{\emptyset,\Omega^Y\})$-martingale, and by the
$H$-hypothesis from Proposition~\ref{causalprop} it is also a
$(\pi,\cF^X_t\otimes\cF^Y_t)$-martingale. Hence, for
$0\le s\le t\le T$,
\[
\E^\pi\!\left[\E^\pi[M^\phi_t\,|\,\cF^Y_t]\,\big|\,\cF^Y_s\right]
=\E^\pi[M^\phi_t\,|\,\cF^Y_s]
=\E^\pi\!\left[\E^\pi[M^\phi_t\,|\,\cF^X_s\otimes\cF^Y_s]\,\big|\,\cF^Y_s\right]
=\E^\pi[M^\phi_s\,|\,\cF^Y_s],
\]
so $\tilde M^\phi_t:=\E^\pi[M^\phi_t\,|\,\cF^Y_t]$ is a
$(\pi,\cF^Y)$-martingale.

Since $\sigma_\nu\succeq\kappa I_d$, the filtration $\cF^Y$ is
generated by the $d$-dimensional $(\pi,\cF^Y)$-Brownian motion
$\tilde W$ constructed in the proof of Proposition~\ref{inclusions}.
By the $d$-dimensional martingale representation theorem, there
exists a square-integrable $\cF^Y$-adapted process
$\tilde Z^\phi\colon[0,T]\times\Omega\to\R^d$ such that
\[
\E^\pi[\phi_i(X_t)\,|\,\cF^Y_t]
=\E^\pi[\phi_i(X_0)]
+\E^\pi\!\left[\int_0^t\bigl(\phi(X_s)\La\bigr)^i ds\,\Big|\,\cF^Y_t\right]
+\int_0^t(\tilde Z^\phi_s)^{\!\top}d\tilde W_s
\quad\text{for all }t\in[0,T].
\]
Splitting the conditional integral via Fubini and writing
\[
\E^\pi\!\left[\bigl(\phi(X_s)\La\bigr)^i\,\big|\,\cF^Y_t\right]
=\E^\pi\!\left[\bigl(\phi(X_s)\La\bigr)^i\,\big|\,\cF^Y_s\right]
+\Bigl(\E^\pi\!\left[\bigl(\phi(X_s)\La\bigr)^i\,\big|\,\cF^Y_t\right]
-\E^\pi\!\left[\bigl(\phi(X_s)\La\bigr)^i\,\big|\,\cF^Y_s\right]\Bigr),
\]
the second piece integrates to a $(\pi,\cF^Y)$-martingale (by the
same tower-property argument as for $\tilde M^\phi$) and can be
absorbed into the $\tilde W$-integral. This yields the
Kushner--Stratonovich-type representation
\be\label{eq:representation}
\E^\pi[\phi_i(X_t)\,|\,\cF^Y_t]
=\E^\pi[\phi_i(X_0)]
+\int_0^t\E^\pi\!\left[\bigl(\phi(X_s)\La\bigr)^i\,\big|\,\cF^Y_s\right]ds
+\int_0^t(\tilde Z^\phi_s)^{\!\top}d\tilde W_s.
\ee

Set $Z^i_s:=\sigma_\nu(s,Y_s)\,\tilde Z^\phi_s\in\R^d$. Since
$\sigma_\nu^{\!\top}=\sigma_\nu$ and
$d\tilde W_s=\sigma_\nu(s,Y_s)^{-1}\bigl(dY_s-b_\nu(s,Y_s)\,ds\bigr)$,
\[
(\tilde Z^\phi_s)^{\!\top}d\tilde W_s
=(Z^i_s)^{\!\top}\sigma_\nu(s,Y_s)^{-2}
\bigl(dY_s-b_\nu(s,Y_s)\,ds\bigr).
\]
Specializing \eqref{eq:representation} to the indicator vector
$\phi(x):=(\1_{x=j})_{j\in\mathbb S}$, so that
$\phi_i(X_t)=\1_{X_t=i}$ and $(\phi(X_s)\La)^i=\La_{X_s,i}$, and
recalling that $p^i_t:=\E^\pi[\1_{X_t=i}\,|\,\cF^Y_t]$ satisfies
$\E^\pi[\La_{X_s,i}\,|\,\cF^Y_s]=\sum_j p^j_s\La_{ji}=(p_s\La)^i$,
we obtain \eqref{eq:dynfiltercot} and \eqref{eq:initialcot}.

\vspace{1mm}
\nd
\emph{Constraint $\sum_{j=1}^K Z^j_t=0$.}\
Since $\sum_j p^j_t=\E^\pi[\sum_j\1_{X_t=j}\,|\,\cF^Y_t]=1$
identically, summing \eqref{eq:dynfiltercot} over $j$ gives
\[
0=d\!\left(\textstyle\sum_j p^j_t\right)
=\textstyle\sum_j(p_t\La)^j\,dt
+\bigl(\sum_j Z^j_t\bigr)^{\!\top}\sigma_\nu^{-2}(t,Y_t)
\bigl(dY_t-b_\nu(t,Y_t)\,dt\bigr).
\]
The drift coefficient $\sum_j(p_t\La)^j=\sum_j\sum_k p^k_t\La_{kj}
=\sum_k p^k_t\sum_j\La_{kj}=0$ vanishes since the rows of the
generator $\La$ sum to zero. Identifying the diffusion part and
using the invertibility of $\sigma_\nu^{-2}$ (Assumption~\ref{assumption1}),
$\sum_{j=1}^K Z^j_t=0\in\R^d$, $dt\otimes d\pi$-a.s.

\vspace{1mm}
\nd
\emph{Constraint $\1_{p^i_t=0}Z^i_t=0$.}\
Since $p^i_t\ge 0$, for every $t$, we have $(p^i_t)^+=p^i_t$. Applying the
Itô–Tanaka formula to the convex function $x\mapsto x^+$ and using
\eqref{eq:dynfiltercot},
\[
d(p^i_t)^+
=\1_{p^i_t>0}\,dp^i_t+\tfrac12\,d\ell^0_t
=\1_{p^i_t>0}\!\left[(p_t\La)^i\,dt
+(Z^i_t)^{\!\top}\sigma_\nu^{-2}(t,Y_t)
\bigl(dY_t-b_\nu(t,Y_t)\,dt\bigr)\right]+\tfrac12\,d\ell^0_t,
\]
where $\ell^0$ is the local time of $p^i$ at $0$. Since
$d(p^i_t)^+=dp^i_t$, we obtain
\[
\1_{p^i_t=0}\!\left[(p_t\La)^i\,dt
+(Z^i_t)^{\!\top}\sigma_\nu^{-2}(t,Y_t)
\bigl(dY_t-b_\nu(t,Y_t)\,dt\bigr)\right]
=\tfrac12\,d\ell^0_t.
\]
The right-hand side is a continuous non-decreasing process supported
on $\{p^i_t=0\}$, in particular of bounded variation. Identifying the
martingale (diffusion) part on the left, which is driven by
$dY_t-b_\nu\,dt=\sigma_\nu\,d\tilde W_t$ and has coefficient
$\1_{p^i_t=0}(Z^i_t)^{\!\top}\sigma_\nu^{-1}$, with $0$ on the right,
\[
\1_{p^i_t=0}(Z^i_t)^{\!\top}\sigma_\nu(t,Y_t)^{-1}=0,
\]
and since $\sigma_\nu^{-1}$ is invertible, $\1_{p^i_t=0}Z^i_t=0\in\R^d$,
$dt\otimes d\pi$-a.s.
\end{proof}

\begin{proof}[Proof of Lemma~\ref{invariance}]
Since $\sum_i p^i_t=1$ holds by \eqref{eq:dynfiltercot}, it suffices
to show $p^i_t\ge 0$ for all $i$. Define
$V_t:=\sum_{i=1}^K(p^i_t)^-\ge 0$. Since $p_0$ is a probability
vector, $V_0=0$. Applying the It\^o--Tanaka formula to each
$(p^i_t)^-$ gives
\[
(p^i_t)^- = -\int_0^t\mathbf{1}_{p^i_s<0}\,dp^i_s
+\tfrac{1}{2}\int_0^t\mathbf{1}_{p^i_s=0}\|Z^i_s\|^2_{\sigma_\nu^{-2}}\,ds,
\]
where $\|v\|^2_{\sigma_\nu^{-2}}:=v^\top\sigma_\nu^{-2}v$.
The second term is the local time contribution at $0$; by
\eqref{Zcond} we have $Z^i_s=0$ on $\{p^i_s=0\}$, so it vanishes.
Substituting
$dp^i_s=(p_s\La)^i\,ds+(Z^i_s)^\top\sigma_\nu^{-2}(s,Y_s)\bigl(dY_s-b_\nu(s,Y_s)\,ds\bigr)$,
summing over $i$, and taking expectations (the stochastic integrals
are martingales since $Z\in L^2$ and $dY_s-b_\nu(s,Y_s)\,ds=\sigma_\nu(s,Y_s)\,dW_s$
under $\nu$) yields
\[
\E[V_t]
= -\,\E\!\left[\int_0^t\sum_i\mathbf{1}_{p^i_s<0}(p_s\La)^i\,ds\right].
\]
We show the integrand is non-positive. For fixed $s$, write
\[
-\sum_i\mathbf{1}_{p^i_s<0}(p_s\La)^i
= -\sum_j p^j_s\!\sum_{i:\,p^i_s<0}\La_{ji}.
\]
If $p^j_s\ge 0$, then $\sum_{i:\,p^i_s<0}\La_{ji}=\sum_{i\neq
j,\,p^i_s<0}\La_{ji}\ge 0$ since $\La_{ji}\ge 0$ for $i\neq j$,
giving a non-positive contribution. If $p^j_s<0$, then using
$\sum_i\La_{ji}=0$,
\[
\sum_{i:\,p^i_s<0}\La_{ji}
= -\!\sum_{i:\,p^i_s\ge 0}\La_{ji}
= -\La_{jj}-\!\sum_{i\neq j,\,p^i_s\ge 0}\La_{ji}\le 0,
\]
and since $-p^j_s>0$, the contribution is again non-positive. Hence
$\E[V_t]\le 0$, and since $V_t\ge 0$ we conclude $V_t=0$ a.s.\ for
each $t$, i.e.\ $p^i_t\ge 0$ for all $i$, $dt\otimes\nu$-a.e.
\end{proof}

\bibliographystyle{alpha}
\bibliography{sample}

@article{BayraktarHanFVI2025,
  author  = {Bayraktar, Erhan and Han, Bingyan},
  title   = {Fitted Value Iteration Methods for Bicausal Optimal Transport},
  journal = {Applied Mathematics \& Optimization},
  volume  = {92},
  number  = {1},
  pages   = {15},
  year    = {2025},
  doi     = {10.1007/s00245-025-10283-1}
}

@article{BayraktarCoxStoev2018,
  author  = {Bayraktar, Erhan and Cox, Alexander M. G. and Stoev, Yavor},
  title   = {Martingale Optimal Transport with Stopping},
  journal = {SIAM Journal on Control and Optimization},
  volume  = {56},
  number  = {1},
  pages   = {417--433},
  year    = {2018},
  doi     = {10.1137/17M1114065}
}

@article{BayraktarHanGoal2025,
  author        = {Bayraktar, Erhan and Han, Bingyan},
  title         = {Goal-based portfolio selection with mental accounting},
  journal       = {arXiv preprint arXiv:2506.06654},
  year          = {2025},
  eprint        = {2506.06654},
  archivePrefix = {arXiv},
  primaryClass  = {q-fin.PM}
}

@article{BayraktarZhang2015ruin,
  author  = {Bayraktar, Erhan and Zhang, Yuchong},
  title   = {Stochastic Perron's Method for the Probability of Lifetime Ruin Problem Under Transaction Costs},
  journal = {SIAM Journal on Control and Optimization},
  volume  = {53},
  number  = {1},
  pages   = {91--113},
  year    = {2015},
  doi     = {10.1137/140967052}
}

@article{Kar81,
  author = {Karatzas, Ioannis},
  title = {The monotone follower problem in stochastic decision theory},
  journal = {Applied Mathematics and Optimization},
  volume = {7},
  number = {2},
  pages = {175--189},
  year = {1981}
}

@article{He96,
  title={Sequential decisions under uncertainty and the maximum theorem},
  author={Hellwig, Martin F},
  journal={Journal of Mathematical Economics},
  volume={25},
  number={4},
  pages={443--464},
  year={1996},
  publisher={Elsevier}
}

@article{KS84,
  author = {Karatzas, Ioannis and Shreve, Steven E.},
  title = {Connections between optimal stopping and singular stochastic control I. Monotone follower problems},
  journal = {SIAM Journal on Control and Optimization},
  volume = {22},
  number = {6},
  pages = {856--877},
  year = {1984}
}

@article{KS85,
  author = {Karatzas, Ioannis and Shreve, Steven E.},
  title = {Connections between optimal stopping and singular stochastic control II. Reflected follower problems},
  journal = {SIAM Journal on Control and Optimization},
  volume = {23},
  number = {3},
  pages = {433--451},
  year = {1985}
}

@article{BSW80,
  author = {Bene\v{s}, V. E. and Shepp, L. A. and Witsenhausen, H. S.},
  title = {Some solvable stochastic control problems},
  journal = {Stochastics},
  volume = {4},
  number = {1},
  pages = {39--83},
  year = {1980}
}

@article{bayraktar-li2016target,
  author  = {Bayraktar, Erhan and Li, Jiaqi},
  title   = {Stochastic {P}erron for Stochastic Target Problems},
  journal = {Journal of Optimization Theory and Applications},
  volume  = {170},
  number  = {3},
  pages   = {1026--1054},
  year    = {2016},
  doi     = {10.1007/s10957-016-0958-2}
}

@article{bayraktar-li2016games,
  author  = {Bayraktar, Erhan and Li, Jiaqi},
  title   = {Stochastic {P}erron for Stochastic Target Games},
  journal = {The Annals of Applied Probability},
  volume  = {26},
  number  = {2},
  pages   = {1082--1110},
  year    = {2016},
  doi     = {10.1214/15-AAP1112}
}

@article{bayraktar-sirbu2012linear,
  author  = {Bayraktar, Erhan and S{\^\i}rbu, Mihai},
  title   = {Stochastic Perron's Method and Verification without Smoothness Using Viscosity Comparison: The Linear Case},
  journal = {Proceedings of the American Mathematical Society},
  volume  = {140},
  number  = {10},
  pages   = {3645--3654},
  year    = {2012},
  doi     = {10.1090/S0002-9939-2012-11336-X}
}

@article{bayraktar-sirbu2014obstacle,
  author  = {Bayraktar, Erhan and S{\^\i}rbu, Mihai},
  title   = {Stochastic {P}erron's Method and Verification without Smoothness Using Viscosity Comparison: Obstacle Problems and Dynkin Games},
  journal = {Proceedings of the American Mathematical Society},
  volume  = {142},
  number  = {4},
  pages   = {1399--1412},
  year    = {2014},
  doi     = {10.1090/S0002-9939-2014-11860-0}
}

@article{KuDi78,
  title={Approximations for functionals and optimal control problems on jump diffusion processes},
  author={Kushner, Harold J and DiMasi, Giovanni},
  journal={Journal of Mathematical Analysis and Applications},
  volume={63},
  number={3},
  pages={772--800},
  year={1978},
  publisher={Elsevier}
}

@book{Ka01,
  title={Foundations of modern probability, second edition},
  author={Kallenberg, Olav},
  year={2001},
  publisher={Springer}
}

@article{gangbo1996geometry,
  title={The geometry of optimal transportation},
  author={Gangbo, Wilfrid and McCann, Robert J},
  journal={Acta Mathematica},
  volume={177},
  number={2},
  pages={113--161},
  year={1996},
  publisher={Springer}
}

@article{back2020optimal,
  title={Optimal Transport and Risk Aversion in {K}yle's Model of Informed Trading},
  author={Back, Kerry and Cocquemas, Francois and Ekren, Ibrahim and Lioui, Abraham},
  journal={arXiv e-prints},
  pages={arXiv--2006},
  year={2020}
}

@book{santambrogio2015optimal,
  title={Optimal Transport for Applied Mathematicians},
  author={Santambrogio, Filippo},
  publisher={Birk{\"a}user},
  address={New York},
  year={2015}
}

@article{katsoulakis1994viscosity,
  title={Viscosity solutions of second order fully nonlinear elliptic equations with state constraints},
  author={Katsoulakis, Markos A},
  journal={Indiana University Mathematics Journal},
  pages={493--519},
  year={1994},
  publisher={JSTOR}
}

@article{rokhlin2014stochastic,
  title={Stochastic {P}erron's method for optimal control problems with state constraints},
  author={Rokhlin, Dmitry B},
  journal={Electronic Communications in Probability},
  volume={19},
  pages={1--15},
  year={2014}
}

@article{soner1986optimal,
  title={Optimal control with state-space constraint I},
  author={Soner, Halil Mete},
  journal={SIAM Journal on Control and Optimization},
  volume={24},
  number={3},
  pages={552--561},
  year={1986},
  publisher={SIAM}
}

@book{ambrosio2005gradient,
  title={Gradient flows: in metric spaces and in the space of probability measures},
  author={Ambrosio, Luigi and Gigli, Nicola and Savar{\'e}, Giuseppe},
  year={2005},
  publisher={Springer}
}

@article{first_paper,
  title={A unified approach to informed trading via {M}onge-{K}antorovich duality},
  author={Chhaibi, Reda and Ekren, Ibrahim and Noh, Eunjung and Vy, Lu},
  journal={https://arxiv.org/pdf/2210.17384.pdf},
  year={2022}
}

@article{bose2021multidimensional,
  title={Multidimensional {K}yle-{B}ack model with a risk averse informed trader},
  author={Bose, Shreya and Ekren, Ibrahim},
  journal={arXiv preprint arXiv:2111.01957},
  year={2021}
}

@article{bose2020kyle,
  title={Kyle-{B}ack Models with risk aversion and non-{G}aussian Beliefs},
  author={Bose, Shreya and Ekren, Ibrahim},
  journal={arXiv preprint arXiv:2008.06377},
  year={2020}
}

@article{b,
  title={Polar factorization and monotone rearrangement of vector-valued functions},
  author={Brenier, Yann},
  journal={Communications on Pure and Applied Mathematics},
  volume={44},
  number={4},
  pages={375--417},
  year={1991},
  publisher={Wiley Online Library}
}

@article{cho,
  title={Continuous auctions and insider trading: uniqueness and risk aversion},
  author={Cho, Kyung-Ha},
  journal={Finance and Stochastics},
  volume={7},
  number={1},
  pages={47--71},
  year={2003},
  publisher={Springer}
}

@incollection{LiPa25,
  title={Adapted Topologies and Higher-Rank Signatures},
  author={Liu, Chong and Pammer, Gudmund},
  booktitle={Signature Methods in Finance: An Introduction with Computational Applications},
  pages={333--380},
  year={2025},
  publisher={Springer}
}

@article{BoLiOb23,
  title={Adapted topologies and higher rank signatures},
  author={Bonnier, Patric and Liu, Chong and Oberhauser, Harald},
  journal={The Annals of Applied Probability},
  volume={33},
  number={3},
  pages={2136--2175},
  year={2023},
  publisher={Institute of Mathematical Statistics}
}

@article{AcKrPa14,
  title={Designing universal causal deep learning models: The geometric (hyper) transformer},
  author={Acciaio, Beatrice and Kratsios, Anastasis and Pammer, Gudmund},
  journal={Mathematical Finance},
  volume={34},
  number={2},
  pages={671--735},
  year={2024},
  publisher={Wiley Online Library}
}

@article{AcHoPa25,
  title={Entropic adapted Wasserstein distance on Gaussians},
  author={Acciaio, Beatrice and Hou, Songyan and Pammer, Gudmund},
  journal={Electronic Communications in Probability},
  volume={30},
  pages={1--14},
  year={2025},
  publisher={The Institute of Mathematical Statistics and the Bernoulli Society}
}

@article{AcBaGrHoPa26,
  title={The geometry of the adapted Bures--Wasserstein space},
  author={Acciaio, Beatrice and Bartl, Daniel and Grass, Anne and Hou, Songyan and Pammer, Gudmund},
  journal={arXiv preprint arXiv:2602.00623},
  year={2026}
}

@article{Pa24,
  title={A note on the adapted weak topology in discrete time},
  author={Pammer, Gudmund},
  journal={Electronic Communications in Probability},
  volume={29},
  pages={1--13},
  year={2024},
  publisher={The Institute of Mathematical Statistics and the Bernoulli Society}
}

@article{Ji24,
  title={Duality of causal distributionally robust optimization},
  author={Jiang, Yifan},
  journal={arXiv preprint arXiv:2401.16556},
  year={2024}
}

@article{JiLi25,
  title={A transfer principle for computing the adapted Wasserstein distance between stochastic processes},
  author={Jiang, Yifan and Lim, Fang Rui},
  journal={arXiv preprint arXiv:2505.21337},
  year={2025}
}

@article{SaLeLiHoDaLy21,
  title={Higher order kernel mean embeddings to capture filtrations of stochastic processes},
  author={Salvi, Cristopher and Lemercier, Maud and Liu, Chong and Horvath, Blanka and Damoulas, Theodoros and Lyons, Terry},
  journal={Advances in Neural Information Processing Systems},
  volume={34},
  pages={16635--16647},
  year={2021}
}

@article{ChVi25,
  title={Robust Optimization in Causal Models and G-Causal Normalizing Flows},
  author={Visentin, Gabriele and Cheridito, Patrick},
  journal={arXiv preprint arXiv:2510.15458},
  year={2025}
}

@article{ChEc25,
  title={Optimal transport and Wasserstein distances for causal models},
  author={Cheridito, Patrick and Eckstein, Stephan},
  journal={Bernoulli},
  volume={31},
  number={2},
  pages={1351--1376},
  year={2025},
  publisher={Bernoulli Society for Mathematical Statistics and Probability}
}

@article{Ho87,
  title={A characterization of adapted distribution},
  author={Hoover, Douglas N},
  journal={The Annals of Probability},
  pages={1600--1611},
  year={1987},
  publisher={JSTOR}
}

@article{GuWo25,
  title={Adapted optimal transport between Gaussian processes in discrete time},
  author={Gunasingam, Madhu and Leonard Wong, Ting-Kam},
  journal={Electronic Communications in Probability},
  volume={30},
  pages={1--14},
  year={2025},
  publisher={The Institute of Mathematical Statistics and the Bernoulli Society}
}

@article{xu2020cot,
  title={Cot-gan: Generating sequential data via causal optimal transport},
  author={Xu, Tianlin and Wenliang, Li Kevin and Munn, Michael and Acciaio, Beatrice},
  journal={Advances in neural information processing systems},
  volume={33},
  pages={8798--8809},
  year={2020}
}

@article{pardoux1982equations,
  title={{\'E}quations du filtrage non lin{\'e}aire de la pr{\'e}diction et du lissage},
  author={Pardoux, Etienne},
  journal={Stochastics},
  volume={6},
  number={3-4},
  pages={193--231},
  year={1982},
  publisher={Taylor \& Francis}
}

@inproceedings{lions1989viscosity2,
  title={Viscosity solutions of fully nonlinear second order equations and optimal stochastic control in infinite dimensions. Part II: Optimal control of {Z}akai's equation},
  author={Lions, Pierre-Louis},
  booktitle={Stochastic Partial Differential Equations and Applications II: Proceedings of a Conference held in Trento, Italy February 1--6, 1988},
  pages={147--170},
  year={1989},
  organization={Springer}
}

@article{lions1989viscosity3,
  title={Viscosity solutions of fully nonlinear second-order equations and optimal stochastic control in infinite dimensions. III. Uniqueness of viscosity solutions for general second-order equations},
  author={Lions, Pierre-Louis},
  journal={Journal of Functional Analysis},
  volume={86},
  number={1},
  pages={1--18},
  year={1989},
  publisher={Elsevier}
}

@article{bayraktar2026comparison,
  title={Comparison for semi-continuous viscosity solutions for second order PDEs on the {W}asserstein space},
  author={Bayraktar, Erhan and Ekren, Ibrahim and He, Xihao and Zhang, Xin},
  journal={Journal of Differential Equations},
  volume={455},
  pages={113963},
  year={2026},
  publisher={Elsevier}
}

@article{bayraktar2025comparison,
  title={Comparison of viscosity solutions for a class of second-order PDEs on the {W}asserstein space},
  author={Bayraktar, Erhan and Ekren, Ibrahim and Zhang, Xin},
  journal={Communications in Partial Differential Equations},
  volume={50},
  number={4},
  pages={570--613},
  year={2025},
  publisher={Taylor \& Francis}
}

@article{ishii2002class,
  title={A class of stochastic optimal control problems with state constraint},
  author={Ishii, Hitoshi and Loreti, Paola},
  journal={Indiana University Mathematics Journal},
  pages={1167--1196},
  year={2002},
  publisher={JSTOR}
}

@article{bartl2025wasserstein,
  title={The {W}asserstein Space of Stochastic Processes in Continuous Time},
  author={Bartl, Daniel and Beiglb{\"o}ck, Mathias and Pammer, Gudmund and Schrott, Stefan and Zhang, Xin},
  journal={arXiv preprint arXiv:2501.14135},
  year={2025}
}

@article{gozzi2000hamilton,
  title={Hamilton--{J}acobi--{B}ellman equations for the optimal control of the {D}uncan--{M}ortensen--{Z}akai equation},
  author={Gozzi, Fausto and {\'S}wiech, Andrzej},
  journal={Journal of Functional Analysis},
  volume={172},
  number={2},
  pages={466--510},
  year={2000},
  publisher={Elsevier}
}

@article{AlvarezLasryLions1997,
  title={Convex viscosity solutions and state constraints},
  author={Alvarez, Olivier and Lasry, J-M and Lions, P-L},
  journal={Journal de Math{\'e}matiques Pures et Appliqu{\'e}es},
  volume={76},
  number={3},
  pages={265--288},
  year={1997},
  publisher={Elsevier}
}

@article{martini2023kolmogorov,
  title={Kolmogorov equations on spaces of measures associated to nonlinear filtering processes},
  author={Martini, Mattia},
  journal={Stochastic Processes and their Applications},
  volume={161},
  pages={385--423},
  year={2023},
  publisher={Elsevier}
}

@article{k,
  title={On the maximum principle for quasi-linear parabolic equations of the second order},
  author={Kusano, Takasi},
  journal={Proceedings of the Japan Academy},
  volume={39},
  number={4},
  pages={211--216},
  year={1963},
  publisher={The Japan Academy}
}

@article{kyle,
  title={Continuous auctions and insider trading},
  author={Kyle, Albert S},
  journal={Econometrica},
  volume={53},
  number={6},
  pages={1315--1335},
  year={1985},
  publisher={JSTOR}
}

@book{l,
  title={Second order parabolic differential equations},
  author={Lieberman, Gary M},
  year={1996},
  publisher={World Scientific}
}

@book{liptser1977statistics,
  title={Statistics of random processes: General theory},
  author={Liptser, Robert Shevilevich and Shiriaev, Al'bert Nikolaevich},
  volume={394},
  year={1977},
  publisher={Springer}
}

@article{m,
  title={Existence and uniqueness of monotone measure-preserving maps},
  author={McCann, Robert J},
  journal={Duke Mathematical Journal},
  volume={80},
  number={2},
  pages={309--323},
  year={1995},
  publisher={Duke University Press}
}

@phdthesis{s,
  title={Study of new models for insider trading and impulse control},
  author={Shi, Pucheng},
  year={2013},
  school={London School of Economics and Political Science (LSE)}
}

@article{bion-nadal2019wasserstein,
  author  = {Bion-Nadal, Jocelyne and Talay, Denis},
  title   = {On a {W}asserstein-type distance between solutions to stochastic differential equations},
  journal = {Annals of Applied Probability},
  volume  = {29},
  number  = {3},
  pages   = {1609--1639},
  year    = {2019},
  doi     = {10.1214/18-AAP1423}
}

@book{kushner-dupuis2001,
  author    = {Kushner, Harold J. and Dupuis, Paul},
  title     = {Numerical Methods for Stochastic Control Problems in Continuous Time},
  edition   = {2nd},
  series    = {Stochastic Modelling and Applied Probability},
  volume    = {24},
  publisher = {Springer},
  year      = {2001}
}

@article{barles-souganidis1991,
  author  = {Barles, Guy and Souganidis, Panagiotis E.},
  title   = {Convergence of approximation schemes for fully nonlinear second order equations},
  journal = {Asymptotic Analysis},
  volume  = {4},
  year    = {1991},
  pages   = {271--283}
}

@article{fahim-touzi-warin2011,
  author  = {Fahim, Arash and Touzi, Nizar and Warin, Xavier},
  title   = {A probabilistic numerical method for fully nonlinear parabolic {PDE}s},
  journal = {Annals of Applied Probability},
  volume  = {21},
  number  = {4},
  year    = {2011},
  pages   = {1322--1364}
}

@article{gobet-lemor-warin2005,
  author  = {Gobet, Emmanuel and Lemor, Jean-Philippe and Warin, Xavier},
  title   = {A regression-based {M}onte {C}arlo method to solve backward stochastic differential equations},
  journal = {Annals of Applied Probability},
  volume  = {15},
  number  = {3},
  year    = {2005},
  pages   = {2172--2202}
}

@article{han-jentzen-e2018,
  author  = {Han, Jiequn and Jentzen, Arnulf and E, Weinan},
  title   = {Solving high-dimensional partial differential equations using deep learning},
  journal = {Proceedings of the National Academy of Sciences},
  volume  = {115},
  number  = {34},
  year    = {2018},
  pages   = {8505--8510}
}

@article{hure-pham-warin2020,
  author  = {Hur{\'e}, C{\^o}me and Pham, Huy{\^e}n and Warin, Xavier},
  title   = {Deep backward schemes for high-dimensional nonlinear {PDE}s},
  journal = {Mathematics of Computation},
  volume  = {89},
  number  = {324},
  year    = {2020},
  pages   = {1547--1579}
}

@article{sirignano-spiliopoulos2018,
  author  = {Sirignano, Justin and Spiliopoulos, Konstantinos},
  title   = {{DGM}: A deep learning algorithm for solving partial differential equations},
  journal = {Journal of Computational Physics},
  volume  = {375},
  year    = {2018},
  pages   = {1339--1364}
}

@article{cont-lim2024causal,
  author  = {Cont, Rama and Lim, Fang Rui},
  title   = {Causal transport on path space},
  journal = {arXiv preprint arXiv:2412.02948},
  year    = {2024}
}

@article{constantinou2017extended,
  author  = {Constantinou, Panayiota and Dawid, A. Philip},
  title   = {Extended conditional independence and applications in causal inference},
  journal = {The Annals of Statistics},
  year    = {2017},
  volume  = {45},
  number  = {6},
  pages   = {2618--2653},
  doi     = {10.1214/16-AOS1537}
}

@article{bandini-cosso-fuhrman-pham2019,
  author  = {Bandini, Elena and Cosso, Andrea and Fuhrman, Marco and Pham, Huy{\^e}n},
  title   = {Randomized filtering and {B}ellman equation in {W}asserstein space for partial observation control problem},
  journal = {Stochastic Processes and their Applications},
  volume  = {129},
  number  = {2},
  pages   = {674--711},
  year    = {2019}
}

@book{sv,
  title={Multidimensional diffusion processes},
  author={Stroock, Daniel W and Varadhan, S. R. Srinivasa},
  volume={233},
  year={1997},
  publisher={Springer Science \& Business Media}
}

@book{villani,
  title={Optimal transport: old and new},
  author={Villani, C{\'e}dric},
  volume={338},
  year={2009},
  publisher={Springer}
}

@article{acciaio2020causal,
  title={Causal optimal transport and its links to enlargement of filtrations and continuous-time stochastic optimization},
  author={Acciaio, Beatrice and Backhoff, Julio and Zalashko, Anastasiia},
  journal={Stochastic Processes and their Applications},
  volume={130},
  number={5},
  pages={2918--2953},
  year={2020},
  publisher={Elsevier}
}

@article{aldous1981weak,
  author  = {Aldous, David J.},
  title   = {Weak convergence and the general theory of processes},
  journal = {Unpublished manuscript},
  year    = {1981}
}

@article{hoover1984adapted,
  author  = {Hoover, Douglas N. and Keisler, H. Jerome},
  title   = {Adapted probability distributions},
  journal = {Transactions of the American Mathematical Society},
  volume  = {286},
  number  = {1},
  pages   = {159--201},
  year    = {1984}
}

@book{bensoussan1992partial,
  author    = {Bensoussan, Alain},
  title     = {Stochastic Control of Partially Observable Systems},
  publisher = {Cambridge University Press},
  year      = {1992},
  series    = {Cambridge Studies in Advanced Mathematics},
  volume    = {48},
  address   = {Cambridge},
  isbn      = {9780521445335}
}

@article{bayraktar2013stochastic,
  title={Stochastic {P}erron's method for Hamilton--Jacobi--Bellman equations},
  author={Bayraktar, Erhan and Sirbu, Mihai},
  journal={SIAM Journal on Control and Optimization},
  volume={51},
  number={6},
  pages={4274--4294},
  year={2013},
  publisher={SIAM}
}

@article{veraguas2016causal,
  title={Causal transport in discrete time and applications},
  author={Backhoff, Julio and Beiglb{\"o}ck, Mathias and Lin, Yiqing and Zalashko, Anastasiia},
  journal={arXiv preprint arXiv:1606.04062},
  year={2016}
}

@article{AkJe17,
  title={Enlargement of filtration with finance in view},
  author={Aksamit, Anna and Jeanblanc, Monique },
  year={2017},
  publisher={Springer}
}

@article{Ru85,
  title={The {W}asserstein distance and approximation theorems},
  author={R{\"{u}}schendorf, Ludger},
  journal={Probability Theory and Related Fields},
  volume={70},
  number={1},
  pages={117--129},
  year={1985},
  publisher={Springer}
}

@article{crandall1992user,
  title={User's guide to viscosity solutions of second order partial differential equations},
  author={Crandall, Michael G and Ishii, Hitoshi and Lions, Pierre-Louis},
  journal={Bulletin of the American Mathematical Society},
  volume={27},
  number={1},
  pages={1--67},
  year={1992}
}

@article{kinderlehrer1978smoothness,
  title={The smoothness of the free boundary in the one phase {S}tefan problem},
  author={Kinderlehrer, David and Nirenberg, Louis},
  journal={Communications on Pure and Applied Mathematics},
  volume={31},
  number={3},
  pages={257--282},
  year={1978},
  publisher={Wiley Online Library}
}

@article{Kushner1964,
  author  = {Kushner, Harold J.},
  title   = {On the differential equations satisfied by conditional probability densities of {M}arkov processes, with applications},
  journal = {Journal of the Society for Industrial and Applied Mathematics, Series A: Control},
  year    = {1964},
  volume  = {2},
  number  = {1},
  pages   = {106--119}
}

@book{bertsekas1996stochastic,
  author    = {Bertsekas, Dimitri P. and Shreve, Steven E.},
  title     = {Stochastic Optimal Control: The Discrete-Time Case},
  publisher = {Athena Scientific},
  address   = {Belmont, MA},
  year      = {1996},
  note      = {Originally published by Academic Press, 1978}
}

@book{barles1994solutions,
  author    = {Barles, Guy},
  title     = {Solutions de viscosit{\'e} des {\'e}quations de Hamilton--Jacobi},
  series    = {Math{\'e}matiques et Applications},
  volume    = {17},
  publisher = {Springer-Verlag},
  address   = {Paris},
  year      = {1994}
}

@article{lassalle2018causal,
  author  = {Lassalle, R{\'e}mi},
  title   = {Causal transport plans and their {M}onge--{K}antorovich problems},
  journal = {Stochastic Analysis and Applications},
  volume  = {36},
  number  = {3},
  pages   = {452--484},
  year    = {2018},
  doi     = {10.1080/07362994.2017.1422747}
}

@article{pflug-pichler2012,
  author  = {Pflug, Georg Ch. and Pichler, Alois},
  title   = {A distance for multistage stochastic optimization models},
  journal = {SIAM Journal on Optimization},
  volume  = {22},
  number  = {1},
  pages   = {1--22},
  year    = {2012},
  doi     = {10.1137/110825054}
}

@article{backhoff2020fundamental,
  author  = {Backhoff-Veraguas, Julio and Beiglb{\"o}ck, Mathias and Eder, Manu and Pichler, Alois},
  title   = {Fundamental properties of process distances},
  journal = {Stochastic Processes and their Applications},
  volume  = {130},
  number  = {9},
  pages   = {5575--5591},
  year    = {2020},
  doi     = {10.1016/j.spa.2020.03.017}
}

@article{GoRoSaTe17,
  title={Kantorovich duality for general transport costs and applications},
  author={Gozlan, Nathael and Roberto, Cyril and Samson, Paul-Marie and Tetali, Prasad},
  journal={Journal of Functional Analysis},
  volume={273},
  number={11},
  pages={3327--3405},
  year={2017},
  publisher={Elsevier}
}

@article{JiOb24,
  title={Sensitivity of causal distributionally robust optimization},
  author={Jiang, Yifan and Obloj, Jan},
  journal={arXiv preprint arXiv:2408.17109},
  year={2024}
}

@article{BaWi22,
  title={Sensitivity of multiperiod optimization problems in adapted {W}asserstein distance},
  author={Bartl, Daniel and Wiesel, Johannes},
  journal={arXiv preprint arXiv:2208.05656},
  year={2022}
}

@article{BlWiZhZh26,
  title={Empirical martingale projections via the adapted {W}asserstein distance},
  author={Blanchet, Jose and Wiesel, Johannes and Zhang, Erica and Zhang, Zhenyuan},
  journal={The Annals of Applied Probability},
  volume={36},
  number={1},
  pages={547--606},
  year={2026},
  publisher={Institute of Mathematical Statistics}
}

@article{BlLaPaWi24,
  title={Bounding adapted {W}asserstein metrics},
  author={Blanchet, Jose and Larsson, Martin and Park, Jonghwa and Wiesel, Johannes},
  journal={arXiv preprint arXiv:2407.21492},
  year={2024}
}

@article{LaPaWi25,
  title={The fast rate of convergence of the smooth adapted {W}asserstein distance},
  author={Larsson, Martin and Park, Jonghwa and Wiesel, Johannes},
  journal={arXiv preprint arXiv:2503.10827},
  year={2025}
}

@article{HiRo24,
  title={Bicausal optimal transport for SDEs with irregular coefficients},
  author={Hitz, Michaela and Robinson, Benjamin A},
  journal={arXiv preprint arXiv:2403.09941},
  year={2024}
}

@article{BaKaRo25,
  title={Adapted {W}asserstein distance between the laws of SDEs},
  author={Backhoff-Veraguas, Julio and K{\"a}llblad, Sigrid and Robinson, Benjamin A},
  journal={Stochastic Processes and their Applications},
  volume={189},
  pages={104689},
  year={2025},
  publisher={Elsevier}
}

@article{EcPa24,
  title={Computational methods for adapted optimal transport},
  author={Eckstein, Stephan and Pammer, Gudmund},
  journal={The Annals of Applied Probability},
  volume={34},
  number={1A},
  pages={675--713},
  year={2024},
  publisher={Institute of Mathematical Statistics}
}

@article{BaBePa21,
  title={The {W}asserstein space of stochastic processes},
  author={Bartl, Daniel and Beiglb{\"o}ck, Mathias and Pammer, Gudmund},
  journal={arXiv preprint arXiv:2104.14245},
  pages={1--41},
  year={2021}
}

@article{backhoff2020all,
  author  = {Backhoff-Veraguas, Julio and Bartl, Daniel and Beiglb{\"o}ck, Mathias and Eder, Manu},
  title   = {All adapted topologies are equal},
  journal = {Probability Theory and Related Fields},
  volume  = {178},
  pages   = {1125--1172},
  year    = {2020},
  doi     = {10.1007/s00440-020-00993-8}
}

@article{backhoff2020adapted,
  author  = {Backhoff-Veraguas, Julio and Bartl, Daniel and Beiglb{\"o}ck, Mathias and Eder, Manu},
  title   = {Adapted {W}asserstein distances and stability in mathematical finance},
  journal = {Finance and Stochastics},
  volume  = {24},
  number  = {3},
  pages   = {601--632},
  year    = {2020},
  doi     = {10.1007/s00780-020-00426-3}
}

@article{backhoff2022estimating,
  author  = {Backhoff, Julio and Bartl, Daniel and Beiglb{\"o}ck, Mathias and Wiesel, Johannes},
  title   = {Estimating processes in adapted {W}asserstein distance},
  journal = {The Annals of Applied Probability},
  volume  = {32},
  number  = {1},
  pages   = {529--550},
  year    = {2022},
  doi     = {10.1214/21-AAP1687}
}

@article{wonham1965applications,
  author  = {Wonham, W. M.},
  title   = {Some applications of stochastic differential equations to optimal nonlinear filtering},
  journal = {SIAM Journal on Control},
  volume  = {2},
  number  = {3},
  pages   = {347--369},
  year    = {1965},
  doi     = {10.1137/0302028}
}

\end{document}